\newcommand{\Z}{{\textsf{\textup{Z}}}}
\newtheorem{thm}{Theorem}
\newtheorem{cor}[thm]{Corollary}
\newtheorem{defi}[thm]{Definition}
\newtheorem{rem}[thm]{Remark}
\newtheorem{nota}[thm]{Notation}
\newtheorem{princ}[thm]{Principle}
\newtheorem{ack}[thm]{Acknowledgement}
\newtheorem*{tempo*}{Template}
\newtheorem{theorem}[thm]{Theorem}
\newcommand\be{\begin{equation}}
\newcommand\ee{\end{equation}} 
\def\bdefi{\begin{defi}\rm}
\def\edefi{\end{defi}}
\def\bnota{\begin{nota}\rm}
\def\enota{\end{nota}}
\def\FIVE{\Pi_{1}^{1}\text{-\textup{\textsf{CA}}}_{0}}
\def\SIX{\Pi_{2}^{1}\text{-\textsf{\textup{CA}}}_{0}}
\def\SIXK{\Pi_{k}^{1}\text{-\textsf{\textup{CA}}}_{0}^{\omega}}
\def\ATR{\textup{\textsf{ATR}}}
\def\ZF{\textup{\textsf{ZF}}}
\def\L{\textsf{\textup{L}}}
 \def\r{\mathbb{r}}
\def\RCA{\textup{\textsf{RCA}}}
\def\({\textup{(}}
\def\){\textup{)}}
\def\RCAo{\textup{\textsf{RCA}}_{0}^{\omega}}
\def\ACAo{\textup{\textsf{ACA}}_{0}^{\omega}}
\def\WKL{\textup{\textsf{WKL}}}
\def\WWKL{\textup{\textsf{WWKL}}}
\def\bye{\end{document}}
\def\N{{\mathbb  N}}
\def\Q{{\mathbb  Q}}
\def\R{{\mathbb  R}}
\def\SS{\textup{\textsf{S}}}
\def\di{\rightarrow}
\def\asa{\leftrightarrow}
\def\ACA{\textup{\textsf{ACA}}}
\def\QFAC{\textup{\textsf{QF-AC}}}
\def\SAC{\Sigma_{1}^{1}\textup{\textsf{-AC}}_{0}}
\def\AC{\textup{\textsf{AC}}}
\def\Gap{\textup{\textsf{Jump}}}
\def\INDY{\textup{\textsf{IND}}_{0}}
\def\SAC{\textup{\textsf{$\Sigma_{1}^{1}$-AC$_{0}$}}}
\def\NIN{\textup{\textsf{NIN}}}
\def\NCC{\textup{\textsf{NCC}}}
\def\NBI{\textup{\textsf{NBI}}}
\def\IND{\textup{\textsf{IND}}}
\def\eps{\varepsilon}
\def\ECF{\textup{\textsf{ECF}}}
\newcommand{\rinf}{\rightarrow \infty}
\numberwithin{equation}{section}
\numberwithin{thm}{section}
\begin{document}
\title{The Biggest Five of Reverse Mathematics}
\author{Dag Normann}
\address{Department of Mathematics, The University 
of Oslo, P.O. Box 1053, Blindern N-0316 Oslo, Norway}
\email{dnormann@math.uio.no}
\author{Sam Sanders}
\address{Department of Philosophy II, RUB Bochum, Germany}
\email{sasander@me.com}
\keywords{Reverse Mathematics, Big Five, second- and higher-order arithmetic}
\begin{abstract}
The aim of \emph{Reverse Mathematics} (RM for short) is to find the minimal axioms needed to prove a given theorem of ordinary mathematics.
These minimal axioms are almost always \emph{equivalent} to the theorem, working over the \emph{base theory} of RM, a weak system of computable mathematics.  
The \emph{Big Five phenomenon} of RM is the observation that a large number of theorems from ordinary mathematics are either provable in the base theory or equivalent to one of only four systems; these five systems together are called the `Big Five'.
The aim of this paper is to greatly extend the Big Five phenomenon as follows: there are two supposedly \emph{fundamentally different} approaches to RM where the main difference is whether the language is restricted to \emph{second-order} objects or if one allows \emph{third-order} objects.  
In this paper, we unite these two strands of RM by establishing numerous equivalences involving the \textbf{second-order} Big Five systems on one hand, 
and well-known \textbf{third-order} theorems from analysis about (possibly) discontinuous functions on the other hand.  
We both study relatively tame notions, like cadlag or Baire 1, and potentially wild ones, like quasi-continuity.  
We also show that \emph{slight} generalisations and variations of the aforementioned third-order theorems fall \emph{far} outside of the Big Five.  
\end{abstract}


\maketitle
\thispagestyle{empty}

\section{Introduction and preliminaries}\label{intro}
\subsection{Short summary}\label{sintro}
The aim of the program \emph{Reverse Mathematics} (RM for short) is to find the minimal axioms needed to prove a given theorem of ordinary mathematics.
In a nutshell, the aim of this paper is to greatly extend the so-called Big Five phenomenon, a central topic in RM according to Montalb\'an, as follows. 
\begin{quote}
[...] we would still claim that the great majority of the theorems from classical mathematics are equivalent to one of the big five. This phenomenon is still quite striking. Though we have some sense of why this phenomenon occurs, we really do not have a clear explanation for it, let alone a strictly logical or mathematical reason for it. The way I view it, gaining a greater understanding of this phenomenon is currently one of the driving questions behind reverse mathematics. (see \cite{montahue}*{p.\ 432})
\end{quote}
In more detail, there are at least two supposedly \emph{fundamentally different}\footnote{This opinion is for instance expressed in the latest textbook on RM, namely in \cite{damurm}*{\S12.4}.} approaches to RM where the main difference is whether the language is restricted to \emph{second-order} objects or if one allows \emph{third-order} objects.   
In this paper, we unite these two strands of RM by establishing numerous equivalences involving the \textbf{second-order} Big Five systems on one hand, 
and well-known \textbf{third-order} theorems from analysis about (possibly) discontinuous functions on the other hand.  
We both study relatively `tame' notions, like cadlag and Baire 1, and potentially `wild' ones, like quasi-continuity.  
We also show that \emph{slight} generalisations and variations of the aforementioned third-order theorems fall \emph{far} outside of the Big Five and much stronger (second- and higher-order) systems.  
The reader will agree that while our results are comprehensive, they only scratch the surface of what is possible and lead the way to a whole new research area. 
In evidence, we sketch analogous results for the RM of the second-order \emph{weak weak K\"onig's lemma} and the third-order Vitali covering theorem for uncountable coverings in Section \ref{ferengi}.

\smallskip

Finally, we discuss the detailed aim and motivation of this paper within RM in Section~\ref{krum} and introduce essential definitions in Section \ref{prelim}.

\subsection{Aim and motivation}\label{krum}
Reverse Mathematics (RM for short) is a program in the foundations of mathematics initiated by Friedman (\cites{fried, fried2}) and developed extensively by Simpson and others (\cites{simpson1, simpson2,damurm}); an introduction to RM for the `mathematician in the street' may be found in \cite{stillebron}.  We assume basic familiarity with RM, including Kohlenbach's \emph{higher-order} RM introduced in \cite{kohlenbach2}, while a brief sketch may be found in Section \ref{introrm}.  
Recent developments in higher-order RM, including our own, are in \cite{dagsamV, dagsamVI, dagsamIII, dagsamX, dagsamVII, dagsamIX, dagsamXI}.
All equivalences are proved over Kohlenbach's base theory $\RCAo$ (or slight extensions), as defined in the appendix (Section~\ref{rmbt}).

\smallskip

The biggest difference between `classical' RM and higher-order RM is that the former makes use of the language of \emph{second-order} arithmetic, while the latter uses the language of \emph{higher-order} arithmetic (see Section \ref{introrm} for details).  
Thus, higher-order objects are only indirectly available via so-called codes or representations in classical RM.  It is then a natural question -in the very spirit of RM- what the connection is between third-order objects and their second-order codes.    
Now, continuous functions constitute perhaps the most basic case study and Kohlenbach in \cite{kohlenbach4}*{\S4} studies the connection between:
\begin{itemize}
\item third-order functions on Baire or Cantor space that satisfy the standard `epsilon-delta' definition of continuity,
\item second-order codes for continuous functions on Baire or Cantor space, following the definition from \cite{simpson2}*{II.6}.
\end{itemize}
Kohlenbach shows that \emph{weak K\"onig's lemma} ($\WKL$ for short) suffices to show that a (third-order) continuous function on Cantor space can be represented by a code.
In Section \ref{harf}, we adapt some of Kohlenbach's results to the unit interval, which turns out to be surprisingly hard.  
The representation of the reals in (both second- and higher-order) RM may be found in Section~\ref{kkk}.

\smallskip

With these `coding results' on $[0,1]$ in place, we establish in Section~\ref{TOT} equivalences between the second Big Five system $\WKL$ and the following third-order theorems; all definitions may be found in Section \ref{franticz}.  
\begin{itemize}
\item A cadlag function on the unit interval is bounded (or: Riemann integrable). 
\item A cadlag function on the unit interval has a supremum. 
\item A regulated function on the unit interval is bounded.
\item A bounded upper semi-continuous\footnote{A `famous' recent reference for the study of semi-continuity is Villani's work \cite{viams}.} function on $[0,1]$ has a supremum.  
\item A bounded Baire 1 function $F:[0,1]\di \R$ has a supremum.
\item A bounded upper semi-continuous function on the unit interval that has a supremum, attains it.
\item Cousin's lemma for cadlag (or: lower semi-continuous) functions.
\item Cousin's lemma for regulated $F: [0,1]\di \R$ such that $F(x)=\frac{F(x-)+F(x+)}{2}$ for all $x\in [0,1]$.
\item Cousin's lemma for quasi-continuous functions.
\item Cousin's lemma for Baire 1 functions.
\item \dots
\end{itemize}
While cadlag -or even Baire 1- functions can be said to be `close to continuous', quasi-continuous functions can be quite exotic, as discussed in Remark \ref{donola}.

\smallskip

We obtain similar equivalences for the other Big Five systems, namely $\ACA_{0}$ (Section \ref{hazaha}), $\ATR_{0}$ (Section \ref{FOUR}), and $\FIVE$ (Section~\ref{FIVE}), involving the Jordan decomposition theorem, Cousin's lemma, and supremum principles.  We suggest many other possible equivalences involving third-order theorems, i.e.\ this paper may be lengthy but only scratches the surface of what is possible. 
In evidence, we sketch similar results for $\WWKL_{0}$ and the Vitali covering theorem in Section~\ref{ferengi}.
Thus, the distinction between second- and third-order statements does not seem that crucial to RM as there are \textbf{many} interesting equivalences across this distinction.

\smallskip

Now, many of the aforementioned results are based on the higher-order RM of the following central axiom from \cite{kohlenbach2}:
\be\tag{$\exists^{2}$}
(\exists E:\N^{\N}\di \{0,1\})(\forall f\in \N^{\N})\big( (\exists n\in \N)(f(n)=0)\asa E(f)=0\big).
\ee
The functional $E:\N^{\N}\di \N$ is \emph{dis}continuous at $f=11\dots$ and is usually called `Kleene's quantifier $\exists^{2}$'.
Kohlenbach shows the equivalence between the existence of a discontinuous function on $\R$ and $(\exists^{2})$ in \cite{kohlenbach2}*{\S3}.
We establish a number of interesting equivalences for $(\exists^{2})$ in Section \ref{neweqi}, including the well-known fact that the Riemann integrable functions are not closed under composition. 

\smallskip

Finally, we show in Section \ref{XXX} that \emph{slight} variations or generalisations of all the aforementioned third-order statements cannot be proved from the Big Five or $(\exists^{2})$, and \emph{much} stronger systems.
This is done by deriving from these statements the following version of the uncountability of the reals:
\[
\NIN_{[0,1]}: \textup{there is no injection from the unit interval $[0,1]$ to $\N$}.
\]
Basic mathematical fact as $\NIN_{[0,1]}$ may be, it cannot be proved in $\Z_{2}^{\omega}$ from Section~\ref{lll}, which is a conservative extension of second-order arithmetic $\Z_{2}$.
As a side-result, many well-known inclusions among function spaces, like the statement \emph{all regulated functions are Baire 1}, also imply $\NIN_{[0,1]}$; these inclusions can therefore not be proved in the Big Five and much stronger systems. 

\smallskip

In conclusion, many third-order statements fall into the Big Five classification, while \emph{slight} variations or generalisations of the former fall \emph{far outside} this classification.  
We have no explanation for this phenomenon at this point.

\subsection{Preliminaries}\label{prelim}
We briefly discuss Reverse Mathematics (Section \ref{introrm}) and introduce some mainstream definitions (Section \ref{franticz}).
\subsubsection{Introducing Reverse Mathematics}\label{introrm}
We refer to \cite{stillebron} for a basic introduction to RM and to \cite{simpson2, simpson1,damurm} for an overview of RM.  We expect familiarity with RM, including Kohlenbach's \emph{higher-order} RM from \cite{kohlenbach2}.  A more detailed description of the latter, including the definition of the base theory $\RCAo$, can be found in a technical appendix (Section \ref{HORMreduction}).  
We do introduce the language of higher-order RM, namely as follows.   

\smallskip

First of all, in contrast to `classical' RM based on $\L_{2}$, the language of \emph{second-order arithmetic} $\Z_{2}$, higher-order RM uses $\L_{\omega}$, the richer language of \emph{higher-order arithmetic}.  
Indeed, while $\L_{2}$ is restricted to natural numbers and sets of natural numbers, $\L_{\omega}$ can accommodate sets of sets of natural numbers, sets of sets of sets of natural numbers, et cetera.  
To formalise this idea, we introduce the collection of \emph{all finite types} $\mathbf{T}$, defined by the two clauses:
\begin{center}
(i) $0\in \mathbf{T}$   and   (ii)  if $\sigma, \tau\in \mathbf{T}$ then $( \sigma \di \tau) \in \mathbf{T}$,
\end{center}
where $0$ is the type of natural numbers, and $\sigma\di \tau$ is the type of mappings from objects of type $\sigma$ to objects of type $\tau$.
In this way, $1\equiv 0\di 0$ is the type of functions from numbers to numbers, and  $n+1\equiv n\di 0$.  Viewing sets as given by characteristic functions, we note that $\Z_{2}$ only deals with objects of type $0$ and $1$.    

\smallskip

Secondly, the language $\L_{\omega}$ includes variables $x^{\rho}, y^{\rho}, z^{\rho},\dots$ of any finite type $\rho\in \mathbf{T}$.  Types may be omitted when they can be inferred from context.  
The constants of $\L_{\omega}$ include the type $0$ objects $0, 1$ and $ <_{0}, +_{0}, \times_{0},=_{0}$  which are intended to have their usual meaning as operations on $\N$.
Equality at higher types is defined in terms of `$=_{0}$' as follows: for any objects $x^{\tau}, y^{\tau}$, we have
\be\label{aparth}
[x=_{\tau}y] \equiv (\forall z_{1}^{\tau_{1}}\dots z_{k}^{\tau_{k}})[xz_{1}\dots z_{k}=_{0}yz_{1}\dots z_{k}],
\ee
if the type $\tau$ is composed\footnote{We recall the convention of right associativity of the type arrow, i.e.\ the type $\tau\equiv(\tau_{1}\di \dots\di \tau_{k}\di 0)$ stands for $\tau_{1}\di(\tau_{2}\di (\dots \di( \tau_{k}\di 0 )\dots))$.  
} as $\tau\equiv(\tau_{1}\di \dots\di \tau_{k}\di 0)$.  
Furthermore, $\L_{\omega}$ also includes the \emph{recursor constant} $\mathbf{R}_{\sigma}$ for any $\sigma\in \mathbf{T}$, which allows for iteration on type $\sigma$-objects. 
Formulas and terms are defined as usual.  

\smallskip

Thirdly, while not strictly speaking necessary, it is often convenient to explicitly include types for \emph{finite sequences} of objects.
For a given type $\rho$, the associated type $\rho^{*}$ is the type of finite sequences of type $\rho$ objects.  We discuss the latter and related notations in detail in Notation \ref{skim}.  

\smallskip

Finally, sets of objects of any finite type can be represented via characteristic functions in $\L_{\omega}$, an approach well-known from measure and probability theory and adopted in this paper as in Definition \ref{openset}.

\subsubsection{Some definitions}\label{franticz}
We introduce some standard definitions from analysis, all rather mainstream and taking place in $\RCAo$.  

\smallskip

First of all, we use the standard definition of (uniform) continuity as follows, where $I\equiv[0,1]$ is the unit interval.  
\bdefi[Continuity]~
\begin{itemize}
\item A function $F:[0,1]\di \R$ is \emph{continuous at} $x\in [0,1]$ if
\be\label{tunt}\textstyle
(\forall k\in \N)(\exists N\in \N)(\forall y\in [0,1])(|x-y|<\frac{1}{2^{N}} \di |F(x)-F(y)|<\frac{1}{2^{k}}  ).
\ee
A function $F:[0,1]\di \R$ is \emph{continuous} if \eqref{tunt} holds for all $x\in [0,1]$
\item A \emph{modulus} of continuity is any $G:(\N\times \R)\di \N$ such that $G(k, x)=N$ as in \eqref{tunt}, for $k\in \N, x\in [0,1]$. 
\item A function $F:[0,1]\di \R$ is \emph{uniformly} continuous if:
\be\label{tunt2}\textstyle
(\forall k\in \N)(\exists N\in \N)(\forall x,y\in [0,1])(|x-y|<\frac{1}{2^{N}} \di |F(x)-F(y)|<\frac{1}{2^{k}}  ).
\ee
\item A \emph{modulus of uniform continuity} is any $h:\N\di \N$ such that $h(k)=N$ as in \eqref{tunt2} for any $k\in \N$.
\end{itemize}
\edefi
Secondly, we shall study the following weaker notions, many of which are well-known and hark back to the days of Baire, Darboux, Hankel, and Volterra (\cites{beren,beren2,darb, volaarde2,hankelwoot,hankelijkheid}).  
We will use `sup' and related operators in the same `virtual' or `comparative' way as in second-order RM (see e.g.\ \cite{simpson2}*{X.1}).  In this way, a formula of the form `$\sup A>a$' makes sense as shorthand for a formula in the language of all finite types, even when $\sup A$ need not exist in $\RCAo$.  
As in \cite{basket, basket2}, the definition of Baire $n$-function proceeds via (external) induction over standard $n$.
Sets are defined in Definition \ref{openset} below, namely via characteristic functions.
\bdefi\label{flung} 
For $f:[0,1]\di \R$, we have the following definitions:
\begin{itemize}
\item $f$ is \emph{upper semi-continuous} at $x_{0}\in [0,1]$ if $f(x_{0})\geq_{\R}\lim\sup_{x\di x_{0}} f(x)$,
\item $f$ is \emph{lower semi-continuous} at $x_{0}\in [0,1]$ if $f(x_{0})\leq_{\R}\lim\inf_{x\di x_{0}} f(x)$,
\item $f$ is \emph{quasi-continuous} at $x_{0}\in [0, 1]$ if for $ \epsilon > 0$ and an open neighbourhood $U$ of $x_{0}$, 
there is a non-empty open ${ G\subset U}$ with $(\forall x\in G) (|f(x_{0})-f(x)|<\eps)$.
\item $f$ is \emph{cliquish} at $x_{0}\in [0, 1]$ if for $ \epsilon > 0$ and an open neighbourhood $U$ of $x_{0}$, 
there is a non-empty open ${ G\subset U}$ with $(\forall y, z\in G) (|f(y)-f(z)|<\eps)$.
\item $f$ is \emph{regulated} if for every $x_{0}$ in the domain, the `left' and `right' limit $f(x_{0}-)=\lim_{x\di x_{0}-}f(x)$ and $f(x_{0}+)=\lim_{x\di x_{0}+}f(x)$ exist.  
\item $f$ is \emph{c\`adl\`ag} if it is regulated and $f(x)=f(x+)$ for $x\in [0,1)$.
\item $f$ is \emph{Darboux} if it has the intermediate value property, i.e.\ if $a, b\in [0,1], c\in \R$ are such that $a\leq b$ and either $f(a)\leq c\leq f(b)$ or $f(b)\leq c\leq f(a)$, then there is $d\in [a,b]$ with $f(d)=c$.
\item $f$ is \emph{Baire 0} if it is a continuous function. 
\item $f$ is \emph{Baire $n+1$} if it is the pointwise limit of a sequence of Baire $n$ functions.
\item $f$ is \emph{effectively Baire $n$} $(n\geq 2)$ if there is a sequence $(f_{m_{1}, \dots, m_{n}})_{m_{1}, \dots, m_{n}\in \N}$ of continuous functions such that for all $x\in [0,1]$, we have 
\[\textstyle
f(x)=\lim_{m_{1}\di \infty}\lim_{m_{2}\di \infty}\dots \lim_{m_{n}\di \infty}f_{m_{1},\dots ,m_{n}}(x).
\]
\item $f$ is \emph{Baire 1$^{*}$} if\footnote{The notion of Baire 1$^{*}$ goes back to \cite{ellis} and equivalent definitions may be found in \cite{kerkje}.  
In particular,  Baire 1$^{*}$ is equivalent to the Jayne-Rogers notion of \emph{piecewise continuity} from \cite{JR}.} there is a sequence of closed sets $(C_{n})_{n\in \N}$ such $[0,1]=\cup_{n\in \N}C_{n}$ and $f_{\upharpoonright C_{m}}$ is continuous for all $m\in \N$.
\item $f$ is \emph{continuous almost everywhere} if it is continuous at all $x\in [0,1]\setminus E$, where $E$ is a measure zero\footnote{A set $A\subset \R$ is \emph{measure zero} if for any $\eps>0$ there is a sequence of basic open intervals $(I_{n})_{n\in \N}$ such that $\cup_{n\in \N}I_{n}$ covers $A$ and has total length below $\eps$.  Note that this notion does not depend on (the existence of) the Lebesgue measure.} set.
\item $f$ is \emph{pointwise discontinuous} if for any $x\in [0,1]$ and $\eps>0$, there is $y\in [0,1]$ such that $f$ is continuous at $y$ and $|x-y|<\eps$ (Hankel, 1870, \cite{hankelwoot}).  
\end{itemize}
\edefi
As to notations, a common abbreviation is `usco' and `lsco' for the first two items, while one often just writes `cadlag', i.e.\ without the accents.  
Moreover, if a function has a certain weak continuity property at all reals in $[0,1]$ (or its intended domain), we say that the function has that property.  

\smallskip

Regarding the notion of `effectively Baire $n$' in Definition \ref{flung}, the latter is used, using codes for continuous functions, in second-order RM (see \cite{basket, basket2}). 
Baire himself notes in \cite{beren2}*{p.\ 69} that Baire 2 functions can be \emph{represented} by effectively Baire~2 functions.  By Theorem \ref{reklam}, there is a 
significant difference between the latter two notions.    Similarly, cliquish functions are exactly those functions that can be expressed as the sum of two quasi-continuous functions (\cites{quasibor2, malin}).  
Nonetheless, comparing Theorems \ref{lebber} and \ref{reklam}, these notions behave fundamentally different in RM.  Analogously, functions continuous almost everywhere are exactly those functions that can be expressed as the sum of two `strong' quasi-continuous functions (see \cite{grand} for the latter notion).

\smallskip

Thirdly, the notion of \emph{bounded variation} (abbreviated $BV$) was first explicitly\footnote{Lakatos in \cite{laktose}*{p.\ 148} claims that Jordan did not invent or introduce the notion of bounded variation in \cite{jordel}, but rather discovered it in Dirichlet's 1829 paper \cite{didi3}.} introduced by Jordan around 1881 (\cite{jordel}) yielding a generalisation of Dirichlet's convergence theorems for Fourier series.  
Indeed, Dirichlet's convergence results are restricted to functions that are continuous except at a finite number of points, while functions of bounded variation can have (at most) countable many points of discontinuity, as already studied by Jordan, namely in \cite{jordel}*{p.\ 230}.
Nowadays, the \emph{total variation} of $f:[a, b]\di \R$ is defined as follows:
\be\label{tomb}\textstyle
V_{a}^{b}(f):=\sup_{a\leq x_{0}< \dots< x_{n}\leq b}\sum_{i=0}^{n-1} |f(x_{i})-f(x_{i+1})|.
\ee
If this quantity exists and is finite, one says that $f$ has bounded variation on $[a,b]$.
Now, the notion of bounded variation is defined in \cite{nieyo} \emph{without} mentioning the supremum in \eqref{tomb}; see also \cites{kreupel, briva, brima}.  
Hence, we shall distinguish between the following notions.  Jordan seems to use item \eqref{donp} of Definition~\ref{varvar} in \cite{jordel}*{p.\ 228-229}.
\bdefi[Variations on variation]\label{varvar}
\begin{enumerate}  
\renewcommand{\theenumi}{\alph{enumi}}
\item The function $f:[a,b]\di \R$ \emph{has bounded variation} on $[a,b]$ if there is $k_{0}\in \N$ such that $k_{0}\geq \sum_{i=0}^{n-1} |f(x_{i})-f(x_{i+1})|$ 
for any partition $x_{0}=a <x_{1}< \dots< x_{n-1}<x_{n}=b  $.\label{donp}
\item The function $f:[a,b]\di \R$ \emph{has {a} variation} on $[a,b]$ if the supremum in \eqref{tomb} exists and is finite.\label{donp2}
\end{enumerate}
\edefi
\noindent
The fundamental theorem about $BV$-functions (see e.g.\  \cite{jordel}*{p.\ 229}) is as follows.
\begin{thm}[Jordan decomposition theorem]\label{drd}
A function $f : [0, 1] \di \R$ of bounded variation is the difference of two non-decreasing functions $g, h:[0,1]\di \R$.
\end{thm}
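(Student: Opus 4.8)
The plan is to run the classical argument, in which the two non-decreasing functions are manufactured from the \emph{variation function} of $f$. Fix $f:[0,1]\di\R$ of bounded variation and let $k_{0}\in\N$ witness Definition~\ref{varvar}(\ref{donp}). Put $g(x):=V_{0}^{x}(f)$ as in~\eqref{tomb} (with $g(0):=0$) and $h(x):=g(x)-f(x)$. Then $f=g-h$ is immediate, so the whole content of the theorem reduces to: (i) $g$ is a well-defined function $[0,1]\di\R$, and (ii) both $g$ and $h$ are non-decreasing.

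For (ii), I would first isolate the elementary \emph{refinement lemma}: adding a point to a partition of $[a,b]$ does not decrease the associated sum $\sum_{i}|f(x_{i})-f(x_{i+1})|$ (triangle inequality), and hence for $0\le x\le y\le 1$ one has the additivity $V_{0}^{y}(f)=V_{0}^{x}(f)+V_{x}^{y}(f)$, where $V_{x}^{y}(f)$ is the corresponding supremum over partitions of $[x,y]$. Since the sums defining $V_{x}^{y}(f)$ are non-negative, $V_{x}^{y}(f)\ge 0$, so $g(y)=g(x)+V_{x}^{y}(f)\ge g(x)$, i.e.\ $g$ is non-decreasing. For $h$: the two-point partition $x<y$ of $[x,y]$ gives $V_{x}^{y}(f)\ge |f(y)-f(x)|\ge f(y)-f(x)$, whence
\[
h(y)-h(x)=V_{x}^{y}(f)-\big(f(y)-f(x)\big)\ge 0 .
\]
All of this is routine manipulation of finite partition sums and should go through in the base theory once $g$ is available.

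The real obstacle is (i): the supremum in~\eqref{tomb} ranges over the \emph{uncountable} family of all finite partitions of $[0,x]$, i.e.\ over tuples of reals, and the supremum of a bounded set of reals need not exist as a genuine (third-order) object over $\RCAo$. Note that Definition~\ref{varvar}(\ref{donp}) is phrased precisely so as not to commit to this supremum, so producing it --- equivalently, producing the pair $g,h$ --- is exactly where the strength of the theorem lies. Concretely, for rational $q$ the statement ``$q<V_{0}^{x}(f)$'' unfolds to ``$(\exists n\in\N)$ there exist $0<x_{1}<\dots<x_{n-1}<x$ with $\sum_{i<n}|f(x_{i})-f(x_{i+1})|>q$'' (writing $x_{0}=0$ and $x_{n}=x$), i.e.\ a number quantifier over an existential real-quantifier; deciding such statements uniformly in $x$ and $q$, and then forming the supremum of the resulting (increasing, $k_{0}$-bounded) sequence indexed by the number of partition points, is what pushes the argument beyond $\RCAo$, and indeed beyond $\RCAo+(\exists^{2})$, into the territory of the relevant Big Five system. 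Hence I expect the bulk of the actual proof to consist in showing that this strong system suffices to define $x\mapsto V_{0}^{x}(f)$ as a third-order function; the monotonicity bookkeeping of the previous paragraph is then the easy part.
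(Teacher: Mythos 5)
Your argument is the standard classical proof and, modulo one point of calibration, it is exactly the decomposition the paper itself deploys: Theorem \ref{drd} is stated as a background fact of ordinary mathematics (no proof is given in the paper), and whenever the paper \emph{does} prove a version of it (Theorems \ref{XZ}, \ref{falnt}, \ref{falnt3}), it sets $g(x):=V_{0}^{x}(f)$ and $h:=g-f$ and runs precisely your monotonicity bookkeeping. You have also correctly located the entire difficulty in forming the supremum \eqref{tomb} as a third-order object. Where your proposal goes wrong is the closing guess that ``the relevant Big Five system'' suffices for this: the paper's whole point about Theorem \ref{drd} is that the \emph{general} statement falls far outside the Big Five --- restricted to usco $BV$-functions it already implies $\NIN_{[0,1]}$ (Theorem \ref{reklam2}) and so is unprovable even in $\Z_{2}^{\omega}$, a conservative extension of full second-order arithmetic, and combined with the Suslin functional it yields $\SIX$. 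The versions that do land inside the Big Five ($\ACA_{0}$ for cadlag, $\ATR_{0}$ for arithmetical/$\Sigma^{1}_{1}$ inputs) are exactly those where one can first \emph{enumerate} the discontinuity points of $f$, after which the supremum in \eqref{tomb} can be replaced by a supremum over $\Q$ and $\N$ and defined from $\exists^{2}$; your unfolding of ``$q<V_{0}^{x}(f)$'' as a quantifier over tuples of \emph{reals} is the honest form of the problem, and no amount of conventional comprehension over $\N^{\N}$ resolves it in general.
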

Theorem \ref{drd} has been studied extensively via second-order representations in e.g.\ \cites{groeneberg, kreupel, nieyo, verzengend}.
The same holds for constructive analysis by \cites{briva, varijo,brima, baathetniet}, involving different (but related) constructive enrichments.  
Now, arithmetical comprehension suffices to derive Theorem \ref{drd} for various kinds of second-order \emph{representations} of $BV$-functions in \cite{kreupel, nieyo}.
By contrast, the results in \cite{dagsamXII,dagsamX, dagsamXI, dagsamXIII} show that the Jordan decomposition theorem is even `explosive': combining with the Suslin functional from $\FIVE^{\omega}$ (see Section \ref{lll}), one derives $\SIX$.  

\smallskip

Fourth, we shall make use of the following notion of (open and closed) set, which was studied in e.g.\ \cite{dagsamXII,dagsamX, dagsamXI, dagsamXIII, samcount}.
\bdefi[Sets in $\RCAo$]\label{openset}
We let $Y: \R \di \R$ represent subsets of $\R$ as follows: we write `$x \in Y$' for `$Y(x)>_{\R}0$' and call a set $Y\subseteq \R$ Ôopen' if for every $x \in Y$, there is an open ball $B(x, \frac{1}{2^{N}}) \subset Y$ with $N\in \N$.  
A set $Y$ is called `closed' if the complement is open. 
\edefi
For open $Y$ as in the previous definition, the formula `$x\in Y$' has the same complexity (modulo higher types) as in second-order RM (see \cite{simpson2}*{II.5.6}), while given $(\exists^{2})$ from Section \ref{intro}, the former becomes a `proper' characteristic function, only taking values `0' and `$1$'.  Hereafter, an `open set' refers to Definition~\ref{openset}, while `RM-open set' refers to the second-order definition from RM.  
For simplicity, we sometimes assume $\ACAo\equiv \RCAo+(\exists^{2})$ and work with characteristic functions of open sets directly.  
Nonetheless, combining Theorem \ref{plofkip} and \cite{simpson2}*{II.7.1}, an RM-open set is indeed an open set as in Definition~\ref{openset}, working over $\RCAo$.

\smallskip

Next, the notion of `countable set' can be formalised in various ways, namely via Definitions \ref{eni} and \ref{standard}.
\bdefi[Enumerable sets of reals]\label{eni}
A set $A\subset \R$ is \emph{enumerable} if there exists a sequence $(x_{n})_{n\in \N}$ such that $(\forall x\in \R)(x\in A\di (\exists n\in \N)(x=_{\R}x_{n}))$.  
\edefi
This definition reflects the RM-notion of `countable set' from \cite{simpson2}*{V.4.2}.  
We note that given $\mu^{2}$ from Section \ref{lll}, we may replace the final implication in Definition~\ref{eni} by an equivalence. 
Our definition of `countable set' is now as follows in $\RCAo$. 
\bdefi[Countable subset of $\R$]\label{standard}~
A set $A\subset \R$ is \emph{countable} if there exists $Y:\R\di \N$ such that $(\forall x, y\in A)(Y(x)=_{0}Y(y)\di x=_{\R}y)$. 
If $Y:\R\di \N$ is also \emph{surjective}, i.e.\ $(\forall n\in \N)(\exists x\in A)(Y(x)=n)$, we call $A$ \emph{strongly countable}.
\edefi
The first part of Definition \ref{standard} is from Kunen's set theory textbook (\cite{kunen}*{p.~63}) and the second part is taken from Hrbacek-Jech's set theory textbook \cite{hrbacekjech} (where the term `countable' is used instead of `strongly countable').  
For the rest of this paper, `strongly countable' and `countable' shall exclusively refer to Definition \ref{standard}, \emph{except when explicitly stated otherwise}.

\section{Main results}
\subsection{Introduction}
We obtain the following results in Sections \ref{harf}-\ref{XXX}.
\begin{itemize}
\item We study the connection between continuous functions on the reals and their codes in Section \ref{harf}, mostly working over $\RCAo$ or assuming $\WKL$.
\item We obtain numerous equivalences involving the Big Five and third-order theorems about (possibly) discontinuous functions (Sections \ref{TOT}-\ref{FOUR}). 
\item We obtain equivalences for $(\exists^{2})$ in Section \ref{neweqi} where the associated principles also stem from mainstream mathematics.  
\item In Section \ref{XXX}, we show that slight variations or generalisations from the third-order statements in the previous three items cannot be proved from the Big Five, $(\exists^{2})$, and much stronger systems, like $\Z_{2}^{\omega}$ from Section \ref{lll}. 
\end{itemize}
As discussed in Remark \ref{donola}, some of our results deal with functions `close to continuous', like the cadlag ones, while other results deal with functions that can be `far from continuous', like the quasi-continuous ones. 

\smallskip

Finally, we discuss some known results due to Kohlenbach regarding continuous and discontinuous functions in the following remark. 
\begin{rem}\label{LEM2}\rm
First of all, Kohlenbach establishes a number of interesting `coding results' for functions on $2^{\N}$ and $\N^{\N}$ in \cite{kohlenbach4}*{\S4}, as follows. 
\begin{itemize}
\item By \cite{kohlenbach4}*{Theorem 4.4}, $\RCAo$ proves the equivalence between the following for a functional $Y:\N^{\N}\di \N$ continuous on Baire space $\N^{\N}$:
\begin{itemize}
\item the functional $Y$ has a \emph{continuous} modulus of continuity,
\item there is a total \emph{RM-code} (\cite{simpson2}*{II.6.1}) that equals $Y$ on $\N^{\N}$,
\item there is a total \emph{Kleene associate} (\cite{kohlenbach4}*{Def.\ 4.3}) that equals $Y$ on $\N^{\N}$.
\end{itemize}
\item Using a construction due to Dag Normann, $\RCAo+\WKL$ proves that a continuous $Y:2^{\N}\di \N^{\N}$ has a modulus of (uniform) continuity (\cite{kohlenbach4}*{Prop.~4.10}).  
By the previous items, there is also an RM-code that equals $Y$ on $2^{\N}$.
In this way, the usual second-order RM-results apply to such $Y$, namely via the aforementioned code.  
For instance, over $\RCAo+\WKL$, such $Y$ is bounded on $2^{\N}$ by \cite{simpson2}*{IV.2.2}, and similar results apply immediately. 
\end{itemize}
Secondly, working over $\RCAo$, Kohlenbach establishes a number of interesting equivalences involving discontinuous functions in \cite{kohlenbach2}*{\S3}, as follows.
\begin{itemize}
\item The axiom $(\exists^{2})$ from Section \ref{intro} is equivalent to the existence of a discontinuous function on $\R$, like e.g.\ Heaviside's function. 
\item The axiom $(\exists^{2})$ from Section \ref{intro} is equivalent to $(\mu^{2})$, i.e.\ the existence of Feferman's mu-operator from Section \ref{lll}.
\end{itemize}
Using classical logic, the first item yields that $\neg(\exists^{2})$ is equivalent to \emph{Brouwer's theorem}, i.e.\ the statement that all functions on $\R$ are continuous.  
In the below, we will make use of the above facts, often without very detailed references.  
\end{rem}

\subsection{From codes to continuous functions and back again}\label{harf}
We establish the following connections between continuous functions on the reals and their codes.
\begin{itemize}
\item A code for a continuous function on $\R$ represents a third-order continuous function, working over $\RCAo$ (Theorem \ref{plofkip}).
\item A third-order continuous function on $[0,1]$ can be represented by an RM-code (Theorem \ref{gofusefl}), working over $\RCAo+\WKL$.  
\item Over $\RCAo$, $\WKL$ is equivalent to basic properties of (third-order) continuous functions on the unit interval (Theorem \ref{ploppy}).
\end{itemize}
The proof of Theorem \ref{gofusefl} is rather involved, while similar results like the boundedness of continuous functions, have (more) basic proofs by Theorems \ref{liguster} and \ref{liguster2}.

\smallskip

First of all, $\RCAo$ is a conservative extension of $\RCA_{0}$ (see e.g.\ Remark~\ref{ECF}).  
In this light, it is desirable that theorems of $\RCA_{0}$ also yield theorems of $\RCAo$.  Given the coding practise of RM, this is not always straightforward and we therefore establish Theorem \ref{plofkip}, which expresses that (second-order) codes for continuous functions give rise to third-order continuous functions, working in the base theory.
Our definition of `RM-code for continuous function' is the standard one (\cite{simpson2}*{II.6.1}) and as in the latter, we often identify a code and the function it represents. 
The following proof is also evidence for the necessity of $\QFAC^{1,0}$ in $\RCAo$.
\begin{thm}[$\RCAo$]\label{plofkip}
Let $\Phi$ be an RM-code for an $\R\di \R$-function.  There is a third-order $F:\R\di \R$ such that $F(x)$ equals the value of $\Phi$ at $x$ for any $x\in \R$. 
\end{thm}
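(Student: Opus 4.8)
The plan is to unfold the standard definition of an RM-code and then manufacture a genuine third-order functional by one application of $\QFAC^{1,0}$. Recall from \cite{simpson2}*{II.6.1} that $\Phi$ is (the characteristic function of) a set of naturals coding quintuples $(n,a,r,b,s)$ with $a,b\in\Q$ and $r,s\in\Q^{+}$; write $(a,r)\Phi(b,s)$ for $(\exists n\in\N)(\langle n,a,r,b,s\rangle\in\Phi)$, and recall that the value of $\Phi$ at $x\in\R$ is the real $y$ (if any) with $|b-y|\leq s$ for every $(a,r)\Phi(b,s)$ satisfying $|a-x|<r$. Since $\Phi$ is a code for a function defined on all of $\R$, we have \textup{(a)}: for every $x\in\R$ and $k\in\N$ there are $a,b\in\Q$ and $r,s\in\Q^{+}$ with $(a,r)\Phi(b,s)$, $|a-x|<r$ and $s<\tfrac{1}{2^{k}}$. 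The code conditions in \cite{simpson2}*{II.6.1} moreover yield \textup{(b)}: whenever $(a,r)\Phi(b,s)$, $(a',r')\Phi(b',s')$, $|a-x|<r$ and $|a'-x|<r'$, then $|b-b'|\leq s+s'$ --- pick a rational $a''$ near $x$ and a small rational radius $r''$ with $B(a'',r'')\subseteq B(a,r)\cap B(a',r')$, restrict both conditions to $(a'',r'')$ by the monotonicity clauses, and compare the resulting values; this uses only bounded search and the Cauchy representation of $x$.

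Next I would massage \textup{(a)} into a shape suitable for quantifier-free choice. Writing $x$ as a fast Cauchy sequence $(q_{m})_{m\in\N}$ of rationals, the predicate ``$|a-x|<r$'' unfolds to ``$(\exists m)(|a-q_{m}|+\tfrac{1}{2^{m}}<r)$'', and ``$(a,r)\Phi(b,s)$'' carries the hidden index $n$; absorbing these two existential witnesses together with the rational data into a single natural-number code $w$, statement \textup{(a)} becomes $(\forall x^{1})(\forall k^{0})(\exists w^{0})\,B_{0}(\Phi,x,k,w)$ with $B_{0}$ quantifier-free. Pairing $x$ with $k$ into one type-$1$ argument and applying $\QFAC^{1,0}$ produces a functional $G$ with $(\forall x^{1})(\forall k^{0})\,B_{0}(\Phi,x,k,G(x,k))$; decoding $w=G(x,k)$ gives rationals $a_{k},r_{k},b_{k},s_{k}$ (depending on $x$) with $(a_{k},r_{k})\Phi(b_{k},s_{k})$, $|a_{k}-x|<r_{k}$ and $s_{k}<\tfrac{1}{2^{k}}$. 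I then define $F:\R\di\R$ by letting $F(x)$ be the real whose $k$-th approximation is $b_{k+1}$; the index shift is only to make the modulus of convergence come out right.

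It remains to verify three points. \textup{(i)} $(b_{k+1})_{k\in\N}$ is a fast Cauchy sequence: by \textup{(b)}, $|b_{k+1}-b_{j+1}|\leq s_{k+1}+s_{j+1}<\tfrac{1}{2^{k+1}}+\tfrac{1}{2^{j+1}}$, so $F(x)\in\R$ is well-defined. \textup{(ii)} $F(x)$ equals the value of $\Phi$ at $x$: for any neighbourhood condition $(a,r)\Phi(b,s)$ with $|a-x|<r$, fact \textup{(b)} gives $|b-b_{k+1}|\leq s+s_{k+1}$ for all $k$, and letting $k\di\infty$ yields $|b-F(x)|\leq s$; hence $F(x)$ satisfies the defining property of the value, which therefore exists and equals $F(x)$. \textup{(iii)} $F$ is extensional: although $G$ may select different witnesses at different Cauchy representatives of one real, by \textup{(ii)} $F(x)$ always equals the value of $\Phi$ at $x$, and this value depends only on the real $x$ since the predicates ``$|a-x|<r$'' are $\RCAo$-provably congruences for $=_{\R}$; thus $x=_{\R}x'$ gives $F(x)=_{\R}F(x')$.

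The heart of the matter --- and the reason this apparently trivial statement is not immediate over $\RCAo$ --- is the passage from the $\forall^{1}\exists^{0}$-fact \textup{(a)} to the functional $G$. Absent $\QFAC^{1,0}$, the natural move would be to define $F(x)$ by searching for, say, the \emph{least} code $w$ with $B_{0}(\Phi,x,k,w)$; but an unbounded search over a decidable predicate is exactly a $\mu$-operator, which is unavailable in $\RCAo$ --- indeed it fails in the $\ECF$-interpretation, whereas $\QFAC^{1,0}$, asserting merely that \emph{some} witness can be chosen, does hold there. The one genuine book-keeping point of the proof is therefore to check that, after contracting the two hidden existentials (the index $n$ of the neighbourhood condition and the stage $m$ witnessing $|a-x|<r$), the matrix $B_{0}$ is genuinely quantifier-free, so that $\QFAC^{1,0}$ applies; this is precisely what is flagged in the remark preceding the theorem.
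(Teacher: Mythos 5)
Your proposal is correct and follows essentially the same route as the paper's proof: apply $\QFAC^{1,0}$ to the totality statement of the code to obtain a witness functional $G$, read off the fourth component with an index shift to get a fast-converging Cauchy sequence, and observe that extensionality holds because the output coincides with the (representation-independent) value of $\Phi$. You merely spell out in more detail the book-keeping the paper leaves implicit, namely contracting the hidden existentials so the matrix is genuinely quantifier-free and verifying via the compatibility of neighbourhood conditions that the resulting sequence is fast Cauchy and equals the coded value.
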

\begin{proof}
For total RM-codes of functionals $\N^{\N}\di \N$, one applies $\QFAC^{1,0}$ to: 
\[
\textup{`the RM-code is defined at each point of $\N^{\N}$'}
\]
to obtain a third-order functional $\Psi:\N^{\N}\di \N$ equal to the (value of the) code everywhere; this argument may be found in e.g.\ the proof of 1) $\di$ 3) in \cite{kohlenbach4}*{Prop.~4.4}.  
We now show that the same procedure works for RM-codes of $[0,1]\di \R$-functions.  Indeed, a code for an $\R\di \R$-function is a set $\Phi\subset [\N\times \Q\times \Q^{+}\times \Q\times \Q^{+}] $ satisfying certain properties.   
The formula `$\Phi$ is total on $\R$' has the following form (which is suitable for $\QFAC^{1,0}$):
\be\label{genemensintereeesthet}\textstyle
(\forall x\in \R, k\in \N)(\exists (n, a, r, b, s)\in \Phi)(d(x, a)<_{\R}r \wedge s<_{\Q}\frac{1}{2^{k}}).  
\ee
Intuitively, the fourth component $b\in \Q $ of $\Phi$ contains rational approximations to the value of the code $\Phi$ at $x\in \R$, while $s\in \Q$ is an upper bound on the difference between $b$ and the value of $\Phi$ at $x\in \R$.  
Hence, apply $\QFAC^{1,0}$ to \eqref{genemensintereeesthet} to obtain $G$ such that $G(x, k)$ is the quintuple as in \eqref{genemensintereeesthet}.  
Note that $G(x, k)(4)$ may not be extensional on the reals as in item \eqref{EXTEN} in Definition \ref{keepintireal}.  Now define $F:\R\di \R$ by $[F(x)](k):=G(x, k+1)(4)$ and note that $F$ is indeed extensional on the reals.  
Clearly, $F(x)$ equals the value of $\Phi$ at every $x\in \R$. 
\end{proof}
\noindent
Unfortunately, the theorem does not generalise to codes for Baire 1 functions (in the sense of \cites{basket, basket2}).   
Indeed, by Theorem \ref{flame}, $(\exists^{2})$ is equivalent to the statement that a code for a Baire 1 function represents a third-order function. 

\smallskip
 
Secondly, by Theorem \ref{plofkip}, we can make the leap from `second-order codes for continuous functions' to `third-order continuous functions' without problems. 
Theorem \ref{gofusefl} expresses that the other direction is possible too, additionally assuming \emph{weak K\"onig's lemma} $\WKL$ in the base theory.  
As will become clear, the associated proof is based on that of \cite{kohlenbach4}*{Prop.\ 4.10}, which is in turn based on a construction due to Dag Normann, as noted in \cite{kohlenbach4}*{p.\ 94}.
\begin{thm}[$\RCAo+\WKL$]\label{gofusefl}
Any $F:\R\di \R$ continuous on $[0,1]$ has a modulus of uniform continuity $h:\N\di \N$ on $[0,1]$.
\end{thm}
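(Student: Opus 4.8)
The plan is to establish uniform continuity of $F$ on $[0,1]$ by combining a pointwise-to-local compactness argument with $\WKL$, essentially adapting the construction behind \cite{kohlenbach4}*{Prop.\ 4.10} from $2^{\N}$ to the unit interval. First I would fix $k\in\N$ and, for each $x\in[0,1]$, use continuity \eqref{tunt} to produce \emph{some} witness $N$ such that $(\forall y\in[0,1])(|x-y|<\frac{1}{2^{N}}\di |F(x)-F(y)|<\frac{1}{2^{k+1}})$; the halving in the exponent is the standard trick to convert a pointwise bound into a uniform one via the triangle inequality, since if $|F(x)-F(y)|<\frac{1}{2^{k+1}}$ whenever $y$ is close to $x$, then any two points within the same small ball differ by less than $\frac{1}{2^{k}}$. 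The substantive issue is that this $N$ is only obtained pointwise and not uniformly, so I cannot simply apply $\QFAC^{1,0}$ and take a supremum — that would require $\exists^{2}$-like strength, which is exactly what we are trying to avoid.

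The mechanism to get uniformity from $\WKL$ is a compactness argument phrased as an infinite-tree search. I would consider the tree $T\subseteq 2^{<\N}$ (or a $\Q$-indexed analogue) of finite binary sequences $\sigma$ coding dyadic subintervals of $[0,1]$ of length $2^{-|\sigma|}$ such that $F$ oscillates by at least $\frac{1}{2^{k}}$ on the corresponding closed interval — more precisely, $\sigma\in T$ iff there exist rationals $p,q$ in that interval with $|F(p)-F(q)|\geq \frac{1}{2^{k}}-\frac{1}{2^{k+2}}$, a condition that is arithmetical in the code/approximations of $F$ and hence available in $\RCAo$. If $T$ were infinite, $\WKL$ would give an infinite path, which (by a standard nested-interval argument using completeness of $[0,1]$) converges to a point $x_{0}\in[0,1]$ at which $F$ fails to be continuous, contradicting the hypothesis. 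Hence $T$ is finite, so there is a uniform bound $M=M(k)$ on the length of its elements, and one checks that $h(k):=M(k)+c$ for a small fixed constant $c$ is a modulus of uniform continuity: any interval of length $2^{-h(k)}$ sits inside a dyadic interval not in $T$, on which the oscillation of $F$ is below $\frac{1}{2^{k}}$.

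The main obstacle I anticipate is purely the bookkeeping of working with third-order $F:\R\di\R$ rather than $Y:2^{\N}\di\N$: one must be careful that the oscillation predicate defining $T$ is genuinely arithmetical given only that $F$ is a (possibly non-continuous-looking) third-order object, using rational arguments and the fact that $[F(p)](m)$ is available as a number for rational $p$; and one must verify that "$T$ infinite" really does yield a point of discontinuity, which requires extracting a convergent sequence of rationals witnessing large oscillation near the limit of the path and then invoking \eqref{tunt} at that limit point. A secondary technical point is that $[0,1]$ is not literally $2^{\N}$, so I would either work with the standard surjection $2^{\N}\di[0,1]$ (handling the dyadic rationals with two codes) or directly with dyadic subintervals; the former lets one quote \cite{kohlenbach4}*{Prop.\ 4.10} and the $\WKL$-based coding machinery almost verbatim, which is the route I would take. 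Finally, one should note that no use of $\QFAC$ beyond what is already in $\RCAo$ is needed here, since the witness $M(k)$ is recovered by a $\Sigma^{0}_{1}$ search bounded by the finiteness of $T$, not by a choice principle.
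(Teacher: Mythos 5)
Your overall architecture (an oscillation tree over dyadic intervals, $\WKL$ to bound its height, and the triangle inequality to pass from pointwise to uniform bounds) is the right classical skeleton, but the proposal founders on the single point that makes this theorem nontrivial: the tree $T$ you describe does not provably exist as a set in $\RCAo+\WKL$. Membership of $\sigma$ in $T$ has the form $(\exists p,q\in I_{\sigma}\cap\Q)\,(|F(p)-F(q)|\geq c)$, which is $\Sigma^{0}_{2}$ (or $\Sigma^{0}_{1}$ if you use a strict inequality), not $\Delta^{0}_{1}$; ``arithmetical'' is not enough, since $\RCAo$ only has $\Delta^{0}_{1}$-comprehension and $\WKL$ applies to trees that exist as sets. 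Forming $T$ outright would require $\Sigma^{0}_{1}$-comprehension, i.e.\ $\ACA_{0}$. Nor can you repair this by bounding the search for witnesses $p,q$ by $|\sigma|$: with only the third-order oracle $\lambda p.F(p)$ (no code, no modulus), finitely many function values can never \emph{certify} that the oscillation of $F$ on a whole interval is small, so a stage-bounded tree either omits genuinely bad intervals or never confirms good ones, and in either case finiteness of the tree no longer yields a modulus. This is precisely where the paper's proof spends almost all of its effort: it first uses (sequential) $\WKL$ on auxiliary ternary trees $S_{\sigma,n}$ to manufacture a function $B(\sigma,n)$ that \emph{approximately} decides the oscillation predicate, in the sandwich sense $A(\sigma,n+1)\di (B(\sigma,n)=0)\di A(\sigma,n)$ of \eqref{pimpo}, and only then runs the main tree argument against this decidable surrogate. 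Your proposal has no counterpart to that step, and without it the argument does not go through in $\WKL_{0}$-strength systems.

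Two secondary points. First, quoting \cite{kohlenbach4}*{Prop.\ 4.10} ``almost verbatim'' via a surjection $2^{\N}\di[0,1]$ is not automatic: that proposition concerns continuous maps into $\N^{\N}$, and $F\circ\r$, with values in $\R$ coded as Cauchy sequences, need not be continuous as a map into $\N^{\N}$ because the representation of reals is not continuous in the real; the paper's use of the negative binary (ternary) representation in Remark \ref{situa} is there exactly to keep this reduction honest. Second, extracting $h(k)$ from ``$T_{k}$ is finite'' uniformly in $k$ needs a choice step (the paper uses $\QFAC^{0,0}$), since the matrix ``no $\sigma$ of length $N$ lies in $T_{k}$'' is $\Pi^{0}_{1}$ rather than quantifier-free; this is minor but should be said.
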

\begin{proof}
First of all, \cite{kohlenbach4}*{Prop.\ 4.10} establishes that, working over $\RCAo+\WKL$, any $F:\N^{\N}\di \N^{\N}$ continuous on $2^{\N}$ has a modulus of \emph{uniform} continuity. 
In the proof of \cite{kohlenbach4}*{Prop.\ 4.4}, there is an explicit formula for an RM-code defined in terms of such a modulus.  
For completeness, we now sketch the proof of \cite{kohlenbach4}*{Prop.~4.10}, which consists of two steps.  
As a first step, the formula $A(k, \sigma)$ in \eqref{gorgkkk} is a slight modification of the innermost universal formula in the definition of continuity for $F$ on $2^{\N}$, where $\sigma^{0^{*}}\leq_{0^{*}}1$ is a finite binary sequence: 
\be\label{gorgkkk}
(\forall  g,h\leq_{1}1)(\overline{g}|\sigma|=_{0^{*}}\sigma=_{0^{*}} \overline{h}|\sigma| \di F(g)(k)=F(h)(k))  \big].
\ee
By definition, we have $(\forall f\leq_{1}1, k^{0})(\exists N^{0})A(k, \overline{f}N)$.  Despite the quantifiers in \eqref{gorgkkk}, $\WKL$ suffices to define its characteristic function $\chi_{A}^{(0\times 0^{*})\di 0}$, i.e.\ we have
\be\label{surelovethat}
(\forall \sigma^{0^{*}}\leq_{0^{*}}1, k^{0})( \chi_{A}(k, \sigma) =0 \asa  A(k, \sigma) ).
\ee
The existence of $\chi_{A}$ is proved in the next paragraph of this proof.  Now, $\sigma \in T_{k}\asa \neg A(k, \sigma)$ defines a 0/1-tree $T_{k}$, which has no path by the above.  
By $\WKL$, the tree $T_{k}$ is finite for any $k^{0}$, implying $(\forall k^{0})(\exists N^{0})(\forall f\leq_{1}1)A(k, \overline{f}N)$.  
The latter yields
\[
(\forall k^{0})(\exists N^{0})(\forall \sigma^{0^{*}} \leq_{0^{*}}1)[ |\sigma| =N\di  A(k, \overline{\sigma}N)],
\]
and applying $\QFAC^{0,0}$ readily yields the required modulus of uniform continuity.  

\smallskip

As a second step, we now establish the existence of $\chi_{A}$ as in \eqref{surelovethat}. 
Due to the continuity of $F$, it suffices to prove the existence of $\chi$ such that:
\[
(\forall \sigma^{0^{*}}\leq 1,  k^{0})\big( \chi(\sigma, k)=0\asa  (\forall \tau^{0^{*}}\leq 1)(F(\sigma*\tau*00\dots)(k)=F(\sigma*00\dots)(k))   \big).
\]
Now define a sequence of $0/1$ trees as follows: $\tau \in T_{k, \sigma}$ in case either of the following:
\begin{itemize}
\item $(\forall \gamma^{0^{*}}\leq 1)\big(|\gamma|\leq |\tau|  \di F(\sigma*\tau*00\dots)(k)=F(\sigma* \gamma*00\dots)(k)\big)$,
\item $(\exists \tilde{\gamma}^{0^{*}}\leq 1)(\exists l\leq |\tau| ) \big(  \tau= \tilde{\gamma}*\overline{00\dots}l$ with $|\tilde{\gamma}|$ minimal such that:
\[
F(\sigma * \tilde{\gamma}*00\dots)(k)\ne F(\sigma*00\dots)(k)\big).
\]
\end{itemize}
Now, each tree $T_{k, \sigma}$ is infinite and by the sequential version of $\WKL$ (equivalent to $\WKL$ by \cite{kooltje}*{Prop.\ 3.1}), there is a sequence of paths $f_{k, \sigma}$ in $T_{k, \sigma}$ for $k\in \N$ and $\sigma^{0^{*}}\leq1$.  
Using the continuity of $F$, one readily verifies that for $ \sigma^{0^{*}}\leq 1,  k^{0}$:
\[
 (\forall \tau^{0^{*}}\leq 1)(F(\sigma*\tau*00\dots)(k)=F(\sigma*00\dots)(k))\asa F(\sigma*f)(k)=F(\sigma*00\dots)(k).
\]
which is as required to obtain \eqref{surelovethat}.  For the next paragraph, we point out the following, assuming a fixed enumeration of all finite sequences: 
if we require that in the second item defining $T_{k, \sigma}$, the sequence $\tilde{\gamma} $ is the minimal sequence with the stated property, measured by sequence number, then $\neg A(k,\sigma)$ implies that $T_{k,\sigma}$ has a single branch witnessing $\neg A(k,\sigma)$, while if $A(k,\sigma)$ holds, any branch in $T_{k,\sigma}$ will witness $A(k,\sigma)$. 

\smallskip

Finally, we modify the previous paragraph to accommodate functions continuous on the unit interval. 
For convenience, we work with \emph{ternary} trees where a tree element $\sigma \in \{-1, 0, 1\}^{<\N}$ is a finite sequence in the alphabet $\{-1, 0, 1\}$.  
Similarly, $f \in \{-1, 0, 1\}^{\N}$ means that $f(k)\in \{-1,0, 1\}$ for all $k\in \N$.  
Clearly, each $f \in \{-1, 0, 1\}^{\N}$ codes a real number $\rho(f) = 1/2 + \sum_{n=0}^{\infty}f(n)2^{-(n+2)}$, where the partial sums form a fast converging Cauchy-sequence as in Definition \ref{keepintireal}.
Now, it is well-known in computer science that any $k$-ary tree admits a representation as a binary tree (see \cite{onderdeknuth, knuthzelf}), and the associated (effective) conversion is sometimes called the \emph{Knuth transform} (\cite{pfff}*{p.\ 146}).    As expected, the latter is readily formalised in $\RCA_{0}$ and hence $\WKL$ is equivalent to the existence of a path for infinite ternary trees, and the same for the associated sequential versions from \cite{kooltje}*{Prop.\ 3.1}.

\smallskip

Next, fix $F:\R \rightarrow \R$ continuous on $[0,1]$ and consider the formula:
\[\textstyle
A(\sigma, n)\equiv (\forall g \in \{-1,0,1\}^\N)[|F(\rho(\sigma \ast 00\dots)) - F(\rho(\sigma \ast g))| \leq \frac{1}{2^n}].
\]
We now use $\WKL$ to prove the existence of a function $B^{(0^{*}\times 0)\di 0}$ such that for all $\sigma \in \{-1,0,1\}^{<\N}$ and $n \in \N$:
\be\label{pimpo}
A(\sigma, n+1) \di (B(\sigma, n)=0)\di A(\sigma, n).
\ee
Using \eqref{pimpo}, one readily finds a modulus of uniform continuity for $F$ as in the first part of the proof.
In order to define $B$ satisfying \eqref{pimpo}, we define a sequence $S_{\sigma , n}$ of infinite ternary trees. 
By $\WKL$, these have a sequence of infinite branches, and the actual $B$ depends on which sequence of branches we select. 
We use the convention that the elements of $\{-1,0,1\}^{<\N}$ are enumerated first by length, and then by the lexicographical ordering.

\smallskip

We now define $S_{\sigma,n}$ as follows: for the (finite) set of sequences $\gamma \in \{-1,0,1\}^{<\N}$ of length $k\in \N$, there are two cases to be considered, namely items \eqref{dagjan2} and \eqref{dagjan}.
\begin{enumerate} 
\item If for all $\gamma\in \{-1,0,1\}^{<\N}$ of length $k$, we have that for all $l \leq k$,\label{dagjan2}
\[\textstyle
 \big| [F(\rho(\sigma \ast 00\dots))](l) -[F(\rho(\sigma \ast \gamma \ast 00\dots))](l)\big | \leq_{\Q} 2^{-n} + 2^{1-l}
\] 
then all $\gamma \in \{-1,0,1\}^{<\N}$ of length $k$ are in $S_{\sigma , n}$.
\item If the previous item is false, there is a least $\gamma'$ of length $\leq k$ such that for some $l \leq k$ we have that 
\[
\big| [F(\rho(\sigma \ast 00\dots))](l) -[F(\rho(\sigma \ast \gamma' \ast 00\dots))](l)\big| >_{\Q} 2^{-n} + 2^{1-l}
\] 
We then let the extension $\gamma'\ast 0\cdots0$ to a sequence of length $k$ be in $S_{\sigma,n}$.\label{dagjan} 
\end{enumerate}
We now make two important observations about the trees $S_{\sigma, k}.  $
Firstly, if for a fixed $k\in \N$, there is a sequence of length $k$ in $S_{\sigma,n}$ following item \eqref{dagjan}, then the same sequence, only extended with zeros, will be the single sequence of length $k'$ for any $k' > k$.  
In this case, the only branch in $S_{\sigma,n}$ is a ternary $g^{1}$ such that $|F(\rho(\sigma \ast 0^\ast)) - F(\rho(\sigma \ast g))| > 2^{-n}$.
Secondly, if $|F(\rho(\sigma \ast 00\dots)) - F(\rho(\sigma \ast g))| \leq 2^{-(n+1)}$ holds for all ternary $g^{1}$, then this formula holds for all branches $g$ in $S_{\sigma,n}$.

\smallskip

Now, let $g_{\sigma, n}$ be a branch in $S_{\sigma,n}$ provided by sequential $\WKL$.  Then at least one of the following two items is the case:
\begin{itemize}
\item $|F(\rho(\sigma \ast 00\dots)) -  F(\rho(\sigma\ast g_{\sigma, n}))| < 2^{-n}$,
\item $|F(\rho(\sigma \ast 00\dots)) - F(\rho(\sigma \ast g_{\sigma, n}))] > 2^{-(n+1)}$.
\end{itemize}
The $n+4$-th rational approximation of $|F(\rho(\sigma \ast 00\dots)) -  F(\rho(\sigma\ast g_{\sigma, n}))| $ tells us which item holds.   
In case the first item holds, we put $B(\sigma,n)=0$, and $1$ otherwise. This function $B$ satisfies \eqref{pimpo} and we are done.
\end{proof}
The following remark discusses the representation used in the previous proof.  
\begin{rem}[Representations]\label{situa}\rm
Regarding the proof of Theorem \ref{gofusefl}, the use of sequences based on $\{-1,0,1\}$ and the map $\rho$ is known as the \emph{negative binary representation}. 
The set of such representations is a computable retract of the set of representations as given in Section \ref{kkk}; this representation is useful for representing $[0,1]$ over a compact space, or $\R$ over a $\sigma$-compact space, as in e.g.\ the proof of item \eqref{W8} of Theorem~\ref{tank}.  
\end{rem}
As an exercise, the reader can verify that a continuous \emph{increasing} function on $[0,1]$ has a modulus of continuity in $\RCAo$.  
The following corollary is useful.
\begin{cor}[$\RCAo+\WKL$]\label{fliep}
For a sequence $(F_{n})_{n\in \N}$ of continuous $[0,1]\di \R$-functions, there is a sequence of RM-codes $(\Phi_{n})_{n\in \N}$ such that $F_{n}(x)$ equals $\Phi_{n}(x)$ for all $x\in [0,1]$ and $n\in \N$.
\end{cor}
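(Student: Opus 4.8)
The plan is to bootstrap Theorem~\ref{gofusefl} from single functions to sequences, by noting that its proof is uniform in parameters, and then to convert moduli of uniform continuity into RM-codes, again uniformly in the sequence index. Thus the corollary splits into two tasks: (a) produce a sequence $(h_{n})_{n\in \N}$ of moduli of uniform continuity, one for each $F_{n}$; and (b) manufacture from each $h_{n}$ (together with $F_{n}$) an RM-code $\Phi_{n}$ representing $F_{n}$, with the construction visibly uniform in $n$.

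For task (a) I would revisit the proof of Theorem~\ref{gofusefl}: nothing there is specific to a single function. Given $(F_{n})_{n\in \N}$, one defines, for each $\sigma\in \{-1,0,1\}^{<\N}$ and $m,n\in \N$, the infinite ternary tree $S_{\sigma,m,n}$ which is simply the tree built in the proof of Theorem~\ref{gofusefl} for the function $F_{n}$ and precision parameter $m$. Each such tree is infinite, so the sequential version of $\WKL$ for ternary trees---available via the Knuth-transform argument in the proof of Theorem~\ref{gofusefl} together with \cite{kooltje}*{Prop.\ 3.1}---yields a \emph{single} sequence $(g_{\sigma,m,n})$ of branches indexed by all such triples. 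Running the remainder of the argument of Theorem~\ref{gofusefl} with this sequence in hand produces, via $\QFAC^{0,0}$ applied to a formula of the form $(\forall n,k)(\exists N)(\dots)$, a sequence $(h_{n})_{n\in \N}$ of functions $h_{n}:\N\di \N$ such that each $h_{n}$ is a modulus of uniform continuity for $F_{n}$ on $[0,1]$.

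For task (b), from a modulus of uniform continuity $h_{n}$ for $F_{n}$ I would build $\Phi_{n}$ by the standard recipe: for each relevant rational centre $a$ and radius $r=\tfrac{1}{2^{h_{n}(k)+1}}$ one uses $F_{n}$ itself to read off a rational $b\in \Q$ and $s\in \Q^{+}$ with $b$ sufficiently close to $F_{n}(a)$, and lists the quintuples $(k,a,r,b,s)\in \N\times\Q\times\Q^{+}\times\Q\times\Q^{+}$ expressing that $F_{n}$ maps $B(a,r)$ into $\overline{B}(b,s)$. The coherence conditions of an RM-code (\cite{simpson2}*{II.6.1}), including totality on $[0,1]$, are immediate from $h_{n}$ being a modulus of uniform continuity, and by construction $\Phi_{n}(x)=_{\R}F_{n}(x)$ for all $x\in [0,1]$. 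Since $h_{n}$ and the rational approximations to $F_{n}$ are available uniformly in $n$, the assignment $n\mapsto \Phi_{n}$ is a genuine sequence of RM-codes in $\RCAo$. (Equivalently, one may invoke the explicit code-from-modulus formula from the proof of \cite{kohlenbach4}*{Prop.\ 4.4}, transported to $[0,1]$ via the negative binary representation of Remark~\ref{situa}.)

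The main obstacle is the bookkeeping in task (a): one must check that the branch-selection carried out in the proof of Theorem~\ref{gofusefl} for a single function really goes through for the entire family $\{S_{\sigma,m,n}\}$ simultaneously, i.e.\ that \emph{sequential} $\WKL$---rather than some stronger choice-like principle---is all that is needed to obtain the sequence $(g_{\sigma,m,n})$ and hence the sequence $(h_{n})_{n\in \N}$ of moduli. Once this uniform reading of the proof of Theorem~\ref{gofusefl} is granted, task (b) is routine and the corollary follows.
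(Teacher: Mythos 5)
Your proposal is correct and follows essentially the same route as the paper: the paper's proof likewise obtains a sequence of moduli of uniform continuity by uniformising the construction of Theorem~\ref{gofusefl} via the sequential version of $\WKL$ from \cite{kooltje}*{Prop.\ 3.1}, and then converts each modulus into an RM-code. Your write-up merely spells out the bookkeeping (the triple-indexed trees and the single application of sequential $\WKL$) that the paper leaves implicit.
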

\begin{proof}
One readily defines an RM-code from a modulus of uniform continuity for a $[0,1]\di \R$-function.  
The principle $\WKL$ is equivalent to the `sequential' version of $\WKL$, i.e.\ that for a sequence of infinite 0/1-trees, there is a sequence of paths through the respective trees (\cite{kooltje}*{Prop.\ 3.1}). 
The latter readily yields the required sequence of RM-codes, via a sequence of moduli of uniform continuity.  
\end{proof}
Thirdly, a continuous function on $[0,1]$ has a modulus of uniform continuity by Theorem \ref{gofusefl} but the proof is rather involved.  
As it happens, the proof that continuous functions are bounded is easier, and (mostly) suffices for the development of higher-order RM.
\begin{thm}[$\RCAo+\WKL$]\label{liguster}
A continuous $F: [0,1]\di \R$ is bounded.
\end{thm}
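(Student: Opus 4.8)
The quickest route is to invoke Theorem~\ref{gofusefl} directly: $F$ then has a modulus of uniform continuity $h:\N\di\N$ on $[0,1]$, and partitioning $[0,1]$ into finitely many dyadic subintervals of length below $2^{-h(0)}$ gives $|F(x)|<|F(q)|+1$ for $x$ in such a subinterval with (dyadic) endpoint $q$, so $|F|$ is bounded by one plus the maximum of the finitely many numbers $|F(q)|$. However, the point of stating Theorem~\ref{liguster} separately is that boundedness needs control of $F$ only at the \emph{single} error bound $\tfrac{1}{2^{0}}=1$, so I would instead re-run merely the relevant special case of the construction in the proof of Theorem~\ref{gofusefl}, avoiding its iteration over all error bounds and hence the bulk of that (involved) proof.

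\textbf{The self-contained argument.}
Work with the negative-binary representation $\rho:\{-1,0,1\}^{\N}\di[0,1]$ and the intervals $J_{\sigma}:=\{\rho(\sigma*g):g\in\{-1,0,1\}^{\N}\}$ from the proof of Theorem~\ref{gofusefl}; each $J_{\sigma}$ has length $2^{-|\sigma|}$, contains its centre $c_{\sigma}:=\rho(\sigma*00\dots)$, satisfies $J_{\sigma*i}\subseteq J_{\sigma}$, and $[0,1]=\bigcup_{|\sigma|=N}J_{\sigma}$ for each $N$. The final part of that proof provides, over $\RCAo+\WKL$, a $\{0,1\}$-valued function $B$ on finite ternary sequences with: $B(\sigma)=0$ implies $(\forall g)(|F(c_{\sigma})-F(\rho(\sigma*g))|\leq1)$, and $B(\sigma)=1$ implies $(\exists g)(|F(c_{\sigma})-F(\rho(\sigma*g))|>\tfrac{1}{2})$ --- precisely the $n=0$ instance of \eqref{pimpo}. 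Form the tree $U:=\{\sigma:(\forall\tau\preceq\sigma)(B(\tau)=1)\}$, which exists by $\Delta^{0}_{1}$-comprehension relative to $B$. If $U$ had an infinite branch $g$, then $B(g\!\upharpoonright\! n)=1$ for all $n$; fix $M$ witnessing continuity of $F$ at $\rho(g)$ for error $\tfrac{1}{4}$ and take $n=M+1$: the interval $J_{g\upharpoonright n}$ then contains two points whose $F$-values differ by $>\tfrac{1}{2}$, yet both points lie within $2^{-M}$ of $\rho(g)$ and so within $\tfrac{1}{4}$ of $F(\rho(g))$ --- contradiction. Hence $U$ has no infinite branch, so by K\"onig's lemma for ternary trees (equivalent to $\WKL$ via the Knuth transform, as in Theorem~\ref{gofusefl}) $U$ is finite. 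Put $N:=1+\max\{|\sigma|:\sigma\in U\}$; every ternary sequence of length $N$ has a prefix $\tau$ with $B(\tau)=0$, so $F$ varies by at most $1$ (relative to $c_{\tau}$) on $J_{\tau}$. As every $x\in[0,1]$ lies in some such $J_{\tau}$ with $|\tau|\leq N$, we get $|F(x)|\leq|F(c_{\tau})|+1\leq1+\max\{|F(c_{\tau})|:|\tau|\leq N\}$, a finite bound computable from $F$; thus $F$ is bounded on $[0,1]$.

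\textbf{Main obstacle.}
Everything in the second paragraph is routine book-keeping except for the single non-trivial ingredient, the existence of $B$. The predicate ``$F$ varies by at most $1$ on $J_{\sigma}$'' is a universal statement over all reals in $J_{\sigma}$ --- third-order and not decidable from $F$ in $\RCAo$ --- so $B$ cannot simply be defined by inspection. The remedy is the ``least witness, extended by zeros'' tree device from the last part of the proof of Theorem~\ref{gofusefl}: for each $\sigma$ one builds an always-infinite ternary tree whose branches, using the continuity of $F$, pin down whether $F$ varies by $\leq1$ or by $>\tfrac{1}{2}$ on $J_{\sigma}$, and the sequential form of $\WKL$ then supplies such branches uniformly in $\sigma$, with $B(\sigma)$ read off from a single rational approximation. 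Thus $\WKL$ is used twice --- as K\"onig's lemma to collapse $U$, and, substantially, in sequential form to build $B$ --- and the reason Theorem~\ref{liguster} is genuinely easier than Theorem~\ref{gofusefl} is that this device is invoked for the one error bound $1$ rather than iterated along all error bounds.
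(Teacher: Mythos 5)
Your argument is correct, but it is a genuinely different route from the one the paper takes for Theorem \ref{liguster}. The paper's proof is the ``shorter but less constructive'' one it advertises: define $G:2^{\N}\di\N$ by $G(f):=\lceil [F(\r(f))](2)\rceil+1$ and split by excluded middle on whether $G$ is continuous on $2^{\N}$. If $G$ is discontinuous, one obtains $(\exists^{2})$, hence $\ACA_{0}$, and then unboundedness of $F$ yields (via $\QFAC^{0,0}$ applied to rational witnesses and the Bolzano--Weierstrass theorem) a convergent sequence along which $F$ blows up, contradicting continuity at the limit; if $G$ is continuous, Kohlenbach's Proposition 4.11 gives a modulus of uniform continuity for $G$ on $2^{\N}$, so $G$ and hence $F$ is bounded, using the existence of binary representations of individual reals. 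Your proof instead re-runs the constructive tree machinery of Theorem \ref{gofusefl} at the single precision $n=0$: the $B$-construction via sequential $\WKL$, the prefix-closed tree $U$, and bounded K\"onig's lemma. This is sound (your handling of the non-monotonicity of $B=1$ by taking all prefixes, and the $M+1$ versus $\tfrac14$ bookkeeping, are fine), and it has the merit of avoiding the case split and the detour through $(\exists^{2})$ and sequential compactness. But it buys less than you suggest: the paper's construction of $B$ in Theorem \ref{gofusefl} is already uniform in $n$ (sequential $\WKL$ is applied once to the doubly-indexed family $S_{\sigma,n}$, not iterated over $n$), so restricting to $n=0$ saves essentially nothing over invoking Theorem \ref{gofusefl} outright, which is the ``quickest route'' you set aside; the genuine simplification the paper has in mind for Theorem \ref{liguster} is precisely the non-constructive dichotomy you avoided.
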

\begin{proof}
For $F$ as in the theorem, define $G:2^{\N}\di \N$ by 
\be\label{tehsup}
G(f):=\lceil [F(\r(f))](2) \rceil+1, 
\ee
where $\r(f):=\sum_{n=0}^{\infty}\frac{f(n)}{2^{n+1}}$ can also be found in Definition \ref{keepintireal}.
We now split the proof in two cases. 
First of all, if $G$ as in \eqref{tehsup} is discontinuous on $2^{\N}$, we obtain $(\exists^{2})$ as the latter is equivalent to the existence of a discontinuous function on $\N^{\N}$ by \cite{kohlenbach2}*{Prop.\ 3.7}.
Suppose $F$ is unbounded on $[0,1]$; by the continuity of the former, we have $(\forall n\in \N)(\exists q\in \Q\cap [0,1])(|F(q)|>_{\R} n)$.  
Applying $\QFAC^{0,0}$, we obtain a sequence $(q_{n})_{n\in \N}$ such that $(\forall n\in \N)(q_{n}\in [0,1]\wedge |F(q_{n})|> n)$.  
Since $(\exists^{2})\di \ACA_{0}$, we may use the well-known (second-order) convergence theorems by \cite{simpson2}*{III.2.7}.
Thus, $(q_{n})_{n\in \N}$ has a sub-sequence with limit $y\in [0,1]$.  Clearly, $F$ is discontinuous at $y$, a contradiction.  
Hence, $F$ is bounded on $[0,1]$. 

\smallskip

Secondly, if $G$ as in \eqref{tehsup} is continuous on $2^{\N}$, then it has a modulus of uniform continuity by \cite{kohlenbach4}*{Prop.\ 4.11}.  
Hence, $G$ is bounded on $2^{\N}$, implying that $F$ is also bounded on $[0,1]$; the latter follows by contradiction and the fact that individual real numbers have binary representations in $\RCA_{0}$ (see \cite{polahirst}).
\end{proof}
Fourth, we recall that $\WKL$ is equivalent to the statement \emph{for a code of a uniformly continuous function, there is a modulus of uniform continuity} (\cite{simpson2}*{IV.2.9}).
\begin{thm}[$\RCAo+\WKL$]\label{liguster2}
A uniformly continuous $F: [0,1]\di \R$ has a modulus of uniform continuity.
\end{thm}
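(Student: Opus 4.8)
The fastest route is to observe that Theorem \ref{liguster2} is an immediate corollary of Theorem \ref{gofusefl}: a uniformly continuous $F:[0,1]\di\R$ is in particular continuous on $[0,1]$, so Theorem \ref{gofusefl} already hands us a modulus of uniform continuity over $\RCAo+\WKL$. Since the point of this item is rather to record a \emph{more basic} proof that avoids the involved tree construction of Theorem \ref{gofusefl} (Kohlenbach's \cite{kohlenbach4}*{Prop.~4.10}) in the main case, paralleling Theorem \ref{liguster}, the plan is to split on $(\exists^{2})$ and treat the two cases separately.

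In the case $(\exists^{2})$, Feferman's $\mu^{2}$ is available, hence arithmetical comprehension relative to $F$. The key point is that uniform continuity of $F$ can be witnessed using \emph{rational} arguments only: the formula $\psi(k,N)\equiv(\forall q,r\in\Q\cap[0,1])\big(|q-r|\leq\frac{1}{2^{N}}\di|F(q)-F(r)|\leq\frac{1}{2^{k}}\big)$ is arithmetical in $F$ (uniformly in $k,N$), hence decidable given $\mu^{2}$, and $(\forall k)(\exists N)\psi(k,N)$ holds since $F$ is uniformly continuous. Then $h_{0}(k):=$ the least $N$ with $\psi(k,N)$ is definable, and $h(k):=h_{0}(k+2)$ is a genuine modulus of uniform continuity for $F$ on $[0,1]$. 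Indeed, given $x,y\in[0,1]$ with $|x-y|<\frac{1}{2^{h(k)}}$, pick rationals $q,r$ sufficiently close to $x,y$ respectively (individual reals have such rational approximations in $\RCA_{0}$, cf.\ the proof of Theorem \ref{liguster}); then $|q-r|\leq\frac{1}{2^{h_{0}(k+2)}}$, so $|F(q)-F(r)|\leq\frac{1}{2^{k+2}}$, and since $|F(x)-F(q)|,|F(r)-F(y)|<\frac{1}{2^{k+2}}$ for $q,r$ close enough to $x,y$ (continuity of $F$ at $x,y$), we get $|F(x)-F(y)|<\frac{1}{2^{k}}$. No instance of $\WKL$ is used here.

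In the case $\neg(\exists^{2})$, there is no discontinuous function $\N^{\N}\di\N$ by \cite{kohlenbach2}*{Prop.~3.7}. Consider $\Gamma:\{-1,0,1\}^{\N}\di\N^{\N}$ given by $\Gamma(f):=F(\rho(f))$, where $\rho$ is the negative binary representation from Remark \ref{situa}; for each $k$ the map $f\mapsto[\Gamma(f)](k)$ is a total $\{-1,0,1\}^{\N}\di\N$ function, hence continuous (a discontinuous one would yield a discontinuous $\N^{\N}\di\N$ function by precomposing with a continuous retraction $\N^{\N}\di\{-1,0,1\}^{\N}$), so $\Gamma$ is continuous on $\{-1,0,1\}^{\N}$. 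Now $\RCAo+\WKL$ proves that such a continuous $\Gamma$ has a modulus of uniform continuity $\ell:\N\di\N$: this is \cite{kohlenbach4}*{Prop.~4.10} transported to ternary trees via the Knuth transform, exactly as in the proof of Theorem \ref{gofusefl}. Finally, convert $\ell$ into a modulus for $F$ using the redundancy of the signed-digit representation: reals at distance $<\frac{1}{2^{n}}$ admit $\rho$-preimages agreeing on their first $n-c$ entries for a fixed constant $c$ — which is precisely why $\rho$ is a computable retract, as noted in Remark \ref{situa} — so if $|x-y|<\frac{1}{2^{\ell(k)+c}}$ then $[F(x)](k)=[\Gamma f](k)=[\Gamma g](k)=[F(y)](k)$ for suitable preimages $f,g$, giving $|F(x)-F(y)|\leq\frac{1}{2^{k-1}}$; adjusting indices yields the desired modulus.

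The main obstacle is this last conversion step in the $\neg(\exists^{2})$ case: the passage from $\{-1,0,1\}^{\N}$-moduli back to $[0,1]$-moduli must be made robust at the (dyadic) points where the representation is not locally injective, which is the whole reason for using signed digits rather than plain binary. Everything else is bookkeeping: the $(\exists^{2})$-case is a direct $\mu^{2}$-search, and the appeal to \cite{kohlenbach4}*{Prop.~4.10} is a black box already invoked in Theorem \ref{gofusefl} — indeed, one may alternatively just cite Theorem \ref{gofusefl} and be done.
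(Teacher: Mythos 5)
Your proposal is correct, but it takes a genuinely different route from the paper. Your opening observation is right: the statement is an immediate corollary of Theorem \ref{gofusefl}, though citing it defeats the stated purpose of Theorems \ref{liguster}--\ref{liguster2}, which is to give \emph{less involved} proofs than the tree construction behind Theorem \ref{gofusefl}. Your standalone argument then splits on $(\exists^{2})$: a $\mu^{2}$-minimisation over the rational restriction in the positive case (which, as you note, uses no $\WKL$ at all --- a mild refinement), and a rerun of the \cite{kohlenbach4}*{Prop.~4.10} machinery via the signed-digit representation in the negative case. The paper instead exploits the \emph{uniform} continuity hypothesis directly and uniformly: the assertion ``there is a $\delta$ \(coded as $\r(g)$ for $g\in 2^{\N}$\) witnessing uniform continuity at precision $2^{-k}$, quantified over rational arguments only'' is $\Pi^{0}_{1}$ in $g$, so a single application of $\Pi^{0}_{1}$-$\textsf{AC}_{0}$ (equivalent to $\WKL$) produces the sequence of witnesses and hence the modulus --- no case split, no discontinuity detection, and no appeal to the Cantor-space construction. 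What each buys: the paper's proof is shorter, stays genuinely elementary, and shows that uniform continuity is what makes the problem a one-step choice application; yours isolates exactly where $\WKL$ is needed ($\neg(\exists^{2})$ only) but re-imports in that case the heavy machinery the theorem was meant to avoid. Your index bookkeeping (e.g.\ $|q-r|\leq 2^{-h_{0}(k+2)}$, the literal equality $[F(x)](k)=[\Gamma f](k)$ which should be an approximation up to $2^{-k+1}$, and the overlap constant $c$ for the signed-digit intervals) is slightly loose but repairable and does not constitute a gap.
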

\begin{proof}
Let $F:[0,1]\di \R$ be uniformly continuous. 
In particular, we have
\be\label{fruhal}\textstyle
(\forall k\in \N)(\exists g\in 2^{\N})\underline{(\forall x, y\in [0,1]\cap \Q)(|x-y|< \r(g)\di |F(x)-F(y)|\leq\frac{1}{2^{k}})},
\ee
where $\r(f):=\sum_{n=0}^{\infty}\frac{f(n)}{2^{n+1}}$ is a real number in $[0,1]$. 
As noted in \cite{simpson2}*{Table 4, Notes}, $\WKL$ is equivalent to $\Pi_{1}^{0}$-$\textsf{AC}_{0}$, where the latter is:
\[
(\forall n \in \N)(\exists X\subset \N)\varphi(n, X) \di ( \exists (Z_{n})_{n\in \N})(\forall n \in \N)[\varphi(n, Z_{n}) \wedge Z_{n}\subseteq\N],
\]
for any $\varphi\in \Pi_{1}^{0}$.
The underlined formula in \eqref{fruhal} is $\Pi_{1}^{0}$, as $\lambda q.F((q, q, \dots))$ is merely a sequence of reals if $q$ is a variable over $\Q$.
Hence, apply $\Pi_{1}^{0}$-$\textsf{AC}_{0}$ to \eqref{fruhal} and note that the resulting function yields a modulus of uniform continuity. 
\end{proof}
Fifth, we obtain the following equivalences.
\begin{thm}[$\RCAo$]\label{ploppy} The following are equivalent to $\WKL$.
\begin{itemize}
\item A continuous $F: [0,1]\di \R$ is bounded.
\item A uniformly continuous $F: [0,1]\di \R$ has a modulus of uniform continuity.
\item A continuous $F:[0,1]\di \R$ is Riemann integrable \(\cite{simpson2}*{IV.2.7}\).
\end{itemize}
\end{thm}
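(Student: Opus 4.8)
The plan is to establish a cycle of implications, using $\WKL$ as the common hub. The three bulleted statements (call them (1), (2), (3)) are each third-order reformulations of classical second-order $\WKL$-equivalences, so the strategy is: first show $\WKL$ implies each of (1), (2), (3), which is already done in the excerpt or nearly so; then show each of (1), (2), (3) implies $\WKL$ over $\RCAo$, most efficiently by exhibiting a canonical third-order continuous function whose good behaviour encodes a path through an arbitrary infinite binary tree.

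For the forward directions: (1) is exactly Theorem~\ref{liguster}, and (2) is exactly Theorem~\ref{liguster2}, so nothing new is needed there. For (3), I would argue over $\RCAo+\WKL$ as follows. Given continuous $F:[0,1]\di\R$, Theorem~\ref{gofusefl} yields a modulus of uniform continuity $h:\N\di\N$, and then Corollary~\ref{fliep} (or directly the construction in the proof of Theorem~\ref{gofusefl}) produces an RM-code $\Phi$ with $\Phi(x)=F(x)$ for all $x\in[0,1]$; now the standard second-order result \cite{simpson2}*{IV.2.7} applies to $\Phi$, giving Riemann integrability. Strictly, one should check that the second-order notion of Riemann integrability transfers correctly to the third-order function — this is routine since the Riemann sums only ever evaluate $F$ at finitely many points of $[0,1]$, where $F$ and $\Phi$ agree.

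For the reversals I would use a single uniform argument. Working over $\RCAo$ (which by Remark~\ref{LEM2} does \emph{not} prove $(\exists^2)$), suppose we are given an infinite binary tree $T\subseteq 2^{<\N}$ with no leaves. The idea is to build, by primitive recursion inside $\RCAo$, a \emph{continuous} function $F_T:[0,1]\di\R$ (described by an explicit modulus, hence genuinely continuous in the sense of Definition~\ref{flung}/\eqref{tunt}) which is unbounded on $[0,1]$ — indeed on the rationals — precisely when $T$ is infinite but has no path: on the $n$-th subinterval $[1-2^{-n},1-2^{-(n+1)}]$ one places a continuous `tent' of height $n$ supported near a rational coding the leftmost element of $T$ of length $n$, arranged so that if $T$ did have a path the tents would accumulate to a point of discontinuity, contradicting continuity. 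Then: if (1) holds, $F_T$ is bounded, which is impossible unless $T$ has a path; so $T$ has a path, i.e.\ $\WKL$. The same $F_T$ handles (3): an unbounded function cannot be Riemann integrable (again arguing via rational sample points, so that $(\exists^2)$ is not needed), so (3) likewise forces $T$ to have a path. For (2) one uses instead a continuous function that is \emph{not} uniformly continuous unless $T$ has a path — e.g.\ steeper and steeper tents of bounded height — so that possession of a modulus of uniform continuity forces the tree to have a branch; this mirrors \cite{simpson2}*{IV.2.9}.

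The main obstacle is the reversal, and specifically making the construction of $F_T$ go through cleanly in $\RCAo$ \emph{without} invoking $(\exists^2)$: one must present $F_T$ together with a modulus of continuity that is primitive recursive in $T$, so that $F_T$ is a bona fide continuous third-order function, and one must verify that ``$F_T$ is bounded'' (resp.\ ``$F_T$ is Riemann integrable'', ``$F_T$ has a modulus of uniform continuity'') genuinely fails when $T$ is a path-free infinite tree — the subtle point being that boundedness and integrability are here stated for the \emph{third-order} object, so the refutation must only probe $F_T$ at points where its value is unambiguously determined (dyadic or rational points), keeping everything inside $\RCAo$. Once the function is in hand, each of the three reversals is a short argument of the shape sketched above.
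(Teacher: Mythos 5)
Your forward directions are exactly the paper's: item (1) is Theorem~\ref{liguster}, item (2) is Theorem~\ref{liguster2}, and item (3) follows from $\WKL$ via Corollary~\ref{fliep} together with the second-order theory of Riemann integration. No issues there.

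The reversals are where you diverge from the paper, and where there is a genuine gap. The paper's reversal is a two-line transfer argument: by Theorem~\ref{plofkip}, every RM-code $\Phi$ for a continuous function on $[0,1]$ gives rise, already in $\RCAo$, to a third-order continuous $F$ agreeing with $\Phi$ everywhere; hence each third-order item implies the corresponding second-order statement \emph{for codes}, and those are equivalent to $\WKL$ by \cite{simpson2}*{IV.2.3, IV.2.7, IV.2.9}. You instead propose to rebuild the tree-to-function construction directly at third order, and the construction you sketch does not work: placing tents of height $n$ on the disjoint intervals $[1-2^{-n},1-2^{-(n+1)}]$ (located at the leftmost node of $T$ of length $n$) yields a function that is unbounded on every neighbourhood of $x=1$ whenever $T$ is infinite, hence discontinuous at $1$ \emph{regardless} of whether $T$ has a path; item (1) therefore never applies to it and nothing is refuted. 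The correct second-order construction (as in \cite{simpson2}*{IV.2.3}) embeds $2^{<\N}$ into $[0,1]$ Cantor-set style, placing the tent for $\sigma\in T$ in the gap between the subintervals of $\sigma*0$ and $\sigma*1$, so that \emph{absence} of a path makes the function continuous while infinitude of $T$ makes it unbounded; your stated logic ("if $T$ did have a path the tents would accumulate to a point of discontinuity, contradicting continuity") has the roles of path/no-path inverted. Even with the construction repaired, you would only be re-deriving the second-order reversals by hand; the intended and far shorter route is to notice that Theorem~\ref{plofkip} lets you quote them. The same remark applies to your treatment of items (2) and (3), which at present are only gestured at.
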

\begin{proof}
That $\WKL$ implies the first two items from the theorem, follows from Theorems \ref{liguster} and \ref{liguster2}.
To obtain the third item from $\WKL$, use Corollary \ref{fliep}, combined with the second-order results for Riemann integration (\cite{simpson2}*{IV.2.5}).
To show that the first item implies $\WKL$, note that an RM-code $\Phi$ for a continuous function on $[0,1]$ yields a (third-order) continuous function $F:[0,1]\di \R$ by Theorem \ref{plofkip}. 
Then $F$ is bounded and so is the function represented by $\Phi$.  We now obtain $\WKL$ via \cite{simpson2}*{IV.2.3}.  An analogous proof goes through for the second and third items.  Indeed, the latter for codes are equivalent to $\WKL$ by \cite{simpson2}*{IV.2.7 and IV.2.9}.
\end{proof}
In conclusion, we have adapted some of Kohlenbach's `coding results' from \cite{kohlenbach4}*{\S4}, namely from $2^{\N}$ to $[0,1]$.  
We have presented a fairly constructive but lengthy proof (Theorem \ref{gofusefl}).   We have also obtained shorter but less constructive proofs of similar results (Theorems \ref{liguster} and \ref{liguster2}). 
Along the way, we have shown that $\WKL$ is equivalent to \emph{third-order} statements (see Theorem~\ref{ploppy}).  
The consensus view here seems to be that (third-order) continuous functions are `really' second-order, as evidenced by Corollary \ref{fliep}.  
In this way, equivalences like Theorem \ref{ploppy} do not really connect second- and third-order arithmetic.   The aim of the next section
is to exhibit `more real' connections, i.e.\ equivalences between $\WKL$ and third-order theorems that do not have an obvious second-order counterpart.

\subsection{Equivalences for weak K\"onig's lemma}\label{TOT}
We obtain equivalences between $\WKL$ and certain third-order statements in higher-order RM (Sections~\ref{XXY} and~\ref{hargeil}). 
In Section \ref{ferengi}, we sketch similar results for the RM of \emph{weak weak K\"onig's lemma} ($\WWKL$ for short) from \cite{simpson2}*{X.1}.

\subsubsection{Boundedness and supremum principles}\label{XXY}
We establish our first series of third-order statements equivalent to $\WKL$ (Theorem~\ref{tank}), the former being boundedness and supremum principles from analysis. 
We note in passing that the textbook proof that $BV$-functions are bounded on $[0,1]$ (see e.g.\ \cite{voordedorst}) goes through in $\RCAo$.

\smallskip

While some of the theorems under study are basic, others like item \eqref{W43} seem advanced as the class of \emph{quasi-continuous} functions goes \emph{far} beyond even the Borel or measurable functions, as discussed in Remark \ref{donola}.  Quasi-continuity goes back to Baire (\cite{beren2}) and is used in domain theory (\cites{lawsonquasi1, cauza,gieren3, gieren2}).

\smallskip

Regarding item \eqref{W6}, the assumption $F(x)=\frac{F(x-)+F(x+)}{2}$ and variations is found in e.g.\ \cite{voordedorst,waterdragen, gofer, gofer2}.  
Regarding item~\eqref{W423}, cadlag functions are an important class in stochastics and econometrics while Remark~\ref{donola} explains why items \eqref{WF}-\eqref{WH}, \eqref{WG}-\eqref{WJ}, \eqref{W27}, \eqref{W272}, \eqref{W72}, and \eqref{W722} are non-trivial.  
Regarding item~\eqref{W82}, Darboux sub-classes are topics of study in their own right (see e.g.\ \cite{DB1, DB2, malin, malin2}).  
The fragment of countable choice $\QFAC^{0,1}$ is defined in Section~\ref{rmbt} while the exact role of the Axiom of Choice is discussed in Remark~\ref{PINX}.
\begin{thm}[$\RCAo+\QFAC^{0,1}$]\label{tank} 
The following are equivalent to $\WKL$.
\begin{enumerate}
\renewcommand{\theenumi}{\roman{enumi}}
\item A regulated $F: [0,1]\di \R$ is bounded.\label{W1}
\item A regulated and continuous almost everywhere $F: [0,1]\di \R$ is bounded.\label{WF}
\item A regulated and pointwise discontinuous $F: [0,1]\di \R$ is bounded.\label{WI}
\item A regulated and not everywhere discontinuous $F: [0,1]\di \R$ is bounded.\label{WH}
\item Any usco $F:[0,1]\di \R$ is bounded above.\label{W2}
\item Any usco and not everywhere discontinuous $F:[0,1]\di \R$  is bounded above.\label{WG}
\item Any usco and pointwise discontinuous $F:[0,1]\di \R$  is bounded above.\label{WJ}
\item Any lsco $F:[0,1]\di \R$ is bounded below.\label{W22}
\item Any usco and Baire 1 function $F:[0,1]\di \R$ is bounded above.\label{W27}
\item Any usco and effectively Baire $n$ $F:[0,1]\di \R$ is bounded above \($n\geq 2$\).\label{W272}
\item A regulated and usco $F:[0,1]\di \R$ is bounded.\label{W4}
\item A regulated and quasi-continuous $F:[0,1]\di \R$ is bounded.\label{W42}
\item A bounded usco function on $[0,1]$ that has a supremum, attains it. \label{W3}
\item A bounded regulated usco function on $[0,1]$ with a supremum, attains it.\label{W5}
\item A bounded lsco function on $[0,1]$ that has an infimum, attains it. \label{W32}
\item A regulated $F: [0,1]\di \R$ such that $F(x)=\frac{F(x-)+F(x+)}{2}$ for all $x\in (0,1)$, is bounded.\label{W6}
\item A regulated and lsco $F: [0,1]\di \R$ is bounded.\label{W7}
\item A regulated and Baire 1 function $F: [0,1]\di \R$ is bounded.\label{W72}
\item A regulated effectively Baire $n$ function $F: [0,1]\di \R$ is bounded \($n\geq 2$\).\label{W722}
\item A bounded and quasi-continuous $F:[0,1]\di \R$ has a sup \(and inf\).\label{W43}
\item A cadlag function $F:[0,1]\di \R$ is bounded \(\cite{bollard}*{Problem IV.3}\).\label{W423}
\item A cadlag function $F:[0,1]\di \R$ has a sup \(and inf\).\label{W424}
\item A bounded Baire 1 function $F:[0,1]\di \R$ has a supremum.\label{W8}
\item A bounded Darboux Baire 1 function $F:[0,1]\di \R$ has a supremum.\label{W82}
\end{enumerate}
We do not use $\QFAC^{0,1}$ in relation to items \eqref{W42}, \eqref{W6}, \eqref{W7}, and \eqref{W43}-\eqref{W82}.  
\end{thm}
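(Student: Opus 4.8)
The plan is to establish the cycle of implications in three blocks: (1) $\WKL$ implies every listed boundedness/supremum statement; (2) the simplest boundedness statements (e.g.\ item \eqref{W1} for regulated functions) imply $\WKL$; and (3) each of the remaining items is sandwiched between $\WKL$ and one of the items already handled, by exhibiting suitable classes of functions. For block (1), the uniform strategy is to note that any $F$ in one of these classes, when restricted to a countable dense set like $[0,1]\cap\Q$, gives rise (after applying $\QFAC^{0,1}$ where needed) to a second-order object; more precisely, if $F$ fails to be bounded or to attain a sup, one extracts via $\QFAC^{0,0}$ a sequence of rationals $(q_n)$ witnessing unbounded growth and then uses the convergence theorems available from $(\exists^2)\to\ACA_0$ together with the relevant weak-continuity hypothesis (regulated, usco, quasi-continuous, Baire 1, cadlag) to derive a contradiction along a convergent subsequence — exactly as in the proof of Theorem \ref{liguster}. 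The case split on whether the associated functional $G:2^\N\to\N$ built from $F$ (as in \eqref{tehsup}) is continuous on $2^\N$ or not is the key device: in the discontinuous case we get $(\exists^2)$ for free and run the $\ACA_0$ argument, while in the continuous case $\WKL$ itself supplies a modulus of uniform continuity (Theorem \ref{gofusefl}, \cite{kohlenbach4}*{Prop.\ 4.11}) and hence boundedness. The attainment items \eqref{W3}, \eqref{W5}, \eqref{W32} follow by the same subsequence-extraction once boundedness and the existence of the sup are in hand, using semi-continuity at the limit point.

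For block (2), the reversal $\eqref{W1}\to\WKL$ is the heart of the matter and the main obstacle. The idea is standard in higher-order RM: given an infinite $0/1$-tree $T$ with no path (or, more precisely, working towards the contrapositive), one must manufacture a \emph{regulated} third-order $F:[0,1]\to\R$ that is unbounded precisely when $\WKL$ fails. Concretely, one places, for each finite binary string $\sigma\in T$ that is a `leaf' of the pruned tree, a spike of height $|\sigma|$ at a dyadic rational coded by $\sigma$; the regulated-ness comes from the fact that in any neighbourhood of any point only finitely many such spikes accumulate when $T$ is finite, whereas if $T$ is infinite-without-path the function is well-defined and regulated but unbounded. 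The delicate point is checking that the left and right limits genuinely exist at every $x\in[0,1]$ — this requires that the spikes associated to longer and longer strings cluster to at most countably many points in a controlled way, and that at each such cluster point the one-sided limits are $0$. One then argues: if item \eqref{W1} holds, $F$ is bounded, so only finitely many spikes exceed any given height, which (by an absoluteness/König-type argument formalizable in $\RCA_0$) forces $T$ to be finite, i.e.\ $\WKL$. The analogous constructions for \eqref{W8} (a bounded Baire 1 function without a sup, built as a pointwise limit of continuous finite-stage approximations indexing the tree) and for the other `base' items proceed similarly but require care to verify the Baire 1, cadlag, or quasi-continuity conditions on the constructed witness.

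Block (3) is then bookkeeping: for each item, exhibit the inclusion of function classes or the direct implication between statements. For instance, \eqref{W423} and \eqref{W1} are linked because every cadlag function is regulated, so \eqref{W1}$\to$\eqref{W423} is immediate, while for the converse one notes the tree-coding witness from block (2) can be arranged to be cadlag (choosing spikes that are right-continuous), giving \eqref{W423}$\to\WKL$; items \eqref{WF}, \eqref{WI}, \eqref{WH} follow since the constructed witness is also continuous almost everywhere / pointwise discontinuous / not everywhere discontinuous, being continuous off a countable (hence measure-zero) set. Items \eqref{W42}, \eqref{W6}, \eqref{W7}, \eqref{W43}–\eqref{W82} are handled without $\QFAC^{0,1}$ because their hypotheses (quasi-continuity, the midpoint condition, lower semi-continuity, Baire 1) already encode enough definability that the sequence of witnessing rationals can be obtained by $\QFAC^{0,0}$ or by primitive recursion rather than choice over function variables — this is the content of the final sentence of the theorem and should be remarked at the appropriate place in each sub-argument. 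The effectively-Baire-$n$ items \eqref{W272} and \eqref{W722} reduce to the Baire 1 / Baire $n$ cases via the explicit sequence of continuous functions in the definition, which lets one avoid $\QFAC^{0,1}$ there as well. The single hardest step remains the regulated tree-coding witness in block (2), specifically the verification that one-sided limits exist everywhere; everything else is a variation on Theorem \ref{liguster} and its reversal via Theorem \ref{plofkip}.
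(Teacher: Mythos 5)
Your block (1) matches the paper's strategy for the boundedness and attainment items (case split on continuity, $(\exists^{2})$ from discontinuity, convergent subsequence, contradiction with the regularity hypothesis at the limit point), but it breaks down exactly at items \eqref{W8} and \eqref{W82}: a bounded Baire 1 function carries \emph{no} pointwise regularity you can exploit at a limit point, so subsequence extraction yields a candidate $y\in[0,1]$ about which nothing can be concluded. The issue for \eqref{W8} is not attainment but the very \emph{existence} of the supremum as a real, and the paper's proof of this is the single most substantial part of the theorem: writing $F=\lim_{n}F_{n}$ with RM-codes $\Phi_{n}$ and moduli $M_{n}$ from Corollary \ref{fliep}, it shows that ``$\sup_{x\in[0,1]}F(x)>r$'' is equivalent to a formula of the form $(\exists n\in\N)(\exists f\in 2^{\N})(\forall m\in\N)R(r,n,f,m)$ and then uses $\WKL$ to replace the function quantifier by an arithmetical one, so that $\exists^{2}$ can compute the supremum. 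Nothing in your proposal supplies this step; the existence-of-sup claims for quasi-continuous and cadlag functions (items \eqref{W43} and \eqref{W424}) likewise need their own (easier) argument via the reduction of $\sup_{x\in[0,1]}F(x)$ to a supremum over $\Q\cap[0,1]$ followed by interval halving, which you do not state.

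Your block (2) also takes a wrong turn: the reversals are the \emph{easy} direction here, not ``the heart of the matter''. A continuous function is trivially regulated, usco, lsco, cadlag, quasi-continuous, Baire 1, and effectively Baire $n$, so each item restricted to continuous functions, combined with Theorem \ref{plofkip} (RM-codes yield third-order functions) and the classical second-order equivalences of \cite{simpson2}*{IV.2.3}, immediately gives $\WKL$; no bespoke regulated ``spike'' witness built from an infinite tree is needed, and the delicate verification you yourself flag (existence of one-sided limits everywhere for such a witness) is precisely the work that the trivial-inclusion argument avoids. So the proposal is workable for most of the boundedness items, but as written it has a genuine gap at \eqref{W8} and \eqref{W82} and an unnecessary, unverified detour in the reversals.
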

\begin{proof}
First of all, item \eqref{W1} readily implies $\WKL$ as $F(x+)=F(x)=F(x-)$ for all $x\in (0,1)$ in case $F$ is continuous; Theorem \ref{ploppy} now yields $\WKL$.
For the reversal, assume $\WKL$ and fix some regulated $F:[0,1]\di \R$.  In case the latter is continuous, it is also bounded by Theorem \ref{ploppy}.
In case $F$ is discontinuous, we have access to $(\exists^{2})$ by \cite{kohlenbach2}*{\S3}.  Now suppose $(\forall n\in \N)( \exists x\in [0,1])( |F(x)|>n)$ and apply $\QFAC^{0,1}$ to obtain $(x_{n})_{n\in \N}$ such that $|F(x_n)|>n$ for all $n\in \N$.  Use $\mu^{2}$ to guarantee $F(x_{n+1})>\max (n+1, F(x_n))$ for all $n\in \N$, if necessary.  
Since $(\exists^{2})\di \ACA_{0}$, we have access to the well-known second-order convergence theorems (see \cite{simpson2}*{III.2}).  
Thus, there is a convergent sub-sequence $(y_{n})_{n\in \N}$ of $(x_{n})_{n\in \N}$, say with limit $y\in [0,1]$.  
Then either there are infinitely many $n\in \N$ such that $y_{n}<y$ or infinitely many $m\in \N$ such that $y_{m}>y$; note that this case distinction is decidable using $\exists^{2}$.  
In the former case (the latter being symmetric), $F(y_{n})$ becomes arbitrarily large as $n\di \infty$.  In particular, $F(y-)$ does not exist, a contradiction, and $F$ must be bounded on $[0,1]$, and item \eqref{W1} follows. 

\smallskip

Secondly, the equivalence for item \eqref{W2} (and items \eqref{WF}-\eqref{WH}, \eqref{WG}-\eqref{WJ}, \eqref{W22}-\eqref{W4}, and \eqref{W72}-\eqref{W722}) follows in the same way as for item \eqref{W1}.  Indeed, item \eqref{W2} for instance implies $\WKL$ since a continuous function is trivially usco (and lsco, Baire 1, cadlag, or effectively Baire $n$), while $\WKL$ already follows in \cite{simpson2}*{IV.2.3} from the existence of an upper bound.  
For the reversal, one proceeds as in the previous paragraph, noting that $F$ cannot be usco at the limit point $y\in [0,1]$.   The equivalence involving items \eqref{W22}-\eqref{W4} and \eqref{W72}-\eqref{W722} is now immediate.

\smallskip

Thirdly, item \eqref{W3} readily implies $\WKL$ as a continuous function is trivially usco, i.e.\ combining Theorem \ref{plofkip} and \cite{simpson2}*{IV.2.3} yields $\WKL$.
For the reversal, assume $\WKL$ and fix an usco function $f:[0,1]\di [0,1]$ that has a supremum $y_{0}\in [0,1]$.  In case $f$ is continuous, $\WKL$ yields an RM-code (Corollary \ref{fliep}). 
Hence, the well-known second-order result in \cite{simpson2}*{IV.2.3} yields the required maximum.  In case $F$ is discontinuous, we have access to $(\exists^{2})$ by \cite{kohlenbach2}*{\S3}.  
By definition, we have $(\forall n\in \N)(\exists x\in [0,1])(f(x)\geq y_{0}-\frac{1}{2^{n}} )$.   
Apply $\QFAC^{0,1}$ to obtain a sequence $(x_{n})_{n\in \N}$ such that $(\forall n\in \N)(f(x_{n})\geq y_{0}-\frac{1}{2^{n}} )$.
Since $(\exists^{2})\di \ACA_{0}$, we have access to the well-known second-order convergence theorems (see \cite{simpson2}*{III.2}).  
Let $(z_{n})_{n\in \N}$ be a convergent sub-sequence of $(x_{n})_{n\in \N}$, say with limit $w_{0}$.  By assumption ($f$ being usco and $y_{0}$ its supremum), we have
\[\textstyle
y_{0} \ge f(w_0) \ge \limsup_{x \to w_0} f(x) \ge \limsup_{n \to \infty} f(z_{n}) \ge \lim_{n\di \infty} y_{0} - \frac 1{2^{n}} = y_{0}, 
\]
which implies $f(w_{0})=y_{0}$ as required for item \eqref{W3}.  The equivalence involving items \eqref{W5} and \eqref{W32} is now immediate.  

\smallskip

Fourth, for items \eqref{W42}, \eqref{W6}, \eqref{W7}, and \eqref{W423}, the equivalence is proved as for items \eqref{W1} and \eqref{W2} with the only modification that $(\forall n\in \N)( \exists x\in [0,1])( |F(x)|>n)$ implies $(\forall n\in \N)( \exists q\in [0,1]\cap \Q)( |F(q)|>n)$ due to the extra conditions in these items.  Hence, we can apply $\QFAC^{0,0}$ (rather than $\QFAC^{0,1}$), included in $\RCAo$.

\smallskip

Fifth, for item \eqref{W43}, the latter yields $\WKL$ by \cite{simpson2}*{IV.2.3}; indeed, Theorem \ref{plofkip} converts an RM-code for a continuous function into a third-order continuous function, which is trivially quasi-continuous.  Now assume $\WKL$ and let $F:[0,1]\di \R$ be quasi-continuous and bounded.  In case the latter is also continuous, Theorem~\ref{gofusefl} provided a modulus of uniform continuity and \cite{simpson2}*{IV.2.3} provides the required supremum.  In case $F$ is discontinuous, we obtain $(\exists^{2})$ by \cite{kohlenbach2}*{\S3}.  The usual interval-halving technique (using $\exists^{2}$) then readily yields the required supremum as 
\be\label{deru}
(\exists x\in [0,1])(F(x)>r)\asa (\exists q\in [0,1]\cap \Q)(F(q)>r),
\ee
for any $r\in \R$, as cadlag implies quasi-continuity.  An analogous proof goes through for item \eqref{W424}, as cadlag functions are quasi-continuous. 

\smallskip

For item \eqref{W8}, let $F:[0,1]\di \R$ be the pointwise limit of $(F_{n})_{n\in \N}$, where each $F_n:[0,1]\di \R$ is continuous on $[0,1]$. Let $\Phi_n$ be an RM-code for $F_n$ and $M_n$ be the modulus of uniform continuity for $F_n$, all provided by Corollary \ref{fliep} (and Remark~\ref{LEM2}). We may assume that $F$ is not continuous, whence we have access to $\exists^2$ by \cite{kohlenbach2}*{\S3}. We now show that for $r \in \Q$, $\sup_{x\in [0,1]} F(x) > r$ is definable in $\exists^2$; the proof is based on the equivalence between items (A) and (B) below. 

\smallskip

Now, $\exists^{2}$ can (uniformly) convert between various representations of real numbers (see \cite{polahirst} for the latter).  Thus,  we may assume that any $x\in [0,1]$, which actually is a fast converging sequence of rational numbers (see Definition \ref{keepintireal}), is obtained from a negative binary representation $f_{x}$ as in Remark \ref{situa}.  Note that there is a bijective correspondence between the negative binary representations and the fast converging sequences obtained from them.  

\smallskip

We let $I(x, k)$ be the interval of reals $y\in [0,1]$ represented by a negative binary representation extending that of $\overline{f_{x}}k $. 
We assume $\Phi_n$ to be given as a set of pairs of intervals $\langle[a,b],[c,d]\rangle$ with rational endpoints such that, in addition to an approximation requirement,  if $y \in [a,b]$ then $F_n(y) \in [c,d]$; this is the most frequently used domain representation.  
Using $\exists^2$, the latter representation is equivalent to any other (RM-)representation.

\smallskip

\noindent We now show that the following are equivalent:
\begin{enumerate}
\item[(A)] $\sup_{x\in [0,1]} F(x) > r$,
\item[(B)] There exists $x\in [0,1]$, $n, k\in \N$ such that for $m \geq n$ and $\langle[a,b],[c,d]\rangle \in \Phi_m$, if $j = M_m(k+1)$ and $I(x, j) \cap [a,b] \neq \emptyset$, then $[c,d]$ contains an element $ \geq r + 2^{-(k+1)} $.
\end{enumerate} 
To show that (A) $\rightarrow$ (B), assume for some $x\in [0,1]$ that $F(x) > r$ and let $n,k\in \N$ be such that $F_m(x) > r - 2^{-k}$ for all $m \geq n$. 
We now verify (B) for this choice of $n$, $k$ and $x$. 
Let $m \geq n$ and $\langle [a,b],[c,d]\rangle \in \Phi_m$ with $j = M_m(k+1)$ be such that $I(x, j) \cap [a,b] \neq \emptyset$. Let $y\in [0,1]$ be in this intersection, implying $|x-y| < \frac{1}{2^{j}}$ and hence $|F_m(x) - F_m(y)| < 2^{-(k+1)}$. 
Since $F_m(y) \in [c,d]$ and $F_m(y) > r + 2^{-(k+1)}$ by the triangle inequality, (B) follows for the aforementioned choice of $n, k, x$.

\smallskip

To show that (B) $\di $ (A), let $x$, $n$ and $k$ be as stated in (B). 
We show that $F(x) > r$ by showing that $F_m(x) \geq r + 2^{-(k+2)}$ for all $m \geq n$. Let $\langle [a,b],[c,d]\rangle \in \Phi_m$ be such that $x \in [a,b]$ and $|d-c| < 2^{-(k+2)}$. 
Clearly $[a,b] \cap I(x, k) \neq \emptyset$, since e.g.\ $x$ is in both sets.  
Since $F_m(x) \in [c,d]$ and $[c,d]$ contains an element $\geq r + 2^{-(k+1)}$, we must have that $F_m(x) \geq r + 2^{-( k+ 2)}$.

\smallskip

\noindent
In (B), we first have existential quantifiers for $x$, $n$ and $k$, then universal quantifiers over $\N$ and $\Q^4$ and the remaining matrix is decidable in the parameters. As `$(\exists x \in [0,1])$' actually is a quantifier over $\{-1,0,1\}^\N$ and the latter is computably identifiable with $2^\N$, the equivalence (A) $\asa$ (B) shows that $\sup_{x\in [0,1]} F(x) > r$ can be expressed as a second-order formula of the form
\[
(\exists n\in \N) (\exists f \in 2^\N)( \forall m\in \N) R(r,n,f,m),
\] 
where $R$ is Turing computable in the second-order objects $(\Phi_k)_{k \in \N}$ and $(M_k)_{k \in \N}$. By $\WKL$, the formula $(\exists f \in 2^\N)( \forall m\in \N) R(r,n,f,m)$ is equivalent to:
\[
(\forall k\in \N)(\exists \sigma \in 2^{<\N})\big[|\sigma|=k \wedge (\forall m\in \N)R(r,n,\sigma*00\dots,m)    \big],
\]
which is arithmetical.
This shows the existence of $\sup_{x\in [0,1]} F(x)$ for $F:[0,1]\di \R$ in Baire 1, given as the limit of a sequence of continuous functions, assuming $\exists^2$. 
The previous goes through for item \eqref{W82} since continuous functions on $[0,1]$ are Darboux, which follows by imitating the second-order intermediate value theorem as can be found in \cite{simpson2}*{II.6.6}.
\end{proof}
In light of Theorem \ref{tank}, a single second-order equivalence from analysis can give rise to \emph{many different} equivalences in higher-order RM.  
There is however a limit: while the supremum principle for effectively Baire~2 functions is equivalent to the Big Five system $\FIVE$ (see Theorem \ref{flapke}), the former principle for Baire~1$^{*}$ or Baire~2 functions is not provable in $\Z_{2}^{\omega}$ by Theorem \ref{truppke}.  Nonetheless, the third-order RM of $\WKL$ can only be called \emph{extremely robust} following Section \ref{sintro}.
  
\smallskip

In the next remark, we discuss the role of the Axiom of Choice in Theorem \ref{tank}.  
\begin{rem}[On the Axiom of Choice]\label{PINX}\rm
The Axiom of Choice ($\AC$ for short) plays an interesting role in Theorem \ref{tank}, namely related to the results in \cites{dagsamV, dagsamVII}.  
As in the latter, we say that a statement $T$ in the language of finite types \emph{exhibits the Pincherle phenomenon} if the following two items are satisfied.
\begin{itemize}
\item The statement $T$ is provable \emph{without} $\AC$ but only in relatively strong systems, namely $T$ is provable in $\Z_{2}^{\Omega}$, but not in $\Z_{2}^{\omega}$ (see Section~\ref{lll}). 
\item The statement $T$ is provable in weak systems \emph{assuming} (fragments of) $\AC$, namely the system $\RCAo+\WKL+\QFAC^{0,1}$ proves $T$.
\end{itemize}
In short, the Pincherle phenomenon is the observation that $\AC$ makes certain theorems `easier to prove' even though we do not strictly \emph{need} $\AC$.   
We first observed this phenomenon in \cite{dagsamV} for a theorem by Salvatore Pincherle from \cite{tepelpinch}*{p.\ 67}, while \emph{many} examples may be found in \cite{dagsamVII} and elsewhere.

\smallskip

One readily verifies that the Pincherle phenomenon is exhibited by items \eqref{W1}, \eqref{W2}, \eqref{W22}, \eqref{W4}, \eqref{W3}, and \eqref{W5} from Theorem~\ref{tank}.
Indeed, the model $\bf Q^{*}$ of $\Z_{2}^{\omega}$ from \cite{dagsamX} is such that there is an unbounded regulated function on $[0,1]$, i.e.\ item \eqref{W1} is not provable in $\Z_{2}^{\omega}$ and the same for the other items.  More interestingly, items \eqref{W27}, \eqref{W272}, \eqref{W72}, and \eqref{W722} exhibit a kind of \emph{weak} Pincherle phenomenon as these items are already\footnote{The use of $\QFAC^{0,1}$  can be replaced by $\Sigma_{1}^{1}\textsf{-AC}_{0}$ in light of the equivalence (A)$\asa$(B) from the proof of Theorem \ref{tank}.} provable in $\RCAo+\Sigma_{1}^{1}\textsf{-AC}_{0}$, which is conservative over $\ACA_{0}$.  Thus, the extra `Baire 1' condition in these items makes them `easier to prove', where we note that the addition of this condition is non-trivial by Remark \ref{donola}.
\end{rem}
Next, we have an important corollary to Theorem \ref{tank}, where a \emph{set} is `Baire~$1$' if the characteristic function is Baire 1.  
The general notion of Baire set may be found in \cite{kodt}*{p.\ 21, Def.\ 4} under a different name; we refer to \cite{lorch} for an introduction and to \cite{dudley}*{\S7} for equivalent definitions, including that of Borel set in Euclidean space.   
\begin{thm}[$\RCAo+\WKL$]\label{klank}
For any open Baire 1 set $O\subset [0,1]$, there exist $(a_{n})_{n\in \N}, (b_{n})_{n\in \N}$ such that $x\in O\asa (\exists n\in \N)(x\in (a_{n}, b_{n}) )$ for all $x\in [0,1]$.
\end{thm}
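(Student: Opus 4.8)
The plan is to reduce this to the supremum principle for bounded Baire 1 functions, i.e.\ item \eqref{W8} of Theorem \ref{tank}, applied to suitable continuous truncations of the distance function to the complement of $O$. Concretely, let $O\subseteq[0,1]$ be open with Baire 1 characteristic function $\chi_O$, and let $C=[0,1]\setminus O$ be the (closed) complement, whose characteristic function $\chi_C=1-\chi_O$ is also Baire 1. First I would note that since $\chi_O$ is Baire 1 we may assume $(\exists^2)$ (otherwise every function on $\R$ is continuous, $O$ is clopen, hence empty or all of $[0,1]$, and the conclusion is trivial with the $(a_n,b_n)$ either absent or equal to $(-1,2)$). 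With $(\exists^2)$ available we have $\ACA_0$ and can form, for each $x\in[0,1]$, the quantity $d(x,C)=\inf\{|x-y|:y\in[0,1]\wedge \chi_C(y)=1\}$; the key point is that $x\in O$ iff $d(x,C)>0$, because $O$ is open (so some $B(x,2^{-N})\subseteq O$) and $C$ is closed.

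The main obstacle is that $d(\cdot,C)$ is defined by an infimum over a set given only by a Baire 1 characteristic function, so its existence as a third-order function on $[0,1]$ is not immediate; this is exactly the kind of situation Theorem \ref{tank}\eqref{W8} is designed to handle. My plan is: for fixed rational $q>0$ and fixed $x$, the statement $d(x,C)<q$ is $(\exists y\in[0,1])(\chi_C(y)=1\wedge |x-y|<q)$, and by the interval-halving trick with $\exists^2$ (as in the proof of \eqref{deru} in Theorem \ref{tank}) one checks whether such a $y$ exists — but $C$ need not be `quasi-continuous enough' to replace $y$ by a rational, so instead I would invoke \eqref{W8} directly. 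For each $n$, consider $G_n(x):=\min(1,2^n\cdot d(x,C))$; each $G_n$ is Lipschitz hence continuous where it is well-defined, and $G_n$ increases pointwise to the characteristic function $\chi_O$ of the \emph{open} set. Then $\chi_O$ is the pointwise limit of the $G_n$, so $\chi_O$ is Baire 1 with an explicit continuous approximating sequence, which is precisely the hypothesis format used in the proof of \eqref{W8}. Running that argument (the equivalence (A)$\asa$(B) there, together with $\WKL$) shows that for each rational $r$ the statement $\sup_{x\in U}G_n(x)>r$ is arithmetical uniformly in the codes, and more to the point, applying the argument to the sequence $(G_n)$ yields that `$x\in O$' $\equiv$ `$(\exists n)(G_n(x)>0)$' can be analyzed via the associated RM-codes $(\Phi_n)_{n\in\N}$ and moduli $(M_n)_{n\in\N}$ from Corollary \ref{fliep}.

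Granting that, the final step is to extract the sequence of intervals. Since each $G_n$ is continuous on $[0,1]$ with an RM-code $\Phi_n$, the set $O_n:=\{x:G_n(x)>0\}$ is an RM-open set: by \cite{simpson2}*{II.5.6} (combined with $\WKL$ to handle the code, cf.\ the remark after Definition \ref{openset}) there are sequences $(a^n_k,b^n_k)_{k\in\N}$ with $x\in O_n\asa(\exists k)(x\in(a^n_k,b^n_k))$. Because $O=\bigcup_n O_n$ (as $G_n\uparrow\chi_O$ and $\chi_O$ only takes values $0,1$, with $x\in O$ forcing $G_n(x)=1$ for $n$ large), interleaving the double sequence $(a^n_k,b^n_k)_{n,k\in\N}$ into a single sequence $(a_m,b_m)_{m\in\N}$ via a pairing function gives $x\in O\asa(\exists m\in\N)(x\in(a_m,b_m))$ for all $x\in[0,1]$, as required. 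I expect the genuinely delicate point to be verifying that $G_n$ (equivalently $d(\cdot,C)$ truncated) is honestly a third-order continuous function on $[0,1]$ in $\RCAo+\WKL$ when $C$ is only presented by a Baire 1 characteristic function — this is where one must lean on the machinery of Theorem \ref{tank}\eqref{W8} rather than on naive constructions, and where $\QFAC^{0,1}$ (or the $\Sigma_1^1$-$\textsf{AC}_0$ substitute noted in Remark \ref{PINX}) enters to produce the approximating continuous functions uniformly.
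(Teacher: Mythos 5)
Your overall strategy -- reduce everything to the supremum/infimum principle for bounded Baire 1 functions, item \eqref{W8} of Theorem \ref{tank} -- is the right one and is exactly what the paper does. But you route it through the distance function $d(\cdot,C)$ to the complement, and that is where the argument has a real gap. Item \eqref{W8}, as proved, computes $\inf_{x\in[0,1]}f(x)$ for a \emph{single} bounded Baire 1 function $f$, and it generalises painlessly to $\inf_{x\in[p,q]}f(x)$ for \emph{rational} $p,q$ because that is a countable family of instances of the same arithmetical formula (B). What you need is $x\mapsto \inf_{y\in[0,1]}h_{x}(y)$ for a family $h_{x}(y)=|x-y|+(2-|x-y|)\mathbb{1}_{O}(y)$ indexed by a \emph{real} parameter, packaged as an extensional third-order function that is moreover verified to be Lipschitz -- a uniform version of \eqref{W8} that you flag as ``the genuinely delicate point'' but never establish. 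It is likely provable (formula (B) is arithmetical in the real parameter, and extensionality follows from (A)$\asa$(B)), but it is the crux of your proof and it is missing; everything downstream ($G_{n}$ continuous, Corollary \ref{fliep} giving an RM-code, preimages of open sets being RM-open) is routine once you have it. Two smaller points: your sets $O_{n}=\{x:G_{n}(x)>0\}$ all equal $O$, so the union over $n$ and the interleaving are vacuous -- a single RM-code for $d(\cdot,C)$ would already finish the proof; and the appeal to $\QFAC^{0,1}$ or $\Sigma_{1}^{1}$-$\textsf{AC}_{0}$ is both unnecessary (the approximating sequence is a witness to the Baire 1 hypothesis, no choice required) and undesirable, since the theorem is stated over $\RCAo+\WKL$ without choice.

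The paper avoids the parametrised infimum entirely by applying the rational-interval version of \eqref{W8} directly to the characteristic function $\mathbb{1}_{O}$: since $\mathbb{1}_{O}$ only takes the values $0$ and $1$, the quantity $\inf_{x\in[p,q]}\mathbb{1}_{O}(x)$ is $1$ precisely when $[p,q]\subseteq O$ and $0$ otherwise, and this case distinction is decidable. Enumerating all rational intervals $(p_{n},q_{n})$, one outputs $(p_{n},q_{n})$ when the infimum is positive and a fixed dummy ball $B(x_{0},2^{-m_{0}})\subseteq O$ otherwise; openness of $O$ in the sense of Definition \ref{openset} guarantees every $x\in O$ lies in some rational interval entirely contained in $O$, so this sequence is the required cover. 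Note that if you try to repair your construction by characterising $d(x,C)>0$ as ``there exist rationals $p<x<q$ with $\inf_{[p,q]}\mathbb{1}_{O}=1$'', you have in effect rediscovered this argument -- which suggests the distance function is a detour rather than a genuinely different proof.
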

\begin{proof}
We make use of item \eqref{W8} of Theorem \ref{tank}.  In particular, the proof of this item immediately generalises to infima involving rational parameters, i.e.\ we have:
\begin{center}
\emph{for a bounded Baire 1 function $f:[0,1]\di \R$, there is $F:\Q^{2}\di \R$ such that for all $p, q\in \Q\cap[0,1]$, the real $F(p, q)$ equals $\inf_{x\in [p, q]}f(x)$.}
\end{center}
To see this, we observe that when $p < q$ for $p, q\in \Q$, the formula $x \in [p,q]$ can be expressed by a $\Pi^0_1$-formula in the negative binary representation $f_x$ from Remark \ref{situa}.
Now let $O$ be an open Baire 1 set and note that the representation is trivial in case $O=\emptyset$.  Hence, we may assume there is $x_{0}\in O$ and $m_{0}$ such that $B(x_{0}, \frac{1}{2^{m_{0}}})\subset O$. 
Let $\big((p_{n}, q_{n})\big)_{n\in \N}$ be an enumeration of all intervals in $[0,1]$ with rational end-points.  
Now define the following sequence of intervals:
\be\label{tugger}
 (a_{n}, b_{n}):=
 \begin{cases}
 B(x_{0}, \frac{1}{2^{m_{0}}}) & \textup{ in case } \inf_{x\in [p_{n}, q_{n}]}\mathbb{1}_{O}(x)<\frac{1}{2}\\
(p_{n}, q_{n})& \textup{ in case } \inf_{x\in [p_{n}, q_{n}]}\mathbb{1}_{O}(x)>0
 \end{cases}.
\ee
Note that the case distinction in \eqref{tugger} is decidable (in $\RCAo$) and that in each case $(a_{n}, b_{n})\subset O$. 
The theorem is now immediate.  
\end{proof}
Theorem \ref{klank} essentially expresses that a Baire 1 open set can be represented by a code for an open set (see \cite{simpson2}*{II.5.6}).
The general case for arbitrary open sets is not provable in $\Z_{2}^{\omega}$ from Section \ref{lll} (see \cite{dagsamVII}).
Nonetheless, assuming $\WKL$, any applicable second-order theorem generalises from `codes for open sets' to `third-order open sets that are Baire 1'.
Examples include the Heine-Borel theorem for countable coverings of closed sets (\cite{brownphd}*{Lemma 3.13} and \cite{simpson2}*{IV.1.5}), the Tietze extension theorem (\cite{withgusto}), the Urysohn lemma (\cite{simpson2}*{II.7.3}), and the Baire category theorem (\cite{simpson2}*{II.4.10}).
The same holds \emph{mutatis mutandis} for open sets with quasi-continuous characteristic functions or open sets as in Definition \ref{openset} where $Y:\R\di \R$ is Baire 1.  
The following theorem establishes a similar theorem for countable sets; enumerating 
general (strongly) countable sets cannot be done in $\Z_{2}^{\omega}$ (see \cite{dagsamXI, dagsamXII}).   
\begin{thm}[$\ACAo$]\label{weerklank}
For any Baire 1 set $A\subset [0,1]$ and Baire 1 function $Y:[0,1]\di \N$ injective on $A$, there is $(x_{n})_{n\in \N}$ which includes all elements of $A$.
\end{thm}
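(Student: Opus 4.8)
The plan is to reduce the statement to item \eqref{W8} of Theorem \ref{tank} — the supremum principle for bounded Baire~1 functions — which applies here since $\ACAo$ proves $(\exists^{2})$, hence $\ACA_{0}$, hence $\WKL$; thus Theorem \ref{tank}, Corollary \ref{fliep}, and the usual real-number manipulations available from $\exists^{2}$ (as in \cite{polahirst}) are all at hand.

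First I would fix continuous $(g_{n})_{n\in\N}$ with $g_{n}\to\mathbb{1}_{A}$ pointwise (truncating, harmlessly, so that each $g_{n}$ takes values in $[0,1]$) and continuous $(Y_{n})_{n\in\N}$ with $Y_{n}\to Y$ pointwise. For $k\in\N$ let $\psi_{k}:\R\di\R$ be the continuous tent $\psi_{k}(t):=\max(0,1-|t-k|)$ and put $h_{k}^{(n)}:=g_{n}\cdot(\psi_{k}\circ Y_{n}):[0,1]\di[0,1]$, a continuous function. Since $Y(x)\in\N$ for all $x$, we have $\psi_{k}(Y(x))=1$ if $Y(x)=k$ and $\psi_{k}(Y(x))=0$ otherwise, so $h_{k}:=\lim_{n}h_{k}^{(n)}$ satisfies $h_{k}(x)=\mathbb{1}_{A}(x)$ when $Y(x)=k$ and $h_{k}(x)=0$ otherwise; that is, $h_{k}$ is the $\{0,1\}$-valued (hence bounded) characteristic function of $A\cap Y^{-1}(\{k\})$, presented as a pointwise limit of continuous functions, so $h_{k}$ is Baire~1. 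Likewise $x\mapsto x\cdot h_{k}(x)$ is bounded Baire~1 with approximating sequence $x\mapsto x\cdot h_{k}^{(n)}(x)$. By item \eqref{W8} of Theorem \ref{tank} (which needs only $\WKL$, no choice) the reals $M_{k}:=\sup_{x\in[0,1]}h_{k}(x)$ and $s_{k}:=\sup_{x\in[0,1]}x\cdot h_{k}(x)$ exist; inspecting that proof and applying Corollary \ref{fliep} to the double sequence $(h_{k}^{(n)})_{k,n}$ shows that `$\sup>r$' is expressed by a formula arithmetical in the second-order data uniformly in $k$, so $\ACA_{0}$ delivers the sequences $(M_{k})_{k}$ and $(s_{k})_{k}$. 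As $h_{k}$ is $\{0,1\}$-valued on the nonempty set $[0,1]$, each $M_{k}\in\{0,1\}$, and from a rational approximation of $M_{k}$ to within $\tfrac14$ one decides, using $\exists^{2}$, whether $M_{k}=1$ or $M_{k}=0$.

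Finally, set $x_{k}:=s_{k}$ if $M_{k}=1$ and $x_{k}:=0$ otherwise (a genuine definition of a sequence of reals, the case distinction being decidable from the Cauchy data of $M_{k}$ via $\exists^{2}$), and I claim $(x_{k})_{k\in\N}$ lists every element of $A$. Given $a\in A$, put $k:=Y(a)\in\N$; then $h_{k}(a)=1$, so $M_{k}=1$ and $x_{k}=s_{k}$. By injectivity of $Y$ on $A$ no $x\ne a$ lies in $A\cap Y^{-1}(\{k\})$, so $h_{k}(x)=0$ — hence $x\cdot h_{k}(x)=0$ — for every $x\ne a$, while $a\cdot h_{k}(a)=a$; therefore $s_{k}=\sup_{x\in[0,1]}x\cdot h_{k}(x)=\max\{0,a\}=a$, i.e.\ $x_{k}=a$. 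The main difficulty is not conceptual but a matter of care: one must verify that $h_{k}$ and $x\cdot h_{k}(x)$ genuinely meet the hypotheses of \eqref{W8} — bounded, Baire~1, and given by a sequence of continuous functions with accessible codes and moduli via Corollary \ref{fliep} — and that the two suprema are obtained uniformly in $k$ (which works because the construction of `$\sup>r$' in the proof of \eqref{W8} is arithmetical in the second-order parameters). The device that renders the extraction of the unique point of $A\cap Y^{-1}(\{k\})$ trivial is simply that $\sup_{x}x\cdot h_{k}(x)$ already equals that point.
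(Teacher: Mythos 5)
Your proof is correct, and it takes a genuinely different route from the paper's. The paper also reduces the statement to the supremum/infimum principle for bounded Baire 1 functions (item \eqref{W8} of Theorem \ref{tank}, in its uniform rational-parameter form, exactly as exploited in Theorem \ref{klank}), but it proceeds \emph{iteratively}: it notes that the product $\mathbb{1}_{A}\cdot Y$ of Baire 1 functions is Baire 1, extracts the least value $n_{0}$ of $Y$ on $A$, locates the unique $x_{0}\in A$ with $Y(x_{0})=n_{0}$ by interval halving using $\inf_{[p,q]}$ for rational $p,q$, then removes $x_{0}$ and repeats with $\mathbb{1}_{A\setminus\{x_{0}\}}\cdot Y$, so that each stage must supply fresh Baire 1 data for the depleted set. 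You instead work in parallel over $k\in\N$: the tent-function composition $g_{n}\cdot(\psi_{k}\circ Y_{n})$ isolates $\mathbb{1}_{A\cap Y^{-1}(\{k\})}$ as a bounded Baire 1 function with an explicit approximating sequence of continuous functions, $M_{k}=\sup_{x}h_{k}(x)\in\{0,1\}$ decides whether that (at most singleton) set is empty, and the device $s_{k}=\sup_{x}x\cdot h_{k}(x)$ reads off its unique point directly. What your version buys is the elimination of both the interval-halving search and the recursion with its removal bookkeeping; what the two arguments share is the essential ingredient, namely that `$\sup F>r$' for a Baire 1 function $F$ is arithmetical in the codes and moduli of the approximating continuous functions supplied by Corollary \ref{fliep}, \emph{uniformly} in a parameter --- for you the index $k$, for the paper the rational endpoints $p,q$ --- so your appeal to $\ACA_{0}$ for the sequences $(M_{k})_{k}$ and $(s_{k})_{k}$ is justified for exactly the same reason as the paper's appeal to the parametrised infimum. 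The remaining checks you flag (continuity and boundedness of $h_{k}^{(n)}$ and $x\cdot h_{k}^{(n)}(x)$, decidability of $M_{k}=1$ from a rational approximation via $\exists^{2}$, and the harmless coincidence $x_{k}=0$ when $a=0$) all go through.
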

\begin{proof}
Let $A\subset [0,1]$ and $Y:[0,1]\di \N$ be as in the theorem.  
The standard proof shows that the product of Baire 1 functions is Baire 1.  
Thus, Theorem \ref{tank} guarantees that $\inf_{[p,q]}(\mathbb{1}_{A}(x) Y(x) )$ makes sense for rational $p, q\in [0,1]$.
Assuming $A\ne \emptyset$, we have $\inf_{[0,1]}(\mathbb{1}_{A}(x) Y(x) )=n_{0}>0$.  
Now replace $[0,1]$ by $[0, \frac12]$ and $[\frac12, 1]$ to check in which of the latter sub-intervals the unique $x_{0}\in A$ with $Y(x_{0})=n_{0}$ is to be found.  
The usual interval-halving technique now provides this real and repeat for $\inf_{[0,1]}(\mathbb{1}_{A\setminus \{x_{0}\}}(x) Y(x) )=n_{1}$ to enumerate $A$.  
\end{proof}
Finally, we finish this section with some conceptual remarks.
\begin{rem}\label{donola}\rm
First of all, it is a basic fact that $BV$, usco, lsco, and regulated functions are Baire 1, but this \textbf{cannot} be proved from the Big Five or $(\exists^{2})$, and much stronger systems by Theorem \ref{reklam}.  
To be absolutely clear, $\ACAo+\FIVE$ and much stronger systems are consistent with the existence of $BV$, usco, lsco, and regulated functions that are not Baire 1, explaining e.g.\ items \eqref{W27} and~\eqref{W72} in Theorem~\ref{tank}.  Similar results hold for `effectively Baire $n$', explaining for instance item \eqref{W722} in Theorem~\ref{tank}.

\smallskip

Secondly, the cadlag functions are arguably `close to continuous', but one should be careful with such claims: 
by Theorem \ref{drel}, it is consistent with the Big Five and much stronger systems that there is a regulated (or usco) function that is 
discontinuous everywhere (see also \cite{samBIG}), explaining items \eqref{WF}-\eqref{WH} and \eqref{WG}-\eqref{WJ} in Theorem~\ref{tank}.
Furthermore, quasi-continuous functions can be quite `wild': if $\mathfrak{c}$ is the cardinality of $\R$, there are $2^{\mathfrak{c}}$ non-measurable quasi-continuous $[0,1]\di \R$-functions and $2^{\mathfrak{c}}$ measurable  quasi-continuous $[0,1]\di [0,1]$-functions (see \cite{holaseg}).  Also, the class of quasi-continuous functions is closed under taking \emph{transfinite} limits (\cite{nieuwebronna}).

\smallskip

Thirdly, the regulated functions boast \emph{many} sub-spaces (see \cite{voordedorst}) and the same for the Baire 1 functions (see \cite{vuilekech}); one can presumably formulate a version of items \eqref{W1} or \eqref{W72} for many of those.  In general, many variations of Theorem~\ref{tank} are possible, based on any function space containing the continuous functions.  
For instance, one can replace `quasi-continuity' by the weaker notion `lower quasi-continuity' (see \cite{ewert}) in most equivalences in this paper.  
Moreover, derivatives that are continuous almost everywhere, are quasi-continuous (\cite{marcus2}), suggesting many possible variations.  
The notion of \emph{strong} quasi-continuity also seems very promising, especially in light of its intimate connection to continuity almost everywhere (\cite{grand}), as well as the notion of \emph{countably continuous} and related concepts (\cite{thommy}). 

\smallskip

Fourth, we have studied the (countable) Axiom of Choice in higher-order RM in \cite{dagsamX}.
We believe that the choice principle $\NCC$ from the latter, which is provable in $\ZF$, can replace $\QFAC^{0,1}$ in Theorem \ref{tank} and the below. 
\end{rem}

\subsubsection{Covering lemmas}\label{hargeil}
We show that $\WKL$ is equivalent to a number of third-order covering lemmas (Theorem \ref{lebber}).
We have shown in \cite{dagsamIII} that the general case, called \emph{Cousin's lemma}, is not provable from $\WKL$ and much stronger systems. 

\smallskip

First of all, $\WKL$ is equivalent to compactness results like the Heine-Borel theorem for countable coverings (\cite{simpson2}*{IV.1}) and Cousin's lemma for (codes of) continuous functions (\cite{basket, basket2}). 
In general, Cousin's lemma (\cite{cousin1}) is formulated as follows.
\begin{center}
\emph{For $\Psi:[0,1]\di \R^{+}$, the covering $\cup_{x\in [0,1]}B(x, \Psi(x))$ of $[0,1]$ has a finite sub-covering, i.e.\ there are $x_{0}, \dots, x_{k}\in [0,1]$ where $\cup_{i\leq k}B(x_{i}, \Psi(x_{i}))$ covers $[0,1]$.}
\end{center}
Secondly, we establish the following theorem to be contrasted with items \eqref{ta0}-\eqref{ta21} in Theorem \ref{reklam}.  We stress that Cousin's lemma deals with uncountable coverings. 
Recall Remark \ref{donola} which explains why items \eqref{HBX}-\eqref{HBH} are non-trivial. 
\begin{thm}[$\RCAo$]\label{lebber}
The following are equivalent to $\WKL$.
\begin{enumerate}
\renewcommand{\theenumi}{\roman{enumi}}
\item Cousin's lemma for RM-codes of continuous functions.\label{HB-1}
\item Cousin's lemma for continuous functions.\label{HB0}
\item Cousin's lemma for lsco functions.\label{HB1}
\item Cousin's lemma for lsco Baire 1 functions.\label{HBX}
\item Cousin's lemma for lsco effectively Baire $n+2$ functions.\label{HBY}
\item Cousin's lemma for lsco functions that are continuous almost everywhere.\label{HBF}
\item Cousin's lemma for lsco functions that are pointwise discontinuous.\label{HBJ}
\item Cousin's lemma for lsco functions that are not everywhere discontinuous.\label{HBH}
\item Cousin's lemma for quasi-continuous functions.\label{HB2}
\item Cousin's lemma for cadlag functions.\label{HB3}
\item Cousin's lemma for regulated $F: [0,1]\di \R$ such that $F(x)=\frac{F(x-)+F(x+)}{2}$ for all $x\in [0,1]$.\label{HB4}
\item Cousin's lemma for regulated $F: [0,1]\di \R$ such that for all $x\in [0,1]$: \label{HB5}
\[
\min (F(x-), F(x+))\leq F(x)\leq \max (F(x-), F(x+)).
\]
\item Cousin's lemma for Baire 1 functions.\label{HB32}
\end{enumerate}
\end{thm}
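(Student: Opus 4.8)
The plan is to prove the cycle of implications by observing, as in Theorems \ref{tank} and \ref{ploppy}, that each listed instance of Cousin's lemma trivially implies its RM-coded version (item \eqref{HB-1}), which is equivalent to $\WKL$ by the second-order RM of compactness (\cite{basket, basket2} and \cite{simpson2}*{IV.1}); so the only real work is deriving each instance \emph{from} $\WKL$. For item \eqref{HB-1}$\asa\WKL$ I would simply cite \cite{basket, basket2}; for item \eqref{HB0} I would use Theorem \ref{plofkip} in the backward direction (a continuous $F:[0,1]\di\R$ yields, via Theorem \ref{gofusefl}, a modulus of uniform continuity and hence an RM-code by Corollary \ref{fliep}, reducing to \eqref{HB-1}), and Theorem \ref{plofkip} again in the forward direction (a code gives a third-order continuous function, which is a legitimate instance of \eqref{HB0}). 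This settles the continuous `endpoints' of the cycle.

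The heart of the argument is the reversal for the genuinely discontinuous classes (items \eqref{HB1}--\eqref{HB32}). Here I would run the standard case split used throughout Section \ref{TOT}: assume $\WKL$ and fix $\Psi:[0,1]\di\R^{+}$ whose domain-function $F$ lies in the relevant class. If $\Psi$ (equivalently $F$) happens to be continuous we are back in case \eqref{HB0}. Otherwise the existence of a discontinuous function on $\R$ gives us $(\exists^{2})$ by \cite{kohlenbach2}*{\S3}, hence $\ACA_{0}$ and the second-order convergence theorems of \cite{simpson2}*{III.2}. Now apply the usual interval-halving/compactness argument for Cousin's lemma: attempt to build, by bisection, a decreasing sequence of closed intervals $I_{0}\supseteq I_{1}\supseteq\cdots$ of length $2^{-n}$, each of which is \emph{not} finitely covered; using $\mu^{2}$ to decide the relevant arithmetical predicates, this sequence (if it exists) is given by a single real, its intersection is a point $y\in[0,1]$, and $y\in B(y,\Psi(y))$ already finitely covers a neighbourhood of $y$, contradicting that the $I_{n}$ eventually lie inside that neighbourhood. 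Therefore the bisection terminates, yielding a finite subcovering. The only place the class of $F$ enters is in making the `$x\in B(z,\Psi(z))$ for some $z$ among finitely many chosen centres' predicate arithmetical; for \eqref{HB1}--\eqref{HBH} lower semi-continuity of $\Psi$ means the set $\{x:\Psi(x)>\eps\}$-type conditions, and more precisely the covering conditions, can be checked on a countable dense set, so no extra choice beyond what $(\exists^{2})$ provides is needed, exactly as in the treatment of items \eqref{W7}, \eqref{W42} of Theorem \ref{tank}. For the Baire 1 items \eqref{HBX}, \eqref{HBY}, \eqref{HB32} one additionally invokes the equivalence (A)$\asa$(B) from the proof of Theorem \ref{tank} (or Remark \ref{LEM2}) to express the relevant suprema/infima of $\Psi$ over rational subintervals arithmetically, again reducing the covering test to an arithmetical statement decidable with $\exists^{2}$; this is where I expect the bookkeeping to be heaviest.

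For the cadlag and `mean-value' items \eqref{HB3}--\eqref{HB5}, I would note that cadlag implies quasi-continuity and that the one-sided-limit hypotheses in \eqref{HB4}, \eqref{HB5} likewise force $F$ to be quasi-continuous (the value at each point is squeezed between, or averaged from, the one-sided limits), so these reduce to \eqref{HB2}; alternatively one argues directly that, for such $F$, the covering radius $\Psi$ satisfies $(\exists x\in[0,1])(\Psi$-covering fails at $x)\asa(\exists q\in[0,1]\cap\Q)(\cdots)$, mirroring \eqref{deru}. The only genuine obstacle I foresee is item \eqref{HB2}, Cousin's lemma for \emph{arbitrary} quasi-continuous functions: quasi-continuity is much weaker than the conditions above (Remark \ref{donola}), and one must check that the failure-of-finite-subcovering predicate is still arithmetical in $F$. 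The key point is that, even for merely quasi-continuous $\Psi$, for each rational interval and each finite choice of candidate centres the statement `$\Psi(x)$ exceeds the distance to one of these centres for all $x$ in this interval' can be tested on a dense set because quasi-continuity guarantees that if $\Psi(x_{0})>\delta$ then $\Psi>\delta$ on a nonempty open subset of every neighbourhood of $x_{0}$; combined with $\exists^{2}$ this makes the relevant predicate arithmetical, and the bisection argument of the previous paragraph then goes through verbatim. Having established $\WKL\di$ each item and each item $\di$ \eqref{HB-1} $\di\WKL$, the theorem follows.
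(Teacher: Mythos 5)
Your overall architecture (each item implies the coded version and hence $\WKL$; conversely, split on whether $\Psi$ is continuous, obtaining $(\exists^{2})$ in the discontinuous case) is exactly the paper's, and for items \eqref{HB1}--\eqref{HB5} your route is sound: for lsco and quasi-continuous $\Psi$ the covering $\cup_{x\in[0,1]}B(x,\Psi(x))$ can indeed be thinned to rational centres, after which either your bisection or (as the paper itself notes) a direct appeal to the second-order countable Heine--Borel theorem finishes the job; the paper's own arguments (an inf-over-$\Q$ dichotomy for lsco, a greedy marching sequence $r_{n+1}=r_{n}+2^{-G(r_{n})}$ for quasi-continuous) are alternatives to the same effect. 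One small correction: a regulated $F$ with $F(x)=\frac{F(x-)+F(x+)}{2}$ need \emph{not} be quasi-continuous (take $F(x_{0}-)=0$, $F(x_{0}+)=1$, $F(x_{0})=\frac12$: no nearby nonempty open set sees values near $\frac12$), so for items \eqref{HB4} and \eqref{HB5} you must rely on your fallback, namely that $\Psi(x)\leq\max(\Psi(x-),\Psi(x+))$ still lets every point be captured by a rational-centred ball; that fallback does work.

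The genuine gap is item \eqref{HB32}, Cousin's lemma for arbitrary Baire 1 functions. Here the restriction to rational centres fails outright: for irrational $\alpha$, the Baire 1 function $\Psi(x)=|x-\alpha|+\mathbb{1}_{\{\alpha\}}(x)$ is everywhere positive, yet $\cup_{q\in\Q}B(q,\Psi(q))$ omits $\alpha$. Consequently ``this interval admits a finite sub-covering'' retains an existential quantifier over finitely many \emph{reals}, and the (A)$\asa$(B) device from Theorem \ref{tank} only renders statements of the form $\sup_{x\in I}\Psi(x)>r$ arithmetical; knowing all such suprema and infima over rational intervals does not decide finite coverability (consider $\Psi(x)=x$ for $x>0$ and $\Psi(0)=1$: every interval $[0,\eps]$ has infimum $0$, yet the single ball $B(0,1)$ covers everything), so your bisection has no arithmetical predicate to branch on and can stall at such a point. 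The paper closes this case differently: in the discontinuous case $(\exists^{2})$ yields $\ACA_{0}$, Corollary \ref{fliep} converts the sequence $(\Psi_{n})_{n\in\N}$ witnessing Baire 1 into a second-order code for a Baire 1 function, and one then cites the result of \cite{basket, basket2} that Cousin's lemma for such codes is provable in (indeed equivalent to) $\ACA_{0}$. Some such reduction to the second-order Baire 1 result, or a genuinely new argument, is needed; your sketch does not supply it.
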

\begin{proof}
Zeroth of all, the equivalence between $\WKL$ and item \eqref{HB-1} has been proved in \cites{basket, basket2}.
The combination of Theorems \ref{plofkip} and \ref{gofusefl} then establishes the equivalence between $\WKL$ and item \eqref{HB0}.

\smallskip

First of all, assume item \eqref{HB1} and note that by Theorem \ref{plofkip}, an RM-code for a continuous function equals a (third-order) continuous function.  
The latter is trivially lsco and item \eqref{HB1} establishes Cousin's lemma for RM-codes of continuous functions, and hence $\WKL$.  
Now assume $\WKL$ and fix lsco $\Psi:[0,1]\di \R^{+}$.   If the latter is continuous, item \eqref{HB0} provides a finite sub-covering.  If the latter is discontinuous, we have access to $(\exists^{2})$ by \cite{kohlenbach2}*{\S3}.
In case $(\exists N\in \N)(\forall q\in [0,1]\cap \Q)(\Psi(q)\geq \frac1{2^{N}})$, item \eqref{HB1} is immediate as enough rational numbers form a finite sub-covering.   
Finally, in case $(\forall N\in \N)(\exists q\in [0,1]\cap \Q)(\Psi(q)<\frac1{2^{N}})$, apply $\QFAC^{0,0}$ included in the base theory to obtain a sequence of rationals $(q_{n})_{n\in \N}$ such that $\Psi(q_{n})<\frac{1}{2^{n}}$ for all $n\in \N$.  
Since $(\exists^{2})\di \ACA_{0}$, we have access to the usual second-order convergence theorems (see \cite{simpson2}*{III.2}), i.e.\ $(q_{n})_{n\in \N}$ has a convergent sub-sequence $(r_{n})_{n\in \N}$, say with limit $y\in [0,1]$.  
Then $\Psi(y)=0$ as $\Psi$ is lsco and $\Psi$ comes arbitrarily close to $0$ close to $y$, a contradiction, and we are done. 

\smallskip

Secondly, item \eqref{HB2} implies $\WKL$ in the same way as for item \eqref{HB1}.  Now assume $\WKL$ and fix quasi-continuous $\Psi:[0,1]\di \R^{+}$.
In case the latter is also continuous, Theorem~\ref{gofusefl} provides a modulus of uniform continuity and \cite{simpson2}*{IV.2.3} implies that $\inf_{x\in [0,1]}\Psi(x)>0$, which readily yields the finite sub-covering.  
In case $F$ is discontinuous, we obtain $(\exists^{2})$ by \cite{kohlenbach2}*{\S3}.  Now consider the following for any $r\in \R$:
\be\label{fki}
(\exists x\in [0,1])(\Psi(x)>r)\asa (\exists q\in [0,1]\cap \Q)(\Psi(q)>r),
\ee
which holds due to the definition of quasi-continuity.  In this light, use $\mu^{2}$ to define $G(x)$ as the least $N\in \N$ such that 
\be\label{tofro}\textstyle
(\exists q\in \Q\cap [0,1])(\frac{1}{2^{N}}< |x- (q+ \Psi(q))|\wedge x \in B(q, \Psi(q))  ).
\ee
Now define  an increasing sequence $(r_{n})_{n\in \N}$ of rationals via $r_{0}:= \frac{1}{2^{G(0)}}$ and $r_{n+1}:= r_{n}+\frac{1}{2^{G(r_{n})}}$.  
In case this sequence stays in $[0,1]$, it converges to some $y\in [0,1]$, which leads to a contradiction.  Hence, $r_{n_{0}}>1$ for some $n_{0}\in \N$, readily yielding a finite sub-covering. 
Since cadlag functions are (trivially) quasi-continuous, the equivalence for item \eqref{HB3} also follows.  Similarly, one readily observes that \eqref{fki} also holds for functions as in items \eqref{HB4} and \eqref{HB5}, i.e.\ the above proof for item \eqref{HB2} yields the equivalences involving items \eqref{HB4} and \eqref{HB5}.  An alternative proof of items \eqref{HB1} and \eqref{HB2} proceeds by noting that one can restrict $\cup_{x\in [0,1]}B(x, \Psi(x))$ to rationals for quasi-continuous or lsco $\Psi:[0,1]\di \R^{+}$, yielding a countable sub-covering $\cup_{q\in [0,1]\cap \Q}B(q, \Psi(q))$ of $[0,1]$ to which the second-order Heine-Borel theorem from \cite{simpson2}*{IV.1} applies; this is relevant to the proof of Corollary \ref{reklamcor}.

\smallskip

Thirdly, for item \eqref{HB32}, it suffices to prove the latter from $\WKL$ as continuous functions are trivially Baire 1.  To establish item \eqref{HB32}, in case $\Psi:[0,1]\di \R^{+}$ is continuous, use item \eqref{HB0}.  
In case $\Psi:[0,1]\di \R^{+}$ is discontinuous, we obtain $(\exists^{2})$ by \cite{kohlenbach2}*{\S3}, and hence $\ACA_{0}$.  
Let $(\Psi_{n})_{n\in \N}$ be a sequence of continuous functions with pointwise limit $\Psi$.  
Corollary \ref{fliep} converts this sequence into a sequence $(\Phi_{n})_{n\in \N}$ of codes for continuous functions.  However, the latter is a code for a Baire 1 function in the sense of \cite{basket, basket2}.
By the latter, Cousin's lemma for {codes for} Baire 1 functions, is also equivalent to $\ACA_{0}$, i.e.\ Cousin's lemma for $\Psi$ now follows via the second-order lemma for $(\Phi_{n})_{n\in \N}$ as $(\exists^{2})\di \ACA_{0}$.   
\end{proof}
Now, Cousin's lemma for \emph{codes for} Baire 1 functions, is equivalent to $\ACA_{0}$ (\cite{basket, basket2}).  
Moreover, by Theorem \ref{flame}, $(\exists^{2})$ is equivalent to the statement: \emph{a code for a Baire 1 function denotes a third-order function}.  
Hence, following item \eqref{HB32} of Theorem~\ref{lebber}, the use of second-order codes changes the logical strength of Cousin's lemma.  
We obtain sharper results on Cousin's lemma in Section \ref{vate}.

\smallskip

Finally, Theorem \ref{klank} expresses that open Baire 1 sets have RM-codes, assuming $\WKL$.  
Now, consider the following version of the (countable) Heine-Borel theorem:
\begin{center}
\emph{let $(O_{n})_{n\in \N}$ be a sequence of open Baire 1 sets, the union of which covers $[0,1]$.  Then there is $m\in \N$ such that $\cup_{n\leq m}O_{n}$ covers $[0,1]$.}
\end{center}
This is a direct generalisation of \cite{simpson2}*{V.1.5} and can be included in Theorem~\ref{lebber}.
The general case, i.e.\ with `Baire 1' omitted, exhibits the Pincherle phenomenon from Remark \ref{PINX}, as shown in \cite{dagsamVII}.

\subsubsection{More on covering lemmas}\label{ferengi}
We show that $\WWKL$ is equivalent to a number of third-order covering theorems (Theorem \ref{flebber}), where the former is \emph{weak weak K\"onig's lemma} as in \cite{simpson2}*{X.1.7}. 
We conjecture that $\RCAo+\WWKL$ cannot prove Theorem \ref{gofusefl}, i.e.\ we cannot use the associated coding results in this section.   

\smallskip

First of all, as suggested by its name, $\WWKL$ is a certain restriction of $\WKL$, namely to trees of positive measure.  Montalb\'an states in \cite{montahue} that $\WWKL$ is robust, i.e.\ equivalent to small perturbations of itself, in the same way as the Big Five are; $\WWKL_{0}$ is even called the `sixth Big system' in \cite{sayo}.   
Now, $\WWKL$ is equivalent to the \emph{Vitali covering theorem} for countable coverings, and to numerous variations (see \cite{simpson2}*{X.1} and \cite{sayo}*{Lemma 8}), including the following.
\begin{center}
\emph{Let $\big((a_{n},b_{n})\big)_{n\in \N}$ be a sequence of open intervals that covers $[0,1]$.  Then for any $\eps>0$, there is $m\in \N$ such that $\cup_{n\leq m}(a_{n}, b_{n})$ has measure $>1-\eps$.}
\end{center}
The following generalisation to uncountable coverings, in the spirit of Cousin's lemma, is not provable from the Big Five and much stronger systems (see \cite{dagsamVII}). 
\begin{center}
\emph{For $\Psi:[0,1]\di \R^{+}$ and $\eps>0$, there are $x_{0}, \dots, x_{k}\in [0,1]$ such that $\cup_{i\leq k}B(x_{i}, \Psi(x_{i}))$ has measure $1-\eps$.}
\end{center}
Vitali indeed considers uncountable coverings in \cite{vitaliorg}, going as far as expressing his surprise regarding the uncountable case.  
We shall refer to the second centred statement as \emph{Vitali's principle} as it constitutes the `combinatorial essence' of the Vitali covering theorem, in our opinion.   
The first centred statement will be called \emph{Vitali's principle for countable coverings}.

\smallskip

Secondly, we establish the following theorem to be contrasted with Corollary~\ref{reklamcor}.  
Recall Remark \ref{donola} which explains why items \eqref{VBX}-\eqref{VBH} are non-trivial. 
\begin{thm}[$\RCAo$]\label{flebber}
The following are equivalent to $\WWKL$.
\begin{enumerate}
\renewcommand{\theenumi}{\roman{enumi}}
\item Vitali's principle for RM-codes of continuous functions.\label{VB-1}
\item Vitali's principle for continuous functions.\label{VB0}
\item Vitali's principle for lsco functions.\label{VB1}
\item Vitali's principle for lsco Baire 1 functions.\label{VBX}
\item Vitali's principle for lsco effectively Baire $n+2$ functions.\label{VBY}
\item Vitali's principle for lsco functions that are continuous almost everywhere.\label{VBF}
\item Cousin's lemma for lsco functions that are pointwise discontinuous.\label{VBJ}
\item Vitali's principle for lsco functions that are not everywhere discontinuous.\label{VBH}
\item Vitali's principle for quasi-continuous functions.\label{VB2}
\item Vitali's principle for cadlag functions.\label{VB3}
\item Vitali's principle for regulated $F: [0,1]\di \R$ such that $F(x)=\frac{F(x-)+F(x+)}{2}$ for all $x\in [0,1]$.\label{VB4}
\item Vitali's principle for regulated $F: [0,1]\di \R$ such that for all $x\in [0,1]$:\label{VB5}
\[
\min (F(x-), F(x+))\leq F(x)\leq \max (F(x-), F(x+)).
\]
\item Vitali's principle for Baire 1 functions.\label{VB32}
\end{enumerate}
\end{thm}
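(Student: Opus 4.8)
The plan is to follow the proof of Theorem \ref{lebber} as closely as possible, with one essential change forced on us: since we expect $\RCAo+\WWKL$ not to prove Theorem \ref{gofusefl}, we may not route continuous functions through a modulus of uniform continuity. The workhorse will instead be the reduction of an \emph{uncountable} covering $\cup_{x\in[0,1]}B(x,\Psi(x))$ to a \emph{countable} one, obtained by restricting the centres to the rationals, after which the second-order Vitali covering theorem (equivalent to $\WWKL$ by \cite{simpson2}*{X.1} and \cite{sayo}) finishes the job. Concretely, I would first record that the equivalence of $\WWKL$ with \eqref{VB-1} follows from the second-order RM of the Vitali covering theorem in exactly the way the equivalence of $\WKL$ with \eqref{HB-1} in Theorem \ref{lebber} follows from \cite{basket, basket2}. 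For \eqref{VB0}: an RM-code for a continuous $[0,1]\di \R$-function denotes a third-order continuous function by Theorem \ref{plofkip}, so \eqref{VB0} implies \eqref{VB-1}, hence $\WWKL$. For the converse, given $\WWKL$, continuous $\Psi:[0,1]\di \R^{+}$, and $\eps>0$, the restriction $\lambda q.\Psi(q)$ to $\Q\cap[0,1]$ is merely a sequence of reals, so $\QFAC^{0,0}$ (included in $\RCAo$) yields a sequence of \emph{rationals} $(r_{q})_{q}$ with $0<r_{q}<\Psi(q)$ and $2r_{q}>\Psi(q)$ for all $q$; continuity of $\Psi$ shows $\cup_{q\in\Q\cap[0,1]}(q-r_{q},q+r_{q})$ covers $[0,1]$, and feeding this rational-endpoint covering to the second-order Vitali covering theorem produces $m\in\N$ with $\cup_{i\leq m}(q_{i}-r_{q_{i}},q_{i}+r_{q_{i}})$ of measure $>1-\eps$; since each such interval lies inside $B(q_{i},\Psi(q_{i}))$, item \eqref{VB0} follows. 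Crucially, neither $\WKL$ nor Theorem \ref{gofusefl} is used.

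Items \eqref{VB1}-\eqref{VBH} all concern subclasses of the lower semi-continuous functions (reading \eqref{VBJ} as concerning Vitali's principle, matching the other items), while \eqref{VB2}-\eqref{VB5} concern the quasi-continuous functions, the cadlag functions, and two special regulated classes; I would handle all of these uniformly. The forward implications are immediate: an RM-code for a continuous function denotes a third-order continuous function (Theorem \ref{plofkip}) lying in each of these classes, so each item implies \eqref{VB-1}, hence $\WWKL$. For the reversals, the argument for \eqref{VB0} applies verbatim once one checks that the rational subcovering $\cup_{q\in\Q\cap[0,1]}B(q,\Psi(q))$ still covers $[0,1]$, and this is where each class's weak continuity property enters, just as around equation \eqref{fki} in the proof of Theorem \ref{lebber}. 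For lsco $\Psi$ (hence for \eqref{VB1}-\eqref{VBH}) and for quasi-continuous $\Psi$ (hence for cadlag $\Psi$): given $x$ with $\Psi(x)>0$, lower semi-continuity (resp.\ quasi-continuity) furnishes a nonempty open $G\subseteq B(x,\frac{\Psi(x)}{5})$ on which $\Psi$ exceeds $\frac{\Psi(x)}{2}$, so any rational $q\in G$ satisfies $r_{q}>\frac{\Psi(q)}{2}>|x-q|$. For \eqref{VB4} and \eqref{VB5} the hypotheses force $\max(F(x-),F(x+))\geq F(x)>0$, so by regularity $F$ exceeds $\frac{F(x)}{2}$ on a suitable one-sided neighbourhood of $x$, and the same kind of rational $q$ works. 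In each case the resulting countable rational-endpoint covering is handed to the second-order Vitali covering theorem, as before.

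Item \eqref{VB32} is the genuine outlier, because the `restrict to rationals' trick breaks: a Baire $1$ function may be a narrow spike at an irrational $x_{0}$ and tend to $0$ towards $x_{0}$ elsewhere, so that no $B(q,\Psi(q))$ with $q\in\Q$ contains $x_{0}$. Here I would revert to the strategy of item \eqref{HB32} in Theorem \ref{lebber}: the forward implication is trivial, and for the reversal, given $\WWKL$ and Baire $1$ $\Psi:[0,1]\di \R^{+}$ (the pointwise limit of continuous $(\Psi_{n})_{n\in\N}$), use \eqref{VB0} when $\Psi$ is continuous, and otherwise note that $(\exists^{2})$, and hence $\ACA_{0}$, holds by \cite{kohlenbach2}*{\S3}, convert $(\Psi_{n})_{n}$ into a sequence of RM-codes $(\Phi_{n})_{n}$ via Corollary \ref{fliep} (a code for a Baire $1$ function in the sense of \cite{basket, basket2}), and invoke the second-order Vitali's principle for such codes, which is provable in $\ACA_{0}$ since it follows at once from Cousin's lemma for codes of Baire $1$ functions (equivalent to $\ACA_{0}$ by \cite{basket, basket2}); the conclusion for $\Psi$ then follows via $(\exists^{2})\di \ACA_{0}$. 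As in Theorem \ref{lebber}, this shows that passing to a second-order code can change the logical strength (cf.\ Theorem \ref{flame}). The hard part will be exactly the reversal for \eqref{VB0}, namely proving Vitali's principle for third-order continuous functions from $\WWKL$ \emph{without} the coding machinery of Theorem \ref{gofusefl}; once the route via rational centres, $\QFAC^{0,0}$, and the second-order Vitali covering theorem is secured, it powers all the lsco, quasi-continuous, cadlag, and regulated variants at once, leaving only the routine per-class check that the rational subcovering covers $[0,1]$ and the code-based fallback for \eqref{VB32}.
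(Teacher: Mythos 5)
Most of your forward directions match the paper. The reduction of the uncountable covering to the countable rational-centred covering $\cup_{q\in\Q\cap[0,1]}B(q,\Psi(q))$ followed by the second-order Vitali covering theorem is exactly how the paper derives item \eqref{VB0} from $\WWKL$ (no use of $\WKL$ or Theorem \ref{gofusefl}), and your per-class checks that this rational covering still covers $[0,1]$ for lsco, quasi-continuous, cadlag and the two regulated classes are correct; the paper itself flags this route as an ``alternative proof'' at the end of the proof of Theorem \ref{lebber}, although in Theorem \ref{flebber} it instead disposes of items \eqref{VB1}--\eqref{VB32} wholesale by excluded middle on $(\exists^{2})$: under $(\exists^{2})$ both $\WWKL$ and all items are outright provable via Theorem \ref{lebber}, and under $\neg(\exists^{2})$ all functions are continuous and the items collapse to \eqref{VB0}. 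Your treatment of item \eqref{VB32} via Corollary \ref{fliep} and Vitali's principle for codes of Baire 1 functions is fine, and you are right that \eqref{VBJ} must be read as Vitali's principle.

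The genuine gap is the reversal. Every implication ``item implies $\WWKL$'' in your proposal is funnelled through the assertion that \eqref{VB-1} (Vitali's principle for RM-codes of continuous functions) is equivalent to $\WWKL$, which you merely ``record'' as following from the second-order RM of the Vitali covering theorem in exactly the way the equivalence of $\WKL$ with item \eqref{HB-1} follows from \cites{basket,basket2}. But there is no such second-order result to cite: \cites{basket,basket2} prove the Cousin/$\WKL$ equivalence, and the second-order Vitali theorem concerns countable coverings only, so the implication from \eqref{VB-1} to $\WWKL$ is precisely what has to be proved. The paper does this by re-using the construction from the proof of \cite{basket2}*{Theorem 4.2}: given a countable covering $\cup_{n\in\N}(a_{n},b_{n})$ of $[0,1]$, one defines in $\RCA_{0}$ a continuous $\delta:[0,1]\di\R^{+}$ with an RM-code satisfying \eqref{tonsk}, i.e.\ $\delta(x)>\frac{1}{2^{k}}$ implies $B(x,\delta(x))\subseteq\cup_{m\leq k}(a_{m},b_{m})$; applying \eqref{VB-1} to $\delta$ and choosing $k$ so large that all finitely many radii exceed $\frac{1}{2^{k}}$, the finite union of $\delta$-balls of measure $>1-\eps$ is contained in $\cup_{m\leq k}(a_{m},b_{m})$, which therefore also has measure $>1-\eps$; this yields Vitali's principle for countable coverings and hence $\WWKL$ by \cite{simpson2}*{X.1}, and item \eqref{VB0} then also yields $\WWKL$ via Theorem \ref{plofkip}. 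Without this (or an equivalent) argument none of your reversals is justified; with it supplied, your proof goes through.
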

\begin{proof}
We establish the equivalences involving $\WWKL$ and items \eqref{VB-1} and \eqref{VB0}.  Invoking the law of excluded middle as in $(\exists^{2})\vee \neg(\exists^{2})$ then finishes the proof.  Indeed, in the former case, $(\exists^{2})\di \ACA_{0}$, which makes $\WWKL$ and all items outright provable in light of Theorem \ref{lebber}.  In case $\neg(\exists^{2})$, all functions on $\R$ are continuous (\cite{kohlenbach2}*{\S3}) and items \eqref{VB1}-\eqref{VB32} reduce to item \eqref{VB0}.

\smallskip

Assume $\WWKL$ and fix continuous $\Psi:[0,1] \di \R^+$.
To show that the countable union $\cup_{q \in [0,1] \cap \Q} B(q, \Psi(q))$ covers $[0,1]$, consider $x \in [0,1]$ and apply the 
definition of continuity of $\Psi$ for $k$ such that $\frac{1}{2^{k}} \leq \Psi(x)/2$, i.e. we obtain $N\in \N$ such that for $y \in B(x, \frac1{2^{N}})$, we have $|\Psi(x) - \Psi(y)|< \Psi(x)/2$.
Then for any $q \in B(x, \frac{1}{2^{N}})\cap \Q$ close enough to $x$, we have $x \in B(q, \Psi(q))$, as required.  
As noted above, $\WWKL$ is equivalent to Vitali's principle for countable coverings (\cite{simpson2}*{X.1}), i.e.\ we may apply the latter to $\cup_{q\in \Q\cap [0,1]}B(q, \Psi(q))$ to obtain item \eqref{VB0}.
For item \eqref{VB-1}, apply Theorem~\ref{plofkip} and use item \eqref{VB0}.

\smallskip

For the reversals, these essentially follow from the proof of \cite{basket2}*{Theorem 4.2}, which takes place in $\RCA_{0}$ and establishes that Cousin's lemma for (codes for) continuous functions, implies $\WKL$.    
In more detail, in the aforementioned proof, one fixes a countable covering $\cup_{n\in \N}(a_{n}, b_{n})$ of $[0,1]$ and defines a continuous function $\delta:[0,1]\di \R^{+}$.  
This function is then shown to have an RM-code and to satisfy:
\be\label{tonsk}\textstyle
(\forall x\in[0,1])\big[\delta(x)>\frac{1}{2^{k}} \di B(x, \delta(x))\subseteq  \cup_{m\leq k} (a_{m}, b_{m})\big].
\ee
In light of \eqref{tonsk}, a finite sub-covering for $\cup_{x\in [0,1]}B(x, \delta(x))$ immediately yields a finite sub-covering for $\cup_{n\in \N}(a_{n}, b_{n})$, i.e.\ Cousin's lemma for (codes for) continuous functions implies the Heine-Borel theorem for countable coverings, and hence $\WKL$ via \cite{simpson2}*{IV.1.1}.  Now, the definition of the RM-code of $\delta$ and the proof of \eqref{tonsk} take place in $\RCA_{0}$, i.e.\ we may simply apply item \eqref{VB-1} to $\cup_{x\in [0,1]}B(x, \delta(x))$ and obtain Vitali's principle for countable coverings, and hence $\WWKL$ via \cite{simpson2}*{X.1}.  
In light of Theorem \ref{plofkip}, item \eqref{VB0} also yield $\WWKL$.
\end{proof}
Finally, certain equivalences from Theorem \ref{flebber} can be proved (or expanded) using \cite{samcie21}*{Cor.\ 2.6}, 
where it is shown that $\WWKL$ is equivalent to Vitali's principle restricted to $\Psi:[0,1]\di\R^{+}$ that are continuous \emph{almost everywhere}. 
\subsection{Equivalences for arithmetical comprehension}\label{hazaha}
In this section, we establish some equivalences between arithmetical comprehension $\ACA_{0}$ and third-order theorems from analysis, including the \emph{Jordan decomposition theorem} as in Theorem~\ref{drd}.  
We have shown in \cite{dagsamXI} that the general case of the latter cannot be proved from the Big Five and much stronger systems like $\Z_{2}^{\omega}$.
Regarding definitions, the system $\ACAo$ is defined as $ \RCAo+(\exists^{2})$ and basic properties are in Section \ref{lll}.

\smallskip

First of all, we need the following theorem, where a \emph{jump discontinuity} of a function $f:\R\di \R$ is a real $x\in \R$ such that the left and right limits $f(x-)$ and $f(x+)$ exist, but are not equal. 
By \cite{dagsamXI}*{\S3.3}, listing \emph{all} points of discontinuity of $BV$-functions cannot be done in in the Big Five and much stronger systems.  
\begin{thm}[$\ACAo$]\label{falm}
If $f:[0,1]\di \R$ is regulated, there is a sequence of reals containing all jump discontinuities of $f$.
\end{thm}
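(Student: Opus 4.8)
The plan is to work in $\ACAo \equiv \RCAo + (\exists^2)$ and exploit the fact that $(\exists^2) \di \ACA_0$, so all the usual second-order convergence and comprehension machinery is available. First I would recall that a regulated function $f:[0,1]\di \R$ has, at each point $x_0$, both one-sided limits $f(x_0-)$ and $f(x_0+)$; a jump discontinuity is a point where these exist (which they always do here) but differ, i.e.\ $f(x_0-) \ne f(x_0+)$. The key classical fact is that the jump set of a regulated function is countable, and the natural way to see this is to stratify by jump size: for each $k\in \N$, let $D_k$ be the set of $x\in[0,1]$ with $|f(x+) - f(x-)| \ge \tfrac{1}{2^k}$. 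Since $f$ is regulated, each $D_k$ is \emph{finite} --- if it were infinite it would have an accumulation point $y$, and near $y$ the function would oscillate by at least $\tfrac{1}{2^k}$ infinitely often from one side, contradicting the existence of $f(y-)$ or $f(y+)$. The jump set is then $\bigcup_{k\in\N} D_k$, a countable union of finite sets, hence enumerable.

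The work is to make this effective in $\ACAo$, i.e.\ to produce an actual sequence of reals, not merely to argue abstractly that the set is countable. For this I would use $\exists^2$ (equivalently $\mu^2$) together with arithmetical comprehension. First, using $\exists^2$ one can, for each rational interval $[p,q]\subset[0,1]$ and each $x_0$, decide and compute approximations to $f(x_0-)$ and $f(x_0+)$ as limits of $f$ over rationals approaching $x_0$ from each side; more precisely, the statements ``$f(x_0-)$ exists and lies within $\tfrac{1}{2^m}$ of $r$'' are arithmetical in $f$ and the real parameters, so $\ACA_0$ can form the relevant sets. Then, for fixed $k$, the assertion ``there exist $N+1$ distinct points $x_0,\dots,x_N$ with $|f(x_i+)-f(x_i-)|\ge \tfrac{1}{2^k}$'' is arithmetical, so by $\ACA_0$ and $\Sigma^0_1$-bounding (available since $(\exists^2)\di\ACA_0$) there is a finite bound $N_k$ on $|D_k|$. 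Having the bound, one can then search --- again using $\mu^2$ to pick witnesses --- through a countable dense set of candidate points and rational precisions to locate the finitely many elements of $D_k$ exactly: more carefully, one does an interval-halving procedure of the same flavour as in the proof of Theorem~\ref{weerklank}, at each stage using $\exists^2$ to decide whether a given dyadic subinterval contains a point of $D_k$ and recursing, terminating because there are at most $N_k$ such points. Concatenating the (finite) enumerations of $D_k$ over all $k$ yields a single sequence $(x_n)_{n\in\N}$ containing all jump discontinuities of $f$.

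The main obstacle, as usual in this setting, is the effective \emph{location} of the jump points rather than the mere cardinality bound: knowing $|D_k|\le N_k$ is arithmetical and cheap, but extracting the points themselves requires one to decide, for dyadic rationals $a<b$, whether $D_k \cap (a,b) \ne \emptyset$, and this is a genuinely $\Sigma^1_1$-looking statement unless one is careful --- it unwinds to ``there exists $x\in(a,b)$ such that the two one-sided limits of $f$ at $x$ differ by at least $\tfrac{1}{2^k}$''. The resolution is that regularity lets us replace the real witness $x$ by a limit of rationals and the one-sided limits by limits of $f$-values over rationals, after which the whole predicate becomes arithmetical in $f$, so $\exists^2$ (which already gives $\ACA_0$, hence arithmetical comprehension, hence a decision procedure for any arithmetical predicate with number parameters, and with $\mu^2$ also a choice of witnesses) suffices. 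One should double-check that the finiteness of $D_k$ is used exactly at the point where the interval-halving recursion must terminate, and that the argument nowhere needs to enumerate the continuity points or invoke anything beyond $\ACAo$; in particular no fragment of choice beyond $\QFAC^{0,0}$ (included in $\RCAo$) is required, since at each finite stage the search space is countable and the witness unique.
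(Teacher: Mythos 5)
Your overall strategy --- stratify the jumps by size into the sets $D_k$, reduce everything to arithmetical statements about $f$ restricted to the rationals, and then locate the points of each $D_k$ by an effective search using $\exists^2$ --- is essentially the paper's, and you correctly identify the crux: eliminating the quantifier over reals from statements like ``$D_k\cap(a,b)\neq\emptyset$''. The gap is that this elimination is precisely the content of the paper's proof, and you assert it rather than prove it. In particular, your claim that ``there exist $N+1$ distinct points $x_0,\dots,x_N$ with $|f(x_i+)-f(x_i-)|\ge 2^{-k}$'' is arithmetical is false as written: it is an existential quantifier over reals, hence on its face $\Sigma^1_1$ (and the appeal to $\Sigma^0_1$-bounding to extract $N_k$ is likewise misplaced). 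To repair this one must prove that, for rationals $a<b$ and $\delta>0$, ``$(a,b)$ contains a (unique) jump of size $\ge\delta$'' is \emph{equivalent} to a formula in which $f$ is applied only to rationals; the paper's version is: for every $n$ there are rationals $a<p<q<b$ with $q-p<2^{-n}$ and $|f(p)-f(q)|>\delta-2^{-n}$, together with (for uniqueness) the existence of an $n$ at which all such witnessing pairs pairwise intersect. The nontrivial direction of this equivalence is not a routine ``replace $x$ by a limit of rationals'': one must extract a convergent subsequence of the witnessing pairs $(p_n,q_n)$ (using $\ACA_0$) and then run a case analysis showing that if the pairs accumulate at a point from one side only, or at an endpoint $a$ or $b$, then the corresponding one-sided limit of $f$ fails to exist, so that the limit point is forced to be a genuine jump of size $\ge\delta$ lying in the open interval. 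Your proposal never carries out this argument, and without it neither the bound $N_k$ nor the decidability of your interval-halving queries is justified.

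A secondary remark: once that equivalence is available, the explicit cardinality bound and the halving recursion are unnecessary. Since jumps of size $\ge\delta$ cannot accumulate (again by regularity), every such jump is the \emph{unique} one in some rational interval; so one can simply enumerate, over all rational triples $(a,b,\delta)$, the unique jump in $(a,b)$ of size $\ge\delta$ whenever the arithmetical uniqueness predicate holds, obtaining that jump as the common limit of effectively found witnessing pairs. This is what the paper does, and it sidesteps both the bounding step and the bookkeeping of a branching search (where, as stated, your recursion does not actually ``terminate'' at finite depth but rather converges along at most $N_k$ branches).
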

\begin{proof}
Let $f:[0,1] \rightarrow\R$ be regulated. We say that $x \in (0,1)$ is a \emph{jump} if $\Gap(f,x) := |f(x+) - f(x-)| $ is $>0$, which is equivalent to the following:
\[\textstyle
(\exists k\in \N)(\forall m\in \N)(\exists q, r\in [0,1]\cap \Q)(q<x<r\wedge |q-r|<\frac{1}{2^{m}}\wedge |f(q)-f(r)|>\frac{1}{2^{k}}  ),
\]
where we note that $f$ only occurs with rational inputs. Hence, the set of jumps is arithmetically definable from $f$.

\smallskip

In the below, $p,q, a,b$ and $\delta > 0$ etc.\ are assumed to be variables over the rationals, while $x$ is a variable over the reals. 
First of all, we prove that items (1) and (2) as follows are equivalent. 
\begin{enumerate}
\item There is exactly one jump $x \in (a,b)$ with $\Gap(f,x) \geq \delta$.
\item
\begin{enumerate}
\item For all $n \in \N$  there are $p,q$ such that $a < p < q < b$, $q - p < 2^{-n}$, and $|f(p) - f(q)| > \delta-2^{-n}$.
\item There is an $n \in \N$  such that for all pairs $(p_1,q_1)$ and $(p_2,q_2)$ satisfying (a) with respect to $n$,  $\delta$, $a$, $b$ and $f$, we have that $(p_1,q_1) \cap (p_2,q_2) \neq \emptyset$.
\end{enumerate}
\end{enumerate}
Assume item (1) and note that (2).(a) follows from the fact that there is at least one point in $(a,b)$ with a jump $\geq \delta$. 
To prove (2).(b), we use the fact  there is only one $x \in (a,b)$ with a jump $\geq \delta$. To prove this uniqueness, we show that:
\begin{center}
\emph{there is $n\in \N$ such that $(p,q)$ satisfying \(2\).\(a\) contains an $x$ with jump $\geq \delta$.}
\end{center}
Suppose the centred claim is false. Then for each $k$, there will be $p_k< q_k$ such that $|f(p_k) -f(q_k)| > \delta- 2^{-k}$, $q_k -p_k < 2^{-k}$, and $x \not \in (p_k,q_k)$. 
Since we may pick a convergent sub-sequence (provable in $\ACAo$ by \cite{simpson2}*{III.2}), we can, without loss of  generality, assume that $(p_k)_{k \in \N}$ has a limit $y$. 
If $y$ is one of the objects $a$ or  $b$ , both $p_k$ and $q_q$ will approximate $y$ from the same side, violating the assumption of one-sided limits. 
If $y = x$, then for each $k$ both $p_k$ and $q_k$ will be on the same side of $x$, and then infinitely many will approach $x$ from the same side, again violating the assumption of one-sided limits. 
For any other value of $y$ we can either argue as above, or obtain that there is also a jump at $y$ of a size $\geq \delta$, contradicting the assumption of item (1).

\smallskip

Now assume (2) and note that by (2).(a) there cannot be more than one jump $x$ in $(a,b)$ with $\Gap(f,x) \geq \delta$. Now, let $n$ be as in (2).(b) and note that there is a pair $p_k < q_p$ of rational points for each $k \geq n$ satisfying (2).(b). We can find such $p_k,q_k$ via an effective search, and by (2).(b) we must have that both $(p_k)_{k \geq n}$ and $(q_k)_{k \geq n}$ converge, and to the same limit $x$. Clearly, using arguments as in the previous paragraph, $x$ is a jump in $(a,b)$ with $\Gap(f,x) \geq \delta$.

\smallskip

Having established the equivalence between (1) and (2), we see that the set of triples $(a,b,\delta)$ such that $(a,b)$ contains exactly one jump $x$ with $\Gap(f,x) \geq \delta$ is arithmetically definable from the restriction of $f$ to the rationals, and using the characterisation we see that the unique $x$ then is definable from the same restriction using $\exists^2$. In this way, we can enumerate the set of jumps.
\end{proof}
We can now generalise Corollary \ref{fliep} as follows.  
\begin{cor}[$\RCAo+\WKL$]\label{fromein}
A cadlag function $f:[0,1]\di \R$ has a modulus of cadlag, i.e.\ there is $G:(\R\times \N)\di \N$ such that
\be\label{flaunt}\textstyle
(\forall k\in \N, x, y, z\in [0,1])\left[ 
\begin{array}{c}
\textstyle y, z\in  (x-\frac{1}{2^{G(x,k)}}, x) \di   |f(y)-f(z)|<\frac{1}{2^{k}} \\ 
\wedge\\ 
y\in (x, x+\frac{1}{2^{G(x, k)}}) \di   |f(x)-f(y)|<\frac{1}{2^{k}}
\end{array}
\right].
\ee
\end{cor}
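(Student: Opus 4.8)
The plan is to split on the law of excluded middle $(\exists^{2})\vee \neg(\exists^{2})$, exactly as in the proof of Theorem \ref{flebber}; this is necessary because without $(\exists^{2})$ we cannot carry out the arithmetical minimisation that defines $G$, while a cadlag function need not be Baire $1$ in our setting (Remark \ref{donola}), so one cannot simply feed a representing sequence into Corollary \ref{fliep}. In the case $\neg(\exists^{2})$, Brouwer's theorem (\cite{kohlenbach2}*{\S3}) gives that every $\R\di\R$-function, and in particular $f$, is continuous, and Theorem \ref{gofusefl} then supplies a modulus of uniform continuity $h:\N\di\N$. Setting $G(x,k):=h(k)$ immediately yields \eqref{flaunt}: if $y,z\in(x-\tfrac{1}{2^{h(k)}},x)$ then $|y-z|<\tfrac{1}{2^{h(k)}}$ and hence $|f(y)-f(z)|<\tfrac{1}{2^{k}}$, and if $y\in(x,x+\tfrac{1}{2^{h(k)}})$ then $|x-y|<\tfrac{1}{2^{h(k)}}$ and so $|f(x)-f(y)|<\tfrac{1}{2^{k}}$. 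Thus in this case the pointwise modulus can even be taken independent of $x$.

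The substantial case is $(\exists^{2})$, where we have $\ACA_{0}$ together with Feferman's $\mu^{2}$ (Remark \ref{LEM2}). Here I would define $G(x,k)$ to be the least $N\in\N$ for which both
\[
(\forall p,q\in (x-\tfrac{1}{2^{N}},x)\cap\Q)\big(\,|f(p)-f(q)|\leq \tfrac{1}{2^{k+1}}\,\big)
\]
and
\[
(\forall q\in (x,x+\tfrac{1}{2^{N}})\cap\Q)\big(\,|f(x)-f(q)|\leq \tfrac{1}{2^{k+1}}\,\big)
\]
hold. Such an $N$ exists for each $(x,k)$: since $f$ is regulated the left limit $f(x-)$ exists, producing a left neighbourhood on which the oscillation of $f$ is below $\tfrac{1}{2^{k+1}}$, and since $f$ is cadlag it is right-continuous at $x$, producing a right neighbourhood on which $|f(x)-f(\cdot)|<\tfrac{1}{2^{k+1}}$. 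Crucially, both displayed conditions quantify only over rationals and merely compare values of $f$, so relative to $\exists^{2}$ they are arithmetical in $f$ with $x$ as a parameter; hence $\mu^{2}$ returns the least witnessing $N$, and the resulting $G$ is a genuine extensional functional of type $(\R\times\N)\di\N$ (its defining conditions depend on $x$ only through $f(x)$ and the interval endpoints, both of which respect $=_{\R}$).

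It remains to check that this $G$ satisfies \eqref{flaunt}, and this is where the only real subtlety lies: the defining conditions constrain $f$ only on the rationals, whereas \eqref{flaunt} ranges over all reals $y,z$. The passage from rationals to reals is precisely the force of quasi-continuity. As a cadlag function is quasi-continuous, the localisation of \eqref{fki}/\eqref{deru} to an arbitrary open subinterval $(a,b)$ holds, namely $(\exists x\in(a,b))(f(x)>r)\asa(\exists q\in(a,b)\cap\Q)(f(q)>r)$ for every $r\in\R$, and dually for infima; consequently the supremum and infimum of $f$ over the reals of $(a,b)$ coincide with those over its rationals. Applying this to $(x-\tfrac{1}{2^{G(x,k)}},x)$ shows $|f(y)-f(z)|\leq \tfrac{1}{2^{k+1}}<\tfrac{1}{2^{k}}$ for all reals $y,z$ there, and applying it to $(x,x+\tfrac{1}{2^{G(x,k)}})$ shows $f(x)-\tfrac{1}{2^{k+1}}\leq f(y)\leq f(x)+\tfrac{1}{2^{k+1}}$ for all reals $y$ there, whence $|f(x)-f(y)|<\tfrac{1}{2^{k}}$. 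This establishes \eqref{flaunt}. I expect this quasi-continuity reduction to be the main obstacle, since without it the defining predicate for $G$ would contain genuine quantification over the reals, would fail to be arithmetical, and $\mu^{2}$ could no longer be applied.
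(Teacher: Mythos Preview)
Your argument is correct, but it takes a genuinely different route from the paper's, and in an interesting way: the statement is presented as a \emph{corollary} to Theorem~\ref{falm}, and the paper's proof uses that theorem essentially. In the discontinuous case (equivalently, when $(\exists^{2})$ holds), the paper first invokes Theorem~\ref{falm} to obtain a sequence $(x_{n})_{n\in\N}$ listing all jump discontinuities of $f$---which for a cadlag function are all discontinuities---and then defines $G(x,k)$ by a case split on whether $x$ equals some $x_{n}$: at a continuity point one searches for the least $N$ giving a two-sided rational bound $|f(x)-f(q)|<\tfrac{1}{2^{k+1}}$, while at a listed discontinuity one searches for the least $N$ making the bracketed formula in \eqref{flaunt} hold for all rational $y,z$.

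Your proof bypasses Theorem~\ref{falm} entirely: you define $G(x,k)$ uniformly, without distinguishing continuity points from jumps, as the least $N$ making two $\Pi^{0}_{1}$-in-$f$ rational conditions hold, and then you lift the bounds to arbitrary reals via the right-continuity of cadlag functions (packaged as the localised form of \eqref{deru}). This is more direct and shows that the result does not actually depend on being able to enumerate the discontinuity set; what your approach buys is a self-contained argument using only $\mu^{2}$ and the elementary fact that $f(y)=\lim_{q\to y^{+},\,q\in\Q}f(q)$ for cadlag $f$. What the paper's approach buys is that, once one has paid the price of Theorem~\ref{falm}, the modulus at continuity points is visibly a two-sided continuity modulus, which is convenient for the subsequent remark that cadlag functions can be shown Baire~1 in a weak system. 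Both proofs handle the continuous case identically via Theorem~\ref{gofusefl}/Corollary~\ref{fliep}.
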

\begin{proof}
In case $f:[0,1]\di \R$ is continuous, use Corollary \ref{fliep} to obtain a modulus of continuity, which readily yields a modulus of cadlag. 
In case $f:[0,1]$ is discontinuous, we obtain $(\exists^{2})$ by \cite{kohlenbach2}*{\S3} and we may use the theorem to obtain a sequence $(x_{n})_{n\in \N}$ that lists all points of discontinuity of $f$. 
Now define a modulus of cadlag $G:(\R\times \N)\di \N$ based on the following case distinction.
\begin{itemize}
\item In case $x\ne x_{n}$ for all $n\in \N$, then $G(x, k)$ is the least $N\in \N$ such that for all $y\in (x-\frac{1}{2^{N}}, x+\frac{1}{2^{N}})\cap \Q$, we have $|f(x)-f(y)|<\frac{1}{2^{k+1}}$. 
\item In case $x= x_{n_{0}}$ for some $n_{0}\in \N$, then $G(x, k-1)$ is the least $N\in \N$ such that the formula in big square brackets in \eqref{flaunt} holds for all $ y, z\in \Q\cap [0,1]$.
\end{itemize}
Then $G$ is as required by the corollary and we are done. 
\end{proof}
One can also use the previous theorem and corollary to show that cadlag functions are Baire 1 in a relatively weak system, but the technical details are somewhat tedious.
This should be contrasted with Theorem \ref{reklam} as by the latter the Big Five cannot prove that e.g.\ regulated functions are Baire 1.  One similarly establishes (part of)
the \emph{Lebesgue decomposition theorem} (see e.g.\ \cite{lebes1}).

\smallskip

Secondly, Theorem \ref{falm} has interesting consequences, e.g.\ Theorem \ref{XZ}, which should be contrasted with Theorem \ref{reklam2}.  
We shall make (seemingly essential) use of the following fragment of the induction axiom, which also follows from $\QFAC^{0,1}$.
\bdefi[$\IND_{2}$]\label{INDT}
Let $Y^{2}, k^{0}$ satisfy $(\forall n\leq k)(\exists f\in 2^{\N})(Y(f, n)=0)$.  
There is $w^{1^{*}}$ such that $(\forall n\leq k)(\exists i<|w|)(Y(w(i), n)=0)$.
\edefi
We note that the class $NBV$ from \cite{voordedorst}*{Def.~1.2}, \cite{rijnwand}*{\S1.1}, or \cite{Holland}*{p.\ 103}, is essentially the intersection between $BV$ and the cadlag functions.  
As discussed in \cite{voordedorst}, the classical Riemann-Stieltjes integral provides a natural one-to-one correspondence between the dual of the space $C([a,b])$ of continuous functions and the space $NBV([a,b])$ of (normalized) $BV$-functions.  
\begin{thm}[$\RCAo+\IND_{2}$]\label{XZ} The following are equivalent to $\ACA_{0}$.
\begin{itemize}
\item The Jordan decomposition theorem for cadlag $BV$-functions.
\item The Jordan decomposition theorem for $BV$-functions satisfying the equality $f(x)=\frac{f(x+)+f(x-)}{2}$ for $x\in (0,1)$.
\item The Jordan decomposition theorem for quasi-continuous $BV$-functions.
\end{itemize}
We do not need $\IND_{2}$ for the first item. 
\end{thm}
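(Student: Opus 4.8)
The plan is to prove each of the three equivalences by the usual two implications, with the forward direction ($\ACAo\vdash$ each item) carrying essentially all the content: recall from the surrounding discussion that the Jordan decomposition theorem for \emph{arbitrary} $BV$-functions is not even provable in $\Z_{2}^{\omega}$, so the point is that the restriction to cadlag (resp.\ midpoint-value, quasi-continuous) $BV$-functions is exactly what brings the principle down to $\ACA_{0}$. The strategy for the forward direction is to split a $BV$-function of the relevant class into a \emph{continuous} $BV$-part, to which the second-order Jordan decomposition theorem (provable in $\ACA_{0}$ by \cite{kreupel, nieyo}, and available on third-order functions via Theorems \ref{plofkip} and \ref{gofusefl}) applies, plus a \emph{saltus} part built explicitly from the jump data supplied by Theorem \ref{falm}, which is visibly a difference of two monotone functions of the same class.

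In more detail, fix $f:[0,1]\di\R$ in the relevant class with variation bound $k_{0}\in\N$; by \cite{kohlenbach2}*{\S3} we may assume $f$ is discontinuous, so that $(\exists^{2})$ and hence $\mu^{2}$, $\WKL$, and second-order arithmetical comprehension are available (if $f$ is continuous it has an RM-code by Theorem~\ref{gofusefl} and Corollary~\ref{fliep}, and one applies the second-order theorem directly). First I would observe that, for each of the three classes, a regulated function is continuous at every point where $f(x-)=f(x+)$, so Theorem~\ref{falm} yields a sequence $(x_{n})_{n\in\N}$ listing all discontinuities of $f$ together with the jumps $\sigma_{n}:=f(x_{n}+)-f(x_{n}-)$, computed from the restriction of $f$ to $\Q$ via $\mu^{2}$, with $\sum_{n}|\sigma_{n}|\leq k_{0}$. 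I then set $f=f_{c}+f_{j}$, where $f_{j}$ is the saltus function attached to the jump data: for cadlag $f$ take $f_{j}(x):=\sum_{n:\,x_{n}\leq x}\sigma_{n}$, and for midpoint-value (resp.\ quasi-continuous, after choosing one representative per value in the list) $f$ take $f_{j}(x):=\sum_{n}\sigma_{n}\psi_{n}(x)$ with $\psi_{n}(x)$ equal to $0,\tfrac12,1$ according as $x<x_{n}$, $x=x_{n}$, $x>x_{n}$. Using arithmetical comprehension these series converge to genuine reals with a computable modulus, so $f_{j}$ is a well-defined $BV$-function in the same class, and $f_{j}=f_{j}^{+}-f_{j}^{-}$ where $f_{j}^{\pm}$ uses $\max(\pm\sigma_{n},0)$ in place of $\sigma_{n}$, both non-decreasing. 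Then $f_{c}:=f-f_{j}$ is regulated and jump-free, hence continuous, and $BV$; Theorem~\ref{gofusefl} gives it a modulus of uniform continuity, hence an RM-code (Corollary~\ref{fliep}), and the second-order Jordan decomposition theorem (or directly: $V_{0}^{x}(f_{c})$ exists by arithmetical comprehension, being the supremum of a sequence of reals, and gives the decomposition) yields codes for non-decreasing $g_{c},h_{c}$ with $f_{c}=g_{c}-h_{c}$, which Theorem~\ref{plofkip} returns to third order. Finally $f=(g_{c}+f_{j}^{+})-(h_{c}+f_{j}^{-})$ is the required decomposition into non-decreasing functions. For cadlag $f$ this goes through in $\RCAo+(\exists^{2})$ with no instance of induction beyond what $\RCAo$ provides; for the midpoint-value and quasi-continuous classes, $\IND_{2}$ enters in the bookkeeping that de-duplicates $(x_{n})_{n\in\N}$ and controls the \emph{two-sided} jump contributions on a sub-interval when passing between $f$, $f_{j}$ and $f_{c}$ — a uniform finite-selection statement over $2^{\N}$ that is automatic in the one-sided cadlag case.

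For the reversals, each of the three properties holds trivially for continuous functions, so each item implies the Jordan decomposition theorem for third-order continuous $BV$-functions; via Theorem~\ref{plofkip} this specialises to RM-codes for continuous $BV$-functions, where the restriction of the resulting monotone decomposition to $\Q\cap[0,1]$ is a second-order object, and one then invokes the known reversal of the second-order Jordan decomposition theorem to $\ACA_{0}$ (\cite{kreupel, nieyo}). The step I expect to be the main obstacle is precisely the correct organisation of the forward direction: verifying that the saltus function $f_{j}$ really lies in the prescribed class and really has bounded variation (a naive infinite sum of one-sided steps need not be cadlag or quasi-continuous unless the jump list is handled with care), and pinning down that $\IND_{2}$ — and nothing stronger — suffices for the two-sided classes while being dispensable for the cadlag one. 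A secondary delicate point is checking that the second-order reversal can indeed be fed continuous $BV$-functions together with their variation bound, so that no use is made of discontinuous second-order $BV$-codes that would fail to lift to third order.
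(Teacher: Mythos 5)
Your forward direction is workable but takes a different and heavier route than the paper's. The paper does not split $f$ into a continuous part plus a saltus part at all: assuming $(\exists^{2})$ (the continuous case being handled via Corollary \ref{fliep} and the second-order results of \cite{nieyo}), it uses the enumeration $(x_{n})_{n\in\N}$ of discontinuities from Theorem \ref{falm} to replace the supremum in \eqref{tomb} by a supremum over $\Q$ and $\N$, defines $g(x):=V_{0}^{x}(f)$ directly with $\exists^{2}$, and takes $h:=g-f$; both are non-decreasing and one is done. Your saltus construction can be made to work, but it forces you to verify extra facts the paper never needs (convergence of the jump series, continuity of $f_{c}=f-f_{j}$, and the choice of the step value $\psi_{n}(x_{n})$ in the quasi-continuous case, where quasi-continuity plus regulatedness forces $f(x_{n})\in\{f(x_{n}-),f(x_{n}+)\}$ rather than the midpoint). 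You have also misattributed the role of $\IND_{2}$: in the paper it is used solely to guarantee that a $BV$-function is regulated (\cite{dagsamXI}*{Theorem 3.33}), so that Theorem \ref{falm} applies to the second and third items; it has nothing to do with de-duplicating the list of discontinuities, and it is dispensable for the first item simply because cadlag functions are regulated by definition.

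The genuine gap is in your reversal. The $\ACA_{0}$ reversal you want to cite from \cite{nieyo} is a reversal of the \emph{second-order} statement, in which the monotone parts are given by codes for continuous functions. The third-order item only hands you non-decreasing $g,h:[0,1]\di\R$ with no continuity whatsoever, and ``restricting $g,h$ to $\Q$'' does not produce the data that the cited reversal consumes; to make your route work you would have to reopen the construction in \cite{nieyo} and check that it extracts the jump from nothing more than the rational values of an arbitrary monotone decomposition, which you neither do nor flag (your stated ``delicate point'' concerns the input side, which is unproblematic). The paper closes exactly this gap by invoking the law of excluded middle on $(\exists^{2})$: if $(\exists^{2})$ holds, $\ACA_{0}$ is immediate; if $\neg(\exists^{2})$, all third-order functions on $\R$ are continuous by \cite{kohlenbach2}*{\S3}, so the $g,h$ produced by the item are continuous and non-decreasing, $\QFAC^{1,0}$ then yields moduli of continuity, Remark \ref{LEM2} converts these into RM-codes, and only at that point is the second-order reversal of \cite{nieyo} applied. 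Without this case distinction (or a re-proof of the second-order reversal from rational data), your final step does not go through as stated.
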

\begin{proof}
We establish the equivalence between $\ACA_{0}$ and the first item based on Theorem \ref{falm} and the observation that cadlag functions do not have removable discontinuities.  One proceeds analogously for the second and third item, as the functions therein also do not have removable discontinuities. 
As shown in \cite{dagsamXI}*{Theorem 3.33}, a $BV$-function is regulated assuming $\IND_{2}$.

\smallskip

First of all, assume $\ACA_{0}$ and fix a cadlag $BV$-function $f:[0,1]\di \R$.  
In case the latter is continuous, Corollary \ref{fliep} provides an RM-code.
We can now apply the second-order RM results from \cite{nieyo}*{\S3} to obtain codes for (continuous) increasing functions $g, h:[0,1]\di \R$ such that $f=g-h$ on $[0,1]$. 
Theorem \ref{plofkip} thus yields the first item from the theorem, in this case.  In case $f:[0,1]\di \R$ is discontinuous, we have access to $(\exists^{2})$ by \cite{kohlenbach2}*{\S3}.  
By Theorem \ref{falm}, there is a sequence $(x_{n})_{n\in \N}$ of all reals in $[0,1]$ where $f$ is discontinuous; indeed, since $f$ is cadlag, it only has jump discontinuities by definition.  
Given $(x_{n})_{n\in \N}$, the supremum in \eqref{tomb} can be replaced by a supremum over $\Q$ and $\N$.  As a result, we can define $V_{a}^{b}(f)$ as in \eqref{tomb} using $\exists^{2}$, where $a, b\in [0,1]$ are parameters.  Clearly, $g(x):=\lambda x.V_{0}^{x}(f)$ is an increasing function, and the same for $h(x):=g(x)-f(x)$ via an elementary argument. 
Hence, $f=g-h$ in this case as well, and the first item follows.  

\smallskip

Secondly, assume the first item of the theorem.  We now invoke the law of excluded middle as in $(\exists^{2})\vee \neg(\exists ^{2})$. 
In the former case we are done, as $(\exists^{2})\di \ACA_{0}$ is trivial.  In the latter case, i.e.\ we have $\neg(\exists^{2})$, all functions on $\R$ are continuous by \cite{kohlenbach2}*{\S3}.  
Hence, the first item now expresses:  
\begin{center}
\emph{a continuous $f : [0, 1] \di \R$ of bounded variation is the difference of two continuous non-decreasing $g, h:[0,1]\di \R$.}
\end{center}
Now fix some code $\Phi$ for a continuous $BV$-function and use Theorem \ref{plofkip} to obtain third-order $f:[0,1]\di \R$ that equals the value of $\Phi$ everywhere. 
By the centred statement, there are two continuous non-decreasing $g, h:[0,1]\di \R$ such that $f=g-h$ on $[0,1]$.  
Now consider:
\[\textstyle
(\forall x\in [0,1], k\in \N)(\exists N\in \N)(|g(x)-g(x+\frac{1}{2^{N}})|<\frac{1}{2^{k}} \wedge |g(x)-g(x-\frac{1}{2^{N}})|<\frac{1}{2^{k}} ).
\]
Applying $\QFAC^{1,0}$, one obtains a (continuous) modulus of continuity for $g$, as $g$ is non-decreasing.
Following Remark \ref{LEM2}, this readily yields an RM-code for $g$ (and $h$), i.e.\ we have also established the second-order version of 
the centred statement.  The latter implies $\ACA_{0}$ by \cite{nieyo}*{\S3}, and we are done. 
\end{proof}
One possible addition to the previous theorem is as follows:  a real function is usco if and only if it is \emph{sequentially usco}, i.e.\ the pointwise limit of a \emph{descreasing} sequence of continuous functions (see e.g.\ \cite{bengelkoning}*{p.\ 62}).  
This sequential notion goes back to Baire's equivalent definition of usco (see \cite{beren}) and the associated restriction of Jordan decomposition theorem is readily\footnote{Fix $f\in BV$ and let $(f_{n})_{n\in \N}$ be a decreasing sequence of continuous functions with pointwise limit $f$.  By Theorem \ref{falm}, we only need to enumerate the removable discontinuities of $f$.  
Using $\exists^{2}$, one readily enumerates the \emph{strict local maxima} of a {continuous} $g:[0,1]\di \R$ (\cite{samwollic22}*{p.\ 272}), i.e.\ those $x\in [0,1]$ such that $(\exists N\in \N)(\forall y\in B(x, \frac{1}{2^{N}}))( x\ne y \di g(y)<g(x))$.  
Now let $(x_{m})_{n\in \N}$ be an enumeration of all strict local maxima of all $f_{n}$.  For any $m\in \N$, $x_{m}$ is a removable discontinuity of $f$ if and only if there is $n_{0}\in \N$ such that $x_{m}$ is a strict local maximum of $f_{n}$ for $n\geq n_{0}$.} seen to be equivalent to $\ACA_{0}$.  A similar result can be obtained for Baire 1$^{*}$ formulated using RM-codes for closed sets, in light of \cite{dagsamXIII}*{Lemma 4.11}.  

\smallskip

Next, Theorem \ref{falm} has the following consequence.  We refer to \cite{simpson2}*{p.\ 136} for the details on Riemann integration in $\RCA_{0}$. 
\begin{thm}[$\RCAo$]\label{defzie} The axiom $\WKL$ is equivalent to: 
\begin{itemize}
\item a code for a continuous function on the unit interval is Riemann integrable. 
\item a continuous function on the unit interval is Riemann integrable. 
\item a cadlag function on the unit interval is Riemann integrable. 
\end{itemize}
\end{thm}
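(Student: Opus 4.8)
The plan is to dispatch the first two items immediately and concentrate on the cadlag case. The first item, \emph{a code for a continuous function on $[0,1]$ is Riemann integrable}, is exactly the second-order equivalence \cite{simpson2}*{IV.2.7} and needs no further argument. The second item, \emph{a (third-order) continuous function on $[0,1]$ is Riemann integrable}, is literally the third bullet of Theorem \ref{ploppy} and is therefore already established; I would simply point this out. Everything then reduces to showing, over $\RCAo$, that \emph{every cadlag $F:[0,1]\di\R$ is Riemann integrable} is equivalent to $\WKL$, where `Riemann integrable' is read virtually: there is a real $I$ such that for every $k$ there is $N$ with $|S(F,P)-I|<\frac{1}{2^{k}}$ for every tagged partition $P$ of mesh $<\frac{1}{2^{N}}$.

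For the reversal I would start from an RM-code $\Phi$ for a continuous $[0,1]\di\R$-function and invoke Theorem \ref{plofkip} to obtain a third-order $F:[0,1]\di\R$ equal to the value of $\Phi$ at every point; such $F$ is trivially cadlag. Applying the cadlag statement to $F$ yields a real $I$ together with the Cauchy-net property of the Riemann sums of $F$, and since $F$ agrees with $\Phi$ pointwise these are exactly the Riemann sums of the code; restricting to tagged partitions with rational nodes and tags gives precisely the second-order assertion that $\Phi$ is Riemann integrable, whence $\WKL$ by \cite{simpson2}*{IV.2.7}. The reversals for the first two items follow from the same citation (together with Theorem \ref{plofkip} for the second).

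For the forward direction, assume $\WKL$, fix cadlag $f:[0,1]\di\R$, and split on $(\exists^{2})\vee\neg(\exists^{2})$. If $\neg(\exists^{2})$, all functions on $\R$ are continuous (\cite{kohlenbach2}*{\S3}), so $f$ is continuous; Corollary \ref{fliep} provides an RM-code for $f$ and the second-order theory (\cite{simpson2}*{IV.2.5}, available under $\WKL$) yields Riemann integrability. If $(\exists^{2})$, then $\ACAo$ holds, hence $\WKL$ and the usual convergence theorems; here $f$ is bounded by item \eqref{W423} of Theorem \ref{tank}, and by Theorem \ref{falm} there is a sequence $(x_{n})_{n\in\N}$ listing all jump discontinuities of $f$, which (since $f$ is cadlag) are \emph{all} of its discontinuities. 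Using $\exists^{2}$, the modulus of cadlag from Corollary \ref{fromein}, and a compactness argument, I would produce for each $k$ a step function $s_{k}$ (constant on finitely many half-open subintervals, with the values $f(x_{n})$ inserted at the listed points) with $|f(x)-s_{k}(x)|<\frac{1}{2^{k}}$ for all $x\in[0,1]$; each $s_{k}$ is Riemann integrable with integral rational and computable in the data. The numbers $\int_{0}^{1}s_{k}$ form a fast Cauchy sequence with limit $I$, and the inline estimate $|S(f,P)-\int_{0}^{1}s_{k}|\le |S(s_{k},P)-\int_{0}^{1}s_{k}|+\frac{1}{2^{k}}$ for $P$ of small enough mesh shows that $f$ is Riemann integrable with integral $I$; the countably many exceptional points $x_{n}$ contribute nothing, and with $\exists^{2}$ their effect on any Riemann sum is bounded explicitly.

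The main obstacle I anticipate is building the uniform step-function approximations $s_{k}$ inside $\ACAo$: one must turn the pointwise existence of the one-sided limits of $f$ (equivalently, Corollary \ref{fromein}'s modulus of cadlag) into a \emph{single} finite partition on whose open subintervals $f$ oscillates by less than $\frac{1}{2^{k}}$, which calls for a compactness step ($\WKL$, or Cousin's lemma for cadlag functions from item \eqref{HB3} of Theorem \ref{lebber}) combined with an effective interval-halving search using $\exists^{2}$, and then one must carefully control the contribution of the jump points to the Riemann sums. Everything else — the stability of Riemann sums under uniform limits, the integrability of step functions, and the passage between a third-order function and its code — is a routine adaptation of the second-order development in \cite{simpson2}*{IV.2}.
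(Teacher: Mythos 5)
Your proposal is correct and follows essentially the same route as the paper: the first two items via \cite{simpson2}*{IV.2.7} together with Theorems \ref{plofkip} and \ref{ploppy}/Corollary \ref{fliep}, the reversal for the cadlag item by converting a code into a (trivially cadlag) third-order continuous function, and the forward direction by case-splitting on continuity/$(\exists^{2})$ and invoking Theorem \ref{falm} and the modulus of Corollary \ref{fromein}. Your step-function approximation is simply a worked-out version of what the paper compresses into ``the usual epsilon-delta proof now goes through''.
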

\begin{proof}
The equivalence for the first item is immediate by \cite{simpson2}*{IV.2.7}. 
For the second item, one additionally uses Theorem \ref{plofkip} and Corollary \ref{fliep}.  

\smallskip

Now assume the third item and fix some code $\Phi$ for a continuous function on $[0,1]$.  Use Theorem \ref{plofkip} to convert the latter into a continuous 
third-order function, which is trivially cadlag.  By the first item, this function is Riemann integrable, and hence so is the function represented by $\Phi$.  We obtain $\WKL$ by the first item. 

\smallskip

Now assume $\WKL$ and let $f:[0,1]\di \R$ be cadlag.  If the latter is also continuous, we may use the second item to obtain the third one.   In case $f$ is discontinuous, we obtain $(\exists^{2})$ by \cite{kohlenbach2}*{\S3}.
Use Theorem \ref{falm} to obtain a sequence $(x_{n})_{n\in \N}$ which enumerates all the points where $f$ is discontinuous (as cadlag functions do not have removable discontinuities).
The usual `epsilon-delta' proof now goes through assuming a modulus as provided by Theorem \ref{fromein}.  
\end{proof}
Finally, we mention some related results from the RM of $\ACA_{0}$.  
\begin{rem}\rm
The RM of $\ACA_{0}$ involves some theorems from analysis, like e.g.\ \cite{simpson2}*{IV.2.11 and III.2.2}.
In the same way as above, one shows that the following are also equivalent to $\ACA_{0}$ over $\RCAo$.
We use `RM-closed' to refer to the second-order definition of codes for closed set in RM (\cite{simpson2}*{II.5.6}).
\begin{itemize}
\item Let $F:C\di \R$ be cadlag where $C\subset [0,1]$ is an RM-closed set.  Then $\sup_{x\in C}F(x)$ exists. 
\item Let $F:C\di \R$ be cadlag and usco where $C\subset [0,1]$ is an RM-closed set.  Then $F$ attains a maximum value on $C$.
\item Let $(f_{n})_{n\in \N}$ be a Cauchy sequence \(relative to the sup norm\) of continuous functions.  Then the limit function exists and is continuous. 
\item Let $(f_{n})_{n\in \N}$ be a Cauchy sequence \(relative to the sup norm\) of cadlag functions.  Then the limit function exists and is cadlag. 
\end{itemize}
Another promising theorem is the compactness theorem (\cite{thebill}*{Theorem 14.3}) for the Skorohod space (of cadlag functions), which is presented as a generalisation of the Arzel\`a-Ascoli theorem.
The latter is part of the RM of $\ACA_{0}$ by \cite{simpson2}*{III.2.9}.  Similarly, a version of the Arzel\`a-Ascoli theorem for quasi-continuous functions exists, namely \cite{haloseg}*{Prop.\ 2.22} and related theorems.
\end{rem}

\subsection{Equivalences for $\Pi_{1}^{1}$-comprehension}\label{FIVE}
We establish some equivalences for $\FIVE$ involving third-order theorems from analysis.  

\smallskip

First of all, we establish Theorem \ref{flapke} to be contrasted with Theorem~\ref{truppke}.
Here, $\Sigma_{1}^{1}\textsf{-IND}$ is the induction axiom for $\Sigma_{1}^{1}$-formulas and $\IND_{2}$ is as in Definition \ref{INDT}. 
As to notation, fix $(r_{n})_{n\in \N}$, a standard injective enumeration of the non-negative rational numbers. 
For $B \subset \Q^+$, we say that `$B$ is $\Sigma^1_1$ with parameter $x\in \N^{\N}$', if $A = \{a : r_a \in B\}$ is $\Sigma^1_1$ with parameter $x$. Since we do not always have access to $\Sigma^1_1$-comprehension, we refer to both $A$ and $B$ as (defined) classes.

\begin{theorem}[$\ACAo$]\label{flapke}
The following are equivalent.
\begin{enumerate}
\renewcommand{\theenumi}{\roman{enumi}}
\item For any $x\in \N^{\N}$, any bounded $\Sigma^{1,x}_1$-class in $\Q^+$ has a supremum.\label{birst}
\item A bounded effectively Baire 2 $f:[0,1]\di \R$ has a supremum.\label{becond}
\item For $n \geq 2$, a bounded and  effectively Baire $n$ $f:[0,1]\di \R$ has a supremum.\label{bird}
\end{enumerate}
Assuming $\IND_{2}+\Sigma_{1}^{1}\textsf{\textup{-IND}}$, these items are equivalent to $\FIVE$.
\end{theorem}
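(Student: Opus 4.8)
The plan is to prove $\FIVE\Rightarrow\eqref{birst}$ and, over $\ACAo+\IND_{2}+\Sigma_{1}^{1}\textsf{\textup{-IND}}$, the converse $\eqref{birst}\Rightarrow\FIVE$; the equivalences of \eqref{birst} with \eqref{becond} and \eqref{bird} are already available from the first part of the theorem. The forward direction is routine: over $\FIVE$ one has second-order $\Sigma^{1}_{1}$-comprehension, so for a bounded $\Sigma^{1,x}_{1}$-class $B\subseteq\Q^{+}$ the set $\{a:r_{a}\in B\}$ exists, whence $B$ is a genuine bounded set of rationals and the least-upper-bound principle (already provable in $\ACA_{0}$, cf.\ \cite{simpson2}*{III.2.2}) yields $\sup B$. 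No instance of the extra induction is needed here.

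For the converse I would derive second-order $\Sigma^{1}_{1}$-comprehension, which over $\ACA_{0}$ is equivalent to $\FIVE$. Fix a $\Sigma^{1}_{1}$ formula $\psi(n)\equiv(\exists g\in\N^{\N})\phi(n,g)$ with $\phi$ arithmetical and let $S=\{n:\psi(n)\}$; the goal is that $S$ exists as a set. The device is to encode $\chi_{S}$, base $3$, into a single supremum: put
\[
B:=\Big\{q\in\Q\cap[0,\tfrac12]:(\exists h\in\N^{\N})(\exists\text{ finite }F)\big[\, q=\textstyle\sum_{i\in F}3^{-i-1}\ \wedge\ (\forall i\in F)\,\phi(i,h^{[i]})\,\big]\Big\},
\]
where $h^{[i]}$ is the $i$-th section of $h$; then ``$q\in B$'' is $\Sigma^{1}_{1}$ in the parameters of $\psi$ and $B\subseteq[0,\tfrac12]$, so item \eqref{birst} supplies $s:=\sup B$. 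One checks that every element of $B$ is a partial sum $\sum_{i\in F}3^{-i-1}$ over some finite $F\subseteq S$, and, conversely, that for each finite $F\subseteq S$ one may use $\IND_{2}$ to bundle the finitely many $\Sigma^{1}_{1}$-witnesses for $(\forall i\in F)\psi(i)$ into a single $h$, so that $\sum_{i\in F}3^{-i-1}\in B$. Consequently $s$ is the real whose canonical ternary expansion has $i$-th digit $\chi_{S}(i)$, so in particular all ternary digits of $s$ lie in $\{0,1\}$. One then recovers $S$ inside $\RCAo$: define $\chi\in 2^{\N}$ by the recursion ``$\chi(i)=1$ iff a sufficiently accurate rational approximation to $s-\sum_{j<i,\,\chi(j)=1}3^{-j-1}$ exceeds $2\cdot 3^{-i-2}$'', and prove by (course-of-values) induction on $i$ that at each step the comparison resolves correctly and $\chi(i)=1\leftrightarrow\psi(i)$, so that $S=\{i:\chi(i)=1\}$ exists.

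The two extra hypotheses enter exactly here. $\IND_{2}$, a bounded form of $\Sigma^{1}_{1}$-choice, is what places the partial sums $\sum_{i\in S\cap[0,N)}3^{-i-1}$ in $B$ and hence forces $s$ to reach the intended value rather than something strictly smaller; $\Sigma^{1}_{1}\textsf{\textup{-IND}}$ (which also yields the $\Pi^{1}_{1}$ case over $\ACA_{0}$) is needed for the induction verifying $\chi=\chi_{S}$, as ``$\chi(i)=1\leftrightarrow\psi(i)$'' decomposes into a $\Sigma^{1}_{1}$ and a $\Pi^{1}_{1}$ conjunct. I expect the main obstacle to be precisely this verification: reconciling the abstract supremum handed over by \eqref{birst} with the concrete ``ternary expansion of $\chi_{S}$'' real over the weak higher-order base theory, while keeping the digit-extraction recursion both total and provably correct. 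Finally, the supremum $\sup B$ is also realised as $\sup f$ for a suitable bounded effectively Baire $1$ (\emph{a fortiori} effectively Baire $2$, and Baire $n$) function $f:[0,1]\di\R$ — namely $f=\lim_{m}f_{m}$ with $f_{m}$ the locally constant approximation obtained by restricting each condition $\phi(i,h^{[i]})$ to its first $m$ coordinates — so that items \eqref{becond} and \eqref{bird} yield $\FIVE$ by the same decoding.
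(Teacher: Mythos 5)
Your proposal, as it stands, does not prove the theorem: it proves (at most) the final sentence. You write that ``the equivalences of \eqref{birst} with \eqref{becond} and \eqref{bird} are already available from the first part of the theorem'', but that three-way equivalence over $\ACAo$ \emph{is} the first and main assertion of Theorem \ref{flapke}, and it is where essentially all of the paper's work goes. Concretely, two things are missing. For \eqref{birst}$\di$\eqref{becond}/\eqref{bird} one must show that ``$\sup_{x\in[0,1]}f(x)>r$'' is (equivalent to) a $\Sigma^1_1$ statement for an effectively Baire $n$ function: the paper does this by commuting the iterated limits into arithmetical quantifiers as in \eqref{hieo}, replacing the continuous $f_{i,j}$ by RM-codes via Corollary \ref{fliep}, and then applying \eqref{birst} to the class $\{r\in\Q:(\exists y)(f(y)>r)\}$. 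For \eqref{becond}$\di$\eqref{birst} one must build, from a bounded class $B=\{r_a:a\in A\}$ with $a\in A\asa(\exists x\in 2^{\N})(\forall m)(\exists n)R(a,x,m,n)$, an effectively Baire 2 function with $\sup f=\sup B$; the paper codes $a$ by a prefix $1*\dots*1*0$ and uses the clopen approximations $G_{a,n,m}$ to the $\forall\exists$ matrix. Your only gesture toward this side, the last sentence, is incorrect as stated: an arithmetical condition $\phi(i,h^{[i]})$ cannot be made continuous by ``restricting $h$ to its first $m$ coordinates''; after Kleene normal form the witness condition is $\Pi^0_2$ in $h$, which is exactly why a \emph{double} limit (effectively Baire 2) is needed. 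Indeed, if your encoding could be realised by an effectively Baire 1 function, the theorem's calibration would collapse, since the supremum principle for Baire 1 functions already follows from $\WKL$ (item \eqref{W8} of Theorem \ref{tank}), far below $\FIVE$.

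What you do work out in detail --- $\FIVE\di$\eqref{birst}, and \eqref{birst}$\di\FIVE$ assuming $\IND_{2}+\Sigma_{1}^{1}\textsf{-IND}$ --- is precisely the part the paper dismisses as ``clear'' and ``tedious but straightforward'', and your ternary-supremum encoding of $\chi_{S}$ is a sensible way to do it. The delicate point is the one you flag yourself: the natural induction hypothesis ``$(\forall j<i)(\chi(j)=1\asa\psi(j))$'' is a conjunction of a $\Sigma^1_1$ and a $\Pi^1_1$ statement, hence not literally an instance of $\Sigma^1_1$- or $\Pi^1_1$-induction. This can be repaired by strengthening the invariant to the purely $\Sigma^1_1$ statement ``$q_i:=\sum_{j<i,\chi(j)=1}3^{-j-1}$ lies in $B$ (witnessed by a single $h$ bundled via $\IND_{2}$) and $s\leq q_i+3^{-i}/2$'', from which both directions of correctness below $i$ follow, so $\Sigma_{1}^{1}\textsf{-IND}$ suffices. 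But even with that repair, your argument only establishes the equivalence with $\FIVE$ under the extra induction; the unconditional equivalence of items \eqref{birst}--\eqref{bird} over $\ACAo$ remains unproved, so the proposal has a genuine gap.
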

\begin{proof}
We first prove that item \eqref{birst} implies items \eqref{becond} and \eqref{bird}.
Let $f:[0,1]\di [0,1]$ be effectively Baire 2, i.e.\ there is a double sequence $(f_{n,m})_{n,m\in \N}$ of continuous functions such that $f(x)=\lim_{n\di \infty}\lim_{m\di \infty}f_{n,m}(x)$ for $x\in [0,1]$.  
Now consider the following for $r\in \Q$:
\begin{align}
&(\exists y\in [0,1])(f(y)>r)\notag\\
&\asa (\exists x\in [0,1]) (\exists n\in \N)( \forall i \geq n)( \exists m\in \N) (\forall j \geq m)( f_{i,j}(x) > r). \label{hieo}
\end{align}
By Corollary \ref{fliep}, we can replace $f_{i, j}$ by a sequence of RM-codes, rendering \eqref{hieo} part of the language of second-order arithmetic. 
Using $\exists^{2}$, \eqref{hieo} is equivalent to a $\Sigma_{1}^{1}$-formula, i.e.\ we may form the set $\{r\in \Q : (\exists y\in [0,1])(f(y)>r) \}$ using item~\eqref{birst}, from which the supremum of $f$ is readily defined using $\exists^{2}$, i.e.\ item~\eqref{becond} follows.  Item \eqref{bird} is proved in the same way, where \eqref{hieo} becomes more complicated due to the presence of more arithmetical quantifiers originating from the definition of `effectively Baire $n$'.  

\smallskip

Secondly, we prove that item \eqref{becond} implies item \eqref{birst}. Let $B = \{r_a : a \in A\}$ be bounded, where $A$ is $\Sigma^1_1$ and given by:
\[
a \in A \leftrightarrow (\exists x \in 2^\N) (\forall m\in \N) (\exists n\in \N) R(a,x,m,n),\] 
where $R$ is primitive recursive. We now construct continuous functions $F_{n,m}:2^\N \rightarrow \R$ such that the double limit $F = \lim_{m \rinf}\lim_{n \rinf}F_{n,m}$ is well defined and such that $\sup F = \sup B$. 
Identifying $2^\N$ with the Cantor set, we extend each $F_{n,m}$ to a continuous function $f_{n,m}:[0,1] \rightarrow \R$ by extending the graph with straight lines. 
Note that all limits commute with this extension and that the corresponding extension $f$ of $F$ is Baire 2 with the same supremum.
For each $a \in \N$, let $G_a(x) = \lim_{m \rinf}\lim_{n \rinf} G_{a,n,m}(x)$ be the characteristic function of the set $\{x \in 2^N : (\forall m\in \N) (\exists n\in \N) R(a,x,m,n)\}$, where
\[
G_{a,n,m}(x) := 
\begin{cases}
1 &  \textup{if $(\forall i \leq m)( \exists j \leq n) R(a,x,i,j)$}\\
0 & \textup{otherwise}. 
\end{cases}.
\]
We now define $F_{n,m}:[0,1]\di \R$ by cases as follows.
\begin{itemize}
\item If $x$ is of the form $\underbrace{1*\dots *1}_{\textup{$m+1$ times}}*~y$, we define $F_{n,m}(x): = 0$.
\item If for $a \leq m$, $x$ is of the form $\underbrace{1*\dots *1}_{\textup{$a$ times}}*~0* y$, define $F_{n,m}(x) := r_a  G_{a,n,m}(y)$.
\end{itemize}
If $x = 11\dots$  then $F_{n,m}(x) = 0$ for all $n,m\in \N$, so in  the double limit we have that $F(x) = 0$. If not, $x$ is of the form $\underbrace{1*\dots *1}_{\textup{$a$ times}}*0* y$ for some $a \geq 0$. 
For all $m \geq a$ and all $n\in \N$  we have that $F_{n,m}(x) = r_a  G_{a,n,m}(y)$. Then $F(x) = r_a$ if $(\forall m\in \N) (\exists n\in \N) R(a,y,m,n)$, and 0 otherwise. 
Then $\sup F = \sup B$, so the latter exists by the assumption that the former exists.

\smallskip

Thirdly, item \eqref{birst} clearly follows from $\FIVE$ and it is a tedious but straightforward verification that the reversal goes through assuming $\IND_{2}+\Sigma_{1}^{1}\textsf{\textup{-IND}}$. 
\end{proof}
We note that the use of $(\exists^{2})$ as part of the base theory in Theorem \ref{flapke} is necessary: in isolation, items \eqref{becond} and \eqref{bird} do not exceed $\WKL_{0}$ in terms of second-order consequences.  
This follows via the $\ECF$-interpretation from Remark \ref{ECF}.  

\smallskip

Secondly, we have the following corollary to Theorem \ref{flapke}, to be contrasted with Theorem~\ref{truppke2}.  We say that a \emph{set} is `effectively Baire~$n$' if the characteristic function has this property.  
The notion of Baire set may be found in \cite{kodt}*{p.\ 21} under a different name; we refer to \cite{lorch} for an introduction and to \cite{dudley}*{\S7} for equivalent definitions, including that of Borel set in Euclidean space.  
\begin{thm}[$\ACAo+\FIVE$]\label{flappy}
For any open effectively Baire $n$ set $O\subset [0,1]$, there exists an RM-code $(n\geq 2)$.  
\end{thm}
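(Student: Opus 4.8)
The plan is to follow the proof of Theorem~\ref{klank} almost verbatim, replacing the Baire~1 supremum principle (item~\eqref{W8} of Theorem~\ref{tank}) by its effectively Baire $n$ analogue from Theorem~\ref{flapke}. The only new ingredient is a \emph{uniform} version of the latter: over $\ACAo+\FIVE$, for a bounded effectively Baire $n$ function $f:[0,1]\di\R$ there is $H:\Q^{2}\di\R$ with $H(p,q)=\inf_{x\in[p,q]}f(x)$ for all rationals $p<q$ in $[0,1]$. To produce $H$, I would argue as in Theorem~\ref{klank}: for rational $p<q$ the condition `$x\in[p,q]$' is $\Pi_{1}^{0}$ in the negative binary representation $f_{x}$ of $x$ (Remark~\ref{situa}); writing $f=\lim_{m_{1}}\cdots\lim_{m_{n}}f_{m_{1},\dots,m_{n}}$ with the $f_{m_{1},\dots,m_{n}}$ continuous and replacing these by RM-codes via Corollary~\ref{fliep}, the statement $(\exists x\in[p,q])(f(x)<r)$ unwinds to a block of alternating number quantifiers over a matrix Turing computable in the codes, prefixed by a single quantifier `$(\exists f_{x}\in\{-1,0,1\}^{\N})$'; using $\exists^{2}$, this is $\Sigma_{1}^{1}$ in the parameters $p,q,r$. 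Hence the relation `$r_{a}>\inf_{x\in[p,q]}f(x)$' is $\Sigma_{1}^{1}$ uniformly in $p,q,a$, and since $\inf_{x\in[p,q]}f(x)$ exists as a real by item~\eqref{birst} of Theorem~\ref{flapke}, $\Sigma_{1}^{1}$-comprehension (a consequence of $\FIVE$) yields the function $H$.

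With $H$ available, fix an open effectively Baire $n$ set $O\subseteq[0,1]$, i.e.\ $\mathbb{1}_{O}:[0,1]\di\R$ is effectively Baire $n$. If $O=\emptyset$ the empty code works; otherwise fix $x_{0}\in O$ and $m_{0}$ with $B(x_{0},\tfrac{1}{2^{m_{0}}})\subseteq O$. Let $\big((p_{n},q_{n})\big)_{n\in\N}$ enumerate the intervals in $[0,1]$ with rational endpoints and set
\[
(a_{n},b_{n}):=\begin{cases} B(x_{0},\tfrac{1}{2^{m_{0}}}) & \textup{ if } H(p_{n},q_{n})<\tfrac12,\\ (p_{n},q_{n}) & \textup{ if } H(p_{n},q_{n})>0.\end{cases}
\]
Since $\mathbb{1}_{O}$ is $\{0,1\}$-valued, $H(p_{n},q_{n})=\inf_{x\in[p_{n},q_{n}]}\mathbb{1}_{O}(x)\in\{0,1\}$, so the two clauses are exhaustive and the case distinction is decidable using $\exists^{2}$. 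In the first clause $B(x_{0},\tfrac{1}{2^{m_{0}}})\subseteq O$; in the second $H(p_{n},q_{n})=1$ forces $[p_{n},q_{n}]\subseteq O$; thus $(a_{n},b_{n})\subseteq O$ in either case. Conversely, any $x\in O$ has $B(x,\tfrac{1}{2^{N}})\subseteq O$ for some $N$ by openness, and picking rationals $p_{n}<x<q_{n}$ with $[p_{n},q_{n}]\subseteq B(x,\tfrac{1}{2^{N}})$ gives $H(p_{n},q_{n})=1>0$, so $(a_{n},b_{n})=(p_{n},q_{n})\ni x$. Hence $\bigcup_{n\in\N}(a_{n},b_{n})=O$, i.e.\ $\big((a_{n},b_{n})\big)_{n\in\N}$ is an RM-code for $O$ (\cite{simpson2}*{II.5.6}), as required.

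The main obstacle is the first step: verifying that, after unwinding the $n$-fold limit defining an effectively Baire $n$ function and folding in the $\Pi_{1}^{0}$ constraint `$x\in[p,q]$', the statement $(\exists x\in[p,q])(f(x)<r)$ is genuinely $\Sigma_{1}^{1}$, uniformly in $p,q,r$; this combines the negative-binary bookkeeping of Remark~\ref{situa} with the RM-codes of Corollary~\ref{fliep} exactly as in the proofs of item~\eqref{W8} of Theorem~\ref{tank} and of Theorem~\ref{klank}. The remainder is routine, and we note that only the direction `$\FIVE\Rightarrow$ supremum principle' of Theorem~\ref{flapke} is invoked, so no induction beyond what is available in $\ACAo+\FIVE$ is needed.
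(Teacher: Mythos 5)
Your proposal is correct and follows essentially the same route as the paper: generalise the supremum principle of Theorem~\ref{flapke} to a uniform infimum function $H:\Q^{2}\di\R$ (the $\Sigma_{1}^{1}$ unwinding via negative binary representations and Corollary~\ref{fliep} is exactly the intended argument), then run the case-split construction \eqref{tugger} from Theorem~\ref{klank}. Your observation that only the direction from $\FIVE$ to the supremum principle is used, so the extra induction axioms of Theorem~\ref{flapke} are not needed, is a correct and worthwhile refinement of the paper's terse proof.
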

\begin{proof}
We make use of the items in Theorem \ref{flapke}.  In particular, the proof of these immediately generalises to infima involving rational parameters, i.e.\ we have 
\begin{center}
\emph{For a bounded effectively Baire $n$ function $f:[0,1]\di \R$, there is $F:\Q^{2}\di \R$ such that for all $p, q\in \Q\cap[0,1]$, the real $F(p, q)$ equals $\inf_{x\in [p, q]}f(x)$ .}
\end{center}
Now consider the sequence in \eqref{tugger} as in the proof of Theorem \ref{klank}. 
\end{proof}
The previous proof essentially establishes that an effectively Baire $n$ open set can be represented by a code for an open set (see \cite{simpson2}*{II.5.6}).
Hence, any theorem from the RM of $\FIVE$ immediately generalises from `codes for open sets' to `third-order open sets that are effectively Baire $n$'.
The RM of $\FIVE$ contains considerable results on codes for open and closed sets sets (see \cite{brownphd, browner, browner2, simpson2}), including the Cantor-Bendixson theorem.
The same holds \emph{mutatis mutandis} for open sets with quasi-continuous characteristic functions.  One can similarly generalise Theorem \ref{weerklank} to effectively Baire $n$ functions.

\smallskip

\subsection{Equivalences for arithmetical transfinite recursion}\label{FOUR}
We establish equivalences for $\ATR_{0}$ involving third-order theorems from analysis.  
We also establish Theorem~\ref{falnt2} which shows that adding the extra condition `Baire 1'
converts theorems about $BV$-functions from `not provable in $\Z_{2}^{\omega}$' to `provable from $\ATR_{0}$ plus induction'.
Remark \ref{donola} again explains why there is no contradiction here. 

\smallskip

First of all, we have a corollary to \cite{basket2}*{Theorem 6.5}, to be contrasted with item~\eqref{ta21} from Theorem \ref{reklam}.  Here, $\Delta_{2}^{1}\textsf{-IND}$ is the induction axiom for $\Delta_{2}^{1}$-formulas. 
\begin{thm}[$\ACAo$ + $\Delta_{2}^{1}\textsf{-IND}$]\label{floepje}
The following are equivalent to $\ATR_{0}$.
\begin{itemize}
\item Cousin's lemma for codes for Baire 2 functions. 
\item Cousin's lemma for effectively Baire 2 $\Psi:[0,1]\di \R^{+}$.
\item Cousin's lemma for effectively Baire $n$ $\Psi:[0,1]\di \R^{+}$ $(n\geq 2)$.
\end{itemize}
\end{thm}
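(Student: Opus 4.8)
The plan is to deduce Theorem~\ref{floepje} from the second-order equivalence \cite{basket2}*{Theorem 6.5}, which handles \emph{codes} for Baire~$2$ (and Baire~$n$) functions, by transferring between such codes and genuine third-order functions via the coding apparatus of Section~\ref{harf}. Throughout we work in $\ACAo+\Delta_{2}^{1}\textsf{-IND}$; in particular $(\exists^{2})$, hence $\WKL$ and arithmetical comprehension, are available, and given $(\exists^{2})$ an RM-code for a continuous function can be evaluated by a direct search, so that a double or $n$-fold sequence of codes for continuous functions at once yields a corresponding sequence of third-order continuous functions, uniformly and without any appeal to choice.

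For the implications from $\ATR_{0}$: let $\Psi:[0,1]\di\R^{+}$ be effectively Baire~$n$ with $n\geq 2$, witnessed by an $n$-fold sequence of continuous functions. Applying Corollary~\ref{fliep} (after re-indexing the $n$-fold sequence as a single sequence) replaces this sequence by an $n$-fold sequence of RM-codes; since the iterated pointwise limit of these codes equals $\Psi$ everywhere and is $\R^{+}$-valued, the sequence is a total code for a Baire~$n$ function in the sense of \cite{basket, basket2}. Invoking the direction of \cite{basket2}*{Theorem 6.5} that proves Cousin's lemma for such codes from $\ATR_{0}$, we obtain $x_{0},\dots,x_{k}\in[0,1]$ with $\cup_{i\leq k}B(x_{i},\Psi(x_{i}))\supseteq[0,1]$, because the code evaluated at $x_{i}$ returns precisely $\Psi(x_{i})$. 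This yields each of the three items from $\ATR_{0}$ (the first being literally the code case, the second the case $n=2$). Conversely, Cousin's lemma for effectively Baire~$n$ functions ($n\geq 2$) implies it for effectively Baire~$2$ functions, since an effectively Baire~$2$ function is effectively Baire~$n$ (pad the defining double sequence with $n-2$ dummy limit variables). Thus it remains only to prove $\ATR_{0}$ from Cousin's lemma for effectively Baire~$2$ functions (and, separately, from Cousin's lemma for codes for Baire~$2$ functions, which is part of \cite{basket2}*{Theorem 6.5}).

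The substantive step is the reversal for effectively Baire~$2$ functions. Here I would revisit the hardness construction underlying \cite{basket2}*{Theorem 6.5}: from a suitable $\ATR_{0}$-hard instance it produces a code $\Phi$ for a Baire~$2$ function such that any finite sub-covering of $\cup_{x\in[0,1]}B(x,\Phi(x))$ computes the desired witness. The key observation is that this code carries the convergence data making it a genuine point of the relevant function space, so that under $(\exists^{2})$ it determines a third-order function $\Psi:[0,1]\di\R^{+}$ with $\Psi(x)$ equal to the value of $\Phi$ at $x$ for all $x$ --- one applies the sequential form of Theorem~\ref{plofkip} to the continuous constituents and then evaluates the iterated limit termwise (this is the Baire~$2$ analogue of Theorem~\ref{flame}, available rather than merely equivalent to $(\exists^{2})$ precisely because the modulus data is present). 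Since the defining continuous sequence exhibits $\Psi$ as effectively Baire~$2$, Cousin's lemma for $\Psi$ produces a finite sub-covering, which is simultaneously one for $\Phi$, and $\ATR_{0}$ follows; the verification that the sub-covering encodes the $\ATR_{0}$-instance is where $\Delta_{2}^{1}\textsf{-IND}$ is used, exactly as in \cite{basket2}. The same argument with effectively Baire~$n$ in place of effectively Baire~$2$ closes the cycle.

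The main obstacle I anticipate is the point flagged above: verifying carefully that the code for a Baire~$2$ function produced in \cite{basket2}*{Theorem 6.5} really does, over $\ACAo$ alone, give rise to a third-order effectively Baire~$2$ function --- with no hidden use of choice or of induction beyond $\Delta_{2}^{1}\textsf{-IND}$. A secondary, bookkeeping issue is checking that \cite{basket2}*{Theorem 6.5} is available uniformly for all $n\geq 2$, so that the third item requires no separate treatment.
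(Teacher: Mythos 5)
Your proposal is correct and follows essentially the same route as the paper: both directions come down to the interconversion, over $\ACAo$, between codes for Baire $n$ functions and third-order effectively Baire $n$ functions (Corollary~\ref{fliep} for one direction, $\exists^{2}$ plus evaluation of the code for the other), after which the second-order equivalence of \cite{basket2} with $\Delta_{2}^{1}\textsf{-IND}$ does all the work. The obstacle you flag at the end dissolves because no inspection of the hardness construction is needed: as noted below \cite{basket2}*{Def.\ 6.1}, $\ACA_{0}$ already proves that \emph{any} code for a Baire $n$ function has a unique value, so $\exists^{2}$ turns an arbitrary such code into a third-order effectively Baire $n$ function, exactly as in items \eqref{F16}--\eqref{F17} of Theorem~\ref{flame}.
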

\begin{proof}
It is known that $\ATR_{0}$ is equivalent to Cousin's lemma for codes for Baire 2 (or: any $n\geq 2$) functions, working over $\RCAo$ plus $\Delta_{2}^{1}$-induction (see \cite{basket, basket2}).  
Now, a code for a Baire $n$ function is essentially an effectively Baire $n$ function where the continuous functions are given by codes. 
As noted below \cite{basket2}*{Def.\ 6.1}, $\ACA_{0}$ suffices to show that a code for a Baire $n$ function has a (unique) value.   
Hence, $\exists^{2}$ readily defines a third-order function taking these values everywhere on $[0,1]$.  
Similarly, an effective Baire $n$ function readily becomes a code for a Baire $n$ function by replacing the continuous functions by codes for continuous functions (see Corollary~\ref{fliep}).
In this way, the base theory connects the items from the theorem and we are done. 
\end{proof}
By the previous, (full) Cousin's lemma plus $(\exists^{2})$ implies $\ATR_{0}$ assuming some induction.  
The use of $(\exists^{2})$ is again essential as Cousin's lemma in isolation does not exceed $\WKL_{0}$ in terms of second-order consequences.  
By Theorem \ref{flame}, $(\exists^{2})$ is equivalent to the statement that a code for a Baire 1 (or Baire $n$) function denotes a third-order function.  
Hence, the strength of Cousin's lemma for codes for Baire~2 functions is actually due to the coding of Baire 2 functions.

\smallskip

Secondly, we obtain equivalences involving $\ATR_{0}$ and the Jordan decomposition theorem, to be contrasted with Theorem \ref{reklam2}.  
Moreover, Theorem \ref{falnt} also shows that the RM of $\ATR_{0}$ is a special case of the higher-order RM of the (full) Jordan decomposition theorem, where the latter is developed in \cite{dagsamXI}*{\S3.3}. 
In the below, notions like \emph{arithmetical}, $\Sigma^1_1$, etc.\ are based on the `standard' definition, i.e.\ with the understanding that we (only) allow parameters of type 0 and of type $1$. 
We say that a function $f:[0,1]\di \R$ is $\Sigma^1_1$ if its graph is $\Sigma^1_1$, which is equivalent to the graph being $\Delta^1_1$, and to being Borel measurable.
\begin{thm}[$\ACAo+\IND_{2}+\Sigma_{2}^{1}\textsf{-IND}$]\label{falnt}
The following are equivalent to $\ATR_{0}$.
\begin{enumerate}
\renewcommand{\theenumi}{\roman{enumi}}
\item For arithmetical formulas $\varphi$ such that 
\be\label{fln2}
(\forall n\in \N)(\exists \textup{ at most one } X\subset \N)\varphi(X, n),
\ee
the set $\{ n\in \N:(\exists X\subset \N)\varphi(X, n)\}$ exists. \label{bont1}
\item For arithmetical $f:[0,1]\di \R$ in BV, there is a sequence $(x_{n})_{n\in \N}$ enumerating all points where $f$ is discontinuous.\label{bont2}  
\item For a $\Sigma^1_1$-function  $f:[0,1]\di \R$ in BV, there is a sequence $(x_{n})_{n\in \N}$ enumerating all points where $f$ is discontinuous.  \label{bont3}
\item The Jordan decomposition theorem \(Theorem \ref{drd}\) restricted to arithmetical \(or: $\Sigma_{1}^{1}$\) functions in $BV$. \label{bont4}
\item A non-enumerable arithmetical set in $\R$ has a limit point.\label{bont5}
\end{enumerate}
\end{thm}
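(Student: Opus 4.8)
The plan is to treat item \eqref{bont1} as the hub: I will show it is equivalent to $\ATR_{0}$, derive the discontinuity-enumeration statements \eqref{bont2} and \eqref{bont3} from it by adapting the method of Theorem \ref{falm}, pass to the Jordan decomposition \eqref{bont4} as in Theorem \ref{XZ}, and close the cycle through \eqref{bont5}. For $\ATR_{0}\di$\eqref{bont1}: given arithmetical $\varphi$ with the uniqueness property \eqref{fln2}, putting $(\exists X)\varphi(X,n)$ in Kleene normal form yields an arithmetical sequence $\langle T_{n}\rangle_{n\in \N}$ of trees, each --- after the usual normalisation --- with at most one infinite path, so that $n\in\{m:(\exists X)\varphi(X,m)\}$ iff $T_{n}$ is ill-founded; forming this particular $\Sigma_{1}^{1}$ set is licensed by $\ATR_{0}$ through the $\Sigma_{1}^{1}$-separation characterisation (\cite{simpson2}*{V.5.1}) together with a pseudohierarchy argument, the uniqueness hypothesis being exactly what keeps this below $\Pi_{1}^{1}$-comprehension. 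In particular $\ATR_{0}$ proves $\Delta_{1}^{1}$-comprehension. For the reversal \eqref{bont1}$\di\ATR_{0}$ I would run the standard construction of an arithmetical transfinite hierarchy along a well-ordering: at each stage the iterate is \emph{uniquely} determined by the earlier ones, so ``$H$ codes a correct partial iteration up to $\beta$'' is arithmetical with at most one witness for each $\beta$; \eqref{bont1} then produces the set of stages at which the iteration converges, and a minimality-and-closure argument (using $\Sigma_{1}^{1}\textsf{-IND}$, and $\Sigma_{2}^{1}\textsf{-IND}$ to handle the iteration globally) shows this set exhausts the ordering, yielding $\ATR_{0}$. The same machinery handles \eqref{bont5}: over $\ATR_{0}$ a non-enumerable arithmetical $A\subseteq\R$ is a set admitting a Cantor--Bendixson decomposition (\cite{simpson2}*{V.4}), hence a limit point; conversely one encodes an instance of \eqref{bont1} into the ``gaps'' of a discrete arithmetical set to recover $\ATR_{0}$.

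Next I would derive \eqref{bont2} from \eqref{bont1}. By \cite{dagsamXI}*{Theorem 3.33} --- here $\IND_{2}$ enters --- an arithmetical $BV$-function $f$ is regulated, so every discontinuity of $f$ is a jump or a removable discontinuity and, by regulatedness, the oscillation $\mathrm{osc}(f,x_{0}):=\limsup_{y\di x_{0}}|f(y)-f(x_{0})|=\max\big(|f(x_{0}-)-f(x_{0})|,\ |f(x_{0}+)-f(x_{0})|\big)$ is computable from $f$ evaluated at rationals together with the single value $f(x_{0})$; hence ``$\mathrm{osc}(f,x_{0})\geq 2^{-k}$'' is arithmetical in $x_{0}$ relative to the parameters defining $f$. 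As in the proof of Theorem \ref{falm}, for each rational triple $n=\langle a,b,k\rangle$ the predicate $\varphi(X,n)$ asserting ``$X$ codes the unique real $x_{0}\in(a,b)$ with $\mathrm{osc}(f,x_{0})\geq 2^{-k}$'' is arithmetical with at most one witness; where the proof of Theorem \ref{falm} invoked $\ACA_{0}$ on a set arithmetical in the \emph{type-one} function $f$, we now apply \eqref{bont1} to this genuinely $\Sigma_{1}^{1}$ predicate to obtain the domain $D$, and since the witnesses for $n\in D$ are $\Delta_{1}^{1}$ uniformly in $n$, they assemble into a sequence $(x_{n})_{n\in D}$ by the $\Delta_{1}^{1}$-comprehension available in $\ATR_{0}$. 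A finite counting argument --- $m$ reals of oscillation $\geq\delta$ force total variation $\gtrsim m\delta$ --- shows only finitely many reals have oscillation $\geq\delta$, so each discontinuity can be isolated inside a rational interval and is thus enumerated; this is \eqref{bont2}. Since $\ATR_{0}$ proves $\Delta_{1}^{1}$-comprehension, a $\Sigma_{1}^{1}$-function has a set graph, and running the same argument relative to that graph gives \eqref{bont3}; also \eqref{bont3}$\di$\eqref{bont2} since arithmetical functions have $\Sigma_{1}^{1}$ graphs.

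From \eqref{bont2} (resp.\ \eqref{bont3}) item \eqref{bont4} follows exactly as in the proof of Theorem \ref{XZ}: given an enumeration $(x_{n})_{n\in\N}$ of the discontinuities of $f\in BV$, the supremum in \eqref{tomb} may be taken over the rationals together with the $x_{n}$, so $\exists^{2}$ defines $V_{0}^{x}(f)$ for $x\in[0,1]$; then $g:=\lambda x.\,V_{0}^{x}(f)$ and $h:=g-f$ are non-decreasing with $f=g-h$, which is Theorem \ref{drd} for such $f$. The cycle is closed by \eqref{bont4}$\di\ATR_{0}$ --- and already by its restriction to arithmetical $BV$-functions, so \eqref{bont2} and \eqref{bont4} each imply $\ATR_{0}$ --- which I expect to be the main obstacle: one must encode an instance of \eqref{bont1}, equivalently of $\Sigma_{1}^{1}$-separation, into an arithmetical $BV$-function so that reading off its ``difference of monotone functions'' form reveals, stage by stage, whether each potential $\Sigma_{1}^{1}$-witness materialises, with $\Sigma_{2}^{1}\textsf{-IND}$ again used to assemble this information; this is carried out along the lines of the reversals in \cite{dagsamXI}*{\S3.3}, and one may alternatively route \eqref{bont4}$\di$\eqref{bont5}$\di\ATR_{0}$. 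The delicate points throughout are the uniqueness bookkeeping that keeps \eqref{bont1} applicable, the reduction of the oscillation to rational data plus a single real value, and the encoding in the reversal; the remainder is routine given Theorems \ref{falm} and \ref{XZ}.
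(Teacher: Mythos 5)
Your derivation of items \eqref{bont2} and \eqref{bont3} from item \eqref{bont1} does not go through as written. Item \eqref{bont1} applies only to \emph{arithmetical} $\varphi$, but the predicate you feed it, ``$X$ codes the unique real $x_{0}\in(a,b)$ with $\textup{osc}(f,x_{0})\geq 2^{-k}$'', is not arithmetical: the uniqueness clause is a universal quantifier over reals, hence $\Pi^{1}_{1}$, and since $f$ is given only by an arithmetical (or $\Sigma^{1}_{1}$) graph, even the value $f(x_{0})$ at the real point $x_{0}$ costs an additional set quantifier---indeed you yourself call the predicate ``genuinely $\Sigma^{1}_{1}$'' in the same sentence in which you invoke \eqref{bont1} for it. Nor can \eqref{bont1} be silently extended to $\Sigma^{1}_{1}$ or $\Delta^{1}_{1}$ matrices: the normal-form reduction behind \cite{simpson2}*{V.5.2} turns ``at most one $X$'' into ``at most one branch'' only for arithmetical $\varphi$ (canonical Skolem data), while extra set quantifiers destroy uniqueness of the combined witness. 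Re-expressing uniqueness arithmetically is exactly the content of the equivalence (1)$\leftrightarrow$(2) in the proof of Theorem \ref{falm}, and that argument uses the values of a \emph{third-order} $f$ at rationals; no analogous arithmetisation is supplied in your setting. The paper's proof avoids the issue altogether: it never feeds a discontinuity predicate into the combinatorial form of \eqref{bont1}, but uses $\ATR_{0}$ (equivalent to \eqref{bont1} by \cite{simpson2}*{V.5.2}, which you essentially re-prove rather than cite) together with the observation that each jump set $D_{k}=\{x\in[0,1]:|f(x+)-f(x-)|>2^{-k}\}$ is a $\Sigma^{1}_{1}$ set with at most $2^{k}$ elements; these finite sets are enumerated via $\Sigma^{1}_{2}$-induction and then glued by \cite{simpson2}*{V.4.10}. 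Your outline never uses this finiteness/$\Sigma_{2}^{1}\textsf{-IND}$ mechanism, which is the very reason $\Sigma_{2}^{1}\textsf{-IND}$ appears in the base theory.

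The second gap is the reversal. You assert that \eqref{bont4} (already restricted to arithmetical $BV$-functions) implies $\ATR_{0}$, but you call the required encoding ``the main obstacle'' and give no construction; this is where the real content of the theorem lies. The paper's reversal is short and concrete and goes through \eqref{bont2}$\rightarrow$\eqref{bont1}: given arithmetical $\varphi$ satisfying \eqref{fln2}, define $f(x):=2^{-(n+3)}$ for the least $n$ with $\varphi(\mathfrak{b}(x),n)$ and $f(x):=0$ otherwise; the uniqueness hypothesis is precisely what bounds the sums in \eqref{tomb}, so $f$ is an arithmetical $BV$-function, and an enumeration $(x_{m})_{m\in\N}$ of its discontinuities yields $(\exists X)\varphi(X,n)\leftrightarrow(\exists m)\varphi(\mathfrak{b}(x_{m}),n)$, with $\mu^{2}$ disposing of binary tails. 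Without this, or some comparable explicit $BV$-coding, your cycle does not close. Note also that \eqref{bont4}$\rightarrow$\eqref{bont3}, which the paper obtains by enumerating the discontinuities of the monotone parts via $\exists^{2}$, is absent from your plan, and your treatment of \eqref{bont5} (Cantor--Bendixson for the forward direction, an unspecified ``gaps'' coding for the reversal) is likewise only gestured at, whereas the paper argues directly that a set without limit points has finite trace on each $[-n,n]$ (using $\Sigma_{1}^{1}\textsf{-AC}_{0}$) and enumerates these finite $\Sigma^{1}_{1}$ sets as before. The parts you do carry out---\eqref{bont3}$\rightarrow$\eqref{bont4} via $g:=\lambda x.V_{0}^{x}(f)$, and the identification of \eqref{bont1} with $\ATR_{0}$---match the paper.
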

\begin{proof}
The equivalence between $\ATR_{0}$ and item \eqref{bont1} is found in \cite{simpson2}*{V.5.2}.
Now assume item \eqref{bont2} and fix arithmetical $\varphi$ such that \eqref{fln2}.
Note that we can use $\mu^{2}$ to find those $n\in \N$ such that $(\exists X\subset \N)\varphi(X, n)$ and there is $m_{0}$ such that $X(m)=1$ for $m\geq m_{0}\in \N$, so without loss of generality we may assume that there are no such $n$.
Define the function $f:[0,1]\di \R$ as follows:
\be\label{fln}
f(x):=
\begin{cases}
\frac{1}{2^{n+3}} & \textup{ the least $n\in \N$ such that } \varphi(\mathfrak{b}(x), n)\\
0  & \textup{ otherwise }
\end{cases},
\ee
where $\mathfrak{b}: [0,1]\di 2^{\N}$ converts real numbers to a binary representation, choosing a tail of zeros if applicable.  
In light of \eqref{fln2}, for every $n\in \N$, there is at most one $x\in [0,1]$ such that $f(x)=\frac{1}{2^{n}}$.  
Hence, the sum $\sum_{i=0}^{k-1} |f(x_{i})-f(x_{i+1})|$ as in \eqref{tomb} is at most $\sum_{n=1}^{k}\frac{1}{2^{n}}$, which is at most $1$.
By definition, $f$ is arithmetical and item~\eqref{bont2} provides a sequence $(x_{m})_{m\in \N}$ with all points where $f$ is discontinuous.
Hence, we have for all $n\in \N$ that  
\[
(\exists X\subset \N)\varphi(X, n)\asa (\exists m\in \N)\varphi(\mathfrak{b}(x_{m}), n),
\]
where, as we assumed,  $X$ in the left-hand side will not have a tail of $1$'s.  

\smallskip

Item \eqref{bont4} implies item \eqref{bont3} as $\exists^{2}$ allows us to enumerate the points of discontinuity of increasing functions (see \cite{dagsamXII}*{Lemma 7}). Of course item \eqref{bont3} implies \eqref{bont2}. We also have that item \eqref{bont3} implies item \eqref{bont4} as follows:
the sequence in item \eqref{bont3} allows us to replace the supremum in \eqref{tomb} by one over $\N$ and $\Q$.  
Hence, $\exists^{2}$ can define the increasing function $g(x):=\lambda x.V_{0}^{x}(f)$.  By noting that $h:=f-g$ is also increasing, item \eqref{bont4} follows. 

\smallskip

Finally, assume item \eqref{bont1} and fix a $\Sigma^1_1$-function $f\in BV$ with bound $k_{0}=1$ as in Definition \ref{varvar}.  
Now consider the set
\be\label{fln3}\textstyle
D_{k}:= \{  x\in [0,1]: |f(x+) -f(x-)|>\frac{1}{2^{k}}  \},
\ee
where we note that $\IND_{2}$ suffices to show that $f\in BV$ is regulated (\cite{dagsamXI}*{Theorem~3.33}).  
The set $D_{k}$ is $\Sigma^1_1$  because the graph of $f$ is.  Moreover, since each element $x\in D_{k}$ contributes at least $\frac{1}{2^{k}}$ to the variation of $f$, $D_{k}$ can have at most $2^{k}$ many elements.  
Using $\Sigma_{2}^{1}$-induction, one obtains\footnote{For $X\subset \R$, $N\in \N$, define the notation `$|X|\leq N$', i.e.\ $X$ has at most $N$ elements, as:  
\be\label{dango}
(\forall w^{1^{*}})\big(\big[|w|>N \wedge (\forall i,j<|w|)(i\ne j\di w(i)\ne w(j)) \big]\di (\exists k<|w|)((w(k)\not \in X))  \big).
\ee
Using \eqref{dango}, let $\varphi(n, X)$ be the following formula: 
\be\label{dango2}
|X|\leq n \di (\exists v^{1^{*}})(\forall x\in \R)\big(\big[x\in X\di (\exists i<|v|)( v(i)=x)\big] \wedge |v|\leq n \big),
\ee
expressing that a set with at most $n$ elements can be enumerated by a finite sequence of length $n$. 
Then \eqref{dango} is $\Pi_{1}^{1}$ if $X$ is $\Sigma_{1}^{1}$ while \eqref{dango2} is then $\Sigma_{2}^{1}$.   
Hence, for $X$ in $\Sigma_{1}^{1}$, $\Sigma_{2}^{1}$-induction on $\varphi(n, X)$ establishes the desired enumeration.\label{urba}
} 
an enumeration of $D_{k}$ for fixed $k\in \N$.  
By \cite{simpson2}*{V.4.10}, which is provable in $\ATR_{0}$, $\cup_{k\in \N}D_{k}$ can now be enumerated, and item \eqref{bont3} follows. 

\smallskip

For item \eqref{bont5}, fix $\varphi$ as in \eqref{fln2} and define the set $A\subset \R$ by putting $x\in A$ in case $x\in [n+1, n+2)$ and $\varphi(\mathfrak{b}(x-(n+1)), n)$.
Since $A\cap [0,n]$ contains at most $n$ elements, $A$ has no limit points, i.e.\ item \eqref{bont5} readily yields item \eqref{bont1}. 
For the reversal, let $A\subset \R$ be a set without limit points.  By contraposition, $A\cap [-n,n]$ is finite for each fixed $n\in \N$ (for which we use $\SAC$).
As in the previous paragraphs of the proof, we can enumerate $A=\cup_{n\in \N}\big(A\cap [-n,n]\big )$.
\end{proof}
As above, $(\exists^{2})$ is essential for the equivalence in Theorem \ref{falnt} as the Jordan decomposition theorem in isolation cannot go beyond $\ACA_{0}$ in terms of second-order consequences, a fact observed again using $\ECF$ from Remark \ref{ECF}.  

\smallskip

Thirdly, we obtain a version of Theorem \ref{falm} for $BV$-functions that are also in Baire 1.  
As discussed in Remark \ref{donola}, while $BV$-functions are Baire 1, this basic fact is not provable in $\ACAo$ and much stronger systems like $\Z_{2}^{\omega}$.  
By \cite{dagsamXI}*{\S3.3}, listing \emph{all} points of discontinuity of $BV$-functions similarly cannot be done in $\Z_{2}^{\omega}$.
\begin{thm}[$\ACAo+\IND_{2}+\Sigma_{2}^{1}\textsf{-IND}+\ATR_{0}$]\label{falnt2}
For Baire 1 $f:[0,1]\di \R$ in $BV$, there is a sequence $(x_{n})_{n\in \N}$ enumerating all points where $f$ is discontinuous.  
\end{thm}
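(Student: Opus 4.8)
The statement to prove is: working over $\ACAo+\IND_{2}+\Sigma_{2}^{1}\textsf{-IND}+\ATR_{0}$, any Baire 1 function $f:[0,1]\di\R$ in $BV$ has a sequence $(x_{n})_{n\in\N}$ enumerating all its points of discontinuity. The key difference with Theorem \ref{falnt}(\ref{bont3}) is that there $f$ was assumed $\Sigma^1_1$ (equivalently Borel measurable), so its graph was $\Sigma^1_1$ and $D_k$ from \eqref{fln3} was $\Sigma^1_1$; here the hypothesis is instead that $f$ is Baire 1 \emph{as a third-order object}. The plan is to reduce the Baire 1 case to an analogue of the $\Sigma^1_1$ case by using the representation of $f$ as a pointwise limit of continuous functions together with $\exists^2$ (available from $\ACAo$).

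First I would unpack the Baire 1 hypothesis: fix a sequence $(f_m)_{m\in\N}$ of continuous $[0,1]\di\R$-functions with $f(x)=\lim_{m\di\infty}f_m(x)$ for all $x\in[0,1]$. By Corollary \ref{fliep} (using $\WKL$, which follows from $\ACAo$) we may replace each $f_m$ by an RM-code $\Phi_m$ together with a modulus of uniform continuity $M_m$; thus $f$ is now presented by genuinely second-order data. Next, exactly as in the proof of item \eqref{W8} of Theorem \ref{tank}, for rational $p<q$ and a rational threshold, statements of the form ``$\sup_{x\in[p,q]}f(x)>r$'' and ``$\inf_{x\in[p,q]}f(x)<s$'' are expressible by $\Sigma^1_1$-formulas (with parameters $(\Phi_m)_{m},(M_m)_m$), and likewise one-sided quantities over rational endpoints: note that ``$x\in[p,q]$'' is $\Pi^0_1$ in the negative binary representation $f_x$ from Remark \ref{situa}, so ``$(\exists x\in(p,q))(\exists n)(\forall i\ge n) f_i(x)>r$'' is $\Sigma^1_1$. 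Using $\exists^2$ these become first-class sets. In particular the left and right limits $f(x\pm)$ — which exist for $x\in(0,1)$ because $\IND_{2}$ guarantees a $BV$-function is regulated (\cite{dagsamXI}*{Thm.\ 3.33}) — can be compared to rationals via $\Sigma^1_1$-predicates, and one can form for each $k\in\N$ the set
\[\textstyle
D_{k}:=\{x\in[0,1]: |f(x+)-f(x-)|>\tfrac1{2^{k}}\ \vee\ |f(x)-f(x+)|>\tfrac1{2^k}\ \vee\ |f(x)-f(x-)|>\tfrac1{2^k}\},
\]
which captures both jump discontinuities and removable discontinuities, and which is $\Sigma^1_1$ in the second-order data. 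Since $f\in BV$ with bound $k_0$, the jump part of $D_k$ has at most $2^k k_0$ elements; the removable-discontinuity part must be handled separately (see below), but once one knows each relevant piece is finite, $\Sigma_{2}^{1}\textsf{-IND}$ enumerates $D_k$ for each fixed $k$ just as in footnote \ref{urba} of Theorem \ref{falnt}, and then \cite{simpson2}*{V.4.10} (provable in $\ATR_{0}$) assembles $\cup_{k}D_k$ into a single sequence.

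The main obstacle is the \emph{removable} discontinuities: a $BV$-function can in principle have (countably many) points $x$ where $f(x-)=f(x+)\ne f(x)$, and these contribute nothing to the variation, so boundedness of $BV$ does not directly bound their number. This is exactly the point where the extra ``Baire 1'' hypothesis must be exploited (the plain Theorem \ref{falm} only lists \emph{jump} discontinuities). The strategy I would follow mirrors the footnote after Theorem \ref{XZ}: a removable discontinuity of $f$ at $x$ with, say, $f(x)>f(x\pm)$ is a \emph{strict local maximum} of $f$, and for the approximants $f_m$ (continuous) one can enumerate strict local maxima using $\exists^2$ (\cite{samwollic22}*{p.\ 272}); one then argues that a removable discontinuity of $f=\lim_m f_m$ is eventually a strict local extremum of the $f_m$ along some subsequence, so the (countably many) candidate points are enumerable from the second-order data, and intersecting with the $\Sigma^1_1$-set ``$f$ discontinuous at $x$'' (again definable via $\exists^2$ from $(\Phi_m),(M_m)$, being the union of the $D_k$) picks out the actual removable discontinuities. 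Combining the enumeration of jump discontinuities (from $\ATR_0$ plus induction, via the bounded $\Sigma^1_1$-sets $D_k^{\mathrm{jump}}$) with the enumeration of removable discontinuities (from the Baire 1 structure plus $\exists^2$) yields the desired sequence $(x_n)_{n\in\N}$, and we are done. The subtle verifications are that the ``eventually a strict local extremum'' characterization is correct and provable here, and that the induction axioms $\IND_{2}$ and $\Sigma_{2}^{1}\textsf{-IND}$ suffice to turn ``each $D_k^{\mathrm{jump}}$ is finite'' into an actual finite enumeration uniformly in $k$; these I expect to go through by the same bookkeeping as in Theorem \ref{falnt} and its footnote \ref{urba}.
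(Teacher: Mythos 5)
There is a genuine gap, and it is located exactly where you flag the ``main obstacle''. Your claim that removable discontinuities of a $BV$-function ``contribute nothing to the variation, so boundedness of $BV$ does not directly bound their number'' is false, and this is the crux of the matter. If $x$ is a removable discontinuity with $|f(x)-f(x+)|>2^{-k}$ and $L:=f(x+)=f(x-)$, pick $y<x<z$ with $f(y)$ and $f(z)$ within $2^{-(k+1)}$ of $L$; the partition $y<x<z$ already contributes more than $2^{-k}$ to $V_{0}^{1}(f)$, and distinct such points can be separated, so a $BV$-bound of $k_{0}$ caps their number at roughly $2^{k}k_{0}$ --- exactly parallel to the jump case. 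This is precisely what the paper's proof uses: by Theorem \ref{falm} the jumps are already enumerable in $\ACAo$, so only the removable discontinuities remain; the Baire 1 data $(f_{n})_{n\in\N}$ (replaced by codes via Corollary \ref{fliep}) serves only to render the predicate ``$x$ is a removable discontinuity with gap $>2^{-k}$'' arithmetical, the \emph{finiteness} for each $k$ comes from $BV$ as above, and then $\Sigma_{2}^{1}$-induction (as in Footnote \ref{urba}) plus \cite{simpson2}*{V.4.10} finish the enumeration.

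Because of the false premise, your substitute route through strict local maxima of the approximants does not recover. The footnote after Theorem \ref{XZ} applies to \emph{decreasing} sequences of continuous functions (sequentially usco); for an arbitrary Baire 1 presentation a removable discontinuity of $f$ need not be a strict local extremum of any $f_{m}$: take $f=\mathbb{1}_{\{x_{0}\}}$ and $f_{m}$ equal to $1$ on $[x_{0},x_{0}+2^{-m}]$ with linear ramps down to $0$ on either side; then $f_{m}\di f$ pointwise but $x_{0}$ is never a strict local maximum of $f_{m}$. So the ``eventually a strict local extremum'' characterization you defer as a subtle verification in fact fails, and with it the enumeration of the candidate removable discontinuities. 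The remainder of your plan (reduction to second-order data via codes and moduli, the counting of jumps via $D_{k}$, and the use of $\Sigma_{2}^{1}\textsf{-IND}$ and $\ATR_{0}$ to assemble $\cup_{k}D_{k}$) matches the paper and is fine; repairing the proof only requires replacing the local-extremum detour by the variation-counting argument above.
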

\begin{proof}
Let $f:[0,1]\di \R$ be Baire $1$ and in $BV$, say with variation bounded by $1$.
In light of Theorem \ref{falm}, we only need to enumerate the `removable' discontinuities if $f$, i.e.\ those $x\in (0,1)$ for which $f(x)\ne f(x+)$ and $f(x+)=f(x-)$.  
Let $(f_{n})_{n\in \N}$ be a sequence of continuous functions with pointwise limit $f$ on $[0,1]$.   Now consider the following formula
\be\label{xdx}\textstyle
(\exists n_{0}\in \N)(\forall n, m\geq n_{0})(\forall q\in B(x, \frac{1}{2^{m}})\cap \Q)(|f_{n}(x)-f(q)|>\frac{1}{2^{k}}),   
\ee
which holds in case $f$ has a removable discontinuity at $x\in (0,1)$ such that $|f(x)-f(x+)|>\frac1{2^{k}}$.   There can only be $2^{k}$ many pairwise distinct $x\in [0,1]$ such that \eqref{xdx}, as each
such real contributes at least $\frac1{2^{k}}$ to the total variation.  Clearly, the formula \eqref{xdx} is equivalent to (second-order) arithmetical as $f$ only occurs with rational input and $f_{n}$ can be replaced uniformly by a sequence of codes $\Phi_{n}$.  
Using $\Sigma_{2}^{1}$-induction, one can enumerate all reals satisfying \eqref{xdx} for fixed $k\in \N$, as in the proof of Theorem \ref{falnt} and Footnote \ref{urba}.
Again using \cite{simpson2}*{V.4.10}, we can enumerate all reals satisfying \eqref{xdx}, and we are done. 
\end{proof}
With some effort, one generalises Theorem \ref{falnt2} from `Baire 1' to `effectively Baire $n$'; it goes without saying that \eqref{xdx} becomes more complicated. 
The same goes for the generalisation from `$BV$' to `regulated', which seems to require $\QFAC^{0,1}$.
Unfortunately, Theorem \ref{falnt2} cannot be pushed down to $\ACA_{0}$ as the union of enumerable arithmetical sets does not necessarily\footnote{To see, let $X = \langle X_ 1, . . . , X_n\rangle $ be in $A_n$ if and only if 
$X_1 = \emptyset$ and for all $i < n$, $X_{i+1}$ is the Turing jump of $X_i$. This constitutes the first $n$ elements in the jump hierarchy, coded as one object, and $A_{n}$ is arithmetical of a complexity independent of $n$. 
Now, each $A_n$ is a singleton with an arithmetical element, but the union does not have any arithmetical enumeration.
} have an arithmetical enumeration.  

\smallskip

Finally, we now establish the following, to be contrasted with the final item of Theorem \ref{truppke}, Theorem \ref{reklam2}, and Theorem \ref{drel}.
We again stress Remark \ref{donola} which explains why there is no contradiction here: rather strong systems are unable to prove that $BV$ or usco functions are in fact Baire 1. 
\begin{thm}[$\ACAo+\IND_{2}+\Sigma_{2}^{1}\textsf{-IND}+\ATR_{0}$]\label{falnt3}
The following are provable.  
\begin{itemize}
\item The Jordan decomposition theorem \(Theorem \ref{drd}\) for $BV$-functions in Baire 1. 
\item A bounded Baire 1 $BV$-function $F:[0,1]\di \R$ has a supremum.
\item {For a Riemann integrable $BV$-function $f:[0,1]\di [0,1]$ in Baire 1 with $\int_{0}^{1}f(x)dx=0$, there is $x\in [0,1]$ such that $f(x)=0$.}
\end{itemize}
\end{thm}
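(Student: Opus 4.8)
The engine for all three items is Theorem~\ref{falnt2}: each $f$ under consideration is simultaneously Baire~1 and of bounded variation, so that theorem (available over our base theory) produces a sequence $(x_n)_{n\in\N}$ enumerating \emph{all} discontinuity points of $f$ — crucially including the removable ones, which is exactly what separates the present hypotheses from the cadlag/quasi-continuous cases of Theorem~\ref{XZ}. With $(x_n)_{n\in\N}$ fixed, $f$ is continuous on $[0,1]\setminus\{x_n:n\in\N\}$, so for rational $a<b$ the quantities $\inf_{x\in[a,b]}f(x)$ and $\sup_{x\in[a,b]}f(x)$ become arithmetically definable from $f$ via $\exists^2$, namely as the $\min$ (resp.\ $\max$) of $\inf_{q\in[a,b]\cap\Q}f(q)$ and $\inf_{n:\,x_n\in[a,b]}f(x_n)$, each of which exists by $\ACA_0$; likewise the one-sided limits $f(x_n\pm)$ exist, since a $BV$-function is regulated assuming $\IND_2$ by \cite{dagsamXI}*{Theorem~3.33}. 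I use these definability facts throughout.

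For the second item I would simply invoke item~\eqref{W8} of Theorem~\ref{tank}: a bounded Baire~1 function on $[0,1]$ has a supremum, and by the closing remark of that theorem this is proved \emph{without} $\QFAC^{0,1}$, hence already over $\RCAo+\WKL$ and a fortiori over $\ACAo$ (so $BV$ is not even needed here); alternatively, $(x_n)_{n\in\N}$ gives $\sup_{x\in[0,1]}F(x)=\max\big(\sup_{q\in\Q\cap[0,1]}F(q),\ \sup_{n\in\N}F(x_n)\big)$ with both suprema existing by $\ACA_0$. For the first item I follow the pattern of the proof of Theorem~\ref{XZ}: using $(x_n)_{n\in\N}$, the supremum defining $V_0^x(f)$ in \eqref{tomb} may be taken over partitions whose breakpoints are rationals together with the points $x_n$, so $\exists^2$ defines the (finite, bounded by the $BV$-bound) total variation function $g(x):=V_0^x(f)$; then $g$ is non-decreasing, and $h:=g-f$ is non-decreasing since for $x<y$ one has $h(y)-h(x)=V_x^y(f)-(f(y)-f(x))\ge V_x^y(f)-|f(y)-f(x)|\ge 0$. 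Hence $f=g-h$ as required.

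The substantive item is the third. Since the range is $[0,1]$ we have $f\ge 0$. From $f$ Riemann integrable with $\int_0^1 f=0$, I would show that for each $k$ the set $\{x\in[0,1]:f(x)>2^{-k}\}$ lies in a finite union of rational open intervals of total length $<2^{-(k+1)}$: using $f\ge0$ and $\int_0^1 f=0$, pick a partition $0=t_0<\dots<t_N=1$ of small enough mesh that every Riemann sum (over all tag choices) lies in $[0,2^{-(2k+2)})$; let $I_k$ be the set of indices $i$ with $\sup_{[t_i,t_{i+1}]}f>2^{-k}$ (a condition decidable in $\exists^2$ thanks to the first paragraph), and for $i\in I_k$ exhibit a tag $\xi_i\in[t_i,t_{i+1}]$ — a rational or some $x_n$ — with $f(\xi_i)>2^{-k}$; tagging by these $\xi_i$ (arbitrarily elsewhere) forces $2^{-k}\sum_{i\in I_k}(t_{i+1}-t_i)<\sum_i f(\xi_i)(t_{i+1}-t_i)<2^{-(2k+2)}$, so the retained intervals have total length $<2^{-(k+2)}$, and $\{f>2^{-k}\}\subseteq\bigcup_{i\in I_k}[t_i,t_{i+1}]$; enlarge slightly to rational open intervals. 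Concatenating these covers over $k\in\N$ — an arithmetical operation, so legitimate in $\ACA_0$ — yields one sequence of rational open intervals covering $\{x\in[0,1]:f(x)>0\}=\bigcup_k\{f>2^{-k}\}$ of total length $<\sum_k 2^{-(k+1)}=1$. If $f(x)>0$ held for all $x\in[0,1]$, this sequence would cover $[0,1]$; by Heine--Borel (\cite{simpson2}*{IV.1}, available since $\ACAo\vdash\WKL$) a finite subcover would exist, of total length $\ge 1$, a contradiction. So some $x^*\in[0,1]$ is missed by the cover, whence $f(x^*)=0$.

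The main obstacle is the bookkeeping in the third item: one must ensure that the cover of $\{f>0\}$ is an \emph{honestly arithmetical} sequence of intervals — this is precisely where the enumeration $(x_n)_{n\in\N}$ of Theorem~\ref{falnt2} is indispensable, as it makes $\sup_{[t_i,t_{i+1}]}f>2^{-k}$ decidable in $\exists^2$ and lets one select the tags $\xi_i$ — and that the Riemann-sum estimate is uniform enough in $k$ to be summed. Granting Theorem~\ref{falnt2} and the already-proved Theorems~\ref{tank} and~\ref{XZ}, the first two items are then routine.
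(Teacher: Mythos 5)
Your first two items are handled essentially as in the paper: the enumeration from Theorem \ref{falnt2} turns the supremum in \eqref{tomb} into a countable one, so $\exists^{2}$ defines $g=\lambda x.V_{0}^{x}(f)$, and your verification that $h:=g-f$ is non-decreasing (hence $f=g-h$) is exactly the intended argument; your direct appeal to item \eqref{W8} of Theorem \ref{tank} for the second item (which indeed needs no $\QFAC^{0,1}$, so $BV$ is not even used there) is a small simplification the paper does not spell out, as it just says the second item follows in the same way as the first. For the third item you take a genuinely different route. The paper argues locally: by \cite{simpson2}*{II.4.10} there is $y\in[0,1]$ avoiding the sequence of discontinuity points from Theorem \ref{falnt2}, so $f$ is continuous at $y$, and $f(y)>0$ together with continuity at $y$ would force a positive lower bound on all sufficiently fine Riemann sums, contradicting $\int_{0}^{1}f(x)dx=0$; hence $f(y)=0$. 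You instead argue globally: using the enumeration to make upper Darboux sums (and the tag selection) arithmetically definable, you cover each set $\{x: f(x)>2^{-k}\}$ by finitely many rational open intervals of total length $<2^{-(k+1)}$, concatenate over $k$, and contradict the countable Heine--Borel theorem if $f$ were everywhere positive. Your argument is correct and does formalize in $\ACAo$ given the enumeration (the uniform-in-$k$ choice of partitions can be made by a $\mu^{2}$-search over uniform meshes, precisely the bookkeeping you flag), but it is heavier than necessary: the continuity-point argument settles the matter in two lines with no measure-theoretic estimates, whereas your covering argument, at the cost of that extra bookkeeping, yields the slightly stronger information that $\{x:f(x)>0\}$ is covered by rational intervals of total length $<1$ rather than merely producing a single zero of $f$.
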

\begin{proof}
For the first item, the sequence provided by Theorem \ref{falnt2} allows one to replace `supremum over $\R$' by `supremum over $\N$' in  \eqref{tomb}.  Hence, we can define $g(x):=\lambda x.V_{0}^{x}(f)$ using $\exists^{2}$.  Now, $g$ is (trivially) increasing and one readily verifies the same for $h=f-g$, i.e.\ a Jordan decomposition is immediate.  The second item follows in the same way.  The third item follows by using \cite{simpson2}*{II.4.10} to obtain a real $y\in [0,1]$ not in the sequence provided by Theorem \ref{falnt2}; by definition, $f$ must be continuous at $y$, and $f(y)=0$ readily follows. 
\end{proof}
With some effort, one generalises Theorem \ref{falnt3} from `Baire 1' to `effectively Baire $n$'. It goes without saying that the proof becomes more complicated.

\subsection{Equivalences for Kleene's arithmetical quantifier}\label{neweqi}
We establish interesting equivalences for $(\exists^{2})$. 
To this end, Thomae's function as follows is useful:
\be\label{thomae}
f(x):=
\begin{cases} 
0 & \textup{if } x\in \R\setminus\Q\\
\frac{1}{q} & \textup{if $x=\frac{p}{q}$ and $p, q$ are co-prime} 
\end{cases}.
\ee
Thomae introduces this function around 1875 in \cite{thomeke}*{p.\ 14, \S20}) to show that Riemann integrable functions can have a dense set of discontinuity points.  
As in the previous, the coding of Baire $n$ functions is taken from \cite{basket, basket2}.  
\begin{thm}[$\RCAo+\WKL$]\label{flame}
The following are equivalent to $(\exists^{2})$.
\begin{enumerate}
\renewcommand{\theenumi}{\roman{enumi}}
\item There exists Riemann integrable $f:[0,1]\di [0,1], g:[0,1]\di \R$ such that $g\circ f$ is not Riemann integrable.\label{F1}
\item  There exists a function that is not Riemann integrable. \label{F12}
\item There exists regulated $f:[0,1]\di [0,1], g:[0,1]\di \R$ such that $g\circ f$ is not regulated.\label{F2}
\item  There exists a function that is not regulated. \label{F3}
\item There exists $f:[0,1]\di [0,1], g:[0,1]\di \R$ in Baire 1 such that $g\circ f$ is not in Baire 1.\label{F4}
\item There exists a function $f:[0,1]\di \R$ that is not Baire 1.\label{F5}
\item There exists usco $f:[0,1]\di [0,1], g:[0,1]\di \R$ such that $g\circ f$ is not usco.\label{F6}
\item  There exists a function that is not usco. \label{F7}
\item There exists a function that is not quasi-continuous. \label{F10}
\item There exists a function that is not cliquish. \label{F11}
\item There exists a function $f:[0,1]\di \R$ that is unbounded. \label{F1212}
\item There exists a Baire 1 function $f:[0,1]\di \R$ that is unbounded. \label{F122}
\item There exists a function $f:[0,1]\di \R$ that is not locally bounded\footnote{A function $f:[0,1]\di \R$ is \emph{locally bounded} if for all $x\in [0,1]$, there is $N\in\N$ such that $(\forall y\in B(x, \frac{1}{2^{N}})\cap [0,1])(|f(y)|\leq N)$.}. \label{F13}
\item There exists Darboux functions $f:[0,1]\di [0,1], g:[0,1]\di \R$ such that $g+ f$ is not Darboux.\label{F14}
\item There is a bounded Darboux $f:[0,1]\di \R$ which does not attains its sup. \label{F15}
\item For a code for a Baire 1 function on $[0,1]$, there exists a third-order function that equals the value of the code on $[0,1]$.\label{F16}  
\item For a code for a Baire $n$ function on $[0,1]$, there exists a third-order function that equals the value of the code on $[0,1]$ $(n\geq 2)$.\label{F17}
\end{enumerate}
We only need $\WKL$ for the items \eqref{F1}, \eqref{F12}, \eqref{F1212}, \eqref{F122}, \eqref{F15}.  
\end{thm}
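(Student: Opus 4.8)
The plan is to split the theorem into a \emph{construction direction} --- $(\exists^{2})$ implies every item --- and a \emph{dichotomy direction} --- every item implies $(\exists^{2})$ --- and to treat the latter uniformly. For the dichotomy direction I would assume the item at hand and invoke the law of excluded middle in the form $(\exists^{2})\vee\neg(\exists^{2})$; the first disjunct finishes immediately, so suppose $\neg(\exists^{2})$, whence every $\R\di\R$-function is continuous by \cite{kohlenbach2}*{\S3}. It then suffices to observe that continuous functions cannot exhibit the asserted pathology. For items \eqref{F2}--\eqref{F11}, \eqref{F13}, and \eqref{F14} this is elementary in $\RCAo$: a continuous $[0,1]\di\R$-function is regulated, Baire~$1$, usco, lsco, quasi-continuous, cliquish, locally bounded, and Darboux (the last via the $\RCAo$-proof of the intermediate value theorem), and continuous functions are closed under composition and under sums. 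For items \eqref{F1}, \eqref{F12}, \eqref{F1212}, \eqref{F122}, and \eqref{F15} one additionally uses, over $\WKL$, that a continuous $F:[0,1]\di\R$ is bounded (Theorem \ref{liguster}), Riemann integrable (Theorem \ref{ploppy}), and attains its supremum; this is precisely why exactly those five items carry the extra $\WKL$. For \eqref{F16} and \eqref{F17}, under $\neg(\exists^{2})$ the canonical $\RCA_{0}$-code for $\mathbb{1}_{\{0\}}$ as a Baire~$1$ function --- for instance $\mathbb{1}_{\{0\}}=\lim_{n}\max(0,1-n|x|)$ --- (resp.\ for $\mathbb{1}_{\Q}$ as a Baire~$2$ function) would, by the item, denote a third-order function equal to $\mathbb{1}_{\{0\}}$ (resp.\ $\mathbb{1}_{\Q}$), which is discontinuous, a contradiction.

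For the construction direction, every witness is definable because $(\exists^{2})\equiv(\mu^{2})$ decides the predicates ``$x\in\Q$'' and ``$x=_{\R}0$'', while $(\exists^{2})\di\ACA_{0}$ supplies the needed convergence theorems. Thomae's function $f$ of \eqref{thomae} is Riemann integrable and is both Baire~$1$ and usco; with $g:=\mathbb{1}_{\R\setminus\{0\}}$ one obtains $g\circ f=\mathbb{1}_{\Q}$, Dirichlet's function, which is neither Riemann integrable, regulated, Baire~$1$, usco, quasi-continuous, nor cliquish, and with $g':=\mathbb{1}_{\{0\}}$ (which is usco and Baire~$1$) one gets $g'\circ f=\mathbb{1}_{\R\setminus\Q}$, not usco; this settles items \eqref{F1}--\eqref{F11}. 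The function $x\mapsto 1/x$ (with value $0$ at $0$), being a pointwise limit of continuous truncations, is an unbounded Baire~$1$ function, covering \eqref{F1212}--\eqref{F122}, and a function with a spike of height $n$ at each point $1/n$ fails to be locally bounded at $0$, covering \eqref{F13}. For \eqref{F15} and \eqref{F14} the key object is a Conway-style function $c:[0,1]\di[0,1)$ that maps every subinterval onto $[0,1)$: such $c$ is bounded and Darboux but misses its supremum $1$ (item \eqref{F15}), while $f:=c$ and $g:=\mathbb{1}_{\{0\}}-c$ are both Darboux with non-Darboux sum $f+g=\mathbb{1}_{\{0\}}$ (item \eqref{F14}). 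Finally, for \eqref{F16} and \eqref{F17}, a code for a Baire~$n$ function is a family of codes for continuous functions; Theorem \ref{plofkip} turns each into a third-order continuous function, $\ACA_{0}$ forms the ($n$-fold) pointwise limit $F(x)$ as $\liminf=\limsup$, and one checks, exactly as in the proof of Theorem \ref{plofkip}, that $F$ is extensional on the reals.

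The main obstacle is the constructive content of the construction direction inside the weak base theory. Producing the Conway-style function $c$ surjective on every subinterval is, classically, carried out via properties of digit expansions or a transfinite recursion; here it must be done by an explicit $\mu^{2}$-definition, and this is the delicate part of items \eqref{F14}--\eqref{F15}. Likewise, verifying that Thomae's function is Riemann integrable (in the sense of \cite{simpson2}*{IV.2}) and that $\mathbb{1}_{\Q}$ is not Baire~$1$ --- the latter classically resting on the Baire category theorem --- requires care over $\RCAo+\WKL$. A lesser but genuine point is the bookkeeping for items \eqref{F16}--\eqref{F17}: one must ensure the limit object is a bona fide third-order $[0,1]\di\R$-function and not merely a rate-indexed family of rational approximations, which is exactly the subtlety handled in the proof of Theorem \ref{plofkip}.
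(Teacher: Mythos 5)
Your overall architecture is the same as the paper's: Thomae's function composed with an indicator to produce $\mathbb{1}_{\Q}$, the ``any witness must be discontinuous'' reversals (your excluded-middle packaging is equivalent to the paper's), the same five items carrying $\WKL$ for the same reason, and the same treatment of items \eqref{F16}--\eqref{F17}. There is, however, a concrete error in your witness for item \eqref{F14}. If $c:[0,1]\di[0,1)$ maps every subinterval onto $[0,1)$, then $g:=\mathbb{1}_{\{0\}}-c$ is \emph{not} Darboux: we have $g(0)=1-c(0)>0$, while $g(x)=-c(x)\leq 0$ for $x\neq 0$, so on any interval $[0,b]$ the values strictly between $0$ and $g(0)$ lie between $g(b)$ and $g(0)$ but are never attained. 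A correct witness needs \emph{both} summands to oscillate over a full interval near the bad point, so that any value assigned there keeps the intermediate value property; the paper takes $f(x)=\sin(\frac1x)$ with $f(0)=1$ and $g(x)=-\sin(\frac1x)$ with $g(0)=0$, each Darboux, with sum $\mathbb{1}_{\{0\}}$. This also makes your Conway-style function (whose $\mu^{2}$-definition with extensionality on the reals you defer as ``delicate'') unnecessary: for item \eqref{F15} the paper simply uses $f(0)=0$ and $f(x)=e^{-x}\cos(\frac1x)$ for $x>0$.

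The other point you defer is precisely the substantive content of the paper's proof of items \eqref{F4}--\eqref{F5}: showing inside $\ACAo$ that $\mathbb{1}_{\Q}$ is not Baire~1. Your proposal offers no argument (and the classical Baire-category proof does not transfer directly, since the relevant sets are defined from third-order data). The paper's argument runs as follows: if continuous $(f_{n})_{n\in\N}$ converge pointwise to $\mathbb{1}_{\Q}$, then by the intermediate value theorem, every nondegenerate $[a,b]$ contains a nondegenerate $[c,d]$ with $f_{N}([c,d])=[\frac14,\frac34]$ for arbitrarily large $N$; since $(\exists^{2})$ is equivalent to the existence of a functional witnessing the intermediate value theorem, one can iterate this to obtain nested intervals $[c_{n},d_{n}]$ with $|c_{n}-d_{n}|<2^{-n}$ and indices $g(n)\geq n$ with $f_{g(n)}([c_{n},d_{n}])=[\frac14,\frac34]$, and the common point $c$ then satisfies $\mathbb{1}_{\Q}(c)=\lim_{n\di\infty}f_{g(n)}(c)\in[\frac14,\frac34]$, a contradiction. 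Without this step (or a comparable one, say via Corollary \ref{fliep} and the second-order Baire category theorem applied to coded closed sets), the forward direction of items \eqref{F4} and \eqref{F5} is not established; the remaining deferred item, Riemann integrability of Thomae's function, is by contrast routine. Your item \eqref{F6} witness $g'=\mathbb{1}_{\{0\}}$ with $g'\circ f=\mathbb{1}_{\R\setminus\Q}$ is fine and in fact spells out a detail the paper leaves implicit.
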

\begin{proof}
First of all, assume $(\exists^{2})$ and consider Thomae's function $f$ as in \eqref{thomae}; one readily verifies that $f$ is Riemann integrable (with integral equal to zero) and regulated (with zero as left and right limits) on any interval.
Now define $g:[0,1]\di \R$ as $0$ in case $x=0$, and $1$ otherwise; this function is trivially Riemann integrable and regulated.  
However, $g\circ f$ is Dirichlet's function $\mathbb{1_{Q}}$, i.e.\ the characteristic function of the rationals, which is readily shown to be \emph{not} Riemann integrable and \emph{not} regulated.
Thus, $(\exists^{2})$ implies items \eqref{F1}-\eqref{F3}. 

\smallskip

Secondly, assume item \eqref{F2} (similar for item \eqref{F3}) and note that $g\circ f$ must be discontinuous, as continuous functions are trivially regulated. 
However, the existence of a discontinuous function on $\R$ yields $(\exists^{2})$ by \cite{kohlenbach2}*{\S3}.
Similarly, for items \eqref{F1} and \eqref{F12}, $\WKL$ suffices to obtain an RM-code for a continuous function on Cantor space (see \cite{kohlenbach4}*{\S4}); the same goes through \emph{mutatis mutandis} for functions on $[0,1]$ by Corollary \ref{fliep}.  
Hence, $\WKL$ suffices to show that a continuous function on $[0,1]$ is Riemann integrable by \cite{simpson2}*{IV.2.6}.
Thus, $g\circ f$ must be discontinuous, which yields $(\exists^{2})$ by \cite{kohlenbach2}*{\S3}.  Similarly, for items \eqref{F4} and \eqref{F5}, a function not in Baire 1 must be discontinuous, as continuous functions are trivially Baire 1; in this way, we obtain a discontinuous function and hence $(\exists^{2})$ by \cite{kohlenbach2}*{\S3}. The first six items now each imply $(\exists^{2})$, the first two using $\WKL$ as noted above.

\smallskip

Thirdly, assume $(\exists^{2})$ and note that Thomae's function is Baire 1.  In particular, finding a sequence of continuous functions converging to $f$ as in \eqref{thomae} is straightforward (using $\exists^{2}$).
The same holds for $g:[0,1]\di \R$ defined as $0$ in case $x=0$, and $1$ otherwise.  We now show that  $\mathbb{1_{Q}}=g\circ f$ is not Baire 1, establishing items \eqref{F4} and \eqref{F5}. 
To this end, suppose $(f_{n})_{n\in \N}$ is a sequence of continuous functions with pointwise limit $\mathbb{1_{Q}}$.  We first prove the following statement in the next paragraph.
\begin{center}
\emph{For any non-empty $[a,b]\subset [0,1]$, there is an arbitrarily large $N\in\N$ and a non-empty $[c,d]\subset [a,b]$ such that  $f_{N}([c, d])=[\frac{1}{4}, \frac{3}{4}]$.}
\end{center}
To establish the previous centred statement, fix a non-trivial interval $[a,b]\subset [0,1]$ and fix $x<y$ such that $x\in \Q\cap [a,b]$ and $y\in [a,b]\setminus \Q$.
Since $(f_{n})_{n\in \N}$ converges pointwise to $\mathbb{1_{Q}}$, there exists arbitrarily large $N\in \N$ such that $f_{N}(x)\geq \frac{3}{4}$ and $f_{N}(y)\leq\frac14$. 
By the intermediate value theorem (provable in $\RCA_{0}$ for RM-codes by \cite{simpson2}*{II.6.6}, and hence in $\ACAo$ for continuous functions), there exists an interval  $[c,d]\subseteq [x,y]\subset [a,b]$ such that $f_{N}([c,d])=[\frac14,\frac34]$.
The previous centred statement has been proved, working in $\ACAo$.  

\smallskip

By \cite{kohlenbach3}*{\S3}, $(\exists^{2})$ is equivalent to the existence of a functional witnessing the intermediate value theorem.  Hence, following the previous paragraph, $\exists^{2}$
readily yields a functional that returns the numbers $N\in \N$ and $c, d\in [0,1]$ as in the centred statement on input $[a,b]$ and $m\in \N$, where $N\geq m$.  
Using the latter functional, one readily obtains sequences $(c_{n})_{n\in \N}$, $(d_{n})_{n\in \N}$, and $g\in \N^{\N}$ such that $g(n)\geq n$, $f_{g(n)}([c_{n}, d_{n}])=[\frac14, \frac34]$, and $|c_{n}-d_{n}|<\frac{1}{2^{n}}$ for all $n\in \N$.
However, if $c=\lim_{n\di \infty }c_{n}$, then $\mathbb{1_{Q}}(c)=\lim_{n\di \infty }f_{g(n)}(c)\in [\frac14, \frac34]$, a contradiction.  Hence, we have proved item \eqref{F4} and \eqref{F5}.  
Since $\mathbb{1_{Q}}$ is not usco (or quasi-continuous or cliquish), the equivalence between $(\exists^{2})$ and items \eqref{F6}-\eqref{F11} follows in the same way. 

\smallskip

To prove item \eqref{F122} (and item \eqref{F1212}) from $(\exists^{2})$, let $f_{n}(x)$ be $2^{2n}$ in case $x\in [0,\frac{1}{2^{n}}]$ and $1/x$ if $x\in (\frac{1}{2^{n}}, 1]$.  Then each $f_{n}:[0,1]\di \R$ is continuous and the sequence converges pointwise to the function which is $1/x$ for $x>0$ and $0$ otherwise.  The latter is unbounded and Baire 1.  To prove $(\exists^{2})$ from item \eqref{F122} (or item \eqref{F1212}), note that the function provided by the latter must be discontinuous by Theorem \ref{liguster}.  However, a discontinuous function  yields $(\exists^{2})$ by \cite{kohlenbach2}*{\S3}. 
The equivalence for items \eqref{F13} follows in the same way, but without using $\WKL$ as continuous functions are trivially locally bounded. 

\smallskip
\noindent
For item \eqref{F14}, use $(\exists^{2})$ to define the following functions $f, g:[0,1]\di \R$
\[
f(x):=
\begin{cases}
\sin(\frac1x) & x\ne 0\\
1 & x=0
\end{cases} \qquad
g(x):=
\begin{cases}
-\sin(\frac1x) & x\ne 0\\
0 & x=0
\end{cases}.
\]
Clearly, $f(x)+g(x)= \mathbb{1}_{\{0\}}$, which is not Darboux.  For the reversal, a continuous function has the intermediate value property, which is provable in $\RCAo+\WKL$ by combining 
Corollary \ref{fliep} with the second-order intermediate value theorem (\cite{simpson2}*{II.6.6}).  Hence, a function that is not Darboux, is discontinuous, yielding $(\exists^{2})$ by \cite{kohlenbach2}*{\S3}.
Note that we can avoid the use of $\WKL$ by imitating the proof of \cite{simpson2}*{II.6.6} in $\RCAo$ for (third-order) continuous functions. 

\smallskip

For item \eqref{F15}, consider $f:[0,1]\di [0,1]$ defined by $f(0)=0$ and $f(x):=e^{-x}\cos(\frac1x)$ for $x>0$ using $(\exists^{2})$.
For the reversal, a continuous function is Darboux as in the previous paragraph, and attains is supremum by combining Cor.\ \ref{fliep} and the second-order results in \cite{simpson2}*{IV.2.3}.  
Hence, item \eqref{F15} expresses the existence of a discontinuous function, and $(\exists^{2})$ follows by \cite{kohlenbach2}*{\S3}.

\smallskip

For items \eqref{F16} and \eqref{F17}, a code for a Baire $n$ function is essentially an effectively Baire $n$ function where the continuous functions are given by codes. 
As noted below \cite{basket2}*{Def.\ 6.1}, $\ACA_{0}$ suffices to show that a code for a Baire $n$ function has a (unique) value.   
Hence, $\exists^{2}$ readily defines a third-order function taking these values everywhere on $[0,1]$.  
For the reversal, define a `Baire 1 code for the Heaviside function' in $\RCAo$ and use items \eqref{F16} or \eqref{F17} to obtain the (discontinuous) Heaviside function, yielding $(\exists^{2})$ by \cite{kohlenbach2}*{\S3}.  
\end{proof}
\noindent
The previous theorem yields the following strange result by contraposition: 
\begin{center}
\emph{if all functions on $\R$ are Baire 1, then all functions on $\R$ are continuous.}
\end{center}
In this light, Brouwer's theorem is not an isolated event, but rather the limit of a certain restriction process.   

\smallskip

Finally, one cannot generalise item \eqref{F5} of Theorem~\ref{flame} to Baire 2 by Theorem~\ref{trupp}.  
One \emph{can} generalise the latter item to `effectively Baire~2' by considering a well-known effectively Baire 3 function: the characteristic function of Borel's normal numbers (\cite{normaleborel}).  
The technical details are however tedious and the same holds for `effectively Baire $n$', where examples of such functions are given in \cite{keylesh}.  

\subsection{Beyond the Big Five}\label{XXX}
\subsubsection{Introduction}\label{lintro}
In the above, we have obtained equivalences between well-known second-order principles like the Big Five on one hand, and a number of 
third-order theorems on the other hand.  This was based on the higher-order RM of $(\exists^{2})$, which we have also developed.   
In this section, we show that similarly basic third-order statements or slight generalisations, go \textbf{far} beyond the RM of the Big Five and $(\exists^{2})$.  
We do so by deriving from the former the following:
\be\tag{$\NIN_{[0,1]}$}
(\forall Y:[0,1]\di \N)(\exists x, y\in [0,1])(x\ne y\wedge Y(x)=Y(y)),
\ee
which expresses that there is no injection from $[0,1]$ to $\N$. 
By \cite{dagsamIX}*{\S4}, $\NIN_{[0,1]}$ is not provable in relatively strong systems like $\Z_{2}^{\omega}$, which is a conservative extension of $\Z_{2}$  (see Section \ref{lll}).
The following list provides some interesting examples.
\begin{itemize}
\item The existence of a function not in Baire $1$ is equivalent to $(\exists^{2})$ (Theorem~\ref{flame}), while the existence of a function not in Baire 2 (or Baire $1^{*}$) implies $\NIN_{[0,1]}$ (Theorem \ref{trupp}).  
\item Cousin's lemma for lsco (or: quasi-continuous) functions is equivalent to $\WKL$ (Theorem \ref{lebber}), while this lemma for usco (or: cliquish)  functions implies $\NIN_{[0,1]}$ (Theorem \ref{reklam}).
\item Cousin's lemma for effectively (or: codes for) Baire 2 functions is part of the RM of $\ATR_{0}$ (Theorem \ref{floepje}) while the generalisation to Baire 2 (or Baire $1^{*}$) functions implies $\NIN_{[0,1]}$ (Theorem \ref{reklam}).
\item The supremum principle for effectively (or: codes for) Baire 2 functions is part of the RM of $\FIVE$ (Theorem \ref{flapke}) while the generalisation to Baire~2 (or Baire $1^{*}$) functions implies $\NIN_{[0,1]}$ (Theorem \ref{truppke}).
\item Jordan's decomposition theorem for cadlag $BV$-functions is equivalent to $\ACA_{0}$ (Theorem \ref{XZ}), while this theorem for usco $BV$-functions implies $\NIN_{[0,1]}$ (Theorem \ref{reklam2}). 
\end{itemize}
In our opinion, these examples show that one should not put too much emphasis on the distinction `second- versus third-order', as there are plenty equivalences between second- and third-order theorems. 
The real fundamental `divide' is whether a given theorem follows from conventional comprehension alone (say up to $\Z_{2}^{\omega}$), or whether it implies $\NIN_{[0,1]}$ or similar principles not provable in $\Z_{2}^{\omega}$.   

\smallskip

An important side-result of this section (see Theorem \ref{reklam}) is that many well-known inclusions among function spaces, like the statements \emph{$BV$-functions are Baire 1} and \emph{Baire 1$^{*}$ functions are Baire 1}, also imply $\NIN_{[0,1]}$.
In this way, such inclusions cannot be established in the Big Five and much stronger systems. 

\smallskip

Finally, we mention in passing that the results in this section also identify certain problems with the representation or coding of (slightly) discontinuous functions in the language of second-order arithmetic.

\subsubsection{Beyond Baire 1 functions}
In this section, we show that the equivalences in Theorems \ref{tank} and \ref{flame} cannot be generalised to Baire 2 or Baire 1$^{*}$.  

\smallskip

First of all, we shall need the following fragment of the induction axiom, also studied in \cite{dagsamXI}*{\S2.2.2} with some non-trivial equivalences.
\bdefi[$\INDY$]
Let $Y^{2}$ satisfy $(\forall n\in \N)(\exists \textup{ at most one } f\in 2^{\N})(Y(f, n)=0)$.  
For $k\in \N$, there is $w^{1^{*}}$ with $|w|=k$ such that for $m\leq k$, we have:
\[
(w(m)\in 2^{\N}\wedge Y(w(m), m)=0) \asa (\exists f\in 2^{\N})(Y(f, m)=0).
\]
\edefi
\noindent
We now have the following result, to be contrasted with item \eqref{F5} in Theorem \ref{flame}.  
There is no contradiction here:  Baire $1^{*}$ functions are of course Baire 1, but by Theorem~\ref{reklam}, this is not provable from the Big Five and much stronger systems.  

\begin{thm}[$\ACAo+\IND_{0}$]\label{trupp}
The principle $\NIN_{[0,1]}$ follows from either:
\begin{itemize}
\item {there is a $[0,1]\di \R$ function that is not Baire 2},
\item {there is a $[0,1]\di \R$ function that is not Baire 1$^{*}$}. 
\end{itemize}
\end{thm}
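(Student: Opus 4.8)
The plan is to establish both implications by contraposition: assuming $\neg\NIN_{[0,1]}$, so that there is an injection $Y:[0,1]\di \N$, I would show that \emph{every} $F:[0,1]\di \R$ is simultaneously Baire $1^{*}$ and Baire 2. Throughout, $(\exists^{2})$ is available since we work over $\ACAo$, so I may decide equality of reals and convert freely between representations of reals (Remark \ref{situa} and \cite{polahirst}); in particular, from $x\in[0,1]$ I can uniformly compute a canonical binary representation, taking the terminating expansion at dyadic rationals.

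The Baire $1^{*}$ half uses no induction. I would put $C_{n}:=\{x\in[0,1]:Y(x)=_{0}n\}$, represented by $Z:\N\times\R\di \R$ with $Z(n,x)=1$ if $x\in[0,1]\wedge Y(x)=n$ and $Z(n,x)=0$ otherwise. Since $Y$ is injective, each $C_{n}$ is empty or a singleton $\{x_{0}\}$, and in the latter case its complement is open: for $x\ne_{\R}x_{0}$ one has $d(x,x_{0})>0$, hence some ball $B(x,\tfrac{1}{2^{N}})$ misses $x_{0}$. So $(C_{n})_{n\in\N}$ is a sequence of closed sets in the sense of Definition \ref{openset}, with $[0,1]=\bigcup_{n\in\N}C_{n}$ because $Y$ is total on $[0,1]$, and $F_{\upharpoonright C_{m}}$ is trivially continuous as $C_{m}$ has at most one point. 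Hence $F$ is Baire $1^{*}$.

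For the Baire 2 half I would invoke $\IND_{0}$. Define $\tilde{Y}:2^{\N}\times\N\di \N$ by $\tilde{Y}(g,m)=0$ iff $g$ is the canonical binary representation of some $x\in[0,1]$ with $Y(x)=m$; by injectivity of $Y$ there is at most one such $g$ for each $m$. Applying $\IND_{0}$ and normalising any non-witnessing entry to the fixed sequence $00\dots$, I obtain for each $N$ a canonical finite sequence $w_{N}$ whose entries code exactly the reals $x\in[0,1]$ with $Y(x)\leq N$; these nest, so by unique existence over $(\exists^{2})$ I extract a single sequence $(x_{n})_{n\in\N}$ of reals in $[0,1]$, which I may assume pairwise distinct (de-duplicating with $(\exists^{2})$), such that every $x\in[0,1]$ equals some $x_{n}$. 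Now set $F_{N}(x):=F(x_{n})$ for the least $n\leq N$ with $x=_{\R}x_{n}$, and $F_{N}(x):=0$ if there is none. Each $F_{N}$ is supported on the finite set $\{x_{0},\dots,x_{N}\}$, hence Baire 1: with $\delta_{N}>0$ the least pairwise distance among these points, the continuous, piecewise-linear functions $H_{N,j}(x):=\sum_{i\leq N}F(x_{i})\cdot\max\!\big(0,\,1-2^{j}|x-x_{i}|\big)$ satisfy $H_{N,j}\to F_{N}$ pointwise as $j\di\infty$ (once $2^{-j}<\delta_{N}$ the spikes are disjoint). Finally $F_{N}\to F$ pointwise, since for $x=_{\R}x_{n_{0}}$ with $n_{0}$ least one gets $F_{N}(x)=F(x)$ for all $N\geq n_{0}$. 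Thus $F$ is a pointwise limit of Baire 1 functions, i.e.\ Baire 2.

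The main obstacle is the Baire 2 half. One cannot shortcut it by ``Baire $1^{*}\subseteq$ Baire 2'', since such inclusions between function classes are exactly the sort of statement that implies $\NIN_{[0,1]}$ and is unprovable in strong systems (Remark \ref{donola}, Theorem \ref{reklam}); the enumeration argument is therefore essential. Within it, the delicate point is extracting the single sequence $(x_{n})_{n\in\N}$ from $\IND_{0}$: this requires stating the ``at most one witness'' hypothesis with a genuinely canonical representation of reals, so that the $\IND_{0}$-witnesses become unique after normalisation, and then using unique existence over $(\exists^{2})$ to turn the family $(w_{N})_{N}$ of finite sequences into an actual sequence. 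The remaining verifications---that singletons are closed in the sense of Definition \ref{openset}, and that a function supported on an explicitly given finite set is Baire 1---are routine.
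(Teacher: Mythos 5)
Your Baire $1^{*}$ half is correct and is exactly the paper's argument: the sets $C_{n}=\{x\in[0,1]:Y(x)=n\}$ are empty or singletons, hence closed, they cover $[0,1]$ since $Y$ is total, and any restriction of $F$ to them is trivially continuous. The genuine gap is in your Baire 2 half, at the step where you pass from the $\IND_{0}$-witnesses $w_{N}$ to a single infinite sequence $(x_{n})_{n\in\N}$ containing every element of $[0,1]$. The axiom $\IND_{0}$ only yields, for each \emph{fixed} $N$, some finite sequence of witnesses; turning the family $(w_{N})_{N\in\N}$ into one infinite sequence of reals is an instance of countable choice at type $0\di 1$ (essentially $\QFAC^{0,1}$, or a unique-choice variant thereof), which is not part of $\ACAo+\IND_{0}$. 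Your appeal to ``unique existence over $(\exists^{2})$'' does not work: the graph of the intended sequence involves the condition ``there exists $x\in[0,1]$ with $Y(x)=m$'', a quantifier over the reals applied to the third-order parameter $Y$, which $\exists^{2}$ cannot decide and for which $\ACAo$ has no comprehension or choice principle; uniqueness of the witness does not by itself produce the sequence. In fact the step \emph{cannot} be admissible: if under $\neg\NIN_{[0,1]}$ one could produce, in $\ACAo+\IND_{0}$, a sequence $(x_{n})_{n\in\N}$ exhausting $[0,1]$, then \cite{simpson2}*{II.4.9} would yield a real not in that sequence, so $\ACAo+\IND_{0}$ would prove $\NIN_{[0,1]}$ outright---contradicting the fact, noted in the paper just before Theorem \ref{truppke2}, that even $\Z_{2}^{\omega}+\IND_{0}$ does not prove $\NIN_{[0,1]}$ (see \cite{dagsamX}*{\S3}). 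Had the extraction been legitimate, you would also not have needed the spike functions at all, since an enumeration of $[0,1]$ is already a contradiction.

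The repair is to avoid any uniformisation, which is what the paper does. Define $f_{n}(x):=f(x)$ if $Y(x)\leq n$ and $f_{n}(x):=0$ otherwise; this \emph{sequence} of functions is definable outright from $f$ and $Y$, and it converges pointwise to $f$ because $f_{n}(x)=f(x)$ for all $n\geq Y(x)$. Being Baire 2 only requires that each member $f_{n_{0}}$ be Baire 1, and this is a separate existential statement for each fixed $n_{0}$: apply $\IND_{0}$ (for that $n_{0}$) to list the at most $n_{0}+1$ reals with $Y(x)\leq n_{0}$, and then your own piecewise-linear spike functions built on this finite list converge pointwise to $f_{n_{0}}$. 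No single enumeration across all $n_{0}$ is needed, and with this change the rest of your verification goes through.
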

\begin{proof}
Fix an arbitrary function $f:[0,1]\di \R$ and let $Y:[0,1]\di \N$ be injective.  Now define $f_{n}(x)$ as $f(x)$ in case $Y(x)\leq n$, and $0$ otherwise. 
Clearly, $f$ is the pointwise limit of the sequence $(f_{n})_{n\in \N}$.  Now fix some $n_{0}\in \N$ and use $\IND_{0}$ to enumerate all $x\in [0,1]$ such that $Y(x)\leq n_{0}$.  
With this finite sequence, one readily defines a sequence of continuous functions converging to $f_{n_{0}}$, which shows that the latter is Baire 1.
This shows that any function $f:[0,1]\di \R$ is Baire 2 assuming $\neg \NIN_{[0,1]}$.  
Now define the closed set $C_{n}:=\{x\in [0,1]: Y(x)=n\}$ and note that $f_{\upharpoonright C_{n}}$ is indeed continuous for all $n\in \N$.
Hence, $f$ is also Baire $1^{*}$ assuming $\neg\NIN_{[0,1]}$, and we are done. 
\end{proof}
We believe the first item in Theorem \ref{trupp} is related to the Vitali-Carath\'eodory theorem as in \cite{wezinken}*{Cor.\ 4}, but we can only conjecture a connection. 

\smallskip

Secondly, we study the following supremum principle which Theorem \ref{flapke} establishes for effectively Baire $n$ functions assuming $\FIVE$.  
Theorem \ref{truppke} shows that slight generalisations are not provable in $\Z_{2}^{\omega}$.
\begin{princ}[Supremum principle for $\Gamma$]\label{tink}
For bounded $f:[0,1]\di \R$ in $\Gamma$, there is $F:\Q^{2}\di \R$ such that for $p, q\in \Q\cap[0,1]$, the real $F(p, q)$ equals $\inf_{x\in [p, q]}f(x)$.
\end{princ}
The following theorem is to be contrasted with items \eqref{W2}, \eqref{W43}, and \eqref{W8} of Theorem~\ref{tank} and with Theorem \ref{flapke}.
There is no contradiction here as $\NIN_{[0,1]}$ follows from the fact that regulated or usco functions are Baire 1 by Theorem \ref{reklam}.
\begin{thm}[$\ACAo+\IND_{0}$]\label{truppke}
The principle $\NIN_{[0,1]}$ follows from either:
\begin{itemize}
\item The supremum principle \(Princ.\ \ref{tink}\) for Baire 1$^{*}$ or Baire 2 functions. 
\item The supremum principle \(Princ.\ \ref{tink}\) for cliquish functions. 
\item The supremum principle \(Princ.\ \ref{tink}\) for regulated functions. 
\item The supremum principle \(Princ.\ \ref{tink}\) for usco functions. 
\item The supremum principle \(Princ.\ \ref{tink}\) for $BV$-functions. 
\end{itemize}
The theorem still goes through if we limit the items to functions that are pointwise discontinuous or continuous almost everywhere. 
\end{thm}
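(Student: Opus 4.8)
The plan is to establish the contrapositive of each implication: assuming $\neg\NIN_{[0,1]}$, fix an injection $Y:[0,1]\di\N$ and derive a contradiction from the relevant instance of Principle \ref{tink}. The common engine will be the bounded function $f(x):=2^{-Y(x)}$; for the refinements to pointwise discontinuous or almost-everywhere continuous functions I would instead take $f$ supported on a fixed perfect, nowhere dense, measure zero set $Z$ (Cantor's middle-thirds set, whose membership relation is decidable in $\ACAo$), setting $f(x):=2^{-Y(x)}$ for $x\in Z$ and $f(x):=0$ otherwise, and for the cliquish item the same with $Z$ replaced by $[0,1]\setminus\Q$.

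First I would verify that $f$ belongs to each of the listed classes. By the method of the proof of Theorem \ref{trupp}, $\IND_0$ lets us enumerate, for each $k\in\N$, the finite set $\{x\in[0,1]:Y(x)\leq k\}$; hence for every $x_0$ and $k$ all but finitely many $x$ near $x_0$ satisfy $f(x)<2^{-k}$, so $f(x_0-)=f(x_0+)=0$ (whence $f$ is regulated) and $\limsup_{x\di x_0}f(x)=0\leq f(x_0)$ (whence $f$ is usco); and since distinct reals carry distinct $Y$-values, every partition sum $\sum_i|f(x_i)-f(x_{i+1})|$ is bounded by $2\sum_{j\in\N}2^{-j}=4$, so $f\in BV$. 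For the Baire 1$^{*}$ and Baire 2 items I would simply cite Theorem \ref{trupp}, which already gives that, under $\neg\NIN_{[0,1]}$ and $\IND_0$, every $[0,1]\di\R$-function is Baire 2 and Baire 1$^{*}$. The $Z$-supported (resp.\ $\Q$-modified) variant is additionally continuous at every point outside $Z$ (resp.\ at every rational), hence continuous almost everywhere (resp.\ pointwise discontinuous) and cliquish, and the computations above are unaffected.

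Next, from the assumed instance of Principle \ref{tink} I would recover, for rationals $p<q$ with $[p,q]\cap Z\neq\emptyset$, the value $m(p,q):=\min\{Y(x):x\in[p,q]\cap Z\}$. For every class on the list except the usco one this is routine, because those classes are closed under $g\mapsto c-g$: applying Principle \ref{tink} to $1-f$ returns $\inf_{x\in[p,q]}(1-f)(x)=1-\sup_{x\in[p,q]}f(x)=1-2^{-m(p,q)}$, the supremum being attained at the $\min$-achieving point of $Z\cap[p,q]$. Working in $\ACAo$, a binary search on $(p,q)\mapsto m(p,q)$ then locates the unique $x\in Z$ with $Y(x)=\min\{Y(y):y\in Z\}$; since this point is a single real bounded away from the finitely many points of small $Y$-value, deleting it perturbs only finitely many of the values $m(p,q)$, and iterating yields a sequence $(z_n)_{n\in\N}$ enumerating all of $Z$. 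As $Y$ is injective, $Z$ is uncountable (indeed perfect, or all of $[0,1]$), so Cantor's diagonal argument, available in $\RCA_0$, produces a point of $Z$ not among the $z_n$ — the desired contradiction. This proves $\NIN_{[0,1]}$ for every item except usco.

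For the usco item the transform $g\mapsto c-g$ leaves the class, so a different auxiliary function is needed; here I would use that $\mathbb 1_C$ is usco for every closed $C\subseteq[0,1]$ and engineer $C$ from $Y$ (removing from $[0,1]$ suitably shrinking rational neighbourhoods indexed by an auxiliary enumeration) so that the values $\inf_{x\in[p,q]}\mathbb 1_C(x)$, which detect precisely when $[p,q]\subseteq C$ and hence determine $\mathrm{int}(C)$, still suffice to enumerate an uncountable set that $Y$ injects into $\N$, giving the same Cantor contradiction. I expect arranging this closed set so that recovering $\mathrm{int}(C)$ genuinely enumerates enough of $[0,1]$ — rather than a trivial open set — to be the main obstacle; the remaining items are routine adaptations of Theorems \ref{trupp} and \ref{tank}.
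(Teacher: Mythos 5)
Your core construction matches the paper's own proof: from a putative injection $Y:[0,1]\di\N$ you build $f(x)=2^{-Y(x)}$, use $\IND_{0}$ to check it is regulated, usco, cliquish, $BV$, Baire~$2$ and Baire~$1^{*}$ (citing Theorem \ref{trupp} for the last two), recover the extremum over rational intervals from Principle \ref{tink}, locate the point of minimal $Y$-value by interval halving, iterate, and contradict the non-enumerability of $[0,1]$ (resp.\ of the Cantor set for the pointwise-discontinuous/a.e.-continuous refinements) -- all as in the paper. The genuine gap is the usco item, which by your own admission you do not prove. Your obstacle is an artifact of reading Principle \ref{tink} ultra-literally as an infimum principle: as its name, its use in the paper's proof, and the results it is modelled on (the supremum items of Theorems \ref{tank} and \ref{flapke}) indicate, the content exploited here is the existence of the functional $(p,q)\mapsto\sup_{x\in[p,q]}f(x)$ (for the symmetric classes this is just the inf form applied to $-f$, which is how you proceed). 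Since $f$ itself is usco and the supremum is exactly the quantity that is meaningful for usco functions, the usco case is handled by the very same interval-halving as the other items; no reflection trick and no auxiliary closed set are needed. Your proposed substitute -- applying the inf form to $\mathbb{1}_{C}$ for a closed $C$ engineered from $Y$ -- would moreover not work as sketched: for usco $g$ the sets $\{x: g(x)<c\}$ are open, so the data $\inf_{x\in[p,q]}\mathbb{1}_{C}(x)$ only reveals which rational intervals lie inside $C$, i.e.\ the interior of $C$, and since the fibres $\{x:Y(x)=n\}$ are unknown singletons there is no definable way to fatten them into intervals, so this data cannot pin down individual points. (Note also that under the literal inf reading the principle applied to your $f$ returns the constant $0$ functional, since every nondegenerate interval contains points of arbitrarily large $Y$-value; this is precisely why the proof must work with suprema.)

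Two smaller points. The claim that deleting the located point ``perturbs only finitely many of the values $m(p,q)$'' is false: every rational interval containing that point is affected. The correct justification for the iteration -- and the one the paper uses -- is simply that $f$ with finitely many values set to $0$ still lies in every listed class, so the principle may be applied again at each stage; relatedly, the located point need not be ``bounded away'' from anything, as it is recovered as the limit of the halving procedure because every rational neighbourhood of it carries the same supremum. Finally, for the refinements your single Cantor-set variant does serve both purposes (its continuity points are dense and co-null), which is if anything slightly tidier than the paper's use of two separate modifications.
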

\begin{proof}
First of all, let $Y:[0,1]\di \N$ be an injection and define $f(x):= \frac{1}{2^{Y(x)+5}}$, which is Baire 2, Baire $1^{*}$, usco, regulated, cliquish, and $BV$, as we show next.  
Indeed, for fixed $x_{0}\in [0,1]$, $\IND_{0}$ can enumerate the finitely many reals that are mapped below $n_{0}:=Y(x_{0})$ by $Y$.  
Thus, $f$ is arbitrary close to $0$ in a small enough punctured neighbourhood of $x_{0}$, readily implying that it is usco, regulated, and cliquish.  
Moreover, since $Y$ is an injection, the sums $\sum_{i=0}^{n-1} |f(x_{i})-f(x_{i+1})|$ as in \eqref{tomb} are bounded by $\sum_{i=0}^{n}\frac{1}{2^{i+5}}$, which is at most $2$, i.e.\ $f$ is in $BV$.
For the Baire 2 property, define $f_{n}(x)$ as $f(x)$ in case $Y(x)\leq n$, and define $f_{n}(x)$ as $0$ if $Y(x)>n$.
Clearly, for fixed $n_{0}\in \N$, the function $f_{n_{0}}$ has got at most finitely many points of discontinuity, which can be enumerated using $\IND_{0}$.  Using this finite list, one readily defines a sequence of continuous functions converging to $f_{n_{0}}$, i.e.\ the latter is Baire 1 and $f$ is Baire 2.   That $f$ is Baire $1^{*}$ follows as for Theorem \ref{trupp}.

\smallskip

Secondly, $\sup_{x\in [0,1]}f(x)$ is $\frac{1}{2^{n_{1}+5}}$ where $n_{1}\in \N$ is the least number such that $Y(x_{1})=n_{1}$ for some $x_{1}\in [0,1]$.  
Comparing $\sup_{x\in [0,\frac12]}f(x)$ and $\sup_{x\in [\frac12,1]}f(x)$, we obtain the first bit of the binary expansion of $x_{1}$.  
Using the usual interval-halving technique, we then obtain $x_{1}$ itself.  Now repeat this process for $f$ replaced by $f_{1}:[0,1]\di \R$ which is $f$ for $x\ne x_{1}$ and $0$ otherwise.  
Thus, we obtain an enumeration of $[0,1]$ and by \cite{simpson2}*{II.4.9}, there is $y\in [0,1]$ not in the latter sequence, a contradiction, and the items from the theorem follow. 

\smallskip

Thirdly, for the final sentence in the theorem, define $g:[0,1]\di \R$ as $g(x):=0$ if $x\in \Q\cap [0,1]$, and $g(x):=f(x)$ otherwise. 
Then $g$ is continuous at each rational number and thus pointwise discontinuous, which one readily establishes using $\IND_{0}$.  
In the same way as for $f$ in the previous paragraph, the function $g$ is $BV$, usco, regulated, Baire 2, and cliquish.  
Repeating the `sup construction' from the previous paragraph for $f$ replaced by $g$, one obtains an enumeration of $[0,1]\setminus \Q$, which yields the required contradiction. 

\smallskip

Finally, for the restriction to functions continuous almost everywhere, let $\mathcal{C}$ be the Cantor (middle-third) set, which has an RM-code and is recursively homomorphic to Cantor space, all in $\RCA_{0}$ by \cite{simpson2}*{I.8.6}.
Let $Y:\mathcal{C}\di\N$ be an injection and define $h:[0,1]\di \R$ as $0$ in case $x\not \in \mathcal{C}$ and $\frac{1}{2^{Y(x)+1}}$ otherwise.  
In the same way as for $f$ and $g$, the function $h$ is $BV$, usco, regulated, Baire 2, and cliquish.  Since $\mathcal{C}$ is closed and has measure zero, $h$ is continuous almost everywhere. 
Repeating the `sup construction' from the previous paragraph for $f$ replaced by $h$, one obtains an enumeration of $\mathcal{C}$, which yields a contradiction by \cite{simpson2}*{II.5.9}. 
Moreover, in case $\mathcal{C}$ is not countable, neither is $2^{\N}$ and $[0,1]$ by the results in \cite{samcie22}, and we are done.
\end{proof}
One can derive $\NIN$ from the fact that a Baire 2 function has a supremum, i.e.\ not involving parameters, but the technical details are somewhat messy.
Now, Theorem~\ref{truppke} identifies a problem with the coding of Baire 2 functions in the language of second-order arithmetic.  
Indeed, comparing Theorems~\ref{flapke} and~\ref{truppke}, we observe that the logical properties of the supremum principle for Baire~2 functions changes dramatically upon restriction to \emph{effectively} Baire 2 functions; the latter using codes for continuous functions is essentially the second-order representation used in \cites{basket, basket2}.

\smallskip

Thirdly, we have the following theorem to be contrasted with Theorem \ref{flappy}. 
Note that $\Z_{2}^{\omega}+\IND_{0}$ cannot prove $\NIN_{[0,1]}$ by \cite{dagsamX}*{\S3}. 
\begin{thm}[$\ACAo+\FIVE+\IND_{0}$]\label{truppke2}
The principle $\NIN_{[0,1]}$ follows from:
\be\label{pung}
\textup{\emph{for any open Baire 2 set in $\R$, there is an RM-code}.}
\ee
\end{thm}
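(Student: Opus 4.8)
The plan is to adapt the ``sup construction'' of Theorems~\ref{trupp} and~\ref{truppke}, replacing the supremum functional by an RM-code for an open set. So assume toward a contradiction that $Y:[0,1]\di\N$ is an injection; I will build a sequence $(x_{n})_{n\in\N}$ that includes every element of $[0,1]$, contradicting \cite{simpson2}*{II.4.9}. The idea is to encode $Y$ as an open Baire~$2$ subset of $[0,1]$ whose RM-code, provided by \eqref{pung}, lets us read off $Y^{-1}(n)$ for each $n$ in the range of $Y$.

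Concretely, place the $n$-th ``copy'' of $[0,1]$ in the dyadic slab $J_{n}:=(1-2^{-n},1-2^{-(n+1)})$ via the increasing affine bijection $\phi_{n}:[0,1]\to\overline{J_{n}}$; these slabs are pairwise disjoint and $[0,1]=\big(\bigcup_{n}J_{n}\big)\cup\{1-2^{-n}:n\in\N\}\cup\{1\}$. For $n$ in the range of $Y$ let $x_{n}\in[0,1]$ be the unique point with $Y(x_{n})=n$, put $p_{n}:=\phi_{n}(x_{n})$, and set $O\cap J_{n}:=J_{n}\setminus\{p_{n}\}$; for $n$ not in the range of $Y$ set $O\cap J_{n}:=J_{n}$, and let $O$ contain none of the endpoints $1-2^{-n}$ or $1$. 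Then $O\subseteq[0,1]$ is manifestly open, and $\mathbb{1}_{O}=1-\mathbb{1}_{E}$ where $E:=\{1-2^{-n}:n\in\N\}\cup\{1\}\cup\{p_{n}:n\text{ in the range of }Y\}$ is countable. To see that $\mathbb{1}_{O}$ is Baire~$2$ I would argue as in the proofs of Theorems~\ref{trupp} and~\ref{truppke}: forming the range of $Y$ as a set with $\FIVE$ and using $\IND_{0}$ on the relation ``$f$ is the negative binary representation (Remark~\ref{situa}) of some $x\in[0,1]$ with $Y(x)=n$'', one obtains for each $m$ a finite list of all pairs $(n,x_{n})$ with $n\le m$ and $n$ in the range of $Y$; from this one defines the finite set $E_{m}:=\{1-2^{-n}:n\le m\}\cup\{1\}\cup\{p_{n}:n\le m,\ n\text{ in the range}\}$, checks that each $\mathbb{1}_{E_{m}}$ is Baire~$1$, and that $\mathbb{1}_{E_{m}}\to\mathbb{1}_{E}$ pointwise (the value at any fixed $x$ stabilises once $m$ passes the index of the at most one slab or endpoint ``carrying'' $x$). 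Hence $O$ is an open Baire~$2$ set.

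Now apply \eqref{pung} to $O$, yielding an RM-code: rationals $\big((a_{m},b_{m})\big)_{m\in\N}$ with $x\in O\asa(\exists m\in\N)(a_{m}<x<b_{m})$ for all $x\in\R$ (\cite{simpson2}*{II.5.6}). For rational $c<d$, the statement ``$(c,d)\subseteq O$'' is then, by the Heine--Borel theorem for countable coverings (\cite{simpson2}*{IV.1}, available since $\ACAo$ proves $\WKL$), equivalent to $(\forall k\in\N)(\exists M\in\N)\big([c+2^{-k}(d-c),\,d-2^{-k}(d-c)]\subseteq\bigcup_{m\le M}(a_{m},b_{m})\big)$, which is arithmetical in the code, hence decided by $\exists^{2}$. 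Consequently $n$ lies in the range of $Y$ iff $J_{n}\not\subseteq O$, which is decidable; and when it does, $p_{n}$ is the unique point of $J_{n}$ lying in no $(a_{m},b_{m})$, so the usual interval-halving search on $\overline{J_{n}}$ — at each stage discarding the open half that is contained in $O$, with the standard handling of the case where $p_{n}$ is a rational midpoint — produces a fast-converging Cauchy sequence for $p_{n}$, hence the real $x_{n}=\phi_{n}^{-1}(p_{n})$. Setting $x_{n}:=0$ for $n$ not in the range of $Y$, the sequence $(x_{n})_{n\in\N}$ contains every $x\in[0,1]$, namely as $x_{Y(x)}$, which contradicts \cite{simpson2}*{II.4.9} and establishes $\NIN_{[0,1]}$.

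I expect the main obstacle to be the verification, over the weak base theory, that $\mathbb{1}_{O}$ is genuinely Baire~$2$: this is where $\IND_{0}$ is needed (to enumerate each finite preimage $\{x:Y(x)\le m\}$ through a fixed representation of the reals) and, in the cleanest formulation, where $\FIVE$ enters (to obtain the range of $Y$ as a set, so that the Baire~$1$ approximants $\mathbb{1}_{E_{m}}$ and their witnesses depend uniformly on $m$). The other delicate point is pinning down precisely the arithmetical decidability of ``$(c,d)\subseteq O$'', so that the interval-halving extraction of $x_{n}$ from the RM-code is unambiguous; once these are in place the remaining steps are routine.
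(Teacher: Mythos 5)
Your construction is essentially sound and, in outline, parallels the paper's own proof of Theorem \ref{truppke2}: both encode the putative injection slab-by-slab as the complement of an open set having at most one missing point per slab, both verify the Baire 2 property via the finite lists supplied by $\IND_{0}$, and both conclude with \cite{simpson2}*{II.4.9}. The genuine difference is the extraction step. The paper first passes to the RM-code, observes that ``$x$ lies in the $m$-th slab and outside $\cup_{n}(a_{n},b_{n})$'' is now (equivalent to) an $\L_{2}$-formula, forms the set of bad slabs by $\Sigma_{1}^{1}$-comprehension (hence $\FIVE$), and then applies $\Sigma_{1}^{1}\textsf{-AC}_{0}$ (available in $\ATR_{0}$) to choose witnesses, yielding the enumeration. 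You instead note that ``$(c,d)\subseteq O$'' for rational $c,d$ is arithmetical in the code (via the countable Heine--Borel theorem, available since $\ACAo$ proves $\WKL$), so $\exists^{2}$ decides which slabs meet the complement, and the unique missing point $p_{n}$ of a bad slab is recovered by bisection; your fix for dyadic $p_{n}$ is indeed standard and works because, for rational $(u,v)$ inside a slab, ``$(u,v)\not\subseteq O$'' is exactly ``$p_{n}\in(u,v)$'' (and if both halves are contained in $O$, then $p_{n}$ is the midpoint). This buys something real: the extraction uses only $\exists^{2}$-level reasoning relative to the code, so, written carefully, your argument establishes the theorem over $\ACAo+\IND_{0}$, i.e.\ without $\FIVE$, which is sharper than the stated result. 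Working inside $[0,1]$ rather than on unit slabs of $\R$ is immaterial (your $O$ is open in $\R$; only note that on $\R$ the identity $\mathbb{1}_{O}=1-\mathbb{1}_{E}$ fails outside $[0,1]$, so take $\mathbb{1}_{(0,1)\setminus E_{m}}$ as the Baire 1 approximants).

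The one step you must excise is the appeal to $\FIVE$ ``to obtain the range of $Y$ as a set''. That is not a legitimate application: $\FIVE$ is a second-order comprehension schema, and the formula $(\exists x\in[0,1])(Y(x)=n)$ contains the third-order parameter $Y$ together with a quantifier over all reals, so it is not an $\L_{2}$-formula; if comprehension for such formulas were available, the entire $\NIN_{[0,1]}$ problematic would trivialise. Fortunately the step is also unnecessary: $\mathbb{1}_{O}$ and each $\mathbb{1}_{E_{m}}$ are directly definable from $Y$ and $\exists^{2}$ (for $x$ in the unique slab $J_{n}$, test whether $Y(\phi_{n}^{-1}(x))=n$), and $\IND_{0}$ by its very formulation returns, for each $m$, the finite list of witnesses \emph{together with} the information which $n\leq m$ lie in the range---exactly as in the proofs of Theorems \ref{trupp} and \ref{truppke} and in the paper's own proof here, neither of which forms the range as a set. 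Since the definition of Baire 2 only requires each approximant to be Baire 1 (no uniformity of the witnessing sequences is demanded), nothing is lost by dropping that appeal, and the rest of your argument goes through as written.
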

\begin{proof}
Fix $A\subset [0,1]$ and $Y:[0,1]\di \N$ injective on $A$.  
Note that we can use $\mu^{2}$ from Section \ref{lll} to remove any rationals from $A$.  
Note that we can also guarantee that $Y$ maps to $\N\setminus \{ 0, 1\}$.  
Now define the closed set $C\subset \R$ as follows:
\[
y\in C \asa (\exists n\in \N)( n<y<n+1 \wedge (y-n)\in A\wedge Y(y-n)=n ). 
\]
Since $Y$ is an injection on $A$, each interval $(n,n+1)$ for $n\geq 2$ contains at most one $y\in C$.  
Clearly, $C$ is closed and $O:=\R\setminus C$ is open.  For fixed $n_{0}\in \N$, we can enumerate the (at most $n_{0}$) reals in $C\cap (0, n_{0}+1)$ using $\IND_{0}$.  
With the finite list, one readily defines a sequence of continuous functions that converges to $\mathbb{1}_{O\cap (0, n_{0}+1)}$, i.e.\ the latter is Baire 1.  
Then $\mathbb{1}_{O}$ is Baire 2 and an RM-code for $O$ (and hence $C$) is provided by the centred item.  
Suppose $O=\cup_{n\in \N}(a_{n}, b_{n})$ where the latter intervals have rational end-points.  
Use $\FIVE$ to form the following set $B:= \{m\in \N: (\exists x\in \R)\varphi(x,m) \}$, where 
\[
\varphi(x,m) \equiv (m<x< m+1\wedge  x \in \R\setminus \cup_{n\in \N}(a_{n}, b_{n}))
\]
is (equivalent to) an $\L_{2}$-formula.  
Now apply $\Sigma_{1}^{1}\textsf{-AC}_{0}$, provable in $\ATR_{0}$ by \cite{simpson2}*{V.8.3}, to $(\forall m\in B)(\exists x\in \R)\varphi(x, m)$. 
The resulting sequence readily yields an enumeration of $A$.  
By \cite{simpson2}*{II.4.9}, there is $y\in [0,1]$ not in this enumeration, and hence we can find $z\in [0,1]\setminus A$.
In this way, for any countable set $A\subseteq [0,1]$ (Def.~\ref{standard}), there is $y\in [0,1]\setminus A$.  
Thus, there is injection from $[0,1]$ to $\N$.
\end{proof}
One can replace $\R$ in \eqref{pung} by $[0,1]$ and still obtain $\NIN_{[0,1]}$; we leave this as an exercise to the reader. 

\smallskip

In conclusion, we have shown that certain \emph{slight} generalisations or variations of the third-order theorems from Sections \ref{TOT}-\ref{neweqi} go far beyond the RM of the Big Five or $(\exists^{2})$.  
It is however hard to draw a `borderline':  Theorem \ref{trupp} involves a super-class, namely Baire 2, \emph{and} a sub-class, namely Baire 1$^{*}$, of Baire 1.  
Now, Baire~1$^{*}$ functions {are} of course Baire 1, but only in strong enough systems by Theorem \ref{reklam}.  
A similar result can be obtained for Baire~1$^{**}$ functions (\cite{pawlak}). 

\subsubsection{Variations on a theme}\label{vate}
In this section, we establish the results sketched in Section \ref{lintro}, i.e.\ we show that slight generalisations or variations of the third-order theorems from Sections \ref{TOT}-\ref{neweqi} are not provable from any Big Five system, the axiom $(\exists^{2})$, and much stronger systems.   

\smallskip

First of all, we have the following theorem, where the first five items are to be contrasted with Theorem \ref{lebber}.  
Regarding items \eqref{ta7}-\eqref{ta10}, we recall that $\Z_{2}^{\omega}$ cannot prove $\NIN_{[0,1]}$, i.e.\ $\ATR_{0}$ is relatively weak in comparison. 
\begin{thm}[$\ACAo+\IND_{0}$]\label{reklam}
The following statements imply $\NIN_{[0,1]}$.
\begin{enumerate}
\renewcommand{\theenumi}{\roman{enumi}}
\item Cousin's lemma for $BV$-functions $\Psi:[0,1]\di \R^{+}$. \label{ta0}
\item Cousin's lemma for regulated functions $\Psi:[0,1]\di \R^{+}$. \label{ta01}
\item Cousin's lemma for usco $\Psi:[0,1]\di \R^{+}$. \label{ta1}
\item Cousin's lemma for cliquish $\Psi:[0,1]\di \R^{+}$. \label{ta2}
\item Cousin's lemma for Baire $1^{*}$ $\Psi:[0,1]\di \R^{+}$. \label{ta22}
\item Cousin's lemma for Baire 2 $\Psi:[0,1]\di \R^{+}$. \label{ta21}
\item All usco \(or: lsco\) functions $f:[0,1]\di \R$ are Baire 1.\label{ta3}
\item All $BV$-functions $f:[0,1]\di \R$ are Baire 1.\label{ta5}
\item All regulated functions $f:[0,1]\di \R$ are Baire 1.\label{ta51}
\item All Baire 1$^{*}$ functions $f:[0,1]\di \R$ are Baire 1.\label{tafel}
\item All usco cliquish $f:[0,1]\di \R$ are Baire 1.\label{ta6}
\end{enumerate}
Given $\ATR_{0}+\Delta_{2}^{1}\textsf{\textup{-IND}}$, the principle $\NIN_{[0,1]}$ also follows from the following.
\begin{enumerate}
\renewcommand{\theenumi}{\roman{enumi}}
\setcounter{enumi}{10}
\item All $BV$ \(or: regulated, usco, or lsco\) $f:[0,1]\di \R$ are effectively Baire~2.\label{ta7}
\item All $BV$ \(or: regulated, usco, or lsco\) $f:[0,1]\di \R$ are eff.\ Baire $n+2$.\label{ta71}
\item All $BV$ \(or: regulated, usco, or lsco\) $f:[0,1]\di \R$ have a Borel code.\label{ta8}
\item All Baire 2 $f:[0,1]\di \R$ are effectively Baire 2 \(Baire, \cite{beren2}*{p.\ 69}\).\label{ta9}
\item All Baire 2 $f:[0,1]\di \R$ are effectively Baire $n$ $(n\geq 3)$.\label{ta10}
\end{enumerate}
The theorem still goes through if we limit items \eqref{ta0}-\eqref{ta6} to functions that are pointwise discontinuous or continuous almost everywhere. 

\end{thm}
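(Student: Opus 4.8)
The approach throughout is contraposition: assuming $\neg\NIN_{[0,1]}$, we fix an injection $Y:[0,1]\di\N$ and put, once and for all, $f(x):=\tfrac{1}{2^{Y(x)+5}}$; from each listed statement one then extracts a contradiction. The crucial preliminaries are two. First, $\IND_{0}$ enumerates, for each $N\in\N$, the at most $N+1$ reals with $Y(x)\leq N$, so every $x_{0}\in[0,1]$ has, for each $N$, a punctured neighbourhood on which $Y>N$, whence $f(x)\to 0$ as $x\to x_{0}$ from either side. From this one verifies—exactly as in the corresponding verifications in the proof of Theorem~\ref{truppke}—that $f$ is simultaneously $BV$ (with variation $\leq\tfrac18$ by injectivity, in either sense of Definition~\ref{varvar}), regulated, usco, cliquish, Baire~$1^{*}$ (witnessed by the closed singletons $\{x:Y(x)=n\}$), and Baire~$2$, and that $-f$ is lsco. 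Second, for any finite $F\subseteq[0,1]$ the balls $B(x,f(x))$, $x\in F$, have total length $\sum_{x\in F}2^{-(Y(x)+4)}\leq\tfrac18$ again by injectivity of $Y$, so no finite subfamily of the Cousin covering $\bigcup_{x\in[0,1]}B(x,f(x))$ covers $[0,1]$.

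The first block of items is then immediate. Items \eqref{ta0}--\eqref{ta21} each assert Cousin's lemma for a class to which the gauge $f$ belongs, so the hypothesis yields a finite subcovering, contradicting the length bound. For items \eqref{ta3}--\eqref{ta6} the hypothesis places $f$ (or $-f$ in the lsco case, whence also $f$) in Baire~$1$; since $Y$ is obtained from $f$ by composing with a continuous map (a routine clamping keeps us inside its domain), $Y:[0,1]\di\N$ is itself Baire~$1$, and Theorem~\ref{weerklank} applied with $A:=[0,1]$ (whose characteristic function is continuous, hence Baire~$1$) produces a sequence enumerating $[0,1]$, contradicting \cite{simpson2}*{II.4.9}. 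Either way $\NIN_{[0,1]}$ follows.

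For the second block \eqref{ta7}--\eqref{ta10}, working over $\ACAo+\IND_{0}+\ATR_{0}+\Delta_{2}^{1}\textsf{-IND}$, the hypothesis places $f$ (again $-f$ for the lsco variant) in the class of effectively Baire~$n$, respectively Borel-coded, functions; note $f$ is also Baire~$2$, which covers items \eqref{ta9}--\eqref{ta10}. Corollary~\ref{fliep} replaces the witnessing continuous functions by codes, so $f$ becomes a code for a Baire~$n$ function (or acquires a Borel code), and by the second-order reverse mathematics of such coded functions (\cite{basket, basket2} and Theorem~\ref{floepje}), $\ATR_{0}$ together with the available induction proves Cousin's lemma for $f$—once more impossible by the length bound.

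Finally, for the closing sentence I would adapt the perturbation devices used at the end of the proof of Theorem~\ref{truppke}: the ``continuous almost everywhere'' statements are obtained by transferring the injection to the Cantor middle-third set $\mathcal{C}$ (legitimate by \cite{samcie22}, which lets one pass from an injection of $\mathcal{C}$ or $2^{\N}$ to one of $[0,1]$) and extending the gauge over the complementary open intervals by a continuous positive function tending to $0$ towards $\mathcal{C}$, so the gauge is continuous off the null set $\mathcal{C}$; the ``pointwise discontinuous'' statements are obtained by modifying $f$ on the rationals, in the manner of the function $g$ in that proof, so as to create a dense set of continuity points while preserving class membership. The step I expect to be the genuine obstacle is this last one: one must check that each perturbed object still lies in every class under consideration using only the weak induction $\IND_{0}$, and—more delicately for the Cousin items—that the failure of the covering property survives the perturbation (the length estimate must not be spoiled by the filling-in of the complement of $\mathcal{C}$). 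Pinning down the precise coded-function results invoked for the $\ATR_{0}$-items is the only other point needing attention, and it is routine given Theorem~\ref{floepje}.
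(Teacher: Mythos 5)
Your handling of the two enumerated blocks is essentially sound. For items \eqref{ta0}--\eqref{ta21} and \eqref{ta7}--\eqref{ta10} it is also essentially the paper's argument: the same gauge $f(x)=2^{-(Y(x)+5)}$, the same $\IND_{0}$-verifications of membership in all the relevant classes, the same total-length estimate defeating any finite subcover, and, for the second block, the same passage (via Corollary \ref{fliep}/$\exists^{2}$) to second-order codes and the Cousin lemma for coded Baire $n$ resp.\ Borel functions provable in $\ATR_{0}+\Delta_{2}^{1}\textsf{-IND}$. For items \eqref{ta3}--\eqref{ta6} you genuinely diverge: you apply the hypothesis only to the canonical gauge, recover $Y$ itself as a Baire 1 function by composing with a clamped continuous map, and then enumerate $[0,1]$ via Theorem \ref{weerklank}, contradicting \cite{simpson2}*{II.4.9}. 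The paper instead derives from each of these hypotheses the corresponding third-order Cousin lemma: an arbitrary usco (BV, regulated, \dots) $\Psi$ is Baire 1 by hypothesis, Corollary \ref{fliep} turns the witnessing sequence into a code for a Baire 1 function, $\ACA_{0}$ proves Cousin's lemma for such codes, and one concludes via items \eqref{ta0}--\eqref{ta2}. Your route is shorter and stays third-order, at the cost of leaning on Theorem \ref{weerklank} (hence on the Baire 1 supremum machinery of Theorem \ref{tank}) and on the clamping step, which is fine but should be written out; the paper's detour has the side benefit of establishing the Cousin lemmas for these classes outright.

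The genuine gap is the final sentence. For the Cousin items your ``pointwise discontinuous'' device---zeroing $f$ on the rationals as for $g$ in the proof of Theorem \ref{truppke}---cannot work: a gauge must take values in $\R^{+}$, and any \emph{positive} function agreeing with $2^{-(Y(x)+5)}$ off $\Q$ is discontinuous everywhere (every neighbourhood contains points of arbitrarily small value, by $\IND_{0}$), so no modification confined to the rationals yields a pointwise discontinuous gauge. The paper handles both restrictions of the Cousin items with a single construction, namely the gauge \eqref{bydef}: $d(x,\mathcal{C})$ off the Cantor set and $2^{-(Y(I(x))+5)}$ on it, which is continuous on the dense open set $[0,1]\setminus\mathcal{C}$ and hence simultaneously continuous almost everywhere and pointwise discontinuous; membership in $BV$, regulated, usco, cliquish, Baire $1^{*}$, Baire 2 is then checked directly. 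The covering-failure issue you flag is settled by the observation that $B(x,d(x,\mathcal{C}))\cap\mathcal{C}=\emptyset$ for $x\notin\mathcal{C}$: the balls of any finite subcover centred on $\mathcal{C}$ must already cover $\mathcal{C}$, and they pull back to finitely many basic clopen sets $[\overline{g}(Y(g)+5)]$ of total measure below $1/2$ in $2^{\N}$, a contradiction (with \cite{samcie22} to pass from an injection on $2^{\N}$ to $\NIN_{[0,1]}$). Finally, for the restricted forms of items \eqref{ta3}--\eqref{ta6} your composition trick no longer recovers an enumeration from the perturbed function (for instance $[0,1]\setminus\Q$ is not a Baire 1 set, so Theorem \ref{weerklank} is unavailable); the paper instead routes these through the supremum principle for Baire 1 functions from Theorem \ref{tank} together with the final sentence of Theorem \ref{truppke}, and you would need to do something similar.
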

\begin{proof}
Let $Y:[0,1]\di \N$ be an injection and define $\Psi(x):= \frac{1}{2^{Y(x)+5}}$.
Recall that $\Psi$ is usco (and cliquish, regulated, and $BV$) as shown in the proof of Theorem \ref{truppke}.
Now, for distinct reals $x_{0}, \dots, x_{k}\in [0,1]$ we must have that $\cup_{i\leq k}B(x_{i}, \Psi(x_{i}))$ has measure at most $1/2$, since $\Psi(x_{0}), \dots, \Psi(x_{k})$ are all distinct due to $Y$ being an injection.  
However, item~\eqref{ta1} provides a finite sub-covering of $\cup_{x\in [0,1]}B(x, \Psi(x))$, which must have measure at least $1$, a contradiction.  Thus, there is no injection from $[0,1]$ to $\N$, i.e.\ $\NIN_{[0,1]}$ follows from item \eqref{ta1}.  The same proof goes through for items \eqref{ta0}, \eqref{ta01}, and \eqref{ta2}.  
That $f$ is Baire $1^{*}$ follows as for Theorem \ref{trupp}, i.e.\ item~\eqref{ta22} also follows in the same way. 

\smallskip

For item \eqref{ta21}, one shows that $\Psi(x):= \frac{1}{2^{Y(x)+5}}$ as above, is Baire 2 in the same way as in the proof of Theorem \ref{trupp}.  
Indeed, define $\Psi_{n}(x)$ as $\Psi(x)$ if $Y(x)\leq n$, and $0$ otherwise.  Clearly, $\Psi$ is the pointwise limit of $\Psi_{n}$ while the latter has at most $n+5$ points of discontinuity.  
For fixed $n\in \N$, the discontinuity points of $\Psi_{n}$ can be enumerated using $\IND_{0}$, which readily yields a sequence of continuous functions that converges to $\Psi_{n}$.  
In this way, $\Psi$ as above is Baire 2, and $\NIN_{[0,1]}$ follows.  

\smallskip

For item \eqref{ta3}, we derive item \eqref{ta1} from the latter.  To this end, fix usco $\Psi:[0,1]\di \R^{+}$ and let $(\Psi_{n})_{n\in \N}$ be a sequence of continuous functions with pointwise limit $\Psi$.  
Use Corollary \ref{fliep} to obtain a sequence $(\Phi_{n})_{n\in \N}$ of codes for continuous functions.  The latter sequence is a code for a Baire 1 function in the sense of \cites{basket, basket2}.  
Since $\ACA_{0}$ proves Cousin's lemma for codes for Baire 1 functions (see \cites{basket, basket2}), we obtain Cousin's lemma for usco functions and hence $\NIN_{[0,1]}$ by item \eqref{ta3}.  
By definition, for usco $f:[0,1]\di \R$ with upper bound $N\in \N$ (provided by Theorem~\ref{tank}), $g(x):=N-f(x)$ is lsco, i.e.\ item \eqref{ta3} can be formulated with either lsco or usco.  

\smallskip

For items \eqref{ta5}-\eqref{ta6}, 
the same proof as for item \eqref{ta3} goes through for regulated or $BV$-functions. 
In particular, $\Psi(x):= \frac{1}{2^{Y(x)+5}}$ is $BV$ (and regulated, cliquish, and Baire 1$^{*}$) in case $Y:[0,1]\di \N$ is an injection by the proof of Theorem \ref{truppke}.  

\smallskip

For item \eqref{ta7}, recall that $\ATR_{0}+\Delta_{2}^{1}$-induction proves (second-order) Cousin's lemma for codes of Baire 2 (or even Borel) functions (\cites{basket, basket2}).  
Moreover, we note that $\exists^{2}$ can convert an effectively Baire 2 function into a code for a Baire 2 function (by uniformly replacing the continuous functions by codes).
Hence, assuming item~\eqref{ta7}, we obtain the Cousin lemma for $BV$ (or: regulated, or: usco) functions, and hence $\NIN_{[0,1]}$ by the results for item \eqref{ta0} (and items \eqref{ta01} and \eqref{ta1}).   
The case for lsco functions implies the case for usco functions as for item \eqref{ta3}.
Items \eqref{ta71} and \eqref{ta8} follow in the same way. 
Items \eqref{ta9} and \eqref{ta10} follow in the same way, in combination with the fact that item \eqref{ta21} proves $\NIN_{[0,1]}$.

\smallskip 

For the final sentence, the supremum principle for Baire 1 functions is provable in $\RCAo+\WKL$ by Theorem \ref{tank}.
Hence, items \eqref{ta3}-\eqref{ta6} yield the supremum principle for the associated classes.  
We now obtain $\NIN_{[0,1]}$ via Theorem~\ref{truppke}.  For items \eqref{ta01}-\eqref{ta21}, let $Y:2^{\N}\di \N$ be an injection and recall the Cantor set $\mathcal{C}$ from the proof of Theorem \ref{truppke}, with associated recursive homomorphism $H:2^{\N}\di [0,1]$ defined as $H(f):=\sum_{n=0}^{\infty}\frac{2 f(n)}{3^{n+1}}$.  Now define $\Psi:[0,1]\di \R^{+}$ using $(\exists^{2})$ as:
\be\label{bydef}
\Psi(x):=
\begin{cases}
d(x, \mathcal{C})  & x\not\in \mathcal{C}\\
\frac{1}{2^{Y(I(x))+5}} & \textup{ otherwise } 
\end{cases},
\ee
where $I(x)$ is the unique $f\in 2^{\N}$ such that $H(f)=x$ in case $x\in \mathcal{C}$, and $00\dots$ otherwise.
As in the proof of Theorem \ref{truppke}, $\Psi$ is continuous almost everywhere, pointwise discontinuous, and has the properties required for items \eqref{ta01}-\eqref{ta21}.
To show that $\Psi$ is in $BV$, note that $\lambda x.d(x, \mathcal{C})$ is Lipschitz (with constant $1$) and hence $BV$ as the textbook proof goes through in $\RCAo$ (\cite{voordedorst}*{p.\ 74}).  Since $Y$ is an injection, the function $\lambda x.\big(\Psi(x)-d(x, \mathcal{C})\big)$ is also in $BV$. 
The sum of two $BV$-functions is in $BV$, as the textbook proof goes through in $\RCAo$ (\cite{voordedorst}*{Prop.\ 1.3}).

\smallskip

Finally, by the definition of $\Psi$ in \eqref{bydef}, if $x\in [0,1]\setminus \mathcal{C}$, then $\mathcal{C}\cap I_{x}^{\Psi}=\emptyset$.
Hence, let $z_{0}, \dots, z_{m}$ be those $y_{i}\in \mathcal{C}$ for $i\leq k$ and note that $\cup_{j\leq m}I_{z_{j}}^{\Psi}$ covers $\mathcal{C}$.
Then $I(z_{0}), \dots, I(z_{m})$ yields a finite sub-cover of $\cup_{f\in 2^{\N}}[\overline{f}(Y(f)+5)]$.  However, the measure of the latter is below $1/2$, a contradiction.  
Moreover, in case $2^{\N}$ is not countable, neither is $[0,1]$ by the results in \cite{samcie22}, i.e.\ $\NIN_{[0,1]}$ follows.
\end{proof}
We could restrict item \eqref{ta0} in Theorem \ref{reklam} to functions continuous on a set of positive measure; this would mean replacing the Cantor set $\mathcal{C}$ in \eqref{bydef} by a `fat' Cantor set, i.e.\ having positive measure.  Item \eqref{ta3}-\eqref{ta6} are also robust in that they still imply $\NIN_{[0,1]}$ upon replacing `Baire 1' by the equivalent definition (involving perfect sets) provided by the Baire characterisation theorem (\cite{beren}).

\smallskip

Recall the Vitali principle from Section \ref{ferengi}, which yields the following immediate corollary, to be contrasted with Theorem \ref{flebber}. 
\begin{cor}[$\ACAo+\IND_{0}$]\label{reklamcor}
The following statements imply $\NIN_{[0,1]}$.
\begin{enumerate}
\renewcommand{\theenumi}{\roman{enumi}}
\item Vitali's principle for $BV$-functions $\Psi:[0,1]\di \R^{+}$. \label{va0}
\item Vitali's principle for regulated functions $\Psi:[0,1]\di \R^{+}$. \label{va01}
\item Vitali's principle for usco $\Psi:[0,1]\di \R^{+}$. \label{va1}
\item Vitali's principle for cliquish $\Psi:[0,1]\di \R^{+}$. \label{va2}
\item Vitali's principle for Baire $1^{*}$ $\Psi:[0,1]\di \R^{+}$. \label{va22}
\item Vitali's principle for Baire 2 $\Psi:[0,1]\di \R^{+}$. \label{va21}
\end{enumerate}
\end{cor}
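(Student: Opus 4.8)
The plan is to reduce Corollary \ref{reklamcor} to Theorem \ref{reklam} by observing that Vitali's principle is essentially Cousin's lemma with the conclusion weakened from ``full measure'' to ``measure $>1-\eps$''. In fact, the proof of Theorem \ref{reklam} already exhibits, for an injection $Y:[0,1]\di \N$, a function $\Psi(x):=\frac{1}{2^{Y(x)+5}}$ that is $BV$, regulated, usco, cliquish, Baire $1^{*}$, and Baire 2 (the latter properties being verified there via $\IND_{0}$), together with the key measure-theoretic observation: for \emph{any} finite collection of distinct reals $x_{0},\dots,x_{k}\in[0,1]$, the set $\cup_{i\leq k}B(x_{i},\Psi(x_{i}))$ has measure at most $\sum_{i\geq 0}\frac{1}{2^{i+5}}\cdot 2\leq\frac{1}{8}$, since the values $\Psi(x_{i})$ are pairwise distinct (as $Y$ is injective) and hence each is bounded by a distinct term $\frac{1}{2^{m+5}}$.

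First I would fix $Y:[0,1]\di \N$ an injection, define $\Psi$ as above, and cite the proof of Theorem \ref{reklam} for the fact that $\Psi$ belongs to all the relevant classes (so that items \eqref{va0}--\eqref{va21} each apply to $\Psi$). Next I would take $\eps:=\frac12$ and apply the relevant instance of Vitali's principle to $\Psi$: this yields $x_{0},\dots,x_{k}\in[0,1]$ with $\cup_{i\leq k}B(x_{i},\Psi(x_{i}))$ of measure $\geq 1-\eps=\frac12$. This directly contradicts the upper bound of $\frac18$ from the previous paragraph. Hence no injection $Y:[0,1]\di\N$ can exist, which is precisely $\NIN_{[0,1]}$. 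The argument is uniform over the six items, exactly as the parallel argument in Theorem \ref{reklam} is uniform over its items \eqref{ta0}--\eqref{ta21}.

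I do not anticipate a genuine obstacle here: the corollary is a routine transcription of the Cousin-lemma argument with ``finite subcover of full measure'' replaced by ``finite subcover of measure $>\frac12$'', and the measure bound $\frac18<\frac12$ leaves ample room. The only mild subtlety, already handled in Theorem \ref{reklam}, is the verification that $\Psi$ lies in the Baire $1^{*}$ and Baire $2$ classes, which requires $\IND_{0}$ to enumerate, for each fixed $n_{0}$, the finitely many reals with $Y(x)\leq n_{0}$; since we are working over $\ACAo+\IND_{0}$ this is available, and I would simply refer to the corresponding step in the proof of Theorem \ref{reklam} rather than repeat it. One could also note, as in Theorem \ref{reklam}, that the construction adapts to the ``pointwise discontinuous'' and ``continuous almost everywhere'' restrictions by composing with the Cantor-set construction from Theorem \ref{truppke}, but I would leave such refinements to the reader.
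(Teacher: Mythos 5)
Your proposal is correct and follows essentially the same route as the paper: the paper's proof of Corollary \ref{reklamcor} simply notes that the argument for Theorem \ref{reklam} goes through with its single application of Cousin's lemma replaced by Vitali's principle for a suitable $\eps$, using the same function $\Psi(x)=\frac{1}{2^{Y(x)+5}}$ and the same measure bound on finite subcollections. Your explicit computation of the bound $\frac18$ and the choice $\eps=\frac12$ just spell out the details the paper leaves implicit.
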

\begin{proof}
The proof of Theorem \ref{reklam} goes through: the single use of Cousin's lemma can be replaced by the associated version of Vitali's principle for $\eps>\frac{1}{2}$.
\end{proof}
We note that the Theorem \ref{reklam} identifies a significant problem with the coding of Baire 2 functions in the language of second-order arithmetic. 
Indeed, Cousin's lemma for \emph{codes for} Baire 2 functions is equivalent to $\ATR_{0}$ (\cite{basket, basket2}).  
In light of item \eqref{ta21} of Theorem~\ref{reklam}, this coding seriously changes the logical strength of Cousin's lemma for Baire 2 functions.  
We do have the following nice corollary to the theorem and Theorem \ref{floepje}, to be contrasted with the fact that $\Z_{2}^{\omega}$ cannot prove $\NIN_{[0,1]}$.
\begin{cor}
For $n\geq2$, the system $\ACAo+\ATR_{0}+\Delta_{2}^{1}\textup{\textsf{-IND}}$ proves that there is no effectively Baire $n$ function $Y:[0,1]\di \Q$ that is injective on $[0,1]$.
\end{cor}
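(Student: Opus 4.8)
The plan is to argue by contradiction. Suppose $Y:[0,1]\di\Q$ is effectively Baire $n$ and injective on $[0,1]$; I will build from it an effectively Baire $n+3$ function $\Psi:[0,1]\di\R^{+}$ whose canonical covering $\cup_{x\in[0,1]}B(x,\Psi(x))$ has no finite sub-covering, contradicting Cousin's lemma for effectively Baire $n+3$ functions. That instance of Cousin's lemma is available to us: by Theorem~\ref{floepje}, over $\ACAo+\Delta_{2}^{1}\textsf{-IND}$ the principle $\ATR_{0}$ proves Cousin's lemma for effectively Baire $m$ functions for every fixed $m\geq 2$, and we are working in $\ACAo+\ATR_{0}+\Delta_{2}^{1}\textsf{-IND}$. (The construction of $\Psi$ below is the direct analogue of the function $\frac{1}{2^{Y(x)+5}}$ used in the proof of Theorem~\ref{reklam}.)

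Fix the standard primitive recursive enumeration $c:\N\di\Q$ with primitive recursive inverse $c^{-1}:\Q\di\N$, and put $\Psi(x):=2^{-(c^{-1}(Y(x))+5)}$. Since $Y$ takes only rational values, $\Psi(x)>0$ for all $x\in[0,1]$, so $\Psi$ is a legitimate Cousin covering function. Injectivity of $Y$ furnishes the decisive metric estimate: if $x_{0},\dots,x_{k}\in[0,1]$ are pairwise distinct, then the naturals $c^{-1}(Y(x_{i}))$ are pairwise distinct, so the total length of $\cup_{i\leq k}B(x_{i},\Psi(x_{i}))$ is at most $\sum_{i\leq k}2\Psi(x_{i})\leq\sum_{m=0}^{\infty}2^{-(m+4)}=1/8<1$; hence this finite union cannot cover $[0,1]$. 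Thus Cousin's lemma for $\Psi$ yields the desired contradiction, provided $\Psi$ is indeed effectively Baire $n+3$.

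To see the latter, write $Y(x)=\lim_{m_{1}}\cdots\lim_{m_{n}}Y_{m_{1},\dots,m_{n}}(x)$ with each $Y_{m_{1},\dots,m_{n}}$ continuous, as provided by the hypothesis that $Y$ is effectively Baire $n$. Note first that $\Psi(x)=\sum_{m=0}^{\infty}2^{-(m+5)}\chi_{m}(x)$, where $\chi_{m}$ is the characteristic function of $\{x\in[0,1]:Y(x)=c(m)\}$; for each $x$ exactly one summand is nonzero, namely the one with $c(m)=Y(x)$. Introduce, for $l,p\in\N$, the continuous ramp $\theta_{l,p}:[0,\infty)\di[0,1]$ which equals $1$ on $[0,2^{-l}-2^{-p}]$, equals $0$ on $[2^{-l},\infty)$, and is linear in between; then $\chi_{m}(x)=\lim_{l}\lim_{p}\theta_{l,p}\big(|Y(x)-c(m)|\big)$. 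Because each $\theta_{l,p}$ is continuous, it commutes with the iterated limit defining $Y$, so $\theta_{l,p}\big(|Y(x)-c(m)|\big)=\lim_{m_{1}}\cdots\lim_{m_{n}}\theta_{l,p}\big(|Y_{m_{1},\dots,m_{n}}(x)-c(m)|\big)$; this exhibits $\chi_{m}$ as effectively Baire $n+2$ through the explicit $(n+2)$-fold family $\theta_{l,p}(|Y_{m_{1},\dots,m_{n}}(\cdot)-c(m)|)$ indexed by $(l,p,m_{1},\dots,m_{n})$. Finite sums and scalar multiples preserve the effective Baire level, so the partial sums $S_{M}:=\sum_{m\leq M}2^{-(m+5)}\chi_{m}$ are effectively Baire $n+2$, and since $S_{M}(x)\to\Psi(x)$ pointwise, $\Psi=\lim_{M}S_{M}$ is effectively Baire $n+3$, witnessed by the continuous functions $\Psi_{M,l,p,m_{1},\dots,m_{n}}(x):=\sum_{m\leq M}2^{-(m+5)}\theta_{l,p}(|Y_{m_{1},\dots,m_{n}}(x)-c(m)|)$ together with the nested limits $\lim_{M}\lim_{l}\lim_{p}\lim_{m_{1}}\cdots\lim_{m_{n}}$. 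This construction is elementary and formalises already in $\RCAo$, with $(\exists^{2})$ available from $\ACAo$ if convenient.

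I expect the one place demanding care to be exactly the bookkeeping in the last paragraph: checking that each of the iterated limits involved actually exists (which rests on the corresponding limits for $Y$ existing, part of its being effectively Baire $n$), that pushing the continuous operations $t\mapsto\theta_{l,p}(|t-c(m)|)$ and the finite summation through those limits is legitimate, and that the resulting index set can be linearly reorganised into a single $(n+3)$-fold sequence of continuous functions. The remaining ingredients — positivity of $\Psi$, the length estimate, and the invocation of Theorem~\ref{floepje} at level $n+3\geq 2$ — are routine.
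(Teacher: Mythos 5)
Your proposal is correct, and it follows the same strategy as the paper's (two-line) proof: invoke Theorem \ref{floepje} to get Cousin's lemma for effectively Baire classes in $\ACAo+\ATR_{0}+\Delta_{2}^{1}\textsf{-IND}$, and then run the injection-versus-measure argument from the first paragraph of the proof of Theorem \ref{reklam} with $\Psi(x)=2^{-(c^{-1}(Y(x))+5)}$. The one substantive point where you diverge is the Baire level at which Cousin's lemma is applied: the paper simply says ``Cousin's lemma for effectively Baire $n$ functions'', implicitly treating the covering function as lying at the same level as $Y$, whereas you correctly observe that composing $Y$ with the (totally discontinuous) enumeration $\Q\di\N$ is not a continuous operation on the value, so the level is not obviously preserved --- indeed no continuous $h:\R\di\R^{+}$ can make $\sum_{i}h(q_{i})$ small over all finite sets of distinct rationals, so some such discontinuous recoding is unavoidable when $Y$ is $\Q$-valued rather than $\N$-valued. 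Your remedy, exhibiting $\Psi$ as effectively Baire $n+3$ via the ramp functions and partial sums and then applying Theorem \ref{floepje} at level $n+3\geq 2$, is sound (the limit-interchange and finite-sum bookkeeping is exactly as you describe; the only cosmetic issues are the degenerate case $p\leq l$ of $\theta_{l,p}$, which does not affect the limits, and the fact that one can shave limits, e.g.\ $\mathbb{1}_{\{0\}}(t)=\lim_{l}\max(0,1-2^{l}t)$ already brings the level down to $n+1$). So your write-up is, if anything, the more careful version of the paper's argument; since Theorem \ref{floepje} holds for every fixed level $\geq 2$ in the ambient system, the shift from $n$ to $n+3$ costs nothing.
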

\begin{proof}
By Theorem \ref{floepje}, the system at hand proves Cousin's lemma for effectively Baire $n$ functions. 
As in the (first paragraph of the) proof of the theorem, one obtains the relevant restriction of $\NIN_{[0,1]}$, and we are done. 
\end{proof}
Secondly, we have the following theorem, to be contrasted with Theorem \ref{XZ}. 
\begin{thm}[$\ACAo+\IND_{0}$]\label{reklam2}
The principle $\NIN_{[0,1]}$ follows from the following:
\begin{center}
\emph{Jordan decomposition theorem (Theorem \ref{drd}) restricted to usco $BV$-functions}.
\end{center}
\end{thm}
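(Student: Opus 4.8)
The plan is to assume the negation of $\NIN_{[0,1]}$, i.e.\ to fix an injection $Y:[0,1]\di \N$, and to derive a contradiction with the uncountability of $[0,1]$ (\cite{simpson2}*{II.4.9}). The witness is the function $f(x):=\frac{1}{2^{Y(x)+5}}$ already used in the proofs of Theorems \ref{truppke} and \ref{reklam}. First I would recall from the proof of Theorem \ref{truppke} that, using $\IND_{0}$ to enumerate the finitely many reals that $Y$ maps below a given $n\in \N$, the function $f$ is arbitrarily close to $0$ in a small enough punctured neighbourhood of any point; this immediately yields that $f$ is usco and that $f(x_{0}-)=f(x_{0}+)=0$ for every $x_{0}\in(0,1)$. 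The same injectivity argument (again as in the proof of Theorem \ref{truppke}) shows the sums in \eqref{tomb} for $f$ are bounded, so $f$ is in $BV$. Hence $f$ is an usco $BV$-function and the hypothesis of the theorem applies: there are non-decreasing $g,h:[0,1]\di \R$ with $f=g-h$ on $[0,1]$.

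The key step is then to show that \emph{every} $x_{0}\in(0,1)$ is a jump discontinuity of $g$ in the sense of Theorem \ref{falm}. Since $g$ and $h$ are non-decreasing they are regulated (routine in $\ACAo$, as the one-sided limits are suprema/infima of bounded sequences of rational-indexed values), so $g(x_{0}\pm)$ and $h(x_{0}\pm)$ exist and $f(x_{0}-)=g(x_{0}-)-h(x_{0}-)$. From $f(x_{0}-)=0$, $f(x_{0})>0$, $g(x_{0})\geq g(x_{0}-)$ and $h(x_{0})\geq h(x_{0}-)$ one computes
\[
g(x_{0})-g(x_{0}-)=f(x_{0})+\big(h(x_{0})-h(x_{0}-)\big)\geq f(x_{0})>0,
\]
so $g(x_{0}+)\geq g(x_{0})>g(x_{0}-)$, and in particular $g(x_{0}-)\ne g(x_{0}+)$, as required.

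To finish, since $g$ is regulated, Theorem \ref{falm} (provable in $\ACAo$) produces a sequence of reals containing all jump discontinuities of $g$; adjoining $0$ and $1$ gives a sequence $(x_{n})_{n\in \N}$ containing every point of $[0,1]$, contradicting \cite{simpson2}*{II.4.9}. Hence no injection $Y:[0,1]\di\N$ exists, i.e.\ $\NIN_{[0,1]}$ holds. The only substantive input beyond bookkeeping is Theorem \ref{falm}; the point to be careful about is that $f$ genuinely has both one-sided limits equal to $0$ at every interior point (this is where $\IND_{0}$ enters, exactly as in Theorem \ref{truppke}) and that one-sided limits of $f=g-h$ split as differences of the one-sided limits of the monotone functions $g$ and $h$.
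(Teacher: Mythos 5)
Your proposal is correct, and it shares the paper's skeleton: the same witness $f(x):=\frac{1}{2^{Y(x)+5}}$, the same appeal to the proof of Theorem \ref{truppke} (with $\IND_{0}$) for the usco and $BV$ properties, an application of the restricted Jordan decomposition to get non-decreasing $g,h$ with $f=g-h$, and a final contradiction with \cite{simpson2}*{II.4.9}. Where you genuinely differ is the finishing move. The paper cites an external lemma to the effect that $\exists^{2}$ enumerates the points of discontinuity of a non-decreasing function, applies it to both $g$ and $h$, extracts a point $y$ where $g$ and $h$ (hence $f$) are continuous, and contradicts the fact -- again via $\IND_{0}$ -- that $f$ is continuous nowhere. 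You instead note that $f(x_{0}-)=0<f(x_{0})$ forces $g(x_{0})-g(x_{0}-)\geq f(x_{0})>0$ and hence $g(x_{0}-)<g(x_{0})\leq g(x_{0}+)$, so \emph{every} $x_{0}\in(0,1)$ is a jump discontinuity of the (monotone, hence regulated in $\ACAo$) function $g$; Theorem \ref{falm} then enumerates all jumps of $g$, i.e.\ all of $(0,1)$, which is the desired contradiction. The two arguments rest on the same underlying facts (the one-sided limits of $f$ vanish everywhere, discontinuities of monotone functions are enumerable, and \cite{simpson2}*{II.4.9}); your variant only needs $g$, not $h$, and stays entirely within results proved in the paper rather than citing the external lemma, at the modest cost of the extra computation showing every interior point is a jump of $g$.
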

\begin{proof}
Let $Y:[0,1]\di \N$ be an injection and define $f(x):= \frac{1}{2^{Y(x)+5}}$.  
In the same way as in the proof of Theorem \ref{truppke}, $f:[0,1]\di \R$ is usco and $BV$.  
The centred statement from Theorem \ref{reklam2} now provides non-decreasing $g, h:[0,1]\di \R$ such that $f=g-h$ on $[0,1]$.
By \cite{dagsamXI}*{Lemma 3}, $\exists^{2}$ can enumerate the points of discontinuity of non-decreasing functions.  
As in \cite{simpson2}*{II.4.9}, this enumeration is not all of $[0,1]$, i.e.\ there is $y\in [0,1]$ such that $g$ and $h$ are continuous at $y$.  
Hence $f$ is continuous at $y$, but this is a contradiction as there are points $z$ arbitrarily close to $y$ such that $f(z)$ is arbitrarily small (use $\IND_{0}$ to establish this claim).  
\end{proof}
There are a number of similar `decomposition theorems', e.g.\ implying that cliquish and usco functions can be expressed as the sum of two quasi-continuous functions (\cites{quasibor2, malin}).  
One readily shows that the latter decompositions also yield $\NIN_{[0,1]}$, even when restricted to $BV$-functions.  

\smallskip

Thirdly, by Theorems \ref{XZ} and \ref{defzie}, $\RCAo+\ACA_{0}$ proves certain basic properties of the Riemann integral and $BV$-functions.
By contrast, Theorem \ref{drel} shows that other basic properties do not follow from the former and much stronger systems.
We note that the negation of the first item implies a very strong `non-uniqueness' of the Riemann integral. 
Similarly, the negation of the second item states that $BV$-functions need not be differentiable \emph{anywhere}.
A version of the second item, called Lebesgue's theorem, involving codes is provable in $\WKL_{0}$ by \cite{nieyo}*{\S6}.
\begin{thm}[$\ACAo+\IND_{0}$]\label{drel} The following principles imply $\NIN_{[0,1]}$.
\begin{itemize}
\item \emph{For a Riemann integrable $BV$-function $f:[0,1]\di [0,1]$ with $\int_{0}^{1}f(x)dx=0$, there is $x\in [0,1]$ such that $f(x)=0$.}
\item \emph{For $f:[0,1]\di [0,1]$ in $BV$, there is $x\in [0,1]$ where $f$ is differentiable.}
\item \emph{For regulated $f:[0,1]\di [0,1]$, there is $x\in [0,1]$ where $f$ is continuous.}
\item \emph{For usco $f:[0,1]\di \R$, there is $x\in [0,1]$ where $f$ is continuous.}
\end{itemize}
\end{thm}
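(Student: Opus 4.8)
The plan is to handle all four items at once with a single construction, namely the function used in the proof of Theorem~\ref{truppke}, and then to add the one extra ingredient that the first item needs. I would argue by contradiction: assume $\neg\NIN_{[0,1]}$, fix an injection $Y:[0,1]\di \N$, and put $f(x):=\tfrac{1}{2^{Y(x)+5}}$, so that $f$ maps $[0,1]$ into $(0,\tfrac{1}{32}]\subseteq[0,1]$. As established in the proof of Theorem~\ref{truppke} (where $\IND_{0}$ is used to enumerate, for each $N\in\N$, the finitely many reals that $Y$ maps into $\{0,\dots,N\}$), this $f$ is in $BV$ (with variation at most $1$, since injectivity of $Y$ makes the values $f(t_{0}),\dots,f(t_{n})$ on any partition pairwise distinct elements of $\{2^{-(j+5)}:j\in\N\}$), is usco, and is regulated with $f(x_{0}-)=f(x_{0}+)=0$ at every $x_{0}\in[0,1]$. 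Since $f(x_{0})>0$ everywhere, the last fact already shows that $f$ is \emph{discontinuous at every point}, hence also \emph{nowhere differentiable}.

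With this in hand, the second, third and fourth items are immediate. The third and fourth items say that the regulated, resp.\ usco, function $f$ has a continuity point, contradicting nowhere-continuity; the second item says that the $BV$-function $f$ has a point of differentiability, contradicting nowhere-differentiability (using `differentiable $\Rightarrow$ continuous'). In each case $\neg\NIN_{[0,1]}$ is refuted, so $\NIN_{[0,1]}$ holds.

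For the first item I would additionally verify that $f$ is Riemann integrable with $\int_{0}^{1}f(x)\,dx=0$, directly from the definition. Given $\eps>0$, choose $N$ with $2^{-(N+5)}<\eps/2$ and use $\IND_{0}$ to list $F:=\{x\in[0,1]:Y(x)\le N\}$ as $z_{1},\dots,z_{m}$ with $m\le N+1$. For a tagged partition of mesh $<\delta$, split the Riemann sum by whether the tag lies in $F$: tags outside $F$ have $f$-value $<2^{-(N+5)}<\eps/2$, contributing in total $<\eps/2$; a point of $F$ is the tag of at most two sub-intervals, each of length $<\delta$ and with $f$-value $\le\tfrac{1}{32}$, so for $\delta$ small (depending on $\eps,m$) the tags in $F$ contribute $<\eps/2$. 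Hence every such Riemann sum lies in $[0,\eps)$, so $f$ is Riemann integrable with integral $0$; it is also $BV$ and maps into $[0,1]$, so the principle applies and yields $x_{*}$ with $f(x_{*})=0$, contradicting $f(x_{*})=2^{-(Y(x_{*})+5)}>0$. Thus $\NIN_{[0,1]}$ follows here too.

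The only step I expect to require care is this Riemann-integrability verification for the first item, and specifically the point that at most finitely many tags of a partition can land in the $\IND_{0}$-enumerable set $\{x:Y(x)\le N\}$, so that the associated part of the Riemann sum is dominated by the mesh; everything else is a direct appeal to the function-space properties of $f$ already recorded in the proof of Theorem~\ref{truppke}, plus the two elementary implications noted above.
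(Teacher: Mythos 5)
Your proposal is correct and follows essentially the same route as the paper: the same function $f(x):=\frac{1}{2^{Y(x)+5}}$ built from a putative injection $Y$, the same $\IND_{0}$-based verification (via the proof of Theorem \ref{truppke}) that $f$ is $BV$, usco, regulated with one-sided limits $0$ everywhere (hence nowhere continuous, nowhere differentiable), and the same epsilon-style argument that $f$ is Riemann integrable with integral $0$ while being everywhere positive. Your write-up merely fills in the Riemann-integrability details that the paper's proof only sketches.
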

\begin{proof}
Let $Y:[0,1]\di \N$ be an injection and define $f(x):= \frac{1}{2^{Y(x)+5}}$.  As in the proof of Theorems \ref{reklam} and \ref{reklam2}, this function is in $BV$ and regulated and usco. 

\smallskip

To show that $f$ is Riemann integrable with integral equal to zero, use the `epsilon-delta' definition of Riemann integrability and (essentially) the same argument why $f$ is in $BV$.
However, $f(x)>0$ for all $x\in [0,1]$ by definition, i.e.\ we obtain a contradiction from the first item of Theorem \ref{drel}. 

\smallskip

Similarly, the second and third item imply there is $y\in [0,1]$ where $f$ is continuous.  
This is a contradiction as there are points $z$ arbitrarily close to $y$ such that $f(z)$ is arbitrarily small (use $\IND_{0}$ to establish this claim).  
\end{proof}
It is an interesting exercise to show that in the final two items, \emph{continuity} can be replaced by much weaker properties, including \emph{feeble continuity} as in  \cite{thommy}*{\S24, p.\ 53}.
As noted in the latter, \emph{every} $\R\di \R$-function is feebly continuous at \emph{all but countably} many reals, i.e.\ we are dealing with a \emph{very} weak continuity notion. 
These results go back to Young (\cite{youngerandyounger}, 1907) with a (historical) overview in \cite{caulkingwood}.

\smallskip

We note that Theorems \ref{reklam2} and \ref{drel} identify a problem with the coding of functions in the language of second-order arithmetic.  
Indeed, we observe that the logical properties of Cousin's lemma for Baire~2 functions, the Jordan decomposition theorem for $BV$-functions, and Lebesgue's (differentiability) theorem for $BV$-functions change dramatically upon introducing second-order codes.  

\smallskip

Finally, while the main focus of \cite{dagsamX} is the study of $\NIN_{[0,1}$, some results are obtained for $\NBI_{[0,1]}$, i.e.\ the statement \emph{there is no \textbf{bijection} from $[0,1]$ to $\N$}. 
By \cite{dagsamX}*{\S4}, $\RCAo+\QFAC^{0,1}$ proves $\NBI_{[0,1]}$ but $\Z_{2}^{\omega}$ cannot.  In this way, $\NBI_{[0,1]}$ exhibits the Pincherle phenomenon from Remark \ref{PINX}. 
Now, we have used item~\eqref{donp} in Definition \ref{varvar} as our definition of $BV$-functions.  One could define `strong $BV$' as item \eqref{donp2} in Definition \ref{varvar}, i.e.\
the supremum \eqref{tomb} must additionally exist.  One readily verifies that e.g.\ Cousin's lemma for strong $BV$-functions (see item \eqref{ta0} in Theorem \ref{reklam}) implies $\NBI_{[0,1]}$.  
Similarly, we can derive $\NBI_{[0,1]}$ from any of the above theorems implying $\NIN_{[0,1]}$, even after replacing `$BV$' by `strong $BV$'.

\begin{ack}\rm
We thank Anil Nerode for his valuable advice.
We thank Ulrich Kohlenbach for (strongly) nudging us towards Theorem \ref{gofusefl}.
The main idea of this paper was conceived while reading \cite{damurm}, following discussions on higher-order RM with Carl Mummert.  
Our research was supported by the \emph{Deutsche Forschungsgemeinschaft} via the DFG grant SA3418/1-1 and the \emph{Klaus Tschira Boost Fund} via the grant Projekt KT43 .
\end{ack}

\appendix
\section{Higher-order Reverse Mathematics}\label{HORMreduction}
We introduce the base theory of higher-order RM (Section \ref{rmbt}), some essential notations (Section \ref{kkk}), and some axioms (Section \ref{lll}).
\subsection{The base theory of higher-order Reverse Mathematics}\label{rmbt}
We introduce Kohlenbach's base theory $\RCAo$, first introduced in \cite{kohlenbach2}*{\S2}.
\bdefi\label{kase} 
The base theory $\RCAo$ consists of the following axioms.
\begin{enumerate}
 \renewcommand{\theenumi}{\alph{enumi}}
\item  Basic axioms expressing that $0, 1, <_{0}, +_{0}, \times_{0}$ form an ordered semi-ring with equality $=_{0}$.
\item Basic axioms defining the well-known $\Pi$ and $\Sigma$ combinators (aka $K$ and $S$ in \cite{avi2}), which allow for the definition of \emph{$\lambda$-abstraction}. 
\item The defining axiom of the recursor constant $\mathbf{R}_{0}$: for $m^{0}$ and $f^{1}$: 
\be\label{special}
\mathbf{R}_{0}(f, m, 0):= m \textup{ and } \mathbf{R}_{0}(f, m, n+1):= f(n, \mathbf{R}_{0}(f, m, n)).
\ee
\item The \emph{axiom of extensionality}: for all $\rho, \tau\in \mathbf{T}$, we have:
\be\label{EXT}\tag{$\textsf{\textup{E}}_{\rho, \tau}$}  
(\forall  x^{\rho},y^{\rho}, \varphi^{\rho\di \tau}) \big[x=_{\rho} y \di \varphi(x)=_{\tau}\varphi(y)   \big].
\ee 
\item The induction axiom for quantifier-free formulas of $\L_{\omega}$.
\item $\QFAC^{1,0}$: the quantifier-free Axiom of Choice as in Definition \ref{QFAC}.
\end{enumerate}
\edefi
\noindent
Note that variables (of any finite type) are allowed in quantifier-free formulas of the language $\L_{\omega}$: only quantifiers are banned.
Recursion as in \eqref{special} is called \emph{primitive recursion}; the class of functionals obtained from $\mathbf{R}_{\rho}$ for all $\rho \in \mathbf{T}$ is called \emph{G\"odel's system $T$} of all (higher-order) primitive recursive functionals. 
\bdefi\label{QFAC} The axiom $\QFAC$ consists of the following for all $\sigma, \tau \in \textbf{T}$:
\be\tag{$\QFAC^{\sigma,\tau}$}
(\forall x^{\sigma})(\exists y^{\tau})A(x, y)\di (\exists Y^{\sigma\di \tau})(\forall x^{\sigma})A(x, Y(x)),
\ee
for any quantifier-free formula $A$ in the language of $\L_{\omega}$.
\edefi
As discussed in \cite{kohlenbach2}*{\S2}, $\RCAo$ and $\RCA_{0}$ prove the same sentences `up to language' as the latter is set-based and the former function-based.   
This conservation result is obtained via the so-called $\ECF$-interpretation discussed in Remark \ref{ECF}. 
\begin{rem}[The $\ECF$-interpretation]\label{ECF}\rm
The (rather) technical definition of $\ECF$ may be found in \cite{troelstra1}*{p.\ 138, \S2.6}.
Intuitively, the $\ECF$-interpretation $[A]_{\ECF}$ of a formula $A\in \L_{\omega}$ is just $A$ with all variables 
of type two and higher replaced by type one variables ranging over so-called `associates' or `RM-codes' (see \cite{kohlenbach4}*{\S4}); the latter are countable representations of continuous functionals.  
Thus, the formula $[A]_{\ECF}$ is just $A$ in case $A\in \L_{2}$.
The $\ECF$-interpretation connects $\RCAo$ and $\RCA_{0}$ (see \cite{kohlenbach2}*{Prop.\ 3.1}) in that if $\RCAo$ proves $A$, then $\RCA_{0}$ proves $[A]_{\ECF}$, again `up to language', as $\RCA_{0}$ is 
formulated using sets, and $[A]_{\ECF}$ is formulated using types, i.e.\ using type zero and one objects.  
In light of the widespread use of codes in RM and the common practise of identifying codes with the objects being coded, it is no exaggeration to refer to $\ECF$ as the \emph{canonical} embedding of higher-order into second-order arithmetic. 
\end{rem}

\subsection{Notations and the like}\label{kkk}
We introduce the usual notations for common mathematical notions, like real numbers, as also introduced in \cite{kohlenbach2}.  
\begin{defi}[Real numbers and related notions in $\RCAo$]\label{keepintireal}\rm~
\begin{enumerate}
 \renewcommand{\theenumi}{\alph{enumi}}
\item Natural numbers correspond to type zero objects, and we use `$n^{0}$' and `$n\in \N$' interchangeably.  Rational numbers are defined as signed quotients of natural numbers, and `$q\in \Q$' and `$<_{\Q}$' have their usual meaning.    
\item Real numbers are coded by fast-converging Cauchy sequences $q_{(\cdot)}:\N\di \Q$, i.e.\  such that $(\forall n^{0}, i^{0})(|q_{n}-q_{n+i}|<_{\Q} \frac{1}{2^{n}})$.  
We use Kohlenbach's `hat function' from \cite{kohlenbach2}*{p.\ 289} to guarantee that every $q^{1}$ defines a real number.  
\item We write `$x\in \R$' to express that $x^{1}:=(q^{1}_{(\cdot)})$ represents a real as in the previous item and write $[x](k):=q_{k}$ for the $k$-th approximation of $x$.    
\item Two reals $x, y$ represented by $q_{(\cdot)}$ and $r_{(\cdot)}$ are \emph{equal}, denoted $x=_{\R}y$, if $(\forall n^{0})(|q_{n}-r_{n}|\leq {2^{-n+1}})$. Inequality `$<_{\R}$' is defined similarly.  
We sometimes omit the subscript `$\R$' if it is clear from context.           
\item Functions $F:\R\di \R$ are represented by $\Phi^{1\di 1}$ mapping equal reals to equal reals, i.e.\ extensionality as in $(\forall x , y\in \R)(x=_{\R}y\di \Phi(x)=_{\R}\Phi(y))$.\label{EXTEN}
\item The relation `$x\leq_{\tau}y$' is defined as in \eqref{aparth} but with `$\leq_{0}$' instead of `$=_{0}$'.  Binary sequences are denoted `$f^{1}, g^{1}\leq_{1}1$', but also `$f,g\in C$' or `$f, g\in 2^{\N}$'.  Elements of Baire space are given by $f^{1}, g^{1}$, but also denoted `$f, g\in \N^{\N}$'.
\item For a binary sequence $f^{1}$, the associated real in $[0,1]$ is $\r(f):=\sum_{n=0}^{\infty}\frac{f(n)}{2^{n+1}}$.\label{detrippe}
\item Sets of type $\rho$ objects $X^{\rho\di 0}, Y^{\rho\di 0}, \dots$ are given by their characteristic functions $F^{\rho\di 0}_{X}\leq_{\rho\di 0}1$, i.e.\ we write `$x\in X$' for $ F_{X}(x)=_{0}1$. \label{koer} 
\end{enumerate}
\end{defi}
For completeness, we list the following notational convention for finite sequences.  
\begin{nota}[Finite sequences]\label{skim}\rm
The type for `finite sequences of objects of type $\rho$' is denoted $\rho^{*}$, which we shall only use for $\rho=0,1$.  
Since the usual coding of pairs of numbers goes through in $\RCAo$, we shall not always distinguish between $0$ and $0^{*}$. 
Similarly, we assume a fixed coding for finite sequences of type $1$ and shall make use of the type `$1^{*}$'.  
In general, we do not always distinguish between `$s^{\rho}$' and `$\langle s^{\rho}\rangle$', where the former is `the object $s$ of type $\rho$', and the latter is `the sequence of type $\rho^{*}$ with only element $s^{\rho}$'.  The empty sequence for the type $\rho^{*}$ is denoted by `$\langle \rangle_{\rho}$', usually with the typing omitted.  

\smallskip

Furthermore, we denote by `$|s|=n$' the length of the finite sequence $s^{\rho^{*}}=\langle s_{0}^{\rho},s_{1}^{\rho},\dots,s_{n-1}^{\rho}\rangle$, where $|\langle\rangle|=0$, i.e.\ the empty sequence has length zero.
For sequences $s^{\rho^{*}}, t^{\rho^{*}}$, we denote by `$s*t$' the concatenation of $s$ and $t$, i.e.\ $(s*t)(i)=s(i)$ for $i<|s|$ and $(s*t)(j)=t(|s|-j)$ for $|s|\leq j< |s|+|t|$. For a sequence $s^{\rho^{*}}$, we define $\overline{s}N:=\langle s(0), s(1), \dots,  s(N-1)\rangle $ for $N^{0}<|s|$.  
For a sequence $\alpha^{0\di \rho}$, we also write $\overline{\alpha}N=\langle \alpha(0), \alpha(1),\dots, \alpha(N-1)\rangle$ for \emph{any} $N^{0}$.  By way of shorthand, 
$(\forall q^{\rho}\in Q^{\rho^{*}})A(q)$ abbreviates $(\forall i^{0}<|Q|)A(Q(i))$, which is (equivalent to) quantifier-free if $A$ is.  For sequences $f^{1}, g^{1}$, the sequence $f\oplus g$ is $ f(0) * g(0) *f(1)* g(1)* \dots $.
\end{nota}

\subsection{Some comprehension functionals}\label{lll}
In second-order RM, the logical hardness of a theorem is measured via what fragment of the comprehension axiom is needed for a proof.  
For this reason, we introduce some axioms and functionals related to \emph{higher-order comprehension} in this section.
We are mostly dealing with \emph{conventional} comprehension here, i.e.\ only parameters over $\N$ and $\N^{\N}$ are allowed in formula classes like $\Pi_{k}^{1}$ and $\Sigma_{k}^{1}$.

\smallskip

First of all, the following functional is clearly discontinuous at $f=11\dots$; in fact, $(\exists^{2})$ is equivalent to the existence of $F:\R\di\R$ such that $F(x)=1$ if $x>_{\R}0$, and $0$ otherwise (\cite{kohlenbach2}*{\S3}).  This fact shall be repeated often.  
\be\label{muk}\tag{$\exists^{2}$}
(\exists \varphi^{2}\leq_{2}1)(\forall f^{1})\big[(\exists n)(f(n)=0) \asa \varphi(f)=0    \big]. 
\ee
Related to $(\exists^{2})$, the functional $\mu^{2}$ in $(\mu^{2})$ is also called \emph{Feferman's $\mu$} (\cite{avi2}) and can be found in Hilbert-Bernays' \emph{Grundlagen} (\cite{hillebilly2}*{Supplement V}).
\begin{align}\label{mu}\tag{$\mu^{2}$}
(\exists \mu^{2})(\forall f^{1})\big[ (\exists n)(f(n)=0) \di [f(\mu(f))=0&\wedge (\forall i<\mu(f))(f(i)\ne 0) ]\\
& \wedge [ (\forall n)(f(n)\ne0)\di   \mu(f)=0]    \big], \notag
\end{align}
We have $(\exists^{2})\asa (\mu^{2})$ over $\RCAo$ and $\ACAo\equiv\RCAo+(\exists^{2})$ proves the same sentences as $\ACA_{0}$ by \cite{hunterphd}*{Theorem~2.5}. 

\smallskip

Secondly, the functional $\SS^{2}$ in $(\SS^{2})$ is called \emph{the Suslin functional} (\cite{kohlenbach2}).
\be\tag{$\SS^{2}$}
(\exists\SS^{2}\leq_{2}1)(\forall f^{1})\big[  (\exists g^{1})(\forall n^{0})(f(\overline{g}n)=0)\asa \SS(f)=0  \big], 
\ee
The system $\FIVE^{\omega}\equiv \RCAo+(\SS^{2})$ proves the same $\Pi_{3}^{1}$-sentences as $\FIVE$ by \cite{yamayamaharehare}*{Theorem 2.2}.   
By definition, the Suslin functional $\SS^{2}$ can decide whether a $\Sigma_{1}^{1}$-formula as in the left-hand side of $(\SS^{2})$ is true or false.   We similarly define the functional $\SS_{k}^{2}$ which decides the truth or falsity of $\Sigma_{k}^{1}$-formulas from $\L_{2}$; we also define 
the system $\SIXK$ as $\RCAo+(\SS_{k}^{2})$, where  $(\SS_{k}^{2})$ expresses that $\SS_{k}^{2}$ exists.  
We note that the operators $\nu_{n}$ from \cite{boekskeopendoen}*{p.\ 129} are essentially $\SS_{n}^{2}$ strengthened to return a witness (if existant) to the $\Sigma_{n}^{1}$-formula at hand.  
The operator $\nu_{n}$ is essentially Hilbert-Bernays' operator $\nu$ (see \cite{hillebilly2}*{Supplement V}) restricted to $\Sigma_{n}^{1}$-formulas. 

\smallskip

\noindent
Thirdly, full second-order arithmetic $\Z_{2}$ is readily derived from $\cup_{k}\SIXK$, or from:
\be\tag{$\exists^{3}$}
(\exists E^{3}\leq_{3}1)(\forall Y^{2})\big[  (\exists f^{1})(Y(f)=0)\asa E(Y)=0  \big], 
\ee
and we therefore define $\Z_{2}^{\Omega}\equiv \RCAo+(\exists^{3})$ and $\Z_{2}^\omega\equiv \cup_{k}\SIXK$, which are conservative over $\Z_{2}$ by \cite{hunterphd}*{Cor.\ 2.6}. 
Despite this close connection, $\Z_{2}^{\omega}$ and $\Z_{2}^{\Omega}$ can behave quite differently, as discussed in e.g.\ \cite{dagsamIII}*{\S2.2}.   
The functional from $(\exists^{3})$ is also called `$\exists^{3}$', and we use the same convention for other functionals.  Hilbert-Bernays' operator $\nu$ (see \cite{hillebilly2}*{Supplement V}) is essentially Kleene's $\exists^{2}$, modulo a non-trivial fragment of the Axiom of (quantifier-free) Choice.

\begin{bibdiv}
\begin{biblist}
\bib{DB2}{article}{
  author={Agronsky, S. J.},
  author={Ceder, J. G.},
  author={Pearson, T. L.},
  title={Some characterizations of Darboux Baire $1$ functions},
  journal={Real Anal. Exchange},
  volume={23},
  date={1997/98},
  number={2},
  pages={421--430},
}

\bib{voordedorst}{book}{
  author={Appell, J\"{u}rgen},
  author={Bana\'{s}, J\'{o}zef},
  author={Merentes, Nelson},
  title={Bounded variation and around},
  series={De Gruyter Series in Nonlinear Analysis and Applications},
  volume={17},
  publisher={De Gruyter, Berlin},
  date={2014},
  pages={x+476},
}

\bib{avi2}{article}{
  author={Avigad, Jeremy},
  author={Feferman, Solomon},
  title={G\"odel's functional \(``Dialectica''\) interpretation},
  conference={ title={Handbook of proof theory}, },
  book={ series={Stud. Logic Found. Math.}, volume={137}, },
  date={1998},
  pages={337--405},
}

\bib{beren2}{article}{
  author={Baire, Ren\'{e}},
  title={Sur les fonctions de variables r\'eelles},
  journal={Ann. di Mat.},
  date={1899},
  pages={1--123},
  volume={3},
  number={3},
}

\bib{beren}{book}{
  author={Baire, Ren\'{e}},
  title={Le\c {c}ons sur les fonctions discontinues},
  language={French},
  series={Les Grands Classiques Gauthier-Villars},
  note={Reprint of the 1905 original},
  publisher={\'{E}ditions Jacques Gabay, Sceaux},
  date={1995},
  pages={viii+65},
}

\bib{basket}{article}{
  title={The reverse mathematics of Cousin's lemma},
  author={Barrett, Jordan Mitchell},
  year={2020},
  journal={Honours thesis, Victoria University of Wellington, Supervised by Rod Downey and Noam Greenberg. xi+51 pages},
}

\bib{basket2}{article}{
  title={Cousin's lemma in second-order arithmetic},
  author={Barrett, Jordan Mitchell},
  author={Downey, Rodney G.},
  author={Greenberg, Noam},
  year={2021},
  journal={Preprint, arxiv: \url {https://arxiv.org/abs/2105.02975}},
}

\bib{thebill}{book}{
  author={Billingsley, Patrick},
  title={Convergence of probability measures},
  series={Wiley Series in Probability and Statistics: Probability and Statistics},
  edition={2},
  publisher={John Wiley \& Sons, Inc., New York},
  date={1999},
  pages={x+277},
}

\bib{normaleborel}{article}{
  title={Les probabilit{\'e}s d{\'e}nombrables et leurs applications arithm{\'e}tiques},
  author={Borel, \'Emile},
  journal={Rendiconti del Circolo Matematico di Palermo (1884-1940)},
  volume={27},
  pages={247-271},
}

\bib{quasibor2}{article}{
  author={Bors\'{\i }k, J\'{a}n},
  title={Sums of quasicontinuous functions defined on pseudometrizable spaces},
  journal={Real Anal. Exchange},
  volume={22},
  date={1996/97},
  number={1},
  pages={328--337},
}

\bib{briva}{article}{
  author={Bridges, Douglas},
  title={A constructive look at functions of bounded variation},
  journal={Bull. London Math. Soc.},
  volume={32},
  date={2000},
  number={3},
  pages={316--324},
}

\bib{brima}{article}{
  author={Bridges, Douglas},
  author={Mahalanobis, Ayan},
  title={Bounded variation implies regulated: a constructive proof},
  journal={J. Symbolic Logic},
  volume={66},
  date={2001},
  number={4},
  pages={1695--1700},
}

\bib{brownphd}{book}{
  author={Brown, Douglas K.},
  title={Functional analysis in weak subsystems of second-order arithmetic},
  year={1987},
  publisher={PhD Thesis, The Pennsylvania State University, ProQuest LLC},
}

\bib{browner2}{article}{
  author={Brown, Douglas K.},
  title={Notions of closed subsets of a complete separable metric space in weak subsystems of second-order arithmetic},
  conference={ title={Logic and computation}, address={Pittsburgh, PA}, date={1987}, },
  book={ series={Contemp. Math.}, volume={106}, publisher={Amer. Math. Soc., Providence, RI}, },
  date={1990},
  pages={39--50},
}

\bib{browner}{article}{
  author={Brown, Douglas K.},
  title={Notions of compactness in weak subsystems of second order arithmetic},
  conference={ title={Reverse mathematics 2001}, },
  book={ series={Lect. Notes Log.}, volume={21}, publisher={Assoc. Symbol. Logic}, },
  date={2005},
  pages={47--66},
}

\bib{boekskeopendoen}{book}{
  author={Buchholz, Wilfried},
  author={Feferman, Solomon},
  author={Pohlers, Wolfram},
  author={Sieg, Wilfried},
  title={Iterated inductive definitions and subsystems of analysis},
  series={LNM 897},
  publisher={Springer},
  date={1981},
  pages={v+383},
}

\bib{cauza}{article}{
  author={Cazacu, Rodica},
  author={Lawson, Jimmie D.},
  title={Quasicontinuous functions, domains, and extended calculus},
  journal={Appl. Gen. Topol.},
  volume={8},
  date={2007},
  number={1},
  pages={1--33},
}

\bib{caulkingwood}{article}{
  author={Collingwood, E. F.},
  title={Cluster sets of arbitrary functions},
  journal={Proc. Nat. Acad. Sci. U.S.A.},
  volume={46},
  date={1960},
  pages={1236--1242},
}

\bib{cousin1}{article}{
  author={Cousin, Pierre},
  title={Sur les fonctions de $n$ variables complexes},
  journal={Acta Math.},
  volume={19},
  date={1895},
  pages={1--61},
}

\bib{darb}{article}{
  author={Darboux, Gaston},
  title={M\'emoire sur les fonctions discontinues},
  journal={Annales scientifiques de l'\'Ecole Normale Sup\'erieure},
  pages={57--112},
  publisher={Elsevier},
  volume={2e s{\'e}rie, 4},
  year={1875},
}

\bib{didi3}{book}{
  author={Dirichlet, Lejeune P.~G.},
  title={\"Uber die Darstellung ganz willk\"urlicher Funktionen durch Sinus- und Cosinusreihen},
  year={1837},
  publisher={Repertorium der physik, von H.W. Dove und L. Moser, bd. 1},
}

\bib{dudley}{book}{
  author={Dudley, R. M.},
  title={Real analysis and probability},
  series={Cambridge Studies in Advanced Mathematics},
  volume={74},
  publisher={Cambridge University Press},
  date={2002},
  pages={x+555},
}

\bib{damurm}{book}{
  author={Dzhafarov, Damir D.},
  author={Mummert, Carl},
  title={Reverse Mathematics: Problems, Reductions, and Proofs},
  publisher={Springer Cham},
  date={2022},
  pages={xix, 488},
}

\bib{ellis}{article}{
  author={Ellis, H. W.},
  title={Darboux properties and applications to non-absolutely convergent integrals},
  journal={Canad. J. Math.},
  volume={3},
  date={1951},
  pages={471--485},
}

\bib{bengelkoning}{book}{
  author={Engelking, Ryszard},
  title={General topology},
  series={Sigma Series in Pure Mathematics},
  volume={6},
  edition={2},
  publisher={Heldermann Verlag, Berlin},
  date={1989},
  pages={viii+529},
}

\bib{ewert}{article}{
  author={Ewert, Janina},
  author={Lipski, Tadeusz},
  title={Lower and upper quasicontinuous functions},
  journal={Demonstratio Math.},
  volume={16},
  date={1983},
  number={1},
  pages={85--93},
}

\bib{Holland}{book}{
  author={Folland, Gerald B.},
  title={Real analysis},
  series={Pure and Applied Mathematics},
  edition={2},
  publisher={John Wiley \& Sons, Inc},
  date={1999},
  pages={xvi+386},
}

\bib{fried}{article}{
  author={Friedman, Harvey},
  title={Some systems of second order arithmetic and their use},
  conference={ title={Proceedings of the International Congress of Mathematicians (Vancouver, B.\ C., 1974), Vol.\ 1}, },
  book={ },
  date={1975},
  pages={235--242},
}

\bib{fried2}{article}{
  author={Friedman, Harvey},
  title={ Systems of second order arithmetic with restricted induction, I \& II (Abstracts) },
  journal={Journal of Symbolic Logic},
  volume={41},
  date={1976},
  pages={557--559},
}

\bib{gieren2}{book}{
  author={Gierz, Gerhard},
  author={Hofmann, Karl Heinrich},
  author={Keimel, Klaus},
  author={Lawson, Jimmie D.},
  author={Mislove, Michael W.},
  author={Scott, Dana S.},
  title={A compendium of continuous lattices},
  publisher={Springer},
  date={1980},
  pages={xx+371},
}

\bib{gieren3}{article}{
  author={Gierz, Gerhard},
  author={Lawson, Jimmie D.},
  author={Stralka, Albert},
  title={Quasicontinuous posets},
  journal={Houston J. Math.},
  volume={9},
  date={1983},
  number={2},
  pages={191--208},
}

\bib{withgusto}{article}{
  author={Giusto, Mariagnese},
  author={Simpson, Stephen G.},
  title={Located sets and reverse mathematics},
  journal={J. Symbolic Logic},
  volume={65},
  date={2000},
  number={3},
  pages={1451--1480},
}

\bib{gofer}{article}{
  author={Goffman, Casper},
  title={Everywhere convergence of Fourier series},
  journal={Indiana Univ. Math. J.},
  volume={20},
  date={1970/71},
  pages={107--112},
}

\bib{gofer2}{article}{
  author={Goffman, Casper},
  author={Moran, Gadi},
  author={Waterman, Daniel},
  title={The structure of regulated functions},
  journal={Proc. Amer. Math. Soc.},
  volume={57},
  date={1976},
  number={1},
  pages={61--65},
}

\bib{grand}{article}{
  author={Grande, Zbigniew},
  title={On some representations of a.e. continuous functions},
  journal={Real Anal. Exchange},
  volume={21},
  date={1995/96},
  number={1},
  pages={175--180},
}

\bib{groeneberg}{article}{
  title={Highness properties close to PA-completeness},
  author={Noam Greenberg},
  author={Joseph S. Miller},
  author={Andr\'e Nies},
  year={2019},
  journal={To appear in Israel Journal of Mathematics},
}

\bib{hankelwoot}{book}{
  author={H. {Hankel}},
  title={{Untersuchungen \"uber die unendlich oft oscillirenden und unstetigen Functionen.}},
  pages={pp.\ 51},
  year={1870},
  publisher={Ludwig Friedrich Fues, Memoir presented at the University of T\"ubingen on 6 March 1870},
}

\bib{hankelijkheid}{book}{
  author={H. {Hankel}},
  title={{Untersuchungen \"uber die unendlich oft oscillirenden und unstetigen Functionen.}},
  volume={20},
  pages={63--112},
  year={1882},
  publisher={Math. Ann., Springer},
}

\bib{baathetniet}{article}{
  author={Heyting, Arend},
  title={Recent progress in intuitionistic analysis},
  conference={ title={Intuitionism and Proof Theory}, address={Proc. Conf., Buffalo, N.Y.}, date={1968}, },
  book={ publisher={North-Holland, Amsterdam}, },
  date={1970},
  pages={95--100},
}

\bib{hillebilly2}{book}{
  author={Hilbert, David},
  author={Bernays, Paul},
  title={Grundlagen der Mathematik. II},
  series={Zweite Auflage. Die Grundlehren der mathematischen Wissenschaften, Band 50},
  publisher={Springer},
  date={1970},
}

\bib{polahirst}{article}{
  author={Hirst, Jeffry L.},
  title={Representations of reals in reverse mathematics},
  journal={Bull. Pol. Acad. Sci. Math.},
  volume={55},
  date={2007},
  number={4},
  pages={303--316},
}

\bib{haloseg}{article}{
  author={Hol\'{a}, \v {L}ubica},
  author={Hol\'{y}, Du\v {s}an},
  title={Pointwise convergence of quasicontinuous mappings and Baire spaces},
  journal={Rocky Mountain J. Math.},
  volume={41},
  date={2011},
  number={6},
  pages={1883--1894},
}

\bib{holaseg}{article}{
  author={Hol\'{a}, \v {L}ubica},
  title={There are $2^{\germ {c}}$ quasicontinuous non Borel functions on uncountable Polish space},
  journal={Results Math.},
  volume={76},
  date={2021},
  number={3},
  pages={Paper No. 126, 11},
}

\bib{hrbacekjech}{book}{
  author={Hrbacek, Karel},
  author={Jech, Thomas},
  title={Introduction to set theory},
  series={Monographs and Textbooks in Pure and Applied Mathematics},
  volume={220},
  edition={3},
  publisher={Marcel Dekker, Inc., New York},
  date={1999},
  pages={xii+291},
}

\bib{hunterphd}{book}{
  author={Hunter, James},
  title={Higher-order reverse topology},
  note={Thesis (Ph.D.)--The University of Wisconsin - Madison},
  publisher={ProQuest LLC, Ann Arbor, MI},
  date={2008},
  pages={97},
}

\bib{JR}{article}{
  author={Jayne, J. E.},
  author={Rogers, C. A.},
  title={First level Borel functions and isomorphisms},
  journal={J. Math. Pures Appl. (9)},
  volume={61},
  date={1982},
  number={2},
  pages={177--205},
}

\bib{jordel}{article}{
  author={Jordan, Camillie},
  title={Sur la s\'erie de Fourier},
  journal={Comptes rendus de l'Acad\'emie des Sciences, Paris, Gauthier-Villars},
  volume={92},
  date={1881},
  pages={228--230},
}

\bib{vuilekech}{article}{
  author={Kechris, Alexander S.},
  author={Louveau, Alain},
  title={A classification of Baire class $1$ functions},
  journal={Trans. Amer. Math. Soc.},
  volume={318},
  date={1990},
  number={1},
  pages={209--236},
}

\bib{keylesh}{article}{
  author={Keldych, Ludmila},
  title={D\'{e}monstration directe du th\'{e}or\`eme sur l'appartenance d'un \'{e}l\'{e}ment canonique $E_\alpha $ \`a la classe $\alpha $ et exemples arithm\'{e}tiques d'ensembles mesurables $B$ de classes sup\'{e}rieures},
  journal={C. R. (Doklady) Acad. Sci. URSS (N.S.)},
  volume={28},
  date={1940},
  pages={675--677},
}

\bib{kerkje}{article}{
  author={Kirchheim, Bernd},
  title={Baire one star functions},
  journal={Real Anal. Exchange},
  volume={18},
  date={1992/93},
  number={2},
  pages={385--399},
}

\bib{knuthzelf}{book}{
  author={Knuth, Donald E.},
  title={The art of computer programming. Vol. 1},
  publisher={Addison-Wesley, Reading, MA},
  date={1997},
  pages={xx+650},
}

\bib{kodt}{article}{
  author={Kodaira, Kunihiko},
  title={\"{U}ber die Gruppe der messbaren Abbildungen},
  journal={Proc. Imp. Acad. Tokyo},
  volume={17},
  date={1941},
  pages={18--23},
}

\bib{kohlenbach4}{article}{
  author={Kohlenbach, Ulrich},
  title={Foundational and mathematical uses of higher types},
  conference={ title={Reflections on the foundations of mathematics}, },
  book={ series={Lect. Notes Log.}, volume={15}, publisher={ASL}, },
  date={2002},
  pages={92--116},
}

\bib{kooltje}{article}{
  author={Kohlenbach, Ulrich},
  title={On uniform weak K\"onig's lemma},
  note={Commemorative Symposium Dedicated to Anne S. Troelstra (Noordwijkerhout, 1999)},
  journal={Ann. Pure Appl. Logic},
  volume={114},
  date={2002},
  number={1-3},
  pages={103--116},
}

\bib{kohlenbach2}{article}{
  author={Kohlenbach, Ulrich},
  title={Higher order reverse mathematics},
  conference={ title={Reverse mathematics 2001}, },
  book={ series={Lect. Notes Log.}, volume={21}, publisher={ASL}, },
  date={2005},
  pages={281--295},
}

\bib{kohlenbach3}{book}{
  author={Kohlenbach, Ulrich},
  title={Applied proof theory: proof interpretations and their use in mathematics},
  series={Springer Monographs in Mathematics},
  publisher={Springer-Verlag},
  place={Berlin},
  date={2008},
  pages={xx+532},
}

\bib{kreupel}{article}{
  author={Kreuzer, Alexander P.},
  title={Bounded variation and the strength of Helly's selection theorem},
  journal={Log. Methods Comput. Sci.},
  volume={10},
  date={2014},
  number={4},
  pages={4:16, 15},
}

\bib{kunen}{book}{
  author={Kunen, Kenneth},
  title={Set theory},
  series={Studies in Logic},
  volume={34},
  publisher={College Publications, London},
  date={2011},
  pages={viii+401},
}

\bib{laktose}{book}{
  author={Lakatos, Imre},
  title={Proofs and refutations},
  series={Cambridge Philosophy Classics},
  note={The logic of mathematical discovery; Edited by John Worrall and Elie Zahar; With a new preface by Paolo Mancosu; Originally published in 1976},
  publisher={Cambridge University Press},
  date={2015},
  pages={xii+183},
}

\bib{lawsonquasi1}{article}{
  author={Lawson, Jimmie},
  author={Xi, Xiaoyong},
  title={The equivalence of QRB, QFS, and compactness for quasicontinuous domains},
  journal={Order},
  volume={32},
  date={2015},
  number={2},
  pages={227--238},
}

\bib{lebes1}{book}{
  author={Lebesgue, Henri},
  title={{Le\c cons sur l'int\'egration et la recherche des fonctions primitives profess\'ees au Coll\`ege de France.}},
  note={Reprint of the 1904 ed.},
  pages={vii + 136},
  year={2009},
  publisher={Cambridge University Press},
}

\bib{lorch}{article}{
  author={Lorch, Edgar R.},
  title={Continuity and Baire functions},
  journal={Amer. Math. Monthly},
  volume={78},
  date={1971},
  pages={748--762},
}

\bib{onderdeknuth}{book}{
  author={Lovrencic, Alen},
  author={Black, Paul},
  title={Binary tree representation of trees},
  publisher={National Institute of Standards and Technology},
  series={Dictionary of Algorithms and Data Structures [online], \url {https://www.nist.gov/dads/HTML/binaryTreeRepofTree.html}},
  date={2008},
}

\bib{malin}{article}{
  author={Maliszewski, A.},
  title={On the products of bounded Darboux Baire one functions},
  journal={J. Appl. Anal.},
  volume={5},
  date={1999},
  number={2},
  pages={171--185},
}

\bib{malin2}{article}{
  author={Maliszewski, Aleksander},
  title={On the differences of Darboux upper semicontinuous functions},
  journal={Real Anal. Exchange},
  volume={21},
  date={1995/96},
  number={1},
  pages={258--263},
}

\bib{marcus2}{article}{
  author={Marcus, Solomon},
  title={Sur les fonctions d\'{e}riv\'{e}es, int\'{e}grables au sens de Riemann et sur les d\'{e}riv\'{e}es partielles mixtes},
  language={French},
  journal={Proc. Amer. Math. Soc.},
  volume={9},
  date={1958},
  pages={973--978},
}

\bib{montahue}{article}{
  author={Montalb{\'a}n, Antonio},
  title={Open questions in reverse mathematics},
  journal={Bull. Sym. Logic},
  volume={17},
  date={2011},
  number={3},
  pages={431--454},
}

\bib{nieuwebronna}{article}{
  author={Neubrunnov\'{a}, Anna},
  title={On quasicontinuous and cliquish functions},
  journal={\v {C}asopis P\v {e}st. Mat.},
  volume={99},
  date={1974},
  pages={109--114},
}

\bib{nieyo}{article}{
  title={The reverse mathematics of theorems of Jordan and Lebesgue},
  journal={The Journal of Symbolic Logic},
  publisher={Cambridge University Press},
  author={Nies, Andr\'e},
  author={Triplett, Marcus A.},
  author={Yokoyama, Keita},
  year={2021},
  pages={1--18},
}

\bib{dagsamIII}{article}{
  author={Normann, Dag},
  author={Sanders, Sam},
  title={On the mathematical and foundational significance of the uncountable},
  journal={Journal of Mathematical Logic, \url {doi.org/10.1142/S0219061319500016}},
  date={2019},
}

\bib{dagsamVI}{article}{
  author={Normann, Dag},
  author={Sanders, Sam},
  title={Vitali's covering theorem in computability theory and Reverse Mathematics},
  journal={Submitted, arXiv: \url {https://arxiv.org/abs/1902.02756}},
  date={2019},
}

\bib{dagsamVII}{article}{
  author={Normann, Dag},
  author={Sanders, Sam},
  title={Open sets in Reverse Mathematics and Computability Theory},
  journal={Journal of Logic and Computation},
  volume={30},
  number={8},
  date={2020},
  pages={pp.\ 40},
}

\bib{dagsamV}{article}{
  author={Normann, Dag},
  author={Sanders, Sam},
  title={Pincherle's theorem in reverse mathematics and computability theory},
  journal={Ann. Pure Appl. Logic},
  volume={171},
  date={2020},
  number={5},
  pages={102788, 41},
}

\bib{dagsamIX}{article}{
  author={Normann, Dag},
  author={Sanders, Sam},
  title={The Axiom of Choice in Computability Theory and Reverse Mathematics},
  journal={Journal of Logic and Computation},
  volume={31},
  date={2021},
  number={1},
  pages={297-325},
}

\bib{dagsamXI}{article}{
  author={Normann, Dag},
  author={Sanders, Sam},
  title={On robust theorems due to Bolzano, Jordan, Weierstrass, and Cantor in Reverse Mathematics},
  journal={Journal of Symbolic Logic, DOI: \url {doi.org/10.1017/jsl.2022.71}},
  pages={pp.\ 51},
  date={2022},
}

\bib{dagsamX}{article}{
  author={Normann, Dag},
  author={Sanders, Sam},
  title={On the uncountability of $\mathbb {R}$},
  journal={Journal of Symbolic Logic, DOI: \url {doi.org/10.1017/jsl.2022.27}},
  pages={pp.\ 43},
  date={2022},
}

\bib{dagsamXII}{article}{
  author={Normann, Dag},
  author={Sanders, Sam},
  title={Betwixt Turing and Kleene},
  journal={LNCS 13137, proceedings of LFCS22},
  pages={pp.\ 18},
  date={2022},
}

\bib{dagsamXIII}{article}{
  author={Normann, Dag},
  author={Sanders, Sam},
  title={On the computational properties of basic mathematical notions},
  journal={Journal of Logic and Computation, DOI: \url {doi.org/10.1093/logcom/exac075}},
  pages={pp.\ 44},
  date={2022},
}

\bib{DB1}{article}{
  author={O'Malley, Richard J.},
  title={Baire* $1$, Darboux functions},
  journal={Proc. Amer. Math. Soc.},
  volume={60},
  date={1976},
  pages={187--192},
}

\bib{pawlak}{article}{
  author={Pawlak, Ryszard Jerzy},
  title={On some class of functions intermediate between the class $B^*_1$ and the family of continuous functions},
  journal={Tatra Mt. Math. Publ.},
  volume={19},
  date={2000},
  pages={135--144},
}

\bib{pfff}{book}{
  author={Pfaltz, John L.},
  title={Computer data structures},
  publisher={McGraw-Hill},
  date={1977},
  pages={xi+446},
}

\bib{tepelpinch}{article}{
  author={Pincherle, Salvatore},
  title={Sopra alcuni sviluppi in serie per funzioni analitiche (1882)},
  journal={Opere Scelte, I, Roma},
  date={1954},
  pages={64--91},
}

\bib{bollard}{book}{
  author={Pollard, David},
  title={Convergence of stochastic processes},
  publisher={Springer},
  date={1984},
  pages={xiv+215},
}

\bib{rijnwand}{book}{
  author={Reinwand, Simon},
  title={Functions of Bounded Variation: Theory, Methods, Applications},
  publisher={PhD Thesis, Julius-Maximilians-University W\"urzburg},
  date={2021},
  pages={v+326},
}

\bib{varijo}{article}{
  author={Richman, Fred},
  title={Omniscience principles and functions of bounded variation},
  journal={Mathematical Logic Quarterly},
  volume={48},
  date={2002},
  pages={111--116},
}

\bib{yamayamaharehare}{article}{
  author={Sakamoto, Nobuyuki},
  author={Yamazaki, Takeshi},
  title={Uniform versions of some axioms of second order arithmetic},
  journal={MLQ Math. Log. Q.},
  volume={50},
  date={2004},
  number={6},
  pages={587--593},
}

\bib{sayo}{article}{
  author={Sanders, Sam},
  author={Yokoyama, Keita},
  title={The {D}irac delta function in two settings of {R}everse {M}athematics},
  year={2012},
  journal={Archive for Mathematical Logic},
  volume={51},
  number={1},
  pages={99-121},
}

\bib{samcie21}{article}{
  author={Sanders, Sam},
  title={Splittings and robustness for the Heine-Borel theorem},
  year={2021},
  journal={Lecture notes in Computer Science, Proceedings of CiE21, Springer},
  pages={pp.\ 12},
}

\bib{samcount}{article}{
  author={Sanders, Sam},
  title={Countable sets versus sets that are countable in Reverse Mathematics},
  year={2022},
  journal={Computability, vol.\ 11, no.\ 1, pp. 9-39},
}

\bib{samcie22}{article}{
  author={Sanders, Sam},
  title={Reverse Mathematics of the uncountability of $\R $},
  year={2022},
  journal={Lecture notes in Computer Science, Proceedings of CiE22, Springer},
  pages={pp.\ 12},
}

\bib{samwollic22}{article}{
  author={Sanders, Sam},
  title={On the computational properties of the uncountability of the reals},
  year={2022},
  journal={LNCS 13468, Proceedings of WoLLIC22, Springer},
}

\bib{samBIG}{article}{
  author={Sanders, Sam},
  title={Big in Reverse Mathematics: the uncountability of the real numbers},
  year={2023},
  journal={Journal of Symbolic Logic, doi:\url {doi:10.1017/jsl.2023.42}},
  pages={pp.\ 34},
}

\bib{simpson1}{collection}{
  title={Reverse mathematics 2001},
  series={Lecture Notes in Logic},
  volume={21},
  editor={Simpson, Stephen G.},
  publisher={ASL},
  place={La Jolla, CA},
  date={2005},
  pages={x+401},
}

\bib{simpson2}{book}{
  author={Simpson, Stephen G.},
  title={Subsystems of second order arithmetic},
  series={Perspectives in Logic},
  edition={2},
  publisher={CUP},
  date={2009},
  pages={xvi+444},
}

\bib{stillebron}{book}{
  author={Stillwell, J.},
  title={Reverse mathematics, proofs from the inside out},
  pages={xiii + 182},
  year={2018},
  publisher={Princeton Univ.\ Press},
}

\bib{thomeke}{book}{
  author={Thomae, Carl J.T.},
  title={Einleitung in die Theorie der bestimmten Integrale},
  publisher={Halle a.S. : Louis Nebert},
  date={1875},
  pages={pp.\ 48},
}

\bib{thommy}{book}{
  author={Thomson, Brian S.},
  title={Real functions},
  series={Lecture Notes in Mathematics},
  volume={1170},
  publisher={Springer},
  date={1985},
  pages={vii+229},
}

\bib{troelstra1}{book}{
  author={Troelstra, Anne Sjerp},
  title={Metamathematical investigation of intuitionistic arithmetic and analysis},
  note={Lecture Notes in Mathematics, Vol.\ 344},
  publisher={Springer Berlin},
  date={1973},
  pages={xv+485},
}

\bib{viams}{book}{
  author={Villani, C\'{e}dric},
  title={Topics in optimal transportation},
  series={Graduate Studies in Mathematics},
  volume={58},
  publisher={American Mathematical Society, Providence, RI},
  date={2003},
  pages={xvi+370},
}

\bib{vitaliorg}{article}{
  author={Vitali, Guiseppe},
  title={Sui gruppi di punti e sulle funzioni di variabili reali.},
  journal={Atti della Accademia delle Scienze di Torino, vol XLIII},
  date={1907},
  number={4},
  pages={229--247},
}

\bib{volaarde2}{article}{
  author={Volterra, Vito},
  title={Alcune osservasioni sulle funzioni punteggiate discontinue},
  journal={Giornale di matematiche},
  volume={XIX},
  date={1881},
  pages={76-86},
}

\bib{waterdragen}{article}{
  author={Waterman, Daniel},
  title={On convergence of Fourier series of functions of generalized bounded variation},
  journal={Studia Math.},
  volume={44},
  date={1972},
  pages={107--117},
}

\bib{youngerandyounger}{article}{
  author={Young, W. H.},
  title={{On the distinction of right and left at points of discontinuity}},
  journal={{Quart. J.}},
  volume={39},
  pages={67--83},
  year={1907},
}

\bib{verzengend}{article}{
  author={Zheng, Xizhong},
  author={Rettinger, Robert},
  title={Effective Jordan decomposition},
  journal={Theory Comput. Syst.},
  volume={38},
  date={2005},
  number={2},
  pages={189--209},
}

\bib{wezinken}{article}{
  author={Zink, Robert E.},
  title={On semicontinuous fuctions and Baire functions},
  journal={Trans. Amer. Math. Soc.},
  volume={117},
  date={1965},
  pages={1--9},
}

\end{biblist}
\end{bibdiv}

\bye